\newtheorem{theorem}{Theorem}[section]
\newtheorem{proposition}[theorem]{Proposition}
\newtheorem{cor}[theorem]{Corollary}
\newtheorem{lemma}[theorem]{Lemma}
\theoremstyle{definition}
\newtheorem{definition}   [theorem]  {Definition}
\theoremstyle{definition}
\newtheorem{remark}   [theorem]  {Remark}
\newcommand{\definedas}{\mathrel{\raise.095ex\hbox{\rm :}\mkern-5.2mu=}}
\let\oldmarginpar\marginpar
\renewcommand\marginpar[1]{\-\oldmarginpar[\raggedleft\tiny #1]%
{\raggedright\tiny #1}}
\newcounter{mnotecount}[section]
\DeclareMathOperator{\Hess}{Hess}
\DeclareMathOperator{\graph}{graph}
\DeclareMathOperator{\tr}{tr}
\DeclareMathOperator{\const}{const}
\DeclareMathOperator{\proj}{proj}
\DeclareMathOperator{\supp}{supp}
\newcommand{\bR}{\mathbb{R}}
\newcommand{\bS}{\mathbb{S}}
\newcommand{\bH}{\mathbb{H}}
\newcommand{\cL}{\mathcal{L}}
\newcommand{\gbar}{\bar{g}}
\newcommand{\ghat}{\hat{g}}
\renewcommand{\hbar}{\overline{h}}
\newcommand{\gtil}{\tilde{g}}
\newcommand{\fbar}{\overline{f}}
\newcommand{\funder}{\underline{f}}
\newcommand{\rhotil}{\widetilde{\rho}}
\newcommand{\Sigmatil}{\widetilde{\Sigma}}
\newcommand{\gcheck}{\check{g}}
\DeclareMathOperator{\dist}{dist}
\DeclareMathOperator{\divg}{div}
\DeclareMathOperator{\vol}{Vol}
\newcommand{\ric}{\mathrm{Ric}}
\newcommand{\ricdd}[2]{\ric_{#1 #2}}
\newcommand{\scal}{\mathrm{Scal}}
\newcommand{\hess}{\mathrm{Hess}}
\begin{document} 

\author{Anna Sakovich}
\address{Uppsala University \\ Department of Mathematics \\
Box 480, 751 06 UPPSALA}
\email{anna.sakovich@math.uu.se}

\title[Jang equation and positive mass theorem in AH setting]
{The Jang equation and the positive mass theorem in the asymptotically hyperbolic setting}

\begin{abstract}
We solve the Jang equation with respect to asymptotically hyperbolic  ``hyperboloidal''  initial data. The results are applied to give a non-spinor proof of the positive mass theorem in the asymptotically hyperbolic setting. This work focuses on the case when the spatial dimension is equal to three.
\end{abstract}

\date{\today}

\maketitle

\section{Introduction}\label{secIntro}

The classical positive mass theorem has its roots in general relativity and asserts that for a nontrivial isolated physical system, the energy of the gravitational field is nonnegative. Considered from the point of view of differential geometry, the theorem is a statement about initial data  for the Einstein equations. Such initial data is a triple $(M,g,K)$, where $(M,g)$ is a Riemannian manifold and $K$ is a symmetric 2-tensor. In the context of the positive mass theorem it is standard to assume that $(M,g,K)$ satisfies the so-called dominant energy condition, a condition on the stress energy tensor of the matter or electromagnetic fields which is satisfied by almost all ``reasonable''  fields.  

Roughly speaking, a manifold $(M,g)$ is asymptotically Euclidean if outside some compact set it consists of a finite number of components $M_k$ such that each $M_k$ is diffeomorphic to a complement of a compact set in Euclidean space. Moreover, it is required that under these diffeomorphisms, the geometry at infinity of each end $M_k$ tends to that of the Euclidean space. In this setup, with each $M_k$ one can associate the so-called Arnowitt-Deser-Misner (ADM) mass which is the limit of surface integrals taken over large 2-spheres  in $M_k$\footnote{Note that the quantity that we, following the terminology of \cite{PMT2}, call ADM mass in this work is more commonly referred to as ADM energy.}. An initial data set $(M,g,K)$ is called asymptotically Euclidean if  $(M,g)$ is an asymptotically Euclidean manifold and $K$ falls off to zero sufficiently fast near infinity. The positive mass theorem for asymptotically Euclidean initial data sets states that the ADM mass  for each $M_k$ is nonnegative provided that the dominant energy condition is satisfied, and if the mass is zero then $M$ arises as a hypersurface in Minkowski spacetime, with the induced metric $g$ and second fundamental form $K$.

A complete proof of this theorem was first obtained by Schoen and  Yau in \cite{PMT1} for the special case when $K\equiv 0$. This result is also known as the Riemannian positive mass theorem: if $\scal_g\geq 0$ (the dominant energy condition when $K\equiv 0$) holds then the ADM mass of $(M,g)$ is positive unless $(M,g)$ is isometric to Euclidean space. Shortly after this important case was resolved, Schoen and Yau were able to address the general case in \cite{PMT2}  using a certain reduction argument. The key idea is to consider a smooth function $f$ on $M$ whose  graph $\Sigma$ in $M\times \bR$ equipped with the standard product metric has mean curvature equal to the trace of $K$ (trivially extended to be a tensor defined over $M\times \bR$) on $\Sigma$. Schoen and Yau observed that, as long as the dominant energy condition is satisfied, $\Sigma$ can be equipped with an asymptotically Euclidean metric such that its scalar curvature vanishes and its ADM mass does not exceed the ADM mass of $(M,g,K)$.  All in all, it follows from the Riemannian positive mass theorem that the ADM mass of $(M,g,K)$ is nonnegative, and in the case when the mass is zero the function $f$ provides the graphical embedding into the Minkowski spacetime.

The prescribed mean curvature equation that plays a central role in Schoen and Yau's argument is known as the Jang equation. It first appeared in the eponymous paper of Jang \cite{Jang} where it was motivated by a question related to the characterization of the case when the mass is zero: Which conditions ensure that an initial data set $(M,g,K)$ arises as a hypersurface in Minkowski spacetime such that the induced metric is $g$ and the second fundamental form is $K$? A substantial part of \cite{PMT2} is devoted to the construction of a solution and careful analysis of its geometric and analytic properties. In fact, it turns out that the hypersurface $\Sigma \subset M\times \bR$ as described above  is not necessarily a graph as it might have asymptotically cylindrical components. Nevertheless, its structure and asymptotics are well understood so that the reduction argument described above can be applied. Importantly, the reduction argument of \cite{PMT2} was shown to work in dimensions $3<n\leq 7$, see Eichmair \cite{EichmairJang}. Furthermore, in the light of Schoen and Yau's recent work \cite{PMTfinal}  it is natural to anticipate the extension of these results to dimensions $n>7$. For other important developments concerning spacetime positive mass theorem in higher dimensions  see   \cite{EHLS},  \cite{HuangLeeRigidity}, \cite{LohkampNew}.

The current work has been largely motivated by another paper of Schoen and Yau \cite{Bondi}, which contains a sketch of the proof that the Bondi mass, representing the total mass of an isolated physical system measured after the loss due to the gravitational radiation, is positive. The idea of the argument is to pick a suitable asymptotically null hypersurface in the radiating Bondi spacetime and use the Jang equation for deforming it to an asymptotically Euclidean manifold with ``almost nonnegative'' scalar curvature and the ADM mass equal to the positive multiple of the Bondi mass. Completing all steps in this argument would require one to analyze the Jang equation in the asymptotically null setting, and the preliminary analysis carried out in \cite{Bondi} (see also \cite{HYZ}) indicates that this can be somewhat problematic in the radiating regime. Therefore in the current paper we turn to the non-radiating -- but still rather general -- setting of asymptotically hyperbolic initial data sets.  

Roughly speaking, a manifold $(M,g)$ is asymptotically hyperbolic if its geometry at infinity approaches that of the hyperbolic space. The definition of mass for such manifolds is due to Wang \cite{Wang}, and Chrusciel and Herzlich \cite{CH}; see also \cite{Herzlich} where the relation between these two approaches is discussed. The respective (Riemannian) positive mass theorem stating that an $n$-dimensional asymptotically hyperbolic manifold $(M,g)$ with $\scal^g \geq -n(n-1)$ has positive mass unless it is isometric to hyperbolic space was proven under spinor assumption in \cite{Wang} and \cite{CH}. In \cite{AnderssonCaiGalloway} the spinor assumption was replaced by the restriction on dimension and the geometry at infinity. These assumptions have recently been removed in \cite{CGNP}, \cite{ChruscielDelay}, and \cite{HJM}. 

An asymptotically hyperbolic manifold  $(M,g)$ with $\scal^g \geq -n(n-1)$ can be viewed as either a spacelike totally geodesic hypersurface in an asymptotically anti-de Sitter spacetime (in which case $K= 0$) or as an umbilic ``hyperboloidal'' hypersurface in an asymptotically Minkowski spacetime (in which case $K=g$). Consequently, an initial data set $(M,g,K)$ is called asymptotically hyperbolic if  $(M,g)$ is an asymptotically hyperbolic manifold and either $K\to 0$ or $K\to g$ sufficiently fast near infinity. There is a vast literature devoted to spinor proofs of positive mass theorem in both cases, see e.g. \cite{ChruscielBondi}, \cite{ChruscielMaerten}, \cite{ChruscielMaertenTod}, \cite{Maerten}, \cite{WangXu},  \cite{XieZhang}, \cite{ZhangAngular}, \cite{Zhang}.  The initial data sets we are considering in this paper are ``hyperboloidal'', that is we assume $K\to g$ at infinity.  

In this work we apply Schoen and Yau's reduction argument using the Jang equation to deform an asymptotically hyperbolic initial data set satisfying the dominant energy condition to an asymptotically Euclidean manifold with ``almost nonnegative'' scalar curvature which in particular yields a proof of the positive mass conjecture in the ``hyperboloidal'' setting. In the current paper we focus on the case when $n=3$. In this case, similarly to \cite{PMT2}, the Jang equation can be solved without resorting to techniques from geometric measure theory that are required for dealing with higher dimensions, see \cite{EichmairJang}.  Furthermore, we could rely on the findings of \cite{Bondi} and \cite{HYZ} to get some intuition about the asymptotics of solutions. Our main result is the following theorem.

\begin{theorem}\label{thMain} 
 Let $(M,g,K)$ be a 3-dimensional asymptotically hyperbolic initial data set of type  
$(l,\beta, \tau,\tau_0)$ for $l\geq 6$, $0<\beta<1$, $\tfrac{3}{2} < \tau < 3$  and
$\tau_0>0$. Assume that the dominant energy condition $\mu \geq |J|_g$
holds. Then the mass vector $(E,\vec{P})$ is causal future directed, that is $E \geq |\vec{P}|$.

Suppose in addition that $(M,g,K)$ has Wang's asymptotics. If $E=0$ then $(M,g)$ can be embedded
isometrically into Minkowski space as a spacelike graphical  hypersurface with second fundamental form $K$.
\end{theorem}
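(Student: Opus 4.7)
The plan is to adapt the Schoen--Yau reduction argument of \cite{PMT2} to the hyperboloidal setting $K \to g$. The natural ambient space is the Lorentzian product $(M \times \bR, g - dt^2)$, and we seek a function $f : M \to \bR$ whose graph $\Sigma_f$ has mean curvature equal to the trace of $K$ (trivially extended), computed with respect to the downward unit normal. Since $K \to g$ at infinity, the unit hyperboloid $t = \sqrt{1 + |x|^2}$ in Minkowski space solves this equation in the model case $(\bH^3, g_{\bH^3}, g_{\bH^3})$; accordingly we fix a background profile $\fbar$ modelled on this and look for $f = \fbar + w$ with $w$ decaying at infinity. Composing with a Lorentz boost by a future-directed unit vector $\vec{\omega} = (\omega^0, \vec{\omega}') \in \bR^{1,3}$ yields a one-parameter family of admissible background profiles; varying $\vec\omega$ will be the mechanism for upgrading positivity of a single ADM-type quantity to the full causal future-directedness of $(E, \vec P)$.

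The first major step is to solve this Jang equation with the prescribed asymptotic profile. Following \cite{PMT2}, I would introduce a regularized equation with a capacitor term of the form $-\tau(f - \fbar)$ that kills the kernel of the linearization, establish a priori $C^0$ and gradient estimates for the solutions $f_\tau$ (using boosted hyperboloid profiles as barriers), and pass to the limit $\tau \to 0$. The main analytic obstacle, and the hard core of the argument, is to extract decay of $w$ at infinity sharp enough to make the subsequent mass integrals converge: near $K = g$ the linearization is only marginally invertible in weighted spaces, so one must iterate a refined asymptotic expansion relying on the preliminary analysis of \cite{Bondi} and \cite{HYZ} and on weighted Schauder estimates adapted to the hyperboloidal geometry. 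In addition, as in \cite{PMT2}, the solution need not be a global graph: asymptotically cylindrical blow-up components may develop along hypersurfaces of $M$ where the Jang operator degenerates. One has to show that in the hyperboloidal regime these are topologically benign and do not spoil the scalar-curvature and mass identities.

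With $f$ in hand, the induced metric $\gbar$ on $\Sigma_f$ satisfies the Schoen--Yau identity
\begin{equation*}
\scal^{\gbar} = 2(\mu - J(q)) + |h - K|_{\gbar}^{\,2} + 2|q|_{\gbar}^{\,2} - 2\,\overline{\divg}(q),
\end{equation*}
where $h$ is the second fundamental form of $\Sigma_f$ in $(M \times \bR, g - dt^2)$ and $q$ is the tangential component of the height-function gradient. The dominant energy condition $\mu \geq |J|_g$ then gives $\scal^{\gbar} \geq -2\,\overline{\divg}(q)$. The decay of $w$ produced in the previous step is designed precisely so that $(\Sigma_f, \gbar)$ is asymptotically Euclidean in the standard sense -- the hyperbolic part of $g$ cancels against the graphical correction $df \otimes df$ thanks to $K \to g$ -- so that $q$ decays fast enough for $\overline{\divg}(q)$ to integrate to zero at infinity, and so that the ADM mass of $(\Sigma_f, \gbar)$ equals a positive multiple of $E\,\omega^0 - \vec P \cdot \vec{\omega}'$.

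To conclude, a conformal correction $\gcheck = u^4 \gbar$ with $u \to 1$ solving $-8\Delta_{\gbar} u + \scal^{\gbar} u = 0$ produces an asymptotically Euclidean metric with nonnegative scalar curvature whose ADM mass differs from that of $\gbar$ by a manifestly nonpositive quantity. The Riemannian positive mass theorem applied to $(\Sigma_f, \gcheck)$ then yields $E\,\omega^0 - \vec P \cdot \vec{\omega}' \geq 0$ for every future-directed unit $\vec\omega$, which is precisely $E \geq |\vec P|$. For the rigidity statement, assume $E = 0$ and Wang's asymptotics, pick $\vec\omega$ so that the ADM mass of $(\Sigma_f, \gcheck)$ vanishes, and invoke the rigidity part of the Riemannian positive mass theorem. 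This forces $u \equiv 1$, $\scal^{\gbar} \equiv 0$, $q \equiv 0$, $h = K$, and $(\Sigma_f, \gbar) \cong (\bR^3, \delta)$ isometrically; unwinding these equalities, $f$ embeds $(M, g)$ isometrically as a spacelike graphical hypersurface in Minkowski space with second fundamental form $K$.
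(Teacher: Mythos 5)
Your sketch reproduces the Schoen--Yau reduction scheme, but it contains one outright error and glosses over the main technical difficulties of the hyperboloidal regime. The error: the Jang equation is a prescribed mean curvature equation in the \emph{Riemannian} product $(M\times\bR,\, g+dt^2)$, not the Lorentzian product $g - dt^2$, and the induced metric is $\gbar = g + df\otimes df$. With $f\sim\sqrt{1+r^2}$ at infinity one has $|df|_g^2 = g^{rr}(\partial_r f)^2\sim r^2\to\infty$, so the graph would not even be spacelike in $g-dt^2$ and the Lorentzian operator, built from $\sqrt{1-|df|_g^2}$, breaks down; the whole Schoen--Yau identity and the positivity of $\gbar$ require the Riemannian product. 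Your barriers are also too naive: boosted hyperboloid profiles lack the angular correction $\psi(\theta,\varphi)$ needed to absorb the mass aspect tensors $\mathbf m,\mathbf p$. The expansion of $\mathcal J(f)$ forces $\Delta^{\bS^2}\psi = \tfrac12\tr^\sigma\mathbf m+\tr^\sigma\mathbf p-\alpha$, which is why the barriers cannot be written down explicitly and are instead obtained implicitly from an auxiliary first-order ODE in $r$ after a substitution $k = \phi'\sqrt{1+r^2}/\sqrt{1+(1+r^2)(\phi')^2}$.

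The conformal step is where the serious work is hidden and your proposal would not close as written. Because $\scal^{\gbar}$ decays only like $r^{-3}$ (slower than in the asymptotically Euclidean Jang reduction), the standard argument that a single conformal factor $u$ yields a ``manifestly nonpositive'' mass shift does not apply; one must instead exploit the cancellation between the leading $O(r^{-3})$ part of $\scal^{\gbar}$ and $\divg^{\gbar}q$ inside the combination $\divg^{\gbar}(4u\,du+u^2 q)$, close the cylindrical blow-up ends to conical singularities of vanishing harmonic capacity, deform the resulting scalar-flat metric to asymptotically Schwarzschildean asymptotics (which is not automatic given only $\gbar-\delta=O(r^{-1})$), and then reopen the singularities before invoking the Riemannian positive mass theorem. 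You also cannot rely on the rescaling technique to extract decay of the solution in the asymptotically hyperbolic interior, since scalar multiplication is not a near-isometry of $b$; the decay is instead extracted by passing to Fermi coordinates adapted to the graph of a barrier, where the equation becomes genuinely asymptotically Euclidean. Finally, building the boost into the background profile to obtain $E\omega^0-\vec P\cdot\vec\omega'\geq 0$ for all $\vec\omega$ is workable but wasteful: since Minkowski boosts restrict to isometries of the hyperboloid, one may prove $E\geq 0$ once and then boost the chart at infinity; the energy transforms as $E\mapsto (E-\theta|\vec P|)/(1-\theta^2)$, which must remain nonnegative for all $\theta\in(0,1)$, so $E\geq|\vec P|$ follows without re-solving the Jang equation. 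For rigidity, one cannot simply ``pick $\vec\omega$ so that the mass vanishes'' and cite PMT rigidity, since $E=0$ forces $\vec P=0$ anyway and the strict dominant energy condition needed for the cylindrical-end analysis may fail; the actual argument is a limiting one, perturbing to strict DEC, passing to a limit Jang graph, and showing $u\to 1$, $\scal^{\gbar}\equiv 0$, $A\equiv K$.
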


When working towards the proof of this result we encountered a few difficulties that are not present in the asymptotically Euclidean setting of \cite{PMT2} and \cite{EichmairJang}. One problem is that barriers for the Jang equation are required to have more complicated asymptotics which makes it difficult to find them by inspection. See Section \ref{secBarriers}, where our construction of barriers is described, for more details. Another difficulty is that the rescaling technique -- which is a commonly used method for proving estimates for solutions of geometric PDEs in the asymptotically Euclidean setting -- does not work on asymptotically hyperbolic manifolds. Consequently, we had to devise a new method for proving that the Jang graph is an asymptotically Euclidean manifold, see Section \ref{secJangAE} for details. An additional issue  that requires some further adjustments is the fact that the asymptotics of the asymptotically Euclidean metric induced on the Jang graph are worse than in the  setting of  \cite{PMT2} and \cite{EichmairJang}, see Section \ref{secConformal}.

Of course, the result of Theorem \ref{thMain} is essentially covered by some of the aforementioned spinor proofs (see also \cite{ChenWangYau} where $E \geq |\vec{P}|$ is proven under an additional assumption on the asymptotic expansion of the initial data). In this connection we would like to point out that our result is currently being extended to the case $3<n\leq 7$  in \cite{Lundberg}. Interestingly, this case turns out to be different from the case $n=3$ in a few respects. The extension to dimensions $n>7$ might also be possible in the view of Schoen and Yau's recent work \cite{PMTfinal}.

We would  also like to stress that the Jang equation has many important applications besides proving positive mass theorems.  Among them are existence results for marginally outer trapped surfaces obtained by Andersson, Eichmair and Metzger (see \cite{AEM} for an overview) and  reduction arguments for the spacetime Penrose conjecture of Bray and Khuri (see e.g. \cite{BrayKhuri}). Other important works where the Jang equation  plays a prominent role include (but do not restrict to) \cite{ADGP} of Andersson, Dahl, Galloway and Pollack on topological censorship, \cite{BourniMoore} of Bourni and Moore on the null mean curvature flow, of Wang and Yau \cite{WangYau} on the notion of quasilocal mass, as well as the recent work of Bryden, Khuri, and Sormani \cite{BKS} on the stability of the spacetime positive mass theorem. In the view of these results, we hope that our study of the Jang equation in the asymptotically hyperbolic setting will be useful in other contexts that are out of the scope of the current paper.

The paper is organized as follows. Section \ref{secPrelim} contains some preliminaries and heuristics behind our arguments.  In Section \ref{secBarriers} we construct barriers for the Jang equation that will later be used to ensure that the solution has certain asymptotic behavior at infinity. In Section \ref{secBVP} we solve a sequence of  regularized boundary value problems for the Jang equation and in Section \ref{secExistence} we construct the geometric limit of the respective solutions when the domain grows and the regularization parameter tends to zero. This gives us the so-called geometric solution of the Jang equation. In Section \ref{secJangAE} we study the asymptotic behavior of this solution in more depth and in Section \ref{secConformal} we analyze its conformal properties. Finally, we prove Theorem \ref{thMain}  in Section \ref{secPositivity} and Section \ref{secRigidity}.

\subsection*{Acknowledgments}   I would like to thank Mattias Dahl for the suggestion to work on this problem, for stimulating discussions at the early stages of work, and for collaborating with me on the companion paper \cite{DahlSakovich}.  Thank you to  Romain Gicquaud for all your help  and interesting discussions on the topics related to this paper and to Michael Eichmair for patiently  answering my questions and encouragement. I would also like to thank Ye Sle Cha and Marcus Khuri for their interest and the pleasure of joint work on a related paper \cite{ChaKhuriSakovich}.  I am also grateful to  Piotr Chru\'sciel, Greg Galloway, Ulrich Menne, Rick Schoen, Mu-Tao Wang, and Eric Woolgar for
their interest in this work. A part of this paper is based upon work supported by the National Science Foundation under Grant No. 0932078 000, while the author was in residence at the Mathematical Science Research Institute in Berkeley, California.  The author acknowledges  support from Knut and Alice Wallenberg Foundation and Swedish Research Council (Vetenskapsr{\aa}det). Finally, I would like to thank the Institute Mittag-Leffler and the organizers of the program ``General Relativity, Geometry and Analysis: beyond the first 100 years after Einstein''  during which this paper was put in its final form and the two anonymous referees for their constructive comments.

\section{Preliminaries}\label{secPrelim}

\subsection{Initial data sets}

\begin{definition} 
An \emph{initial data set} $(M,g,K)$ for the Einstein equations of general relativity consists of a 3-dimensional Riemannian manifold $(M,g)$ and a symmetric  2-tensor $K$. The local mass density $\mu$ and the local current density $J$ of $(M,g,K)$ are defined via the constraint equations by
\begin{eqnarray}
2 \mu&\definedas& \scal^g+(\tr^g K)^2-|K|_g^2 \,, \label{eqConstraints1}\\
J&\definedas&\divg^g K-d (\tr^g K)\,, \label{eqConstraints2}
\end{eqnarray}
where $\scal^g$ is the scalar curvature of the metric $g$, and $\tr^g K$ and $|K|_g$ are respectively the trace and the norm of $K$ with respect to $g$. We say that $(M,g,K)$ satisfies the \emph{dominant energy condition} if
\begin{equation}\label{DEC}
\mu\geq |J|_g.
\end{equation}
\end{definition}

In this article, we denote the 3-dimensional hyperbolic space by $\bH^3$ and the hyperbolic metric by $b$. We will almost exclusively work with the hyperboloidal model of the hyperbolic space where $(\bH^3, b)$ is viewed as the unit upper hyperboloid 
\begin{equation}\label{eqHyperboloid}
\left\{(x^0, x^1, x^2, x^3): x^0 = \sqrt{1- (x^1)^2-(x^2)^2-(x^3)^2} \right\}
\end{equation}
 in Minkowski spacetime $\bR^{3,1}=(\bR\times \bR^3, -(dx^0)^2 + (dx^1)^2 + (dx^2)^2 + (dx^3)^2)$. In this case we have $b=\frac{dr^2}{1+r^2} + r^2 \sigma$ on $(0,\infty) \times \bS^2$, where $\sigma$ is the standard round metric on $\bS^2$ and $r^2 = (x^1)^2+(x^2)^2+(x^3)^2$. 
 
 Our definition of asymptotically hyperbolic initial data sets is the same as in \cite{DahlSakovich}:
\begin{definition}\label{defAHdataGeneral}  
We say that an initial data set $(M,g,K)$ is \emph{asymptotically hyperbolic of type $(l,\beta,\tau,\tau_0)$} for $l\geq 2$, $0\leq \beta <1$, $\tau>3/2$, and $\tau_0>0$ if $g\in C^{l,\beta}(M)$, $K\in C^{l-1,\beta}(M)$, and if there exists a compact set $\mathcal{C}$ and a diffeomorphism $\Phi: M\setminus \mathcal{C} \to (R,\infty)\times  \bS^2$ for some $R>0$ such that  
\begin{itemize}
\item $e \definedas \Phi_*g-b\in C^{l,\beta}_\tau (\bH^3)$,
\item $\eta\definedas \Phi_* (K-g) \in C^{l-1,\beta}_\tau (\bH^3)$,
\item $\Phi_*\mu \in C^{l-2,\beta}_{3+\tau_0}(\bH^3)$, and $\Phi_* J \in C^{l-2,\beta}_{3+\tau_0}(\bH^3)$.
\end{itemize}
\end{definition}
For the definition of \emph{weighted H\"older spaces} $C^{l,\beta}_\tau$, see  \cite{DahlSakovich}.  

In the view of the density result proven in \cite{DahlSakovich} (see Theorem \ref{thPerturb} below), for the purposes of  this article it will mostly suffice to work with initial data having simpler asymptotics, as described in the following definition. 

\begin{definition}\label{defAHdata}  We say that an asymptotically hyperbolic initial data set $(M,g,K)$ of type $(l,\beta,\tau,\tau_0)$ for $l\geq 2$, $0\leq \beta <1$, $\tau>3/2$, and $\tau_0>0$, has \emph{Wang's asymptotics}\footnote{The study of mass of asymptotically hyperbolic manifolds was initiated by  Xiaodong Wang  in \cite{Wang}. The asymptotic behavior of the metric considered here is essentially the same as in \cite{Wang}, hence the name.} if $\tau=3$ and the chart at infinity $\Phi$ is such that 
\begin{eqnarray*}
\Phi_*g & = &\frac{dr^2}{1+r^2}+r^2 \left(\sigma + \mathbf{m} \, r^{-3} + O^{l,\beta}(r^{-4})\right) \\
\Phi_* (K - b)_{|_{T\bS^2 \times T\bS^2}} & = & \mathbf{p} \, r^{-1} + O^{l-1,\beta}(r^{-2})
\end{eqnarray*}
where $\sigma$ is the standard round metric on  $\mathbb{S}^2$, and $\mathbf{m}\in C^{l,\beta}(\mathbb{S}^2)$ and $\mathbf{p}\in C^{l-1,\beta}(\mathbb{S}^2)$ are symmetric 2-tensors on $\bS^2$.  The expression $O^{l,\beta}(r^{-\tau})$ stands for a tensor in the weighted H\"older space $C^{l,\beta}_\tau (\bH^3)$.
\end{definition}

We will now recall the notion of mass in the asymptotically hyperbolic setting. Let $\mathcal{N} \definedas \{ V \in C^{\infty}(\bH^3) \mid \hess^b V = V b \}$. This is a vector space with a basis of the functions 
\[
V_{(0)} = \sqrt{1+r^2}, \quad 
V_{(i)} = x^i r, \quad
i=1,2,3,
\]
where $x^1, x^2, x^3$ are the coordinate functions
on $\bR^{3}$ restricted to $\bS^{2}$. In the hyperboloidal model of the hyperbolic space,  the functions $V_{(a)}$, $a=0,\ldots, 3$, have natural interpretation as  the restrictions to the upper unit hyperboloid \eqref{eqHyperboloid} of the coordinate functions $x^a$ of $\bR^{3,1}$. In fact, there is a natural correspondence between functions in $\mathcal{N}$ and the isometries of Minkowski space preserving the geometry of the hyperboloid, see e.g. \cite[Section 2.2]{DahlSakovich} for details. 

Given an asymptotically hyperbolic initial data set as in Definition \ref{defAHdataGeneral} the \emph{mass functional} $H_\Phi : \mathcal{N} \to \bR$ is well-defined by the formula
\[
H_{\Phi} (V)
=
\lim_{R \to \infty} \int_{\{r=R\}} \left(
V (\operatorname{div}^b e- d \tr^b e) + (\tr^b e) dV - (e+2\eta)(\nabla^b V, \cdot)
\right) (\nu_r) \, d \mu^b,
\]
where $\nu_r = \sqrt{1+r^2}\partial_r$.  
If $\Phi$ is a chart at infinity as in Definition \ref{defAHdataGeneral}
and $\mathcal{I}$ is an isometry of the hyperbolic metric $b$ then $\mathcal{I} \circ \Phi$ 
is again such a chart and it is not complicated to verify that
\[
H_{\mathcal{I} \circ \Phi} (V) = H_{\Phi} (V \circ \mathcal{I}^{-1}).
\]
The components of the \emph{mass vector} $(E,\vec{P})$, where $\vec{P} = (P_1, P_2, P_3)$, are given by 
\[
E = \tfrac{1}{16 \pi} H_\Phi (V_{(0)}),\quad P_i = \tfrac{1}{16 \pi} H_\Phi (V_{(i)}), \quad i=1,2,3.
\]
In what follows we will refer to $E$ as the \emph{energy} of the initial data set $(M,g,K)$. A computation shows that in the case when the initial data has Wang's asymptotics the energy is given by
\begin{equation}\label{eqMassAH}
E = \tfrac{1}{16 \pi} \int_{\bS^2} (\tr^\sigma \mathbf{m} + 2 \tr ^\sigma \mathbf{p})\, d\mu^\sigma.
\end{equation}
The Minkowskian length of the mass vector is a coordinate invariant which is usually referred to as the \emph{mass}.  We note that this definition of mass is essentially the one introduced in \cite{ChruscielBondi} and refer the reader to \cite{Michel} for the proof of well-definiteness and coordinate invariance.

The following density result was proven in \cite{DahlSakovich}. 

\begin{theorem}\label{thPerturb}
Let $(M,g,K)$ be an asymptotically hyperbolic initial data set of type
$(l,\beta, \tau,\tau_0)$ for $l\geq 3$, $0<\beta<1$, $\tfrac{3}{2} < \tau < 3$ and
$\tau_0>0$. 
Assume that the dominant energy condition $\mu \geq |J|_g$
holds. Then for every $\varepsilon >0$ there exists an asymptotically hyperbolic initial data set $(M,\bar{g},\bar{K})$ of type $(l-1,\beta, 3,\tau'_0)$ for some $\tau'_0>0$ with Wang's asymptotics (possibly with respect to a different chart at infinity) 
such that the strict dominant energy condition 
\[
\bar{\mu} > |\bar{J}|_{\bar{g}}
\]
holds, and the energies of the two initial data sets satisfy
\[
|E-\bar{E}|
< \varepsilon.
\]
\end{theorem}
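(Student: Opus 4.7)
The plan is to construct $(\bar g, \bar K)$ in two stages: first an asymptotic cutoff/interpolation to achieve Wang's asymptotic form (decay rate $\tau=3$), then a small correction via a solvability argument for the constraint operator to make the dominant energy condition strict while controlling the change in energy.

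\textbf{Stage 1 (asymptotic simplification).} Using the chart at infinity $\Phi$, expand $g - b$ and $K - g$ in the asymptotic regime. Since the initial decay exponent satisfies $\tau > 3/2$, only a finite portion of the expansion can carry mass information, and any term decaying faster than $r^{-3}$ is negligible for $H_\Phi$. Fix a smooth radial cutoff $\chi_R$ supported in $\{r \geq R\}$, and choose a model $(g_W, K_W)$ of Wang type (with tensors $\mathbf{m}, \mathbf{p}$ on $\bS^2$ extracted from the leading part of the expansion of $g$ and $K$) that agrees with $(g,K)$ up to order $O(r^{-3})$ on the support of $\chi_R$. Define
\[
\tilde g \definedas g + \chi_R (g_W - g), \qquad \tilde K \definedas K + \chi_R (K_W - K).
\]
For $R$ large the modification lives in a spherical shell where $g_W - g = O(r^{-\tau})$, so $(\tilde g, \tilde K)$ has Wang's asymptotics, sits in the class $C^{l-1,\beta}$, and its energy differs from $E$ by an amount that can be made arbitrarily small by increasing $R$ (the boundary integrals defining $H_\Phi$ are essentially unchanged). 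However, the constraints $(\tilde\mu, \tilde J)$ computed from $(\tilde g, \tilde K)$ need not satisfy the DEC inside the interpolation shell.

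\textbf{Stage 2 (perturbation to make DEC strict).} Prescribe a target energy-momentum pair $(\hat\mu, \hat J)$ with $\hat\mu > |\hat J|_{\tilde g}$ that equals $(\mu, J)$ outside the interpolation shell, and inside is a small smooth perturbation of $(\tilde\mu, \tilde J)$ chosen so that $\hat\mu - |\hat J|$ is strictly positive. Solve the system
\[
\Psi(\tilde g + h, \tilde K + w) = (2\hat\mu, \hat J),
\]
where $\Psi$ is the constraint map defined by \eqref{eqConstraints1}--\eqref{eqConstraints2}. The linearization $D\Psi$ at $(\tilde g, \tilde K)$ is an operator on weighted Hölder spaces that is known to admit a bounded right inverse after accounting for a finite-dimensional cokernel (generated by elements of $\mathcal{N}$ in the hyperbolic model). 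Apply a Newton iteration or implicit function theorem argument in the weighted space $C^{l-1,\beta}_3 \times C^{l-2,\beta}_3$ (together with the gauge fixing needed to absorb or sidestep the cokernel) to produce a small $(h,w)$ decaying at rate $r^{-3}$ or faster, so that $\bar g = \tilde g + h$ and $\bar K = \tilde K + w$ retain Wang's asymptotics. Finally verify that $|E - \bar E| < \varepsilon$ via direct inspection of \eqref{eqMassAH} and the smallness of $(h,w)$.

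\textbf{Main obstacle.} The delicate point is reconciling the solvability argument in Stage 2 with the constraint on the energy change. The cokernel of $D\Psi$ at infinity is exactly the space $\mathcal{N}$ that defines the mass functional, so any nonzero projection of the prescribed source onto this cokernel is an obstruction to solving in a space of rapid decay and simultaneously contributes to a shift in $E$. The resolution is to choose the target $(\hat\mu, \hat J)$ so that its pairings with $V_{(0)}, V_{(i)}$ differ from those of $(\mu, J)$ by an amount controllable by $\varepsilon$, while still being strictly dominated in the pointwise DEC sense. This requires carefully balancing the size of the smooth bump added to $\mu$ against the structure of the cokernel, and ensuring that the iteration scheme does not lose control of the decay rate (which would degrade Wang's asymptotics). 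A secondary technical hurdle is the loss of one derivative of regularity, accounted for in the statement by the drop from $(l,\beta)$ to $(l-1,\beta)$.
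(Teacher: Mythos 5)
The paper itself does not prove this statement: it is quoted from the companion paper \cite{DahlSakovich}, so there is no in-text proof to compare against. Judging your proposal on its own merits, the two-stage outline (asymptotic simplification followed by a corrective solvability step for the constraint map) is of the right general shape, and your remarks in Stage~2 about the cokernel of $D\Psi$ being generated by $\mathcal{N}$, the need to balance the energy shift against the bump added to $\mu$, and the one-derivative regularity loss are all on target.

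Stage~1, however, has a genuine gap. For $\tfrac{3}{2}<\tau<3$ the tensor $e=\Phi_*g-b$ decays like $r^{-\tau}$, which is \emph{slower} than $r^{-3}$, so there is no well-defined ``$r^{-3}$ coefficient'' $\mathbf{m}$ to be ``extracted from the leading part of the expansion'': the leading part itself is the obstruction. For the same reason the claim that your model $g_W$ ``agrees with $(g,K)$ up to order $O(r^{-3})$ on the support of $\chi_R$'' is internally inconsistent --- for any Wang-type model, $g_W-g=O(r^{-\tau})$ on the cutoff annulus. In addition, Wang's asymptotics is not only a decay rate but a normal form (the $g_{rr}$-component is exactly $(1+r^2)^{-1}$ and $g_{r\mu}=0$), so even a metric with $e=O(r^{-3})$ need not be in this form. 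The step your proposal omits is a change of chart at infinity --- the ``adjustment'' or change of conformal gauge which the paper invokes in Section~\ref{secRigidity} and which the theorem's parenthetical ``(possibly with respect to a different chart at infinity)'' is explicitly flagging. This gauge change exploits the assumed fast decay $\mu,J\in C^{l-2,\beta}_{3+\tau_0}$ (the Regge--Teitelboim mechanism in the asymptotically hyperbolic setting) to absorb the $O(r^{-\tau})$ part of $e,\eta$ and bring what remains into Wang's normal form at rate $r^{-3}$. Only after that step are $\mathbf{m},\mathbf{p}$ meaningful and the interpolation argument, followed by your Stage~2, well-posed. Relatedly, your assertion that the boundary integrals defining $H_\Phi$ are ``essentially unchanged'' by the cutoff requires justification: the energy is a limit of surface integrals and modifying the data on $\{r\geq R\}$ can in principle move that limit; one has to quantify the error in terms of $R$ and verify that it vanishes as $R\to\infty$.
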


For future reference we also recall the following well-known definition.

\begin{definition}\label{defAEManifolds}
Let $(M,g)$ be a 3-dimensional Riemannian manifold. We say that $(M,g)$ is \emph{asymptotically Euclidean} if there is a compact $\mathcal{C} \subset M$ and a diffeomorphism  $\Phi: M\setminus \mathcal{C} \to \mathbb{R}^3 \setminus \overline{\mathbb{B}}_R$ such that in the coordinates $(x^1,x^2,x^3)$ induced by this diffeomorphism we have 
\begin{equation*}
|g_{ij} - \delta_{ij}|  +|x||\partial g_{ij}| + |x|^2|\partial\partial g_{ij}| = O_2(|x|^{-1}) \quad \text{as} \quad |x| \to \infty.
\end{equation*}
If the scalar curvature $\scal^g$ is integrable then the \emph{ADM mass} of the metric $g$ is defined by 
\begin{equation*}
\mathcal{M} (g) =\frac{1}{16\pi}\lim_{r\to\infty}\int_{|x|=r}\sum_{i,j=1}^3 (\partial_i g_{ij} - \partial_j g_{ii}) \frac{x^j}{|x|} \, d\mu^\delta.
\end{equation*}
If, in addition, $g$ has the following asymptotic expansion near infinity
\[
g_{ij} = \left(1+\frac{m}{2|x|}\right)^4 \delta_{ij} + O_2(|x|^{-2})  \quad \text{for $m\in\mathbb{R}$} \, \quad \text{as} \quad  |x| \to \infty
\] 
then $(M,g)$ is called \emph{asymptotically Schwarzschildean}. In this case $\mathcal{M}(g)=m.$
\end{definition}

Note that the asymptotics considered in this definition are not the most general ones, however they are sufficient for the purpose of this paper. For a more detailed treatment of asymptotically Euclidean manifolds and their mass see e.g. \cite{Bartnik} or \cite{Michel}. 
 
\subsection{The Jang equation}\label{secJf}

Let $(M,g,K)$ be an initial data set. Let $(x^1,x^2,x^3)$ be local coordinates on $M$, then we can write $g=g_{ij}dx^i\otimes dx^j$ and $K=K_{ij}dx^i\otimes dx^j$. We use the Einstein summation convention and define $g^{ij}$ by $g^{ik}g_{kj}=\delta^i_j$.  In the chosen coordinates the Jang equation reads
\begin{equation}\label{eqJang}
\left(g^{ij}-\frac{f^if^j}{1+|df|_g^2}\right)\left(\frac{\Hess^g_{ij}f}{\sqrt{1+|df|_g^2}} - K_{ij}\right)=0,
\end{equation}
where $f^i=g^{ij}f_j$ (with $f_j=\partial_j f$) are the components of the gradient and $|df|_g^2=g^{ij}f_i f_j$ is the square of its norm. Recall that the components of the second covariant derivative (or Hessian) of $f$ are computed as $\Hess^g _{ij} f=\partial_i\partial_j f-\Gamma^k_{ij}\partial _k f$,  where $ \Gamma^k_{ij}$ are the Christoffel symbols of the metric $g$ in the coordinates $(x^1,x^2,x^3)$.  

The geometric interpretation of the Jang equation is as follows. Consider a function $f:M\rightarrow \bR$.  Its graph $\Sigma\definedas\{(x,f(x)):x\in M\}$ can be seen as a submanifold in $(M\times \bR, g+dt^2)$, where $t$ is the coordinate along the $\bR$-factor, with local coordinates $(x^1,x^2,x^3)$. It is easy to check that the downward pointing unit normal of $\Sigma$ is $\nu=\frac{f^i\partial _i -\partial_t}{\sqrt{1+|df|_g^2}}$ and that the vectors $e_i = \partial_{x^i} + f_i \partial_t$ are tangent to $\Sigma$. Consequently, we may use the base coordinates $(x^1,x^2,x^3)$ to compute that the components of the induced metric on $\Sigma$ are $\gbar_{ij} =g_{ij}+\partial_i f \partial_j f$ with the inverse $\gbar^{ij}=g^{ij}-\frac{f^if^j}{1+|df|_g^2}$ defined by $\gbar^{ik} \gbar _{kj}=\delta^i_j$.  Similarly, the components of the second fundamental form are $A_{ij}=\frac{\Hess^g_{ij}f}{\sqrt{1+|df|_g^2}}$. It follows that
\begin{equation*}
H_{g}(f)\definedas H^{\Sigma}=\left(g^{ij}-\frac{f^if^j}{1+|df|_g^2}\right)\frac{\Hess^g_{ij}f}{\sqrt{1+|df|_g^2}}
\end{equation*}
is the mean curvature of $\Sigma$. Now let us extend $K$ to be a symmetric tensor on $M\times\bR$ by setting $K(\cdot,\partial_t)= 0$. Then the trace of $K$ with respect to the induced metric on $\Sigma$ is 
\begin{equation*}
\tr_{g}(K)(f)\definedas \tr^{\Sigma}K= \left(g^{ij}-\frac{f^if^j}{1+|df|_g^2}\right)K_{ij}.
\end{equation*}
We conclude that the Jang equation \eqref{eqJang} is a  prescribed mean curvature equation 
\[
H^{\Sigma}=\tr^{\Sigma}K 
\]
which we will also write as
\begin{equation*}
H_{g}(f)-\tr_{g}(K)(f)=0
\end{equation*}  
whenever we need to make reference to the graphing function.

\subsection{Preliminary considerations}\label{secHeuristic}

In this section we make an educated guess about the asymptotics of solutions of the Jang equation in the asymptotically hyperbolic setting. The existence of solutions having the desired asymptotics will be proven rigorously in Sections \ref{secBarriers}--\ref{secJangAE}. 

In \cite{Bondi} it was observed that if the initial data is taken to be the unit hyperboloid in the Minkowski spacetime, that is, if $(M,g,K) = (\bH^3,b,b)$ where $b=\frac{dr^2}{1+r^2} + r^2 \sigma$ is the hyperbolic metric, then the Jang equation \eqref{eqJang} is satisfied by the function $f(r,\theta,\varphi) = \sqrt{1 + r^2}$. Based on this observation, in the case of initial data arising as an asymptotically null slice in Bondi radiating spacetime,  it was suggested in \cite{Bondi} and \cite{HYZ} to look for solutions in the form 
\begin{equation}\label{eqAnsatzHYZ}
f(r,\theta,\varphi)=\sqrt{1+r^2}+\alpha(\theta,\varphi)\ln r +O_3(r^{-1 + \varepsilon})
\end{equation}
where $\alpha\in C^3({\mathbb{S}}^2)$ and $\varepsilon>0$. At the same time, a computation carried out in  \cite[Proposition 4.1]{HYZ} shows that this asymptotic behavior cannot be expected unless the initial data satisfies some additional conditions, see Remark \ref{HYZcomputation} below.  

In the case when $(M,g,K)$ is initial data with Wang's asymptotics (see Definition \ref{defAHdata}), the above considerations have served as motivation to look for solutions of \eqref{eqJang} with asymptotics
\begin{equation}\label{eqAnsatz}
f(r,\theta,\varphi)=\sqrt{1+r^2}+\alpha(\theta,\varphi)\ln r +\psi(\theta,\varphi)+ O_3(r^{-1 + \varepsilon})
\end{equation}
for $\alpha,\psi\in C^3({\mathbb{S}}^2)$ and $\varepsilon>0$. A lengthy but rather straightforward computation shows that in this case we have
\begin{equation}\label{eqJangEqExpansion}
\mathcal{J}(f) = \frac{\alpha + \Delta^{\mathbb{S}^2}\psi-(\frac{1}{2} \tr^\sigma \mathbf{m} + \tr^\sigma \mathbf{p})}{r^3} +\frac{\Delta^{\mathbb{S}^2}\alpha \, \ln r}{r^3}+O_1(r^{-4+\varepsilon})
\end{equation}
where $\mathcal{J}(f)$ denotes the left hand side of the Jang equation \eqref{eqJang}. As it turns out, it is possible to make the leading order terms in this expansion vanish without imposing any restrictions on the initial data $(M,g,K)$.

\begin{proposition}
If $(M,g,K)$ is asymptotically hyperbolic in the sense of Definition \ref{defAHdata}, then there exists a constant 
\begin{equation}\label{eqAlphaMass}                                                          
\alpha = \frac{1}{8\pi}\int_{\mathbb{S}^2} (\tr^\sigma \mathbf{m} + 2 \tr^\sigma \mathbf{p}) \, d\mu^\sigma = 2E
\end{equation}
and $\psi : \mathbb{S}^2\to \mathbb{R}$ such that 
\begin{equation}\label{eqPsi}
\Delta^{\mathbb{S}^2}\psi  = \tfrac{1}{2} \tr^\sigma \mathbf{m} + \tr^\sigma \mathbf{p} - \alpha, 
\end{equation}
\end{proposition}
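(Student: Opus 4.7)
The proof is essentially a solvability argument for the Poisson equation on the round 2-sphere, combined with a direct identification of the constant $\alpha$ with $2E$. My plan is therefore to reduce the statement to these two elementary facts and verify that the setup of Definition~\ref{defAHdata} supplies everything that is needed.

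First I would observe that the proposed value of $\alpha$ matches $2E$ as a formal consequence of the mass formula. Indeed, under Wang's asymptotics the energy is given by \eqref{eqMassAH},
\[
E = \tfrac{1}{16 \pi} \int_{\bS^2} \bigl(\tr^\sigma \mathbf{m} + 2 \tr^\sigma \mathbf{p}\bigr)\, d\mu^\sigma,
\]
so multiplying by $2$ reproduces the integral appearing in the definition of $\alpha$. This half of the statement is therefore just a direct comparison with the formula established in the preliminaries.

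Next I would turn to the existence of $\psi$. The equation $\Delta^{\bS^2} \psi = F$ on the round sphere is a classical Poisson problem: by elliptic Fredholm theory, since $\Delta^{\bS^2}$ is self-adjoint with one-dimensional kernel spanned by the constants, it admits a solution precisely when the right-hand side is $L^2$-orthogonal to constants, i.e.\ when $\int_{\bS^2} F\, d\mu^\sigma = 0$. Choosing
\[
F = \tfrac{1}{2} \tr^\sigma \mathbf{m} + \tr^\sigma \mathbf{p} - \alpha,
\]
we compute
\[
\int_{\bS^2} F\, d\mu^\sigma
= \int_{\bS^2} \bigl(\tfrac{1}{2} \tr^\sigma \mathbf{m} + \tr^\sigma \mathbf{p}\bigr) \, d\mu^\sigma - 4\pi \alpha = 0
\]
thanks to the very definition of $\alpha$ (the factor $\tfrac{1}{8\pi}$ cancels the factor $\tfrac{1}{2}$ coming from $\tr^\sigma \mathbf{m} + 2\tr^\sigma \mathbf{p}$ against the sphere area $4\pi$). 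Thus a solution $\psi$ exists.

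There is no real obstacle here; the only thing to remark on is regularity. Since Definition~\ref{defAHdata} gives $\mathbf{m} \in C^{l,\beta}(\bS^2)$ and $\mathbf{p} \in C^{l-1,\beta}(\bS^2)$, the right-hand side $F$ lies in $C^{l-1,\beta}(\bS^2)$, so standard Schauder theory for $\Delta^{\bS^2}$ on the compact manifold $\bS^2$ yields a solution $\psi \in C^{l+1,\beta}(\bS^2)$, unique up to an additive constant (which may be fixed arbitrarily, e.g.\ by requiring $\int_{\bS^2} \psi\, d\mu^\sigma = 0$). This completes the proof.
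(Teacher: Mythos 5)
Your proof is correct and follows essentially the same route as the paper: define $\alpha$ so that the right-hand side of \eqref{eqPsi} has zero mean over $\bS^2$, and then invoke the standard solvability criterion for the Poisson equation on a closed manifold. The extra touches you add (identifying $\alpha=2E$ explicitly via \eqref{eqMassAH}, and the Schauder regularity of $\psi$) are fine and consistent with, though slightly more detailed than, what the paper records.
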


\begin{proof} 
This follows from standard existence theory for linear elliptic equations on closed manifolds (see e.g. \cite[Section I in Appendix]{Besse}). If we define $\alpha$ by \eqref{eqAlphaMass}, then 
\begin{equation*}
\int_{\mathbb{S}^2} (\tfrac{1}{2} \tr^\sigma \mathbf{m}  + \tr^\sigma \mathbf{p} - \alpha) \, d\mu^\sigma=0,
\end{equation*}
which implies the  existence of a solution $\psi$ to \eqref{eqPsi}. Note that $\psi$ is uniquely defined up to an additive constant which is reminiscent of the fact that the Jang equation \eqref{eqJang} is invariant with respect to vertical translations $f\to f+C$, where $C$ is a constant.
\end{proof}

\begin{remark}\label{HYZcomputation}
In \cite[Section 4]{HYZ} it was suggested to seek a solution in the form \eqref{eqAnsatz} with $\psi \equiv 0$. From the above discussion it is clear that this approach might only work for initial data which satisfies the additional condition $\tfrac{1}{2} \tr^\sigma \mathbf{m} + \tr^\sigma \mathbf{p} \equiv \const$.
\end{remark}


\section{Construction of barriers}\label{secBarriers}
 
In this section we construct barriers for the Jang equation \eqref{eqJang} in the case when the asymptotically hyperbolic initial data set $(M,g,K)$ has Wang's asymptotics as in Definition \ref{defAHdata}.

\begin{definition}\label{defBarriers}
We say that functions $f_+$ and $f_-$, which are locally $C^2$ on the subset $\{r\geq r_0\}\subset M$, are respectively an {\it upper} and a {\it lower barrier} for the Jang equation $\mathcal{J}(f)=0$ if 
\begin{equation}\label{eqBar1}
(\partial _r f_+)|_{r=r_0}= - \infty, \hspace{0.5cm} (\partial _r f_-)|_{r=r_0}= + \infty
\end{equation}
and 
\begin{equation}\label{eqBar2}
\hspace{1cm} \mathcal{J}(f_+)< 0, \hspace{0.5cm} \mathcal{J}(f_-)> 0 \hspace{0.5cm}\text{for } r > r_0.
\end{equation}
\end{definition}

Such functions $f_+$ and $f_-$ with prescribed asymptotic behavior at infinity will be needed for our construction of a geometric solution of the Jang equation, a hypersurface $\Sigma \subset M\times \mathbb{R}$ satisfying $H^\Sigma =\tr^\Sigma K$. In fact, in Section \ref{secBVP} and Section \ref{secExistence} we will see that near infinity $\Sigma$ is given as the graph of a function $f$ satisfying the Jang equation  \eqref{eqJang} such that $f_- \leq f \leq f_+$ on $\{r\geq r_0\}$. Our construction of barriers will ensure that $f$ behaves at infinity as 
\begin{equation}\label{eqAsSoln}
f(r,\theta,\varphi) = \sqrt{1+r^2}+\alpha\ln r +\psi(\theta,\varphi)+O(r^{-1+\varepsilon}),
\end{equation}
where $\alpha$ and $\psi$ are as in \eqref{eqAlphaMass} and \eqref{eqPsi}. 

While in the asymptotically Euclidean setting of  \cite{PMT2} the barriers with the required fall off $O(r^{-\varepsilon})$ for $\varepsilon>0$ are constructed explicitly, it appears difficult to find the functions $f_+$ and $f_-$ satisfying \eqref{eqBar1}, \eqref{eqBar2} and  \eqref{eqAsSoln} by inspection. Instead, in our construction of barriers we rely on the fact that in the spherically symmetric case there is a substitution which allows to rewrite the Jang equation as a first order ordinary differential equation, see e.g. \cite[Section 2]{MM}. The rough idea is to use this substitution and rewrite the Jang equation as an ordinary differential equation modulo correction terms and then construct sub- and supersolutions of this ordinary differential equation with prescribed boundary values on $\{r \geq r_0\}$.

More specifically, we will look for barriers in the form 
\begin{equation}\label{eqBarrierForm}
f(r,\theta,\varphi)=\phi(r)+\psi(\theta,\varphi),
\end{equation}
where $\psi$ is a solution of \eqref{eqPsi}. 
For $f$ as in \eqref{eqBarrierForm} we define (cf. \cite[Equation (4)]{MM})
\begin{equation}\label{eqK}
k(r) \definedas \frac{\phi'(r)\sqrt{1+r^2}}{\sqrt{1+(1+r^2)(\phi'(r))^2}}.
\end{equation}
Note that $-1\leq k \leq 1$, and that $k(r_0)= \pm 1$ if and only if  $\phi'(r_0)=\pm\infty$, cf. \eqref{eqBar1}.

For $f$ as in \eqref{eqBarrierForm}, we would like to rewrite the left hand side of the Jang equation $\mathcal{J}(f) = 0$ in terms of $k$. For this purpose it is convenient to introduce 
\begin{equation*}
\beta \definedas \frac{1+(1+r^2)(\phi')^2}{1+|df|_g^2}. 
\end{equation*}
Note that 
\begin{equation*}
\beta = \frac{1}{1+\frac{g^{\mu\nu}\psi_\mu\psi_\nu}{1+(1+r^2)(\phi')^2}} = 1 + O(r^{-2})
\end{equation*}
 in the sense that $|1-\beta|\leq Cr^{-2}$, where the constant $C$ does not depend on $\phi$. Set $c \definedas K-g$, then $c_{rr}=O(r^{-5})$,  $c_{r\mu}=O(r^{-3})$, $c_{\mu\nu}=O(r^{-1})$, and
\begin{equation}\label{eqTraceC}
g^{\mu\nu} c_{\mu\nu}= \frac{\tr^\sigma \mathbf{p} - \tr^\sigma \mathbf{m}}{r^3}+O(r^{-4}).
\end{equation}

\begin{lemma}\label{lemODE}
There exist constants $C_i$, $i=1,2,\ldots,8$, depending only on $(M,g,K)$ such that 
\begin{equation*}
\begin{split}
\frac{\mathcal{J}(f)}{\sqrt{1+r^2}(1+|d\psi|_g^2)\beta^{\frac{3}{2}}}\leq & k'+\frac{2}{r}\left(k-\frac{r}{\sqrt{1+r^2}}\right)-\frac{1-k^2}{\sqrt{1+r^2}}-\frac{\alpha\sqrt{1-k^2}}{r^2\sqrt{1+r^2}}\\&+\frac{C_1}{r^2}\left|\sqrt{\frac{1-k^2}{1+r^2}}-\frac{3k}{r^2}+\frac{2}{r^2}\right|+\frac{C_2}{r^2}\left|\sqrt{\frac{1-k^2}{1+r^2}}-\frac{1}{r^2}\right|\\&+\frac{C_3}{r^3}\left|k-\frac{r}{\sqrt{1+r^2}}\right| +C_4 r^{-3}|k|(1-k^2)\\&+C_5r^{-3}(1-k^2)+C_6r^{-5},
\end{split}
\end{equation*}
and
\begin{equation*}
\begin{split}
\frac{\mathcal{J}(f)}{\sqrt{1+r^2}(1+|d\psi|_g^2)\beta^{\frac{3}{2}}}\geq & k'+\frac{2}{r}\left(k-\frac{r}{\sqrt{1+r^2}}\right)-\frac{1-k^2}{\sqrt{1+r^2}}-\frac{\alpha\sqrt{1-k^2}}{r^2\sqrt{1+r^2}}\\&-\frac{C_1}{r^2}\left|\sqrt{\frac{1-k^2}{1+r^2}}-\frac{3k}{r^2}+\frac{2}{r^2}\right|-\frac{C_2}{r^2}\left|\sqrt{\frac{1-k^2}{1+r^2}}-\frac{1}{r^2}\right|\\&-\frac{C_3}{r^3}\left|k-\frac{r}{\sqrt{1+r^2}}\right| -C_4 r^{-3}|k|(1-k^2) -C_5r^{-3}(1-k^2)\\ & -C_6r^{-5}-\frac{C_7(3-k^2)}{r}\left(\left(1+C_8 r^{-2}(1-k^2)\right)^{\frac{3}{2}}-1\right)
\end{split}
\end{equation*}
holds for any $f$ as in \eqref{eqBarrierForm}.
\end{lemma}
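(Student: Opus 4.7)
My strategy is to compute $\mathcal{J}(f)$ directly in the $(r,\theta,\varphi)$-coordinates for $f=\phi(r)+\psi(\theta,\varphi)$, separate out the spherically symmetric piece that produces the ODE in $k$, and bound every remaining contribution using the Wang-type fall-off of $e=\Phi_*g-b$, of $c=K-g$, and the prescribed equation \eqref{eqPsi} for $\psi$. Throughout, all constants will depend only on $(M,g,K)$ through pointwise bounds on $e$, $c$, $\psi$ and finitely many of their derivatives, so they will be uniform in $\phi$.

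First I would expand $|df|_g^2 = g^{rr}(\phi')^2 + 2g^{r\mu}\phi'\psi_\mu + g^{\mu\nu}\psi_\mu\psi_\nu$. Since $g^{rr}=1+r^2+O(r^{-3})$, $g^{r\mu}=O(r^{-3})$, and $g^{\mu\nu}\psi_\mu\psi_\nu=O(r^{-2})$, the quantity $\beta = (1+(1+r^2)(\phi')^2)/(1+|df|_g^2)$ satisfies $\beta = 1+O(r^{-2})$ uniformly in $\phi'$; by the definition \eqref{eqK} this is exactly $\beta = 1 + O(r^{-2}(1-k^2))$ on the level of the $\phi'$-dependence. Correspondingly $1-k^2 = (1+(1+r^2)(\phi')^2)^{-1}$, so every factor $\phi'$ can be converted into $k$ and a factor $(1-k^2)^{1/2}$, turning all ``bad'' $\phi'$-dependence into bounded functions of $k$.

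Next I would compute $\Hess^g_{ij}f$ as a sum of a purely radial piece $\partial_r^2\phi - \Gamma^k_{rr}\partial_k f$, mixed pieces $\Hess^g_{r\mu}\psi$, and angular pieces $\Hess^g_{\mu\nu}\psi$. Splitting $\Hess^g = \Hess^b + (\Hess^g - \Hess^b)$, the $b$-pieces give the clean spherically symmetric contributions, while the corrections are $O(r^{-2})\phi'' + O(r^{-3})\phi' + O(r^{-1})\nabla^{\bS^2}\psi + O(r^{-3})$. Plugging everything into $\mathcal{J}(f)=(g^{ij}-f^if^j/(1+|df|_g^2))(\Hess^g_{ij}f/\sqrt{1+|df|_g^2}-K_{ij})$, dividing through by $\sqrt{1+r^2}(1+|d\psi|_g^2)\beta^{3/2}$, and rewriting via $k,k'$, the principal part becomes $k' + \tfrac{2}{r}\!\bigl(k-\tfrac{r}{\sqrt{1+r^2}}\bigr) - \tfrac{1-k^2}{\sqrt{1+r^2}}$, which is precisely the spherically symmetric Jang equation for $(\bH^3,b,b)$ (recalling that $\phi=\sqrt{1+r^2}$ is an exact solution, for which $k=r/\sqrt{1+r^2}$ and $1-k^2 = 1/(1+r^2)$). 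The contribution $-\alpha\sqrt{1-k^2}/(r^2\sqrt{1+r^2})$ will come from combining $\Delta^{\bS^2}\psi$ in the angular Hessian with the trace identity \eqref{eqTraceC} and \eqref{eqPsi}: the $\tr^\sigma \mathbf m$ and $\tr^\sigma \mathbf p$ pieces produced by $K$ cancel against $\Delta^{\bS^2}\psi$ up to the constant $\alpha$.

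Finally, every remaining term I would bound term-by-term. The specific combinations $\sqrt{(1-k^2)/(1+r^2)} - 3k/r^2 + 2/r^2$ and $\sqrt{(1-k^2)/(1+r^2)} - 1/r^2$ appear because these are precisely the $O(r^{-2})$ discrepancies between the radial pieces of $\Hess^g$ and $\Hess^b$ once $\phi'$ is replaced by $k$, and they vanish at the reference value $k=r/\sqrt{1+r^2}$ that anchors the barriers; the term $|k-r/\sqrt{1+r^2}|/r^3$ absorbs the next-order $e$-corrections; and $r^{-3}|k|(1-k^2)$, $r^{-3}(1-k^2)$, $r^{-5}$ absorb cross-terms between $\phi'$ and $\psi$ and the highest-order $c$-contributions. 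The asymmetric term $r^{-1}(3-k^2)((1+C_8 r^{-2}(1-k^2))^{3/2}-1)$ in the lower bound arises because passing from $(1+|df|_g^2)^{3/2}$ to $\beta^{3/2}$ introduces a one-sided multiplicative error $\beta^{3/2}=(1+O(r^{-2}(1-k^2)))^{3/2}$, and this error has a definite sign only on one side. The main obstacle will be bookkeeping: ensuring that at every step the bounds are expressed only in $k$, $1-k^2$, $r$, and fixed constants, without hidden dependence on $\phi$ through $\phi',\phi''$, which is the whole point of performing the substitution \eqref{eqK} before estimating.
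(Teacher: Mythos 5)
Your plan follows the paper's proof essentially step for step: expand $\mathcal{J}(f)$ for $f=\phi(r)+\psi$, substitute $k$ to convert all $\phi'$-dependence into bounded functions of $k$ and $\sqrt{1-k^2}$, factor out $\sqrt{1+r^2}(1+|d\psi|_g^2)\beta^{3/2}$, use \eqref{eqPsi} to cancel $\Delta^{\bS^2}\psi$ against the $\mathbf m,\mathbf p$-terms leaving $-\alpha\sqrt{1-k^2}/(r^2\sqrt{1+r^2})$, and estimate the leftover terms one by one. The only slight imprecision is attributing the $C_1,C_2$ combinations to a ``$\Hess^g-\Hess^b$ discrepancy'' — in the paper they are precisely the coefficients that end up multiplying $\tr^\sigma\mathbf m$ and $\tr^\sigma\mathbf p$ after assembling contributions from the angular Laplacian, $g^{\mu\nu}\partial_r g_{\mu\nu}$, and $\tr_g(K)(f)$ — but the overall structure and correctness of the argument are the same.
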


\begin{proof}
As in Section \ref{secJf} we write $\mathcal{J}(f)= H_g(f)-\tr_g(K)(f)$ and compute the two terms in the right hand side separately. In the computations below, for all tensors the indices are lowered and raised with respect to the metric $g$, unless stated  otherwise. The Christoffel symbols of the metric $g$ can be found in Appendix \ref{appChristoffel}. We have
\begin{equation*}
\begin{split}
& \tr_g(K)(f) \\ & \quad =\left(g^{rr}-\frac{f^r f^r}{1+|df|_g^2}\right)\left(g_{rr}+c_{rr}\right)-\frac{2f^r f^\nu c_{r\nu}}{1+|df|_g^2}+\left(g^{\mu\nu}-\frac{f^\mu f^\nu}{1+|df|_g^2}\right)\left(g_{\mu\nu}+c_{\mu\nu}\right).
\end{split}
\end{equation*}
It is easy to see that the radial term is
\begin{equation*}
\begin{split}
\left(g^{rr}-\frac{f^r f^r}{1+|df|_g^2}\right)\left(g_{rr}+c_{rr}\right)&=\left(1+r^2-\frac{(1+r^2)^2(\phi')^2}{1+|df|_g^2}\right)\left(\frac{1}{1+r^2}+c_{rr}\right)\\&=\left(1-\frac{(1+r^2)(\phi')^2}{1+|df|_g^2}\right)\left(1+(1+r^2)c_{rr}\right)\\&=(1-\beta k^2 )(1+(1+r^2)c _{rr})\\
&=(1-k^2)(1+r^2c_{rr})+k^2(1-\beta)+O(r^{-5}).
\end{split}
\end{equation*} 
We use the fact that $1-k^2=\frac{1}{1+(1+r^2)(\phi')^2}$ and \eqref{eqTraceC} to find that the sum of the mixed terms is
\begin{equation*}
-\frac{2f^r f^\nu c_{r\nu}}{1+|df|_g^2}=-\frac{2(1+r^2)\phi' \psi^\nu c_{r\nu}\beta}{1+(1+r^2)(\phi')^2}
=-2 k \sqrt{1-k^2} \sqrt{1+r^2}\, \psi^\nu c_{r\nu}\beta=O(r^{-4}),
\end{equation*} 
and that the sum of the tangential terms is
\begin{equation*}
\begin{split}
\left(g^{\mu\nu}-\frac{f^\mu f^\nu}{1+|df|_g^2}\right)\left(g_{\mu\nu}+c_{\mu\nu}\right)& =\left(g^{\mu\nu}-\frac{\psi^\mu \psi^\nu}{1+|df|_g^2}\right)\left(g_{\mu\nu}+c_{\mu\nu}\right)\\&=2+g^{\mu\nu}c_{\mu\nu}-\frac{|d\psi|_g^2}{1+|df|^2_g}-\frac{c_{\mu\nu}\psi^\mu\psi^\nu}{1+|df|_g^2}\\& = 2 + \frac{\tr^\sigma \mathbf{p} - \tr^\sigma \mathbf{m}}{r^3} -(1-\beta)+O(r^{-4}).
\end{split}
\end{equation*}
Consequently,
\begin{equation*}
\tr_g(K)(f) = (1-k^2)(1 + r^2 c_{rr}) - (1-k^2)(1-\beta) + 2 + \frac{\tr^\sigma \mathbf{p} - \tr^\sigma \mathbf{m}}{r^3} + O(r^{-4}). 
\end{equation*}
Similarly, we compute $H_g(f)$ by splitting it into the sum of the radial, mixed, and tangential terms.
To compute the radial term we note that
\begin{equation*}
\begin{split}
k'
=\frac{\frac{r}{\sqrt{1+r^2}}\,\phi'+\phi''\sqrt{1+r^2}}{\left(1+(1+r^2)(\phi')^2\right)^{\frac{3}{2}}},
\end{split}
\end{equation*}
which yields
\begin{equation*}
\begin{split}
\left(g^{rr}-\frac{f^r f^r}{1+|df|^2_{g}}\right)\frac{\Hess^{g}_{rr}f}{\sqrt{1+|df|^2_{g}}}&=\left(1+r^2 -\frac{(1+r^2)^2(\phi')^2}{1+|df|_g^2}\right)\frac{\phi''+\frac{r}{1+r^2}\phi'}{\sqrt{1+|df|_g^2}}\\&=\frac{\left(1+r^2+(1+r^2)|d\psi|^2_g\right)\left(\phi''+\frac{r}{1+r^2}\phi'\right)}{(1+|df|^2_g)^{\frac{3}{2}}}\\&=\frac{\sqrt{1+r^2}(1+|d\psi|_g^2)\left(\phi''\sqrt{1+r^2}+\frac{r}{\sqrt{1+r^2}}\phi'\right)\beta^{\frac{3}{2}}}{(1+(1+r^2)(\phi')^2)^{\frac{3}{2}}}\\&=\sqrt{1+r^2}(1+|d\psi|_g^2)\beta^{\frac{3}{2}}k'.
\end{split}
\end{equation*}
As for the mixed terms, a straightforward computation shows that 
\begin{equation*}
\begin{split}
-\frac{2f^r f^{\mu}}{1+|df|^2_{g}}\frac{\Hess^{g}_{r\mu}f}{\sqrt{1+|df|^2_{g}}}&=\frac{2(1+r^2)\phi'\psi^\mu (\Gamma _{\mu r}^r\phi'+\Gamma^\nu_{\mu r}\psi_\nu)}{(1+|df|_g^2)^{\frac{3}{2}}}\\&
=\frac{(1+r^2)\phi'\partial_r g_{\mu\nu}\psi^\mu\psi^\nu}{(1+|df|_g^2)^{\frac{3}{2}}}\\&=\frac{(1+r^2)\phi'\partial_r g_{\mu\nu}\psi^\mu\psi^\nu\beta^{\frac{3}{2}}}{(1+(1+r^2)(\phi')^2)^{\frac{3}{2}}}\\&=\sqrt{1+r^2}\,k(1-k^2)\partial_r g_{\mu\nu} \psi^{\mu}\psi^{\nu} \beta^{\frac{3}{2}}.
\end{split}
\end{equation*}
Further, it is easy to check that $g^{\mu\nu}\partial_r g_{\mu\nu} = 4 r^{-1} - 3r^{-4} \tr^\sigma \mathbf{m} + O(r^{-5})$ and  that $\Delta^g \psi = r^{-2}\Delta^{\mathbb{S}^2}\psi+O(r^{-5})$.
Hence the sum of the tangential terms is 
\begin{equation*}
\begin{split}
&\left(g^{\mu\nu}-\frac{ \psi^{\mu} \psi^{\nu}}{1+|df|^2_{g}}\right)\frac{\partial^2_{\mu\nu}\psi-\Gamma^\lambda_{\mu\nu}\psi_\lambda - \Gamma^r_{\mu\nu}\phi'}{\sqrt{1+|df|^2_{g}}}\\ &\quad =\left(g^{\mu\nu}-\frac{ \psi^{\mu} \psi^{\nu}}{1+|df|^2_{g}}\right)\frac{\Hess^g_{\mu\nu}\psi+\frac{1}{2}(1+r^2)\partial_r g_{\mu\nu}\phi'}{\sqrt{1+|df|^2_{g}}}\\ &\quad =\frac{\Delta^g\psi}{\sqrt{1+|df|^2_g}}-\frac{\psi^{\mu}\psi^\nu\Hess^g_{\mu\nu}\psi}{(1+|df|_g^2)^{\frac{3}{2}}}+\frac{g^{\mu\nu}\partial_r g_{\mu\nu}(1+r^2)\phi'}{2\sqrt{1+|df|_g^2}}-\frac{\partial_r g_{\mu\nu}\psi^\mu\psi^\nu (1+r^2)\phi'}{2(1+|df|_g^2)^{\frac{3}{2}}}\\&\quad =\frac{\Delta^g\psi \,\beta^{\frac{1}{2}}}{\sqrt{1+(1+r^2)(\phi')^2}}-\frac{\psi^{\mu}\psi^\nu\Hess^g_{\mu\nu}\psi\,\beta^{\frac{3}{2}}}{(1+(1+r^2)(\phi')^2)^{\frac{3}{2}}}+\frac{(1+r^2)\phi'g^{\mu\nu}\partial_r g_{\mu\nu}\beta^{\frac{1}{2}}}{2\sqrt{1+(1+r^2)(\phi')^2}}\\ & \qquad-\frac{(1+r^2)\phi'\partial_r g_{\mu\nu}\psi^\mu\psi^\nu\beta^{\frac{3}{2}}}{2(1+(1+r^2)(\varphi')^2)^{\frac{3}{2}}
}\\&\quad =\Delta^g \psi \sqrt{1-k^2}\,\beta^{\frac{1}{2}}-\psi^\mu\psi^\nu \Hess_{\mu\nu} ^g \psi (1-k^2)^{\frac{3}{2}}\beta^{\frac{3}{2}}+\frac{1}{2}\sqrt{1+r^2}\,k g^{\mu\nu} \partial_r g_{\mu\nu} \beta^{\frac{1}{2}}\\&\qquad-\frac{1}{2}\sqrt{1+r^2}\,k(1-k^2)\partial_r g_{\mu\nu} \psi^\mu\psi^\nu\beta^{\frac{3}{2}}\\&\quad =r^{-2}\Delta^{\mathbb{S}^2}\psi\sqrt{1-k^2}\beta^{\frac{1}{2}}+\sqrt{1+r^2}\,k \beta^{\frac{1}{2}}\left(\frac{2}{r}-\frac{3\tr^\sigma \mathbf{m}}{2r^4}\right)\\ & \qquad-\frac{1}{2}\sqrt{1+r^2}\,k(1-k^2)\partial_r g_{\mu\nu} \psi^\mu\psi^\nu\beta^{\frac{3}{2}}+O(r^{-4}).
\end{split}
\end{equation*}

Using the fact that $\Delta^{\mathbb{S}^2}\psi=\frac{1}{2}\tr^\sigma \mathbf{m} + \tr^\sigma \mathbf{p} -\alpha$ by \eqref{eqPsi}, we can now compute
\begin{equation*}
\begin{split}
&\frac{H_g(f)-\tr_g(K)(f)}{\sqrt{1+r^2}(1+|d\psi|_g^2)\beta^{\frac{3}{2}}}\\&\quad = k'+\frac{k(1-k^2)\partial_r g_{\mu\nu}\psi^\mu\psi^\nu}{2(1+|d\psi|_g^2)}+\frac{\sqrt{1-k^2}\left(\frac{1}{2}\tr^\sigma \mathbf{m} + \tr^\sigma \mathbf{p} -\alpha \right)}{r^2\sqrt{1+r^2}(1+|d\psi|_g^2)\beta}\\ & \qquad+\frac{k\left(\frac{2}{r}-\frac{3\tr^\sigma \mathbf{m}}{2r^4}\right)}{(1+|d\psi|_g^2)\beta}-\frac{(1-k^2)(1+r^2 c_{rr})}{\sqrt{1+r^2}(1+|d\psi|_g^2)\beta^{\frac{3}{2}}}+\frac{(1-k^2)(1-\beta)}{\sqrt{1+r^2}(1+|d\psi|_g^2)\beta^{\frac{3}{2}}}\\&\qquad -\frac{2+(\tr^\sigma \mathbf{p} - \tr^\sigma \mathbf{m})r^{-3}}{\sqrt{1+r^2}(1+|d\psi|_g^2)\beta^{\frac{3}{2}}}+O(r^{-5})\\&\quad =k'+\frac{1}{2}k(1-k^2)\partial_r g_{\mu\nu}\psi^\mu\psi^\nu+\sqrt{\frac{1-k^2}{1+r^2}}\left(\frac{\tr^\sigma \mathbf{m}}{2r^2} + \frac{\tr^\sigma \mathbf{p}}{r^2} - \frac{\alpha}{r^2}\right)\\ & \qquad +\frac{2k}{r(1+|d\psi|_g^2)\beta}-\frac{3k\tr^\sigma \mathbf{m}}{2r^4}-\frac{1-k^2}{\sqrt{1+r^2}(1+|d\psi|^2_g)\beta^{\frac{3}{2}}}-\frac{(1-k^2)r^2 c_{rr}}{\sqrt{1+r^2}}\\& \qquad +\frac{(1-k^2)(1-\beta)}{\sqrt{1+r^2}(1+|d\psi|_g^2)\beta^{\frac{3}{2}}}-\frac{2}{\sqrt{1+r^2}(1+|d\psi|_g^2)\beta^{\frac{3}{2}}}+
\frac{\tr^\sigma \mathbf{m} - \tr^\sigma \mathbf{p}}{r^4}+O(r^{-5}).
\end{split}
\end{equation*}
 We use the simple identities
\begin{eqnarray*}
\frac{1}{r(1+|d\psi|^2_g)\beta}&=&\frac{1}{r}-\frac{|d\psi|_g^2}{r(1+|d\psi|_g^2)}+\frac{1-\beta}{r(1+|d\psi|_g^2)\beta},
\\
\frac{1}{\sqrt{1+r^2}(1+|d\psi|^2_g)\beta^{\frac{3}{2}}}&=&\frac{1}{\sqrt{1+r^2}}-\frac{|d\psi|_g^2}{\sqrt{1+r^2}(1+|d\psi|_g^2)}+\frac{1-\beta^{\frac{3}{2}}}{\sqrt{1+r^2}(1+|d\psi|_g^2)\beta^{\frac{3}{2}}}
\end{eqnarray*}
to rewrite this as 
\begin{equation*}
\begin{split}
\frac{H_g(f)-\tr_g(K)(f)}{\sqrt{1+r^2}(1+|d\psi|_g^2)\beta^{\frac{3}{2}}}=
& k'+\frac{2}{r}\left(k-\frac{r}{\sqrt{1+r^2}}\right)-\frac{1-k^2}{\sqrt{1+r^2}}-\frac{\alpha\sqrt{1-k^2}}{r^2\sqrt{1+r^2}}\\&+\frac{\tr^\sigma \mathbf{m}}{2r^2}\left(\sqrt{\frac{1-k^2}{1+r^2}}-\frac{3k}{r^2}+\frac{2}{r^2}\right) + \frac{\tr^\sigma \mathbf{p}}{r^2} \left(\sqrt{\frac{1-k^2}{1+r^2}} - \frac{1}{r^2}\right)\\ &-\frac{2}{r}\left(k-\frac{r}{\sqrt{1+r^2}}\right)\frac{|d\psi|_g^2}{1+|d\psi|_g^2}+\frac{2k(1-\beta)}{r(1+|d\psi|_g^2)\beta}\\&-\frac{2\left(1-\beta^{\frac{3}{2}}\right)}{\sqrt{1+r^2}(1+|d\psi|_g^2)\beta^{\frac{3}{2}}}+\frac{(1-k^2)|d\psi|_g^2}{\sqrt{1+r^2}(1+|d\psi|_g^2)}\\ &-\frac{(1-k^2)\left(1-\beta^{\frac{3}{2}}\right)}{\sqrt{1+r^2}(1+|d\psi|_g^2)\beta^{\frac{3}{2}}}+\frac{1}{2}k(1-k^2)\partial_r g_{\mu\nu}\psi^\mu\psi^\nu\\&-\frac{(1-k^2)r^2c_{rr}}{\sqrt{1+r^2}} +\frac{(1-\beta)(1-k^2)}{\sqrt{1+r^2}(1+|d\psi|_g^2)\beta^{\frac{3}{2}}}+O(r^{-5}).
\end{split}
\end{equation*}
Finally, we note that
\begin{equation*}
\frac{1-\beta}{\beta}=\frac{|d\psi|_g^2}{1+(1+r^2)(\phi')^2}=(1-k^2)|d\psi|_g^2, 
\end{equation*}
and
\begin{equation*}
\frac{1-\beta^{\frac{3}{2}}}{\beta^{\frac{3}{2}}}=\left(1+\frac{|d\psi|_g^2}{1+(1+r^2)(\phi')^2}\right)^{\frac{3}{2}}-1 =\left(1+|d\psi|_g^2(1-k^2)\right)^{\frac{3}{2}}-1,
\end{equation*}
hence 
\begin{equation}\label{eqLHSJang}
\begin{split}
\frac{H_g(f)-\tr_g(K)(f)}{\sqrt{1+r^2}(1+|d\psi|_g^2)\beta^{\frac{3}{2}}}= & k'+\frac{2}{r}\left(k-\frac{r}{\sqrt{1+r^2}}\right)-\frac{1-k^2}{\sqrt{1+r^2}}-\frac{\alpha\sqrt{1-k^2}}{r^2\sqrt{1+r^2}}\\&+\frac{\tr^\sigma \mathbf{m}}{2r^2}\left(\sqrt{\frac{1-k^2}{1+r^2}}-\frac{3k}{r^2}+\frac{2}{r^2}\right) + \frac{\tr^\sigma \mathbf{p}}{r^2} \left(\sqrt{\frac{1-k^2}{1+r^2}} - \frac{1}{r^2}\right) \\ & -\frac{2}{r}\left(k-\frac{r}{\sqrt{1+r^2}}\right)\frac{|d\psi|_g^2}{1+|d\psi|_g^2} +\frac{2k(1-k^2)|d\psi|_g^2}{r(1+|d\psi|_g^2)}\\ & -\frac{3-k^2}{\sqrt{1+r^2}}\left(\left(1+|d\psi|_g^2(1-k^2)\right)^{\frac{3}{2}}-1 \right) \frac{1}{1+|d\psi|_g^2}\\&+\frac{(1-k^2)|d\psi|_g^2}{\sqrt{1+r^2}(1+|d\psi|_g^2)}+\frac{1}{2}k(1-k^2)\partial_r g_{\mu\nu}\psi^\mu\psi^\nu \\ & -\frac{(1-k^2)r^2 c_{rr}}{\sqrt{1+r^2}}+\frac{(1-\beta)(1-k^2)}{\sqrt{1+r^2}(1+|d\psi|_g^2)\beta^{\frac{3}{2}}}+O(r^{-5}).
\end{split}
\end{equation}
Estimating the right hand side from above and from below, the result follows.
\end{proof}

Lemma \ref{lemBarriers} and Lemma \ref{lemBarrierAsympt} below concern two initial value problems whose solutions will be used to define the barriers via \eqref{eqBarrierForm} and \eqref{eqK}.  To prove these two lemmas we will need the following simple comparison result for ordinary differential equations.
\begin{lemma}\label{lemComparison}
Let $F:[r_0, +\infty)\times [-1,\, 1] \rightarrow \bR$ be continuous in both variables. If functions $l=l(r)$ and $k=k(r)$ satisfy $l'+F(r,l)<k'+F(r,k)$ and $l(r_0)\leq k(r_0)$ then $l(r)\leq k(r)$ for $r\geq r_0$.
\end{lemma}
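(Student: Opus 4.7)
The plan is to argue by contradiction using a standard ``first crossing point'' argument, exploiting the fact that the hypothesis $l' + F(r,l) < k' + F(r,k)$ is a \emph{strict} inequality. This is the crucial feature that allows the conclusion without assuming any Lipschitz property of $F$ in the second variable: continuity alone suffices, because at a point where $l=k$ the two $F$-values coincide.

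Concretely, I would set $h \definedas k - l$, so that $h(r_0) \geq 0$, and assume toward a contradiction that $h(r_1) < 0$ for some $r_1 > r_0$. The set $S \definedas \{r \in [r_0, r_1] : h(r) \geq 0\}$ is nonempty, closed (by continuity of $h$), and bounded, so $r_* \definedas \sup S$ belongs to $S$. Since $h(r_1) < 0$ we have $r_* < r_1$, and by definition of the supremum $h(r) < 0$ for every $r \in (r_*, r_1]$. By continuity this forces $h(r_*) = 0$, i.e.\ $l(r_*) = k(r_*)$.

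Evaluating the hypothesis at $r_*$ and using $F(r_*, l(r_*)) = F(r_*, k(r_*))$, the $F$-terms cancel, yielding $l'(r_*) < k'(r_*)$, i.e.\ $h'(r_*) > 0$. On the other hand, since $h(r_*) = 0$ and $h(r) < 0$ for all $r$ slightly greater than $r_*$, the right-sided difference quotient satisfies $(h(r) - h(r_*))/(r - r_*) < 0$, whence $h'(r_*) \leq 0$. This contradiction establishes the lemma.

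There is essentially no obstacle here; the only item worth flagging is the role of the strict inequality in the hypothesis, which is what lets the $F$-values at the crossing point cancel exactly and converts the ODE comparison into the purely calculus statement ``a $C^1$ function that vanishes at $r_*$ and is negative immediately to the right cannot have positive derivative at $r_*$.'' No regularity beyond $C^1$ of $l$ and $k$ and continuity of $F$ is required, consistent with the way the lemma will be applied to the barrier candidates produced via \eqref{eqBarrierForm} and \eqref{eqK}.
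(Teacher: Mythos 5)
Your proof is correct and takes essentially the same ``crossing point'' approach as the paper, which defines $r_* \definedas \inf\{r > r_0 : l(r) > k(r)\}$ rather than your $\sup\{r \in [r_0,r_1] : h(r)\geq 0\}$; both reduce to noting that at a point where $l = k$ the $F$-terms cancel so $(l-k)' < 0$, which is incompatible with the crossing. Your one-sided difference-quotient argument makes the final contradiction a bit more explicit than the paper's terse conclusion, but there is no substantive difference.
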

\begin{proof}
Assume that $l(r)>k(r)$ for some $r> r_0$. Set $r_*\definedas \inf \{r> r_0: l(r)>k(r)\}$, then $r_*\geq r_0$ and $l(r_*)=k(r_*)$. But then $(l-k)'(r_*)<0$ and $(l-k)(r_*)=0$ so $(l-k)(r_*+\varepsilon)<0$ for any sufficiently small $\varepsilon>0$. 
Since $l$ and $k$ are continuous we conclude that $l(r)\leq k(r)$ for $r\geq r_0$.
\end{proof}

\begin{lemma}\label{lemBarriers}
Let $C_i$, $i=1,2,\ldots,8$, be as in Lemma \ref{lemODE}. For any sufficiently large $r_0>0$ there exists $k_+: [r_0, +\infty) \to \bR$ such that $k_+(r_0) = -1$ and $|k_+| < 1$ for $r>r_0$ satisfying  
\begin{equation}\label{eqUpperBarrier} 
\begin{split}
 k_+'&+\frac{2}{r}\left(k_+-\frac{r}{\sqrt{1+r^2}}\right)-\frac{1-k_+^2}{\sqrt{1+r^2}}-\frac{\alpha\sqrt{1-k_+^2}}{r^2\sqrt{1+r^2}}\\&+\frac{C_1}{r^2}\left|\sqrt{\frac{1-k_+^2}{1+r^2}}-\frac{3k_+}{r^2}+\frac{2}{r^2}\right|+\frac{C_2}{r^2}\left|\sqrt{\frac{1-k_+^2}{1+r^2}}-\frac{1}{r^2}\right|+\frac{C_3}{r^3}\left|k_+-\frac{r}{\sqrt{1+r^2}}\right| \\& +C_4 r^{-3}|k_+|(1-k_+^2)+C_5r^{-3}(1-k_+^2)+C_6r^{-5} = 0.
\end{split}
\end{equation}
Similarly, there exists $k_-: [r_0, +\infty) \to \bR$ such that $k_-(r_0) = 1$ and $|k_-|<1$ for $r>r_0$ satisfying 
\begin{equation}\label{eqLowerBarrier} 
\begin{split}
k_-'&+\frac{2}{r}\left(k_--\frac{r}{\sqrt{1+r^2}}\right)-\frac{1-k_-^2}{\sqrt{1+r^2}}-\frac{\alpha\sqrt{1-k_-^2}}{r^2\sqrt{1+r^2}}\\&-\frac{C_1}{r^2}\left|\sqrt{\frac{1-k_-^2}{1+r^2}}-\frac{3k_-}{r^2}+\frac{2}{r^2}\right|-\frac{C_2}{r^2}\left|\sqrt{\frac{1-k_-^2}{1+r^2}}-\frac{1}{r^2}\right|\\&-\frac{C_3}{r^3}\left|k_--\frac{r}{\sqrt{1+r^2}}\right| -C_4 r^{-3}|k_-|(1-k_-^2) -C_5r^{-3}(1-k_-^2) -C_6r^{-5}\\&-\frac{C_7(3-k_-^2)}{r}\left(\left(1+C_8 r^{-2}(1-k_-^2)\right)^{\frac{3}{2}}-1\right)=0.
\end{split}
\end{equation}
\end{lemma}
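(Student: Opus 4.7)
The plan is to treat \eqref{eqUpperBarrier} and \eqref{eqLowerBarrier} as scalar first-order ODEs with continuous right-hand side, combining Peano existence, a boundary sign analysis at $k=\pm 1$, the comparison Lemma~\ref{lemComparison}, and standard continuation. I focus on the upper barrier $k_+$; the case of $k_-$ is symmetric, with the additional $C_7,C_8$ term of \eqref{eqLowerBarrier} carrying the correct (inward-pointing) sign at $k=1$.

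Write \eqref{eqUpperBarrier} as $k_+'=G_+(r,k_+)$, where $G_+\colon[r_0,\infty)\times[-1,1]\to\bR$ is jointly continuous, smooth in $k$ on $(-1,1)$, but only H\"older of exponent $1/2$ near $k=\pm 1$ due to the factors $\sqrt{1-k^2}$. Substituting the endpoint values, every term of $G_+$ containing $(1-k^2)$ or $\sqrt{1-k^2}$ vanishes, leaving
\[
G_+(r,-1)=\tfrac{2}{r}\Bigl(1+\tfrac{r}{\sqrt{1+r^2}}\Bigr)+O(r^{-3}),\qquad
G_+(r,1)=-\tfrac{2}{r}\Bigl(1-\tfrac{r}{\sqrt{1+r^2}}\Bigr)+O(r^{-4}).
\]
Hence, for $r_0$ large enough, $G_+(r,-1)>0$ and $G_+(r,1)<0$ for all $r\geq r_0$: the vector field points strictly inward at both endpoints of $[-1,1]$.

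For short-time existence from the singular initial value, I extend $G_+$ continuously to $[r_0,\infty)\times\bR$ (say by a constant-in-$k$ extension outside $[-1,1]$) and invoke Peano's theorem to produce a $C^0$ solution on some interval $[r_0,r_0+\delta)$ with $k_+(r_0)=-1$. Picard's theorem is unavailable because $G_+$ is non-Lipschitz at $k=-1$, and uniqueness is genuinely lost, but any Peano solution must satisfy $k_+(r)>-1$ for $r$ slightly above $r_0$: from $k_+(r)=-1+\int_{r_0}^r G_+(s,k_+(s))\,ds$, the strict positivity $G_+(r_0,-1)>0$, and continuity of $G_+$, the integrand is strictly positive on a short initial interval, regardless of whether $k_+$ has already begun to move. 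Once $k_+>-1$, the ODE is locally Lipschitz and classical theory applies.

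To confine the solution to $(-1,1)$, set $F\definedas -G_+$ and apply Lemma~\ref{lemComparison}. The constant $1$ is a strict supersolution since $1'+F(r,1)=-G_+(r,1)>0$, yielding $k_+(r)\leq 1$, while the constant $-1$ is a strict subsolution, yielding $k_+(r)\geq -1$. The strict inequality for $r>r_0$ follows directly from the boundary signs: if $k_+(r_*)=-1$ for some $r_*>r_0$ with $k_+>-1$ to the left of $r_*$, then approaching $-1$ from above forces $k_+'(r_*)\leq 0$, contradicting $k_+'(r_*)=G_+(r_*,-1)>0$; the strict upper bound is identical. Finally, $|k_+|<1$ keeps $|G_+(r,k_+)|$ uniformly controlled on compact $r$-intervals, so the standard continuation theorem extends the solution to all of $[r_0,\infty)$. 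The main obstacle throughout is the non-Lipschitz character of the ODE at the singular endpoints $k=\pm 1$, where uniqueness is lost; it is resolved by the favorable sign of the vector field at these endpoints, established in the second paragraph, which forces every Peano solution to move into the interior and remain there.
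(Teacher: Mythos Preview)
Your proof is correct and follows essentially the same approach as the paper's: both check that the constant functions $\pm 1$ are strict sub- and supersolutions (equivalently, that the vector field points inward at the endpoints), invoke the comparison Lemma~\ref{lemComparison}, and verify the strict inequalities $|k_+|<1$ for $r>r_0$ by the same boundary-sign contradiction. You are more explicit about the non-Lipschitz character at $k=\pm 1$ and the resulting need for Peano rather than Picard existence, which the paper handles by a citation to Hartman; one minor remark is that the extra $C_7,C_8$ term in \eqref{eqLowerBarrier} actually vanishes at $k=\pm 1$ (it carries a factor $(1-k^2)$), so it plays no role in the endpoint sign analysis, but this does not affect your argument.
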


\begin{proof}
We shall only prove the existence of $k_+$, as the same argument applies in the case of $k_-$. It is clear that $k^+_+(r)\equiv 1$ and $k^-_+(r)\equiv -1$ are respectively a super- and a subsolution of \eqref{eqUpperBarrier} provided that $r_0$ is sufficiently large. Hence by Lemma \ref{lemComparison}, and the existence theory for ordinary differential equations (see e.g. \cite[Chapter II]{Hartman})  we conclude that the solution $-1\leq k_+\leq 1$ of \eqref{eqUpperBarrier} exists for $r\geq r_0$.

Note also that our choice of $r_0$ guarantees that at a point $r^*>r_0$ where $k_+(r^*)=1$ we have $k_+'(r^*)<0$, meaning that $k_+(r^*-\varepsilon)>1$ for any sufficiently small $\varepsilon>0$, which contradicts $-1\leq k_+\leq 1$. That there are no points $r_*>r_0$ where $k_+(r_*)=-1$ is proven similarly.
\end{proof}

\begin{lemma}\label{lemBarrierAsympt}
For any sufficiently small $\varepsilon > 0$ there exists $r_0>0$ such that $k_+$ and $k_-$ as in Lemma \ref{lemBarriers} satisfy
\begin{equation}\label{eqKpm}
k_\pm(r)=\frac{r}{\sqrt{1+r^2}}+\frac{\alpha}{r^3}+O(r^{-4+\varepsilon}).
\end{equation}
\end{lemma}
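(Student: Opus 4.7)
\medskip

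The plan is to view the ODEs \eqref{eqUpperBarrier} and \eqref{eqLowerBarrier} as perturbations of the principal equation $k' + (2/r)(k-k_0) - (1-k^2)/\sqrt{1+r^2} = 0$ around the background $k_0(r) := r/\sqrt{1+r^2}$. Note that $k_0$ is an exact solution of the principal equation, since $k_0' = 1/(1+r^2)^{3/2} = (1-k_0^2)/\sqrt{1+r^2}$. Setting $v := k_\pm - k_0$, substituting $k_\pm = k_0 + v$ into \eqref{eqUpperBarrier} or \eqref{eqLowerBarrier} and Taylor expanding (using $\sqrt{1-k^2} = (1+r^2)^{-1/2}(1 - r\sqrt{1+r^2}\,v + O(v^2 r^2))$) yields an ODE of the form
\[
v' + a(r)\,v = b(r) + \mathcal{E}(r,v),
\]
where $a(r) = \tfrac{2}{r} + \tfrac{2r}{1+r^2} + O(r^{-3}) = \tfrac{4}{r} + O(r^{-3})$, $b(r) = \alpha r^{-4} + O(r^{-5+\varepsilon})$, and the nonlinear remainder obeys $|\mathcal{E}(r,v)| \le C(r^{-5+\varepsilon} + r^{-2}|v|^2 + r^{-1}|v|^{1+\delta})$ as long as $|v|$ stays bounded. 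The homogeneous equation $v' + 4v/r = 0$ has fundamental solution $r^{-4}$, while $v_\mathrm{p}(r) = \alpha/r^3$ is a particular solution of $v' + 4v/r = \alpha/r^4$, which identifies the claimed leading behavior in \eqref{eqKpm}.

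With this in hand, the first concrete step is to verify, by direct substitution into \eqref{eqUpperBarrier} and \eqref{eqLowerBarrier} and the expansions above, that for any sufficiently small $\varepsilon>0$ there exist $C>0$ and $R \geq r_0$ such that
\[
k^{\mathrm{sup}}(r) := k_0(r) + \frac{\alpha}{r^3} + C r^{-4+\varepsilon}, \qquad k^{\mathrm{sub}}(r) := k_0(r) + \frac{\alpha}{r^3} - C r^{-4+\varepsilon}
\]
are, respectively, strict super- and sub-solutions of the relevant ODE for all $r \geq R$. The key point in this verification is that the ansatz $\alpha/r^3$ exactly cancels the inhomogeneous leading term $\alpha/r^4$ in the equation for $v$, so that the contribution of the $Cr^{-4+\varepsilon}$ correction produces a definite sign of order $Cr^{-5+\varepsilon}$ dominating the error terms for $C$ large enough.

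Next, I apply the comparison principle (Lemma~\ref{lemComparison}) to conclude $k^{\mathrm{sub}}(r) \le k_\pm(r) \le k^{\mathrm{sup}}(r)$ on $[R,\infty)$. The nontrivial point is to obtain the required ordering at some finite matching point $R$. Since we only know a priori that $|k_\pm|<1$ on $(r_0,\infty)$, I would first derive a preliminary decay estimate $v(r) \to 0$ as $r \to \infty$ by a Gronwall/bootstrap argument on the integral form $(r^4 v)' = \alpha + r^4 \mathcal{E}(r,v)$: starting from the trivial bound $|v|\le 2$, integrating from a large $r_1$ to $r$ gives $r^4 v(r) = r_1^4 v(r_1) + \alpha(r - r_1) + \int_{r_1}^r s^4\mathcal{E}(s,v(s))\,ds$, and successive iteration of this identity, each time using the sharper decay obtained from the previous step to control the quadratic term $r^{-2}v^2$ in $\mathcal{E}$, eventually yields $v(r) = \alpha/r^3 + O(r^{-4+\varepsilon})$. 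Alternatively, once $|v(R)| < CR^{-4+\varepsilon}/2$ has been established for large $R$, the strict super-/sub-solution property of $k^{\mathrm{sup}},k^{\mathrm{sub}}$ combined with Lemma~\ref{lemComparison} closes the argument on $[R,\infty)$.

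The main obstacle is precisely the initial matching step: the barriers $k_\pm$ are only pinned down at $r_0$ (where they take the singular values $\pm 1$), so the test functions $k^{\mathrm{sup}},k^{\mathrm{sub}}$ cannot be compared to $k_\pm$ directly at any finite point without first extracting some decay from the ODE itself. The bootstrap argument through the integrating factor $r^4$ handles this cleanly, but requires care in controlling the quadratic nonlinearity $r^{-2}v^2$ in $\mathcal{E}$, as this term has the same formal size as the linear part when $|v|\sim 1$ and only becomes manageable once a preliminary smallness of $v$ has been secured.
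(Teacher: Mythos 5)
Your plan for the refined asymptotics is essentially the paper's: the ansatz $k_0(r)=r/\sqrt{1+r^2}$, the integrating factor $r^4$ for the linearization $v'+4v/r$, and the particular solution $\alpha/r^3$ of $v'+4v/r=\alpha/r^4$ all match the paper's Steps~2 and 3. However, there is a genuine gap in the preliminary step, and you have noticed the right obstacle but proposed a resolution that does not actually close.

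The issue is the claim that ``the bootstrap argument through the integrating factor $r^4$ handles this cleanly'' starting from the trivial bound $|v|\le 2$. Write the linearized ODE with integrating factor: $(r^4 v)' = \alpha + r^4\mathcal{E}(r,v)$. On the region where $|v|\sim 1$ (i.e.\ near $r_0$, since $k_+(r_0)=-1$ while $k_0(r_0)\approx 1$, so $|v(r_0)|\approx 2$), the dominant contribution to $\mathcal{E}$ is $v^2/\sqrt{1+r^2}\sim r^{-1}v^2$, not $r^{-2}v^2$ as you wrote, and for $|v|\sim 1$ this gives $\mathcal{E}\sim r^{-1}$. Then $\int_{r_1}^r s^4\mathcal{E}\,ds\sim r^4$, i.e.\ the integrated error has exactly the same magnitude as $r^4v(r)$ itself, so a single pass of the integral identity yields $|v(r)|\lesssim 1$ and no further iteration improves this. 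The quadratic term has the same size as the linear term at this scale, so the bootstrap cannot be initiated without an independent source of smallness; the proposal never supplies one and the claim that the integrating factor ``handles this'' is circular. The alternative version you suggest --- obtain $|v(R)|<CR^{-4+\varepsilon}/2$ at some large $R$ and then compare --- has exactly the same problem: that bound at $R$ is the very thing that is not yet available.

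The paper resolves this in its Step~1 by a barrier chosen to match the singular initial value at $r_0$ exactly. Take $k_+^-(r)=1-2r_0^{2-\varepsilon}/r^{2-\varepsilon}$, which satisfies $k_+^-(r_0)=-1=k_+(r_0)$, and show by direct substitution that it is a strict subsolution of \eqref{eqUpperBarrier} provided $r_0$ is large enough (the key inequality reduces to $-2\varepsilon r_0^{2-\varepsilon}r^{-3+\varepsilon}+(C+1)r^{-3}<0$, which holds for $r_0\geq\sqrt{(C+1)/(2\varepsilon)}$). Lemma~\ref{lemComparison} then gives $k_+\geq k_+^-$, and together with $k_+\leq 1$ this yields the preliminary decay $k_+ = 1+O(r^{-2+\varepsilon})$, hence $v=O(r^{-2+\varepsilon})$. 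Only with this in hand does the integrating-factor argument for $(r^4 v)'$ yield nontrivial information, and the paper then carries it out in two passes, first to reach $v=-\tfrac{1}{2}r^{-2}+O(r^{-3+\varepsilon/2})$ (so $k_+ = k_0 + O(r^{-3+\varepsilon/2})$) and then to reach the final form. Your proof would be complete once you insert this matched-barrier step; as written it is incomplete.
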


\begin{proof} First, we will confirm \eqref{eqKpm} in the case of $k_+$ by gradually improving its asymptotics. Then we will briefly comment on the case of $k_-$, which is very similar.

\vspace{0.5cm}

{\it Step 1.} We will prove that $k_+(r)=1+O(r^{-2+\varepsilon})$. For a sufficiently large $r_0>1$ set $k^-_+(r)=1-\frac{2r_0^{2-\varepsilon}}{r^{2-\varepsilon}}$. Then $k^-_+(r_0)=-1$. We also have $(k^-_+)'=\frac{2(2-\varepsilon)r_0^{2-\varepsilon}}{r^{3-\varepsilon}}$,  and
\begin{equation*}
\frac{2}{r}\left(k_+^--\frac{r}{\sqrt{1+r^2}}\right)=\frac{2}{r}\left(-\frac{2r_0^{2-\varepsilon}}{r^{2-\varepsilon}}+\frac{1}{\sqrt{1+r^2}(r+\sqrt{1+r^2})}\right)\leq-\frac{4r_0^{2-\varepsilon}}{r^{3-\varepsilon}}+\frac{1}{r^3}.
\end{equation*}
It is also easy to check that
\begin{equation*}
\begin{split}
&-\frac{1-(k^-_+)^2}{\sqrt{1+r^2}}-\frac{\alpha\sqrt{1-(k^-_+)^2}}{r^2\sqrt{1+r^2}}+\frac{C_1}{r^2}\left|\frac{\sqrt{1-(k^-_+)^2}}{\sqrt{1+r^2}}-\frac{3k^-_+}{r^2}+\frac{2}{r^2}\right|\\& \quad +\frac{C_2}{r^2}\left|\sqrt{\frac{1-(k^-_+)^2}{1+r^2}}-\frac{1}{r^2}\right| +\frac{C_3}{r^3}\left|k^-_+-\frac{r}{\sqrt{1+r^2}}\right|+C_4 r^{-3}|k^-_+|\left(1-(k^-_+)^2\right)\\ & \quad  +C_5 r^{-3}(1-(k^-_+)^2)+C_6 r^{-5}<Cr^{-3},
\end{split}
\end{equation*}
where the constant $C>0$ depends only on $C_i$, $i=1,2,\ldots,6$, and does not depend on $r_0$. We conclude that $k_+^-$ is a subsolution of \eqref{eqUpperBarrier} provided that $-\frac{2\varepsilon r_0^{2-\varepsilon}}{r^{3-\varepsilon}}+\frac{C+1}{r^3}<0$ for $r\geq r_0$, which is true if $r_0\geq \sqrt{\frac{C+1}{2\varepsilon}}$. The claim follows by Lemma \ref{lemComparison} since $k_+\leq 1$.

\vspace{0.5cm}

{\it Step 2.} For a chosen $\varepsilon>0$ we fix $r_0$ as in Step 1, and prove that $k_+(r)=\frac{r}{\sqrt{1+r^2}}+O\left(r^{-3+\frac{\varepsilon}{2}}\right)$. 
Write $k_+=1+k_1$, then $k_1=O(r^{-2+\varepsilon})$ by Step 1. Then $k_+'=k_1'$, 
\begin{equation*}
\frac{2}{r}\left(k_+-\frac{r}{\sqrt{1+r^2}}\right)=\frac{2k_1}{r}+\frac{2}{r}\left(1-\frac{r}{\sqrt{1+r^2}}\right)=\frac{2k_1}{r}+\frac{1}{r^3}+O(r^{-5}),
\end{equation*} 
\begin{equation*}
\frac{1-k_+^2}{\sqrt{1+r^2}}=-\frac{2k_1+k_1^2}{\sqrt{1+r^2}}=-\frac{2k_1}{r}+O(r^{-5+2\varepsilon}),
\end{equation*}
and it is easy to check that the sum of the remaining terms in the left hand side of \eqref{eqUpperBarrier} is of order $O(r^{-4+\frac{\varepsilon}{2}})$.
Consequently, $k_1$ is a solution of the equation
\begin{equation*}
k_1'+\frac{4k_1}{r}+\frac{1}{r^3}=p,
\end{equation*}
where $p(r)=O(r^{-4+\frac{\varepsilon}{2}})$. Then $\left(k_1 r^4\right)'=-r+r^4 p$, and integrating from $r_0$ to $r$ we obtain 
\begin{equation*}
\begin{split}
k_1(r)&=-\frac{1}{2r^2}+r^{-4}\left(\frac{r_0^2}{2}+k_1(r_0)r_0^4\right)+r^{-4}\int_{r_0}^r s^4 p(s) ds \\ &= -\frac{1}{2r^2} +O(r^{-3+\frac{\varepsilon}{2}}).
\end{split}
\end{equation*}
It follows that $k_+(r)=\frac{r}{\sqrt{1+r^2}}+O\left(r^{-3+\frac{\varepsilon}{2}}\right)$.

\vspace{0.5cm}

{\it Step 3.} Finally, we prove that $k_+(r)=\frac{r}{\sqrt{1+r^2}}+\frac{\alpha}{r^3}+O(r^{-4+\frac{\varepsilon}{2}})$. By Step 2, we can write $k_+(r)=\frac{r}{\sqrt{1+r^2}}+k_2$, where $k_2(r)=O(r^{-3+\frac{\varepsilon}{2}})$. Then $k_+'=\frac{1}{(1+r^2)^{\frac{3}{2}}}+k_2'$ and $\frac{2}{r}\left(k_+-\frac{r}{\sqrt{1+r^2}}\right)=\frac{2k_2}{r}$. It is also straightforward to check that
\begin{equation*}
\frac{1-k_+^2}{\sqrt{1+r^2}}=\frac{1}{(1+r^2)^{\frac{3}{2}}}-\frac{2k_2}{r}+O(r^{-6+\frac{\varepsilon}{2}}),
\end{equation*}
and
\begin{equation*}
\frac{\alpha \sqrt{1-k_+^2}}{r^2\sqrt{1+r^2}} =\frac{\alpha}{r^4}+O\left(r^{-5+\frac{\varepsilon}{2}}\right),
\end{equation*}
while the remaining terms in the left hand side of \eqref{eqUpperBarrier} are of order $O(r^{-5+\frac{\varepsilon}{2}})$.
We conclude that $k_2$ satisfies 
\begin{equation*}
k_2'+\frac{4k_2}{r}-\frac{\alpha}{r^4}=q,
\end{equation*}
where $q(r)=O(r^{-5+\frac{\varepsilon}{2}})$. Equivalently, we have $\left(k_2 r^4\right)'=\alpha+q r^4$. It follows that 
\begin{equation*}
\begin{split}
k_2(r)& =\alpha r^{-3}+\left(k_2(r_0)r_0^4-\alpha r_0\right)r^{-4}+r^{-4}\int_{r_0}^r s^4 q(s) ds \\ & = \alpha r^{-3}+O\left(r^{-4+\frac{\varepsilon}{2}}\right),
\end{split}
\end{equation*}
hence $k_+(r)=\frac{r}{\sqrt{1+r^2}}+\frac{\alpha}{r^3}+O(r^{-4+\frac{\varepsilon}{2}})$.
 
\vspace{0.5cm}

This argument can also be applied to prove \eqref{eqKpm} in the case of $k_-$. The only difference is that the last term on the left hand side of \eqref{eqLowerBarrier}, is not present in \eqref{eqUpperBarrier}. On Step 1, this term can be simply estimated from above by zero. On Step 2, the contribution of this term is of order $O(r^{-5+\varepsilon})$, and on Step 3 it is of order $O(r^{-5})$.
\end{proof}

\begin{proposition}\label{propBarExist}
Given $\varepsilon>0$ there exists $r_0 > 0$ and $f_+, f_-: [r_0, \infty)\to \bR$ such that 
\begin{itemize}
\item $f_+$ (respectively $f_-$) are an upper (respectively lower) barrier for the Jang equation in the sense of Definition \ref{defBarriers}.
\item  When $r\to \infty$ we have 
\begin{equation}\label{eqAsBarriers}
f_\pm(r,\theta,\varphi) = \sqrt{1+r^2}+\alpha\ln r +\psi(\theta,\varphi)+O(r^{-1+\varepsilon}).
\end{equation}
\item $f_- \leq f_+$.
\end{itemize}
\end{proposition}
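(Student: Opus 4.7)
The plan is to construct the barriers via the ansatz \eqref{eqBarrierForm}, using the auxiliary functions $k_\pm$ produced by Lemmas \ref{lemBarriers} and \ref{lemBarrierAsympt}. Solving the defining relation \eqref{eqK} for $\phi'$, we set
\[
\phi_\pm'(r) = \frac{k_\pm(r)}{\sqrt{(1+r^2)(1-k_\pm^2(r))}}
\]
and integrate from $r_0$. Linearizing the ODEs \eqref{eqUpperBarrier} and \eqref{eqLowerBarrier} at $r=r_0$ (where $k_\pm(r_0)=\mp 1$) shows that $1\pm k_\pm(r)$ vanishes linearly in $r-r_0$, so $\phi_\pm'(r)=O((r-r_0)^{-1/2})$ is integrable near $r_0$. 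This yields functions $\phi_\pm: [r_0,\infty)\to \bR$, smooth on $(r_0,\infty)$, with $\partial_r\phi_\pm(r_0)=\mp\infty$; setting $f_\pm(r,\theta,\varphi) \definedas \phi_\pm(r)+\psi(\theta,\varphi)$ with $\psi$ solving \eqref{eqPsi} makes condition \eqref{eqBar1} immediate.

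Substituting the expansion \eqref{eqKpm} for $k_\pm$ into the formula for $\phi_\pm'$ and Taylor expanding gives
\[
\phi_\pm'(r) = \frac{r}{\sqrt{1+r^2}} + \frac{\alpha}{r} + O(r^{-2+\varepsilon}),
\]
which integrates to $\phi_\pm(r) = \sqrt{1+r^2}+\alpha\ln r + D_\pm + O(r^{-1+\varepsilon})$ for constants $D_\pm$ depending on the choice of integration constants. Since $\psi$ is only determined by \eqref{eqPsi} up to an additive constant, one can absorb $D_\pm$ into $\psi$ and recover \eqref{eqAsBarriers}. For the differential inequality \eqref{eqBar2}, Lemma \ref{lemODE} applied to $f_+$ gives $\mathcal{J}(f_+)/(\sqrt{1+r^2}(1+|d\psi|_g^2)\beta^{3/2}) \leq k_+'+(\text{upper-bound expression})=0$ by \eqref{eqUpperBarrier}, and similarly $\mathcal{J}(f_-)\geq 0$. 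To upgrade these to strict inequalities, we perturb the right-hand sides of \eqref{eqUpperBarrier} and \eqref{eqLowerBarrier} by a small term $\mp\delta r^{-N}$ with $\delta>0$ and $N$ sufficiently large. This is absorbed into the error terms of Lemma \ref{lemBarrierAsympt} and does not disturb \eqref{eqKpm}, but forces $\mathcal{J}(f_+)<0$ and $\mathcal{J}(f_-)>0$.

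The ordering $f_-\leq f_+$ follows from a comparison argument. Writing the ODEs \eqref{eqUpperBarrier} and \eqref{eqLowerBarrier} as $k_\pm'+F_\pm(r,k_\pm)=0$, one has $F_+(r,k)\geq F_-(r,k)$ pointwise. Since $k_+(r_0)=-1\leq 1 = k_-(r_0)$, Lemma \ref{lemComparison} (applied with a small perturbation making the inequality between $F_\pm$ strict, followed by passage to the limit) yields $k_+\leq k_-$ on $[r_0,\infty)$. Because $k\mapsto k/\sqrt{(1+r^2)(1-k^2)}$ is strictly increasing, this gives $\phi_+'\leq \phi_-'$, so $\phi_+-\phi_-$ is nonincreasing on $[r_0,\infty)$. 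Fixing the integration constants so that $\lim_{r\to\infty}(\phi_+-\phi_-)=0$ (compatible with \eqref{eqAsBarriers}), we conclude $\phi_+\geq \phi_-$, hence $f_+\geq f_-$. The main technical obstacle is the bookkeeping of the Taylor expansion of $\phi_\pm'$ to the right order, together with the simultaneous matching of asymptotic constants in \eqref{eqAsBarriers} and the ordering; both are ultimately controlled by the monotonicity $k_+\leq k_-$ established via comparison.
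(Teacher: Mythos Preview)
Your construction of $\phi_\pm$ from $k_\pm$, the integrability argument near $r_0$, and the derivation of the asymptotics \eqref{eqAsBarriers} all match the paper's proof. The perturbation by $\mp\delta r^{-N}$ to enforce strict inequalities in \eqref{eqBar2} is a point the paper leaves implicit; your treatment here is actually more careful. (One stylistic quibble: saying you ``absorb $D_\pm$ into $\psi$'' is misleading, since there is only one $\psi$. What you really do---and what the paper does---is fix the two integration constants for $\phi_\pm$ separately so that the constant terms vanish.)

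The genuine difference is in the ordering $f_-\leq f_+$. The paper argues at the PDE level: since $f_+-f_-$ depends only on $r$ and is $O(r^{-1+\varepsilon})$, if it had a negative interior minimum at some $r_*$ one could subtract $\mathcal{J}(f_-)>0$ from $\mathcal{J}(f_+)<0$ at a point $x_*$ with $r(x_*)=r_*$, use that $\partial f_+=\partial f_-$ there, and obtain a sign contradiction from the positivity of the Hessian of $f_+-f_-$ against the (positive semidefinite) coefficient matrix of the Jang operator. This is the standard Bernstein-type comparison for quasilinear elliptic equations and would work for \emph{any} pair of functions satisfying \eqref{eqBar1}--\eqref{eqBar2}, regardless of how they were constructed. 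Your route stays entirely at the ODE level: $F_+\geq F_-$ pointwise gives $k_+\leq k_-$ via Lemma~\ref{lemComparison}, monotonicity of $k\mapsto k/\sqrt{(1+r^2)(1-k^2)}$ gives $\phi_+'\leq\phi_-'$, and normalizing so that $\phi_+-\phi_-\to 0$ at infinity yields $\phi_+\geq\phi_-$. This is more elementary and exploits the specific ODE structure, but it is tied to the particular ansatz \eqref{eqBarrierForm}; the paper's argument is more robust and is the one that generalizes.
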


\begin{proof}
Given $\varepsilon>0$ let $r_0$, $k_+$, and $k_-$ be as in Lemma \ref{lemBarriers} and Lemma \ref{lemBarrierAsympt}. Recall that $r_0>0$ was chosen so that $|k_\pm'(r_0)|>0$. Hence for some $\delta>0$ we also have $|k_\pm'(r)|>0$  on $[r_0, r_0+\delta]$, so that $ 1\pm k_\pm (r) \geq C(r-r_0)$ for some positive constant $C$ when $r \in [r_0, r_0 + \delta]$.  It follows that \eqref{eqK} or, equivalently,
\[
\phi'_\pm (r)= \frac{k_\pm(r)}{\sqrt{(1-k_\pm^2(r))(1+r^2)}}\,
\]  
defines (up to an additive constant) the continuous functions $\phi_\pm$ on $[r_0,\infty)$, which are $C^2$ for $r>r_0$. Since $\phi'_\pm(r)=1+\frac{\alpha}{r}+O(r^{-2+\varepsilon})$ and since the Jang equation is invariant with respect to vertical translations we can assume that $\phi_\pm(r)=r+\alpha \ln r +O(r^{-1+\varepsilon})$. By Lemma \ref{lemODE} and Lemma \ref{lemBarriers} the functions $f_\pm = \phi_\pm + \psi$ will satisfy \eqref{eqBar2}. Since $k_+(r_0) = -1$ and $k_-(r_0) = 1$ they will also have the property \eqref{eqBar1}. 

It only remains to show that $f_- \leq f_+$. For this we use a version of the well-known Bernstein trick as in the proof of \cite[Proposition 3]{PMT2}. Note that the difference $f_+ - f_- $ depends only on $r$ and is of order $O(r^{-1+\varepsilon})$.  Clearly, there exists a constant $L \geq 0$ such that $f_+ - f_- > - L$ for $r \geq r_0$. We denote by $L_0$  the infimum of all such constants $L$. Then we have \begin{equation}\label{eqMin1}
(f_+ - f_-)(r) \geq - L_0 \text{ for all } r \in [r_0, +\infty)
\end{equation} 
and either there exists $r_* \in [r_0, +\infty)$ such that                     
\begin{equation}\label{eqMin2}
(f_+ - f_-)(r_*) = -L_0
\end{equation}
or else
\[
\lim _{r \to \infty } (f_+ - f_-)(r) = -L_0.
\] 
In the later case we obviously must have $L_0=0$ and hence $f_+ \geq f_-$ on $\{r \geq r_0\}$. We will complete the proof by showing that the former case is not possible. If we assume that $r_* = r_0$, then by \eqref{eqMin1} and \eqref{eqMin2} it follows that $(f_+ - f_-)'(r_0) \geq 0$, which contradicts \eqref{eqBar1}.  Now suppose that $r_* > r_0$ and let $x_* \in \{r > r_0\}$ be any point such that $r(x_*) = r_*$. In this case $x_*$ is an interior minimum point for the function $f_+-f_-$. Let $(x^1,x^2,x^3)$ denote coordinates in the neighborhood of $x_*$.  Using \eqref{eqBar2} and the fact that the first order partial derivatives of $f_+$ and $f_-$ coincide at $x_*$ we obtain
\begin{equation*}
(1+|df_+|_g^2(x_*))^{-\frac{1}{2}}\left(g^{ij}(x_*)-\frac{(f_+)^i(x_*)(f_+)^j(x_*)}{1+|df_+|_g^2}\right)\frac{\partial^2 (f_+-f_-)}{\partial x^i \partial x^j}(x_*)< 0,
\end{equation*}
which contradicts the fact that  $g^{ij}(x_*)-\frac{(f_+)^i(x_*)(f_+)^j(x_*)}{1+|df_+|_g^2}$ and $\frac{\partial^2 (f_+-f_-)}{\partial x^i \partial x^j}(x_*)$ are nonnegative definite. 
\end{proof}

\section{A boundary value problem for the regularized Jang equation}\label{secBVP}
A distinctive feature of the Jang equation  $\mathcal{J}(f) = 0$ is the lack of a priori estimates for $\sup_M |f|$: in fact, the solutions may blow up for general initial data. In order to construct solutions, Schoen and Yau introduced in \cite{PMT2} the so called {\it capillarity regularization}, that is the equation $\mathcal{J}(f) = \tau f$ for $\tau>0$ for which a ($\tau$-dependent) a priori estimate is available. This section is concerned with the existence of a solution to a certain boundary value problem for the regularized equation, see Proposition \ref{propExBVP}. In Section \ref{secExistence} we will construct the so-called \emph{geometric} solution to the Jang equation by letting the regularization parameter go to zero as the domain grows in a controlled way. 

The following result has been established in \cite[Theorem 3.1]{AEM}, \cite[Lemma 2.2]{Eichmair}, \cite[Corollary 3.6]{AM}, \cite[Proposition 2]{SYBlackHole}, \cite[Section 5]{Yau}. 

\begin{theorem}\label{existenceBVP} 
Let $\Omega$ be a bounded domain in the initial data set $(M,g,K)$ with $C^{2,\alpha}$ boundary $\partial\Omega$. Let $H_{\partial \Omega}$ denote the mean curvature of $\partial \Omega$ computed as the tangential divergence of the outward unit normal to $\partial \Omega$, and let $\tr_{\partial \Omega}K$ be the trace of the restriction of $K$ to $\partial\Omega$ with respect to the induced metric on $\partial \Omega$. Suppose that
\begin{equation}\label{eqTrap}
H_{\partial \Omega}-|\tr_{\partial \Omega}K|>0.
\end{equation}
If $\tau\in (0,1)$ is sufficiently small and $\phi \in C^{2,\alpha} (\partial \Omega)$, then there exists $f\in C^{2,\alpha}(\overline{\Omega})\cap C^3(\Omega)$ such that
\begin{subequations}
\begin{eqnarray}
H_g(f)-\tr_g(K)(f)=\tau f &\text{ in }& \Omega  \label{eqBVP1}\\
f=\phi &\text{ on }& \partial\Omega \label{eqBVP2}.
\end{eqnarray} 
\end{subequations}
\end{theorem}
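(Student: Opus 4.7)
The plan is to prove Theorem \ref{existenceBVP} by the method of continuity, deforming away the contribution of $K$ to reach a trivially solvable problem. Specifically, I would consider the family of boundary value problems
\begin{equation*}
Q_t(f) \definedas H_g(f) - t\,\tr_g(K)(f) - \tau f = 0 \text{ in } \Omega, \qquad f = t\phi \text{ on } \partial \Omega,
\end{equation*}
for $t \in [0,1]$. At $t=0$ the function $f \equiv 0$ is a solution (since $H_g(0)=0$ at a point where $df=0$ and $f=0$), while at $t=1$ we recover \eqref{eqBVP1}--\eqref{eqBVP2}. Letting $T \subset [0,1]$ be the set of $t$ for which a $C^{2,\alpha}(\overline\Omega)$ solution exists, the goal is to show $T$ is both open and closed, hence all of $[0,1]$.

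Openness at a solution $f_{t_0}$ comes from the implicit function theorem in H\"older spaces. The linearization of $Q_t$ at $f_{t_0}$ is a linear second-order elliptic operator whose zeroth-order coefficient is exactly $-\tau < 0$; by the maximum principle with Dirichlet data, this operator is a bijection from $\{u \in C^{2,\alpha}(\overline\Omega) : u|_{\partial\Omega}=0\}$ to $C^\alpha(\overline\Omega)$, and small perturbations of $t$ yield nearby solutions. Closedness reduces to $t$-uniform a priori estimates, which I would obtain in three stages. First, a $C^0$ bound: at an interior extremum $p$ of $f$ one has $df(p)=0$ and $H_g(f)(p) = \Delta^g f(p)$ with the correct sign, so the equation gives $\tau |f(p)| \leq |\tr_g K|$, hence
\begin{equation*}
\|f\|_{C^0(\overline\Omega)} \leq \max\bigl(\|\phi\|_{C^0(\partial\Omega)},\ \tau^{-1} \|\tr_g K\|_{C^0(\overline\Omega)}\bigr),
\end{equation*}
which is the well-known role of the capillary regularization. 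Second, a gradient bound: interior estimates follow from the standard prescribed mean curvature theory applied to graphs (as used in the cited works \cite{SYBlackHole}, \cite{Yau}), while the boundary gradient estimate is obtained by constructing upper and lower barriers of the form $\phi \pm c\, d(\cdot,\partial\Omega)$ modified by a concave function, choosing $c$ so that the trapping hypothesis $H_{\partial\Omega} > |\tr_{\partial\Omega} K|$ forces the barrier to satisfy the correct differential inequality near $\partial\Omega$. Third, once $|f|$ and $|df|$ are controlled the operator $Q_t$ is uniformly elliptic with H\"older coefficients, and the Schauder theory yields a uniform $C^{2,\alpha}(\overline\Omega)$ estimate and, by bootstrapping, $C^3$ regularity in the interior.

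The main obstacle will be the boundary gradient estimate, because the Jang operator degenerates as $|df|\to \infty$ and we need \eqref{eqTrap} in a quantitative way to rule out the solution becoming vertical at $\partial\Omega$. The construction is by now classical and carried out in \cite{AEM}, \cite{AM}, \cite{Eichmair}: one writes the graph of the barrier over a collar neighborhood of $\partial\Omega$, computes its mean curvature and $\tr K$, and uses $H_{\partial\Omega} > |\tr_{\partial\Omega} K|$ together with the $\tau f$ correction (which contributes a bounded term thanks to the $C^0$ estimate above) to compare with the operator on $f$; the maximum principle then confines $f$ between the two barriers. Since all of these ingredients are insensitive to $t \in [0,1]$ except through the uniformly bounded factor $t|K| \leq |K|$, every estimate is uniform in $t$ and closedness follows via Arzel\`a--Ascoli.
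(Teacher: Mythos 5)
Your proposal follows essentially the same continuity-method strategy as the paper: the same one-parameter family $H_g(f) - s\,\tr_g(K)(f) = \tau f$ with $f = s\phi$ on $\partial\Omega$, openness via the implicit function theorem using the $-\tau$ zeroth-order term, closedness via $t$-uniform a priori estimates in the order $C^0 \to$ gradient (interior via a Bernstein-type argument, boundary via linear barriers $s\phi \pm B\rho$ built from the trapping condition) $\to$ H\"older gradient (Ladyzhenskaya--Ural'tseva) $\to C^{2,\alpha}$ Schauder. The only small discrepancy is that the paper's boundary barriers are linear in $\rho = \dist(\cdot,\partial\Omega)$ rather than concave-modified, and you compress the Ladyzhenskaya--Ural'tseva step into ``Schauder theory,'' but neither is a gap.
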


The proof, which we include for the sake of self-consistency, is very similar to that of \cite[Lemma 3]{PMT2} and is based on the continuity method. For $s\in [0,1]$ we consider the supplementary boundary value problem 
\begin{subequations}
\begin{eqnarray}
H_g(f_s)-s\tr_g(K)(f_s)=\tau f_s &\text{ in }& \Omega \label{eqAux1}\\
f_s=s\phi &\text{ on }& \partial \Omega \label{eqAux2}.
\end{eqnarray} 
\end{subequations}
The first step is to obtain uniform a priori estimates for the solutions.

\begin{lemma}\label{lemApriori}
Let $\Omega$ and $\phi$ be as in Theorem \ref{existenceBVP}. Suppose that $\tau>0$ is sufficiently small and $f_s\in C^{2,\alpha}(\overline{\Omega})$ satisfies \eqref{eqAux1}-\eqref{eqAux2}. Then there exists a constant $C$ depending only on $\alpha$, $\tau$, $\phi$, $\Omega$, and the initial data $(M,g,K)$, such that $\left\|f_s\right\|_{C^{2,\alpha}(\overline{\Omega})}\leq C$.
\end{lemma}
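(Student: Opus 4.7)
\medskip

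\noindent\textbf{Plan of proof.} The plan is to run a standard a priori estimate chain: $C^0 \to $ boundary gradient $\to$ global gradient $\to C^{2,\alpha}$, taking advantage of the capillarity term $\tau f_s$ for the sup norm and of the trapping condition \eqref{eqTrap} for the boundary gradient. Throughout, I will use that for any $C^2$ function $u$ one has the pointwise identities $H_g(u)=\operatorname{div}_g\!\bigl(du/\sqrt{1+|du|_g^2}\bigr)$ and $\tr_g(K)(u)=\bigl(g^{ij}-u^i u^j/(1+|du|_g^2)\bigr)K_{ij}$, and that the associated operator is quasilinear elliptic with ellipticity constants degenerating only as $|du|_g \to \infty$.

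\smallskip

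\noindent\emph{Step 1: $C^0$ estimate.} If $f_s$ attains an interior maximum at $x_0\in\Omega$ then $df_s(x_0)=0$ and $\Hess^g f_s(x_0)\leq 0$, so $H_g(f_s)(x_0)=g^{ij}\Hess^g_{ij}f_s(x_0)\leq 0$ and $\tr_g(K)(f_s)(x_0)=\tr_g K(x_0)$. Substituting into \eqref{eqAux1} gives
\[
\tau f_s(x_0)\leq -s\,\tr_g K(x_0)\leq \|\tr_g K\|_{C^0(\overline{\Omega})}.
\]
Combined with the boundary value $f_s|_{\partial\Omega}=s\phi$, this yields $\sup_{\overline\Omega} f_s\leq \max\bigl(\|\phi\|_{C^0(\partial\Omega)},\,\tau^{-1}\|\tr_g K\|_{C^0(\overline\Omega)}\bigr)$, and an analogous argument at an interior minimum gives the matching lower bound.

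\smallskip

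\noindent\emph{Step 2: boundary gradient estimate.} This is the main difficulty. I would construct upper and lower local barriers along $\partial\Omega$ of the form $w_\pm(x)=s\phi(\tilde x)\pm h\bigl(d(x)\bigr)$, where $d$ is the signed distance to $\partial\Omega$, $\tilde x$ the nearest-point projection, and $h:[0,d_0]\to\bR$ a smooth concave function with $h(0)=0$, $h'(0)$ large, and $h'>0$. A direct computation shows that as $h'\to\infty$ the operator $H_g(w_\pm)-s\tr_g(K)(w_\pm)-\tau w_\pm$ is dominated, to leading order, by the mean curvature of the level sets of $d$ near $\partial\Omega$ plus the appropriate trace of $K$; the trapping hypothesis \eqref{eqTrap} ensures that for $h'(0)$ sufficiently large and $d_0$ small enough, $w_+$ is a strict supersolution and $w_-$ a strict subsolution of \eqref{eqAux1} on a one-sided tubular neighborhood of $\partial\Omega$, while matching $f_s$ on $\partial\Omega$. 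The comparison principle for the quasilinear equation (which is uniformly elliptic once $|df_s|$ is bounded) then gives $w_-\leq f_s\leq w_+$ near the boundary, and evaluating the normal derivatives at $\partial\Omega$ yields $|\nu(f_s)|\leq h'(0)$ there. The tangential derivatives are controlled directly by $\|s\phi\|_{C^1(\partial\Omega)}$, so this produces a uniform bound on $|df_s|$ along $\partial\Omega$.

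\smallskip

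\noindent\emph{Step 3: global gradient estimate and Schauder bootstrap.} With $|df_s|$ under control on $\partial\Omega$, I would use the now-classical interior/global gradient maximum principle for prescribed mean curvature type equations (as in \cite{PMT2} and the references \cite{AEM}, \cite{Eichmair}, \cite{AM} cited above). The function $\eta=\tfrac12 \log(1+|df_s|_g^2)$ satisfies a linear elliptic inequality of the form $a^{ij}\nabla^2_{ij}\eta + b^i \nabla_i \eta \geq -C(1+|df_s|)$ coming from differentiating \eqref{eqAux1} and using the Bochner-type identity on $\Sigma=\operatorname{graph}(f_s)$; combined with the $C^0$ bound and the boundary gradient bound from Step 2, this yields $\|f_s\|_{C^1(\overline\Omega)}\leq C$ independent of $s$. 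Once $|df_s|\leq C$, the equation \eqref{eqAux1} is uniformly elliptic with coefficients in $C^\alpha(\overline\Omega)$, so interior and boundary Schauder estimates (e.g. Gilbarg--Trudinger) upgrade the bound to $\|f_s\|_{C^{2,\alpha}(\overline\Omega)}\leq C$, as required.

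\smallskip

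\noindent The main obstacle is Step 2: finding barriers which exploit the strict inequality \eqref{eqTrap} uniformly in $s\in[0,1]$ and small $\tau>0$. The rest of the argument is standard quasilinear elliptic theory once this is in hand.
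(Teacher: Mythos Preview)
Your proposal is correct and follows essentially the same four-step route as the paper: maximum principle for $C^0$, barriers exploiting \eqref{eqTrap} for the boundary gradient, a Bernstein-type argument for the interior gradient, and Schauder for $C^{2,\alpha}$. The only differences are cosmetic --- the paper uses the linear barriers $s\phi\pm B\rho$ rather than your concave $h(d)$, works with $u_s=|df_s|_g^2$ rather than $\tfrac12\log(1+|df_s|_g^2)$, and makes explicit the intermediate Ladyzhenskaya--Ural'tseva H\"older gradient estimate \cite[Theorem~13.7]{GT} needed before Schauder can be applied.
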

\begin{proof} The proof is divided into the following steps.

\vspace{0.5cm}

1) {\it $C^0$ bound for $f_s$.} Suppose that $f_s$  attains its maximum at an interior point $p\in \Omega$, then from \eqref{eqAux1} 
it follows that 
\begin{equation*}
\tau f_s(p) = \left(g^{ij} \Hess_{ij} (f_s)\right)(p)-s \left(\tr^g K\right)(p)\\ \leq-s\left(\tr^g K\right)(p)\\ \leq \max_{\Omega} \left|\tr^g K\right|.
\end{equation*}
Similarly, if $q\in \Omega$ is an interior minimum point we have  
$\tau f_s(q)
\geq-\max_{\Omega} \left|\tr^g K\right|,$
thus
\begin{equation*}
\tau |f_s|<\mu_1\definedas\max\left\{\max_{\Omega} \left|\tr^g K\right|,\max_{\partial\Omega}|\phi|\right\} \hspace{7pt}\text{ on } \hspace{7pt} \overline{\Omega}.
\end{equation*}

\vspace{0.5cm}

2) {\it Interior gradient estimates for $f_s$.} It is straightforward to check that 
\begin{equation*}
H_g(f_s)=\nabla_i \left(\frac{(f_s)^i}{\sqrt{1+|df_s|_g^2}}\right).
\end{equation*}
Applying the covariant derivative $\nabla_k$ to the both sides of \eqref{eqAux1} and commuting the covariant derivatives we thereby obtain 
\begin{equation*}
\begin{split}
\tau (f_s)_k & =\nabla_k \left(H_g(f_s)\right)-s\nabla_k \left(\tr_g(K)(f_s)\right)\\ & = \nabla_i\left(\left(g^{ij}-\frac{(f_s)^i (f_s)^j}{1+|df_s|_g^2}\right)\frac{\Hess_{jk}(f_s)}{\sqrt{1+|df_s|_g^2}}\right)-\frac{\ricdd ik(f_s)^i}{\sqrt{1+|df_s|_g^2}}\\ & \quad +\frac{2 s K_{il} (f_s)^l \Hess_{kj}(f_s)}{1+|df_s|_g^2}\left(g^{ij}-\frac{(f_s)^i (f_s)^j}{1+|df_s|_g^2}\right) \\ & \quad - s \left(g^{ij}-\frac{(f_s)^i(f_s)^j}{1+|df_s|_g^2} \right)\nabla_k K_{ij}.
\end{split}
\end{equation*}
As a consequence, we have
\begin{equation*}
\begin{split}
\tau|d f_s|_g^2=&(f_s)^k\nabla_i\left(\left(g^{ij}-\frac{(f_s)^i (f_s)^j}{1+|df_s|^2_g}\right)\frac{\Hess_{jk}(f_s)}{\sqrt{1+|df_s|^2_g}}\right)- \frac{\ricdd ik(f_s)^i(f_s)^k}{\sqrt{1+|df_s|^2_g}}\\&+\frac{2 s K_{il} (f_s)^l (f_s)^k\Hess_{kj}(f_s)}{1+|df_s|_g^2}\left(g^{ij}-\frac{(f_s)^i (f_s)^j}{1+|df_s|_g^2}\right)\\ & -s \left(g^{ij}-\frac{(f_s)^i(f_s)^j}{1+|df_s|_g^2} \right)\nabla_k K_{ij}(f_s)^k.
\end{split}
\end{equation*} 
Let $u_s = |df_s|_g^2$. Then
\begin{equation*}
- \frac{\ricdd ik(f_s)^i(f_s)^k}{\sqrt{1+|df_s|^2_g}}\leq \frac{|\ric|_g|df_s|_g^2}{\sqrt{1+|df_s|_g^2}}\leq C_1 (u_s)^{\frac{1}{2}}
\end{equation*}
and
\begin{equation*}
\begin{split}
\left(g^{ij}-\frac{(f_s)^i (f_s)^j}{1+|df_s|_g^2}\right)\frac{2 K_{il} (f_s)^l (f_s)^k\Hess_{kj}(f_s)}{1+|df_s|_g^2}=\left(g^{ij}-\frac{(f_s)^i (f_s)^j}{1+|df_s|_g^2}\right)\frac{K_{il} (f_s)^l \nabla_j (u_s)}{1+|df_s|_g^2}. 
\end{split}
\end{equation*}
We also have
\begin{equation*}
\begin{split}
&(f_s)^k\nabla_i\left(\left(g^{ij}-\frac{(f_s)^i (f_s)^j}{1+|df_s|^2_g}\right)\frac{\Hess_{jk}(f_s)}{\sqrt{1+|df_s|^2_g}}\right)\\&=
\nabla_i\left(\left(g^{ij}-\frac{(f_s)^i (f_s)^j}{1+|df_s|_g^2}\right)\frac{(f_s)^k \Hess_{jk}(f_s)}{\sqrt{1+|df_s|_g^2}}\right)\\ & \quad -\left(g^{ij}-\frac{(f_s)^i(f_s)^j}{1+|df_s|_g^2}\right)\frac{g^{kl}\Hess_{jk}(f_s)\Hess_{il}(f_s)}{\sqrt{1+|df_s|_g^2}}\\
&=\frac{1}{2}\nabla_i\left(\left(g^{ij}-\frac{(f_s)^i (f_s)^j}{1+|df_s|_g^2}\right)\frac{\nabla_j\left(|df_s|_g^2\right)}{\sqrt{1+|df_s|_g^2}}\right)\\& \quad -\left(g^{ij}-\frac{(f_s)^i(f_s)^j}{1+|df_s|_g^2}\right)\frac{(f_s)^k(f_s)^l}{1+|df_s|_g^2}\frac{\Hess_{jk} (f_s) \Hess_{il}(f_s)}{\sqrt{1+|df_s|_g^2}}\\&\hspace{12pt}-\left(g^{ij}-\frac{(f_s)^i(f_s)^j}{1+|df_s|_g^2}\right)\left(g^{kl}-\frac{(f_s)^k(f_s)^l}{1+|df_s|_g^2}\right)\frac{\Hess_{jk}(f_s)\Hess_{il}(f_s)}{\sqrt{1+|df_s|_g^2}}\\
&= \frac{1}{2}\nabla_i\left(\frac{\gbar^{ij}\nabla_j (u_s)}{\sqrt{1+|df_s|_g^2}}\right)-\frac{|d(u_s)|^2_{\gbar}}{4(1+|df_s|_g^2)^{\frac{3}{2}}}-\frac{|\Hess (f_s)|^2_{\gbar}}{\sqrt{1+|df_s|_g^2}} \\
&\leq  \frac{1}{2}\nabla_i\left(\frac{\gbar^{ij}\nabla_j (u_s)}{\sqrt{1+|df_s|_g^2}}\right)
\end{split}
\end{equation*}
where $\gbar$ is the metric induced on the graph of the function $f_s : \Omega \to \bR$ in the product manifold $(M\times \bR, g+dt^2)$, cf. Section \ref{secJf}. Finally, we can estimate 
\begin{equation*}
-\left(g^{ij}-\frac{(f_s)^i(f_s)^j}{1+|df_s|_g^2} \right)s \nabla_k K_{ij}(f_s)^k\leq |\gbar^{ij} \nabla K_{ij}|_g |df_s|_g\leq C_2 (u_s)^{\frac{1}{2}}.
\end{equation*}
We conclude that $u_s=|df_s|_g^2$ satisfies the differential inequality
\begin{equation*}
\nabla_i(A^{ij}\nabla_j (u_s))+B^j \nabla_j (u_s)+C (u_s)^{\frac{1}{2}}\geq \tau u_s,
\end{equation*}
where $A^{ij}=\frac{\gbar^{ij}}{2\sqrt{1+|df_s|_g^2}}$ is nonnegative definite, $B^j=\frac{s\gbar^{ij}K_{il} (f_s)^l}{1+|df_s|_g^2}$ is bounded, and $C>0$ is a constant that only depends on the initial data $(M,g,K)$. If $u_s$ attains its maximum at an interior point $p\in \Omega$, then the above inequality implies that $C (u_s(p))^{\frac{1}{2}}\geq \tau u_s(p)$. Recalling the definition of $u_s$ we conclude that $\tau |df_s|_g( p)\leq \mu_2$  where $\mu_2$ depends only on the initial data $(M,g,K)$.

\vspace{0.5cm}

3) {\it Boundary gradient estimates.} The bounds for $|df_s|_g$ restricted to $\partial \Omega$ can be obtained by means of the so-called barrier method. This method is  described in \cite[Chapter 14]{GT}, and its application to the boundary value problem \eqref{eqAux1}-\eqref{eqAux2} is summarized in Appendix \ref{appBarMet}.

Since \eqref{eqTrap} holds,  by choosing $\tau>0$ to be sufficiently small, we may ensure that $H_{\partial \Omega}-|\tr_{\partial \Omega} K|-\tau |\phi|>0$. Using the function $\rho=\dist(\cdot,\partial \Omega)$ we can foliate a neighborhood $U$ of $\partial \Omega$ by the hypersurfaces $\mathcal{E}_\rho$ of constant $\rho$. If $\{x^1,x^2\}$ are coordinates on $\partial \Omega$ then $(\rho, x^1, x^2)$ are coordinates on $U$, and we can write the metric on $U$ as $g=d\rho^2+g_{\rho}$, where $g_\rho$ is the induced metric on $\mathcal{E}_{\rho}$. From now on it will be assumed that $U=\{0\leq \rho < \rho_0\}$, where $\rho_0>0$ is as small as to ensure that 
\begin{equation}\label{eqSupertrap}
H_{\mathcal{E}_\rho}-|\tr_{\mathcal{E}_\rho}K|-\tau|\phi|>0
\end{equation}
holds for any $\rho\in [0,\rho_0)$. 

We will show that for a sufficiently large constant $B>0$ the functions $\fbar=s \phi +B \rho$ and $\funder=s\phi -B\rho$ are boundary barriers for \eqref{eqAux1}-\eqref{eqAux2}, in the sense that they satisfy the conditions of Proposition \ref{propBoundBar}. The mean curvature of the hypersurfaces $\mathcal{E}_\rho$ computed with respect to the normal $\partial _\rho$ (chosen so that the orientations of $\partial \Omega$ and $\mathcal{E}_\rho$ agree) is
\begin{equation*}
H_{\mathcal{E}_\rho}=(g_{\rho})^{\mu\nu}\left(A_{\mathcal{E}_\rho}\right)_{\mu\nu}=(g_{\rho})^{\mu\nu}g((\nabla_\mu \partial_\nu)|_{\mathcal{E}_\rho},\partial_\rho)=(g_{\rho})^{\mu\nu}g(\Gamma^\rho_{\mu\nu}\partial_\rho,\partial_\rho)=(g_{\rho})^{\mu\nu}\Gamma_{\mu\nu}^{\rho}.
\end{equation*}
 Using the fact that $\Gamma_{\rho\rho}^{\rho}=\Gamma_{\rho\rho}^\mu=\Gamma_{\rho\mu}^\rho=0$, one computes
\begin{equation*}
\begin{split}
& H_g(s\phi\pm B\rho)\\ & \quad =\pm\frac{2B s^2\phi^\mu \Gamma_{\rho\mu}^\nu\phi_\nu}{(1+B^2+s^2|d\phi|^2_{g_\rho})^{\frac{3}{2}}}+\left((g_\rho)^{\mu\nu}-\frac{s^2\phi^\mu \phi^\nu}{1+B^2+s^2|d\phi|^2_{g_\rho}}\right)\frac{s\Hess_{\mu\nu}\phi\mp B\Gamma^\rho_{\mu\nu}}{\sqrt{1+B^2+s^2|d\phi|^2_{g_\rho}}} \\ & \quad = \mp H_{\mathcal{E}_\rho}+O(B^{-1}),
\end{split}
\end{equation*}
and 
\begin{equation*}
\begin{split}
\tr_g(K)(s\phi\pm B\rho) = & \left(1-\frac{B^2}{1+B^2+s^2|d\phi|^2_{g_\rho}}\right)K_{\rho\rho}\mp\frac{2Bs \phi^{\mu}K_{\rho\mu}}{1+B^2+s^2|d\phi|^2_{g_\rho}}\\ & \quad +(g_{\rho})^{\mu\nu}K_{\mu\nu} -\frac{s^2\phi^\mu\phi^\nu K_{\mu\nu}}{1+B^2+s^2|d\phi|^2_{g_\rho}}\\= & \tr_{\mathcal{E}_\rho}K+O(B^{-1}).
\end{split}
\end{equation*}
Consequently, in the view of \eqref{eqSupertrap}, we have 
\begin{equation*}
\begin{split}
H_g(\fbar)-s\tr_g (K)(\fbar)-t\fbar&=-H_{\mathcal{E}_\rho}-s\tr_{\mathcal{E}_\rho} K-\tau s\phi-\tau B\rho+O(B^{-1})\\&<-H_{\mathcal{E}_\rho}+\left|\tr_{\mathcal{E}_\rho} K \right|+\tau |\phi|+CB^{-1}<0,
\end{split}
\end{equation*}
and 
\begin{equation*}
\begin{split}
H_g(\funder)-s\tr_g (K)(\funder)-t\funder&=H_{\mathcal{E}_\rho}-s\tr_{\mathcal{E}_\rho} K-\tau s\phi+\tau B\rho+O(B^{-1})\\&>H_{\mathcal{E}_\rho}-\left|\tr_{\mathcal{E}_\rho} K \right|-\tau |\phi|-CB^{-1}>0,
\end{split}
\end{equation*}
for any $0\leq\rho<\rho_0$, provided that $B>0$ is large enough. Finally, recall that the functions $f_s$ are uniformly bounded in $C^0$ norm and satisfy \eqref{eqAux2}. Hence, by increasing $B$ if needed, we can ensure that $\funder<f_s <\fbar$ holds on $\mathcal{E}_{\rho_0}$. 

Since the first order partial derivatives of $\fbar$ and $\funder$ in $U$ are bounded by a constant independent of $s$, by Proposition \ref{propBoundBar} there exists a constant $\mu_3>0$ such that the uniform estimate $|df_s|_g<\mu_3$ holds on $\partial \Omega$.

4) {\it $C^{2,\alpha}$ bounds on $f_s$.} Let $x=(x^1,x^2,x^3)$ be coordinates on $\Omega$. We may write \eqref{eqAux1} as 
\begin{equation}\label{eqQuasilinear}
a^{ij}(x,Df_s) \partial^2_{ij}f_s+b(x,f_s,Df_s)=0,
\end{equation} 
where $Df_s$ denotes the Euclidean gradient of $f_s$,  and 
\begin{equation*}
\begin{split}
a^{ij}(x,Df_s)&=\left(g^{ij}-\frac{g^{ik}g^{jl}(f_s)_k(f_s)_l}{1+g^{kl}(f_s)_k(f_s)_l}\right)\frac{1}{\sqrt{1+g^{kl}(f_s)_k(f_s)_l}},\\
b(x,f_s,Df_s)&=-\left(g^{ij}-\frac{g^{ik}g^{jl}(f_s)_k(f_s)_l}{1+g^{kl}(f_s)_k(f_s)_l}\right)\left(\frac{\Gamma_{ij}^k(f_s)_k}{\sqrt{1+g^{kl}(f_s)_k(f_s)_l}}+sK_{ij}\right)-\tau f_s.
\end{split}
\end{equation*}
 Note that 2) and 3) imply that $|df_s|_g \leq \max \{\frac{\mu_2}{\tau},\mu_3 \}$. Suppose that $K_\tau$ is a positive constant depending on $\tau$ such that $\sup_{\Omega}|f_s|+\sup_{\Omega}|D f_s|<K_\tau$. Then the differential operator in the left hand side of \eqref{eqQuasilinear} is strictly elliptic with uniform ellipticity constant $\lambda_{K_\tau}$ for all $s$. It is also obvious that we can choose a constant $\mu_{K_\tau}$ so that 
\begin{equation*}
|a^{ij}(x,p)|+|\partial_{x^k}a^{ij}(x,p)|+|\partial_{p^k} a^{ij}(x,p)|+|b(x,z,p)|\leq \mu_{K_\tau}
\end{equation*} 
for $x\in\Omega$, $|z|+|p|<K_\tau$, and $k=1,2,3$.
From the fundamental global H\"older estimate of Ladyzhenskaya and Ural'tseva \cite[Theorem 13.7]{GT} we conclude that there exists $\beta=\beta(K_\tau,\mu_{K_\tau}/\lambda_{K_\tau},\Omega)$ such that $Df_s$ is bounded in $C^{0,\beta}$ norm by a constant $C=C(K_\tau,\mu_{K_\tau}/\lambda_{K_\tau},\Omega, \Phi)$, where $\Phi$ is a $C^{2,\alpha}$ norm of $\phi$. That is, $|Df_s|_{C^{0,\beta}}<C$ uniformly in $s$.

We are now in a position to treat the Jang equation as a linear elliptic equation for $f_s\in C^{2,\alpha}(\overline{\Omega})$, namely,
\begin{equation}\label{eqJangLinEll}
\alpha^{ij}\partial^2_{ij}f_s+\beta^i \partial_i f_s+\gamma f_s=F,
\end{equation}
where the coefficients 
\begin{eqnarray*}
\alpha^{ij}&=&\left(g^{ij}-\frac{(f_s)^i(f_s)^j}{1+|df_s|_g^2}\right)\frac{1}{\sqrt{1+|df_s|_g^2}},\\
\beta^k&=&-\left(g^{ij}-\frac{(f_s)^i(f_s)^j}{1+|df_s|_g^2}\right)\frac{\Gamma_{ij}^k}{\sqrt{1+|df_s|_g^2}},\\
\gamma&=&-\tau,\\
F&=&sK_{ij}\left(g^{ij}-\frac{(f_s)^i(f_s)^j}{1+|df_s|_g^2}\right)
\end{eqnarray*}
are uniformly bounded in $C^{0,\beta}(\overline{\Omega})$. Applying \cite[Theorem 6.6]{GT} we deduce that $f_s$ are uniformly bounded in $C^{2,\beta}(\overline{\Omega})$. Then $f_s$ are uniformly bounded in $C^{1,\alpha}(\overline{\Omega})$. One more application of  \cite[Theorem 6.6]{GT} completes the proof.
\end{proof} 

\begin{proof}[Proof of Theorem \ref{existenceBVP}]
The proof is very similar to \cite[Lemma 3]{PMT2} and consists in applying the continuity method to \eqref{eqAux1}-\eqref{eqAux2}. Let $S$ be the set of $s\in[0,1]$ such that \eqref{eqAux1}-\eqref{eqAux2} has a solution $f_s\in C^{2,\alpha}(\overline{\Omega})$. Clearly, $S$ is non-empty, since $0 \in S$. Hence if we show that $S$ is both open and closed in $[0,1]$ it will follow that $S=[0,1]$. 

That $S$ is closed is an immediate consequence of Lemma \ref{lemApriori}. Suppose that $s_n\in S$ for $n=1,2,\ldots$ are such that $s_n\rightarrow s$ as $n\rightarrow \infty$. By Lemma \ref{lemApriori} we have a uniform bound $\|f_{s_n}\|_{C^{2,\alpha}(\overline{\Omega})}\leq C$. Hence by Arzela-Ascoli theorem there is a subsequence of $f_{s_n}$ which converges uniformly along with its first and second derivatives to a limit $f_s$. Thus $s\in S$, so $S$ is closed.

That $S$ is open will follow from implicit function theorem. Consider a $C^1$ map 
\begin{equation*}
T:C^{2,\alpha}(\overline{\Omega})\times \bR\rightarrow C^{0,\alpha}(\overline{\Omega})\times C^{2,\alpha}(\partial \Omega) \times \bR
\end{equation*}
defined by
\begin{equation*}
T(f,s)=(H_g(f)-s\tr_g(K)(f)-\tau f, f|_{\partial \Omega}-s\phi,s).
\end{equation*} 
Suppose that $s_0\in S$ and that $f_0\in C^{2,\alpha}(\overline{\Omega})$ is the respective solution of  \eqref{eqAux1}-\eqref{eqAux2}. The linearization of $T$ at $(f_0,s_0)$ is a map  
\begin{equation}\label{eqLinearization}
\cL_{(f_0,s_0)} T:  C^{2,\alpha}(\overline{\Omega})\times \bR\rightarrow C^{0,\alpha}(\overline{\Omega})\times C^{2,\alpha}(\partial \Omega) \times \bR
\end{equation}
given by
\begin{equation*}
\cL_{(f_0,s_0)} T\,(\eta,\varsigma)=\left(\cL_{(f_0,s_0)}\left(H_g(f)-s\tr_g(K)(f)-\tau f\right)(\eta,\varsigma),\eta|_{\partial \Omega}-\varsigma \phi,\varsigma \right).
\end{equation*}
It is straightforward to check that
\begin{multline*}
\cL _{(f_0,s_0)}\left(H_g(f)-s\tr_g(K)(f)\right)(\eta,\varsigma) \\=G^{ij} \Hess_{ij}\eta+\left(\nabla_j G^{kj}+2s_0G^{kj}(f_0)^iK_{ij}(1+|df_0|_g^2)^{-\frac{1}{2}}\right)\eta_k-\varsigma \tr_g(K)(f_0) 
\end{multline*}
where 
\begin{equation*}
G^{ij}= \left(g^{ij}-\frac{(f_0)^i(f_0)^j}{1+|df_0|_g^2}\right)\frac{1}{\sqrt{1+|df_0|_g^2}}.
\end{equation*}
By standard theory for linear elliptic equations (see e.g. \cite[Theorem 6.14]{GT}), for any $\varsigma\in [0,1]$, $\Xi\in C^{0,\alpha}(\overline{\Omega})$, and $\xi \in C^{2,\alpha}(\partial \Omega)$ there exists a unique solution $\eta \in C^{2,\alpha}(\overline{\Omega})$ to the boundary value problem
\begin{eqnarray*}
G^{ij}\partial^2_{ij}\eta+H^k\partial_k\eta-\tau\eta  = & \Xi + \varsigma \tr_g(K)(f_0)  \quad &\text{in} \quad \Omega \\
\eta  = & \varsigma \phi+\xi \quad &\text{on} \quad \partial \Omega
\end{eqnarray*}
where 
\begin{equation*}
H^k = \nabla_j G^{kj}+2s_0 G^{kj}(f_0)^iK_{ij}(1+|df_0|_g^2)^{-\frac{1}{2}} - G_{ij} \Gamma^k_{ij}.
\end{equation*}
Consequently, the map \eqref{eqLinearization} is an isomorphism and by the implicit function theorem there is an interval $I=(s_0-\delta,s_0+\delta)\subseteq [0,1]$ such that for every $s\in I$ there is a solution $f_s \in C^{2,\alpha}(\overline{\Omega})$ to the boundary value problem \eqref{eqAux1}-\eqref{eqAux2}. Hence $S$ is open.

The existence of $f \in C^{2,\alpha}(\overline{\Omega})$ satisfying \eqref{eqBVP1}-\eqref{eqBVP2} is thereby proven.  That $f\in C^{3,\alpha}(\Omega) $ follows at once by interior Schauder estimates (see e.g. \cite[Theorem 6.17]{GT}) applied to \eqref{eqJangLinEll} with $s=1$.
\end{proof}

The following elementary lemma provides an example of a domain $\Omega$ in an asymptotically hyperbolic initial data set $(M,g,K)$ such that the condition  \eqref{eqTrap} is satisfied. 

\begin{lemma}\label{lemLargeSphere}
Let $(M,g,K)$ be an asymptotically hyperbolic initial data set in the sense of Definition \ref{defAHdata}. If $R>0$ is sufficiently large, then $H_{\partial\mathcal{B}_R}-|\tr_{\partial\mathcal{B}_R}K|>0$ holds for $\mathcal{B}_R=M \setminus \{r \geq R\}$.
\end{lemma}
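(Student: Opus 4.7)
The plan is a direct asymptotic expansion of both $H_{\partial \mathcal{B}_R}$ and $\tr_{\partial \mathcal{B}_R} K$ in inverse powers of $R$ using Wang's asymptotics. The underlying intuition is the model computation in the exact hyperbolic background: the coordinate sphere $\{r = R\}$ in $(\bH^3, b)$ has mean curvature $\frac{2\sqrt{1+R^2}}{R} = 2 + R^{-2} + O(R^{-4})$, while the trace of $b$ along its own tangent directions is identically $2$. This produces a universal positive gap of order $R^{-2}$, and the asymptotically hyperbolic and ``hyperboloidal'' corrections from Definition \ref{defAHdata} are both of order $R^{-3}$, so they cannot destroy the leading gap.

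Concretely, I would first observe that in the chart at infinity $\Phi$ from Definition \ref{defAHdata} there are no radial-angular cross terms in $g$, so $\partial_r$ is $g$-orthogonal to $\partial \mathcal{B}_R$ and the outward unit normal is $N = \sqrt{1+r^2}\,\partial_r$. From the induced-metric expansion $\bar{g}_{\mu\nu}(R) = R^2\sigma_{\mu\nu} + R^{-1}\mathbf{m}_{\mu\nu} + O(R^{-2})$, matrix inversion yields
\[
\bar{g}^{\mu\nu}(R) = R^{-2}\sigma^{\mu\nu} - R^{-5}\mathbf{m}^{\mu\nu} + O(R^{-6}),
\]
with indices on $\mathbf{m}$ raised by $\sigma$. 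The second fundamental form is then computed from
\[
h_{\mu\nu} = \tfrac{1}{2}(\mathcal{L}_N g)_{\mu\nu} = \tfrac{\sqrt{1+r^2}}{2}\,\partial_r \bar{g}_{\mu\nu},
\]
and, after tracing and using $\sqrt{1+R^2} = R + \tfrac{1}{2}R^{-1} + O(R^{-3})$, one obtains
\[
H_{\partial \mathcal{B}_R} = 2 + R^{-2} - \tfrac{3}{2}R^{-3}\tr^\sigma \mathbf{m} + O(R^{-4}).
\]
The analogous contraction for $K$, using $K_{\mu\nu}|_{T\bS^2 \times T\bS^2} = R^2\sigma_{\mu\nu} + R^{-1}\mathbf{p}_{\mu\nu} + O(R^{-2})$ from Wang's asymptotics, gives
\[
\tr_{\partial \mathcal{B}_R} K = \bar{g}^{\mu\nu}K_{\mu\nu} = 2 + R^{-3}\bigl(\tr^\sigma \mathbf{p} - \tr^\sigma \mathbf{m}\bigr) + O(R^{-4}).
\]

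Subtracting the two expansions yields
\[
H_{\partial \mathcal{B}_R} - \tr_{\partial \mathcal{B}_R} K = R^{-2} - R^{-3}\bigl(\tfrac{1}{2}\tr^\sigma \mathbf{m} + \tr^\sigma \mathbf{p}\bigr) + O(R^{-4}) = R^{-2}\bigl(1 + O(R^{-1})\bigr),
\]
which is strictly positive for all $R$ larger than some $R_0$ depending only on $\mathbf{m}$ and $\mathbf{p}$. Since $\tr_{\partial \mathcal{B}_R}K = 2 + O(R^{-3}) > 0$ for $R$ large, the absolute value in the statement is redundant and the conclusion $H_{\partial \mathcal{B}_R} - |\tr_{\partial \mathcal{B}_R}K| > 0$ follows. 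There is no real obstacle beyond careful bookkeeping of the $O(R^{-4})$ remainders; these are controlled uniformly on $\bS^2$ since $\mathbf{m}$ and $\mathbf{p}$, together with the implicit $O(r^{-4})$ and $O(r^{-2})$ remainders in Definition \ref{defAHdata}, live in weighted H\"older spaces and $\bS^2$ is compact.
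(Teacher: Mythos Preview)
Your proof is correct and follows exactly the same approach as the paper: a direct asymptotic expansion yielding $H_{\partial\mathcal{B}_R} = 2 + R^{-2} + O(R^{-3})$ and $\tr_{\partial\mathcal{B}_R} K = 2 + O(R^{-3})$, from which the claim is immediate. The paper merely records these two expansions without showing the intermediate steps you have written out.
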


\begin{proof}
A computation shows that the mean curvature of $\partial \mathcal{B}_R$ is
\begin{equation*}
\begin{split}
H_{\partial\mathcal{B}_R}
= 2 + R^{-2} + O(R^{-3})
\end{split}
\end{equation*}
and
\begin{equation*}
\begin{split}
\tr_{\partial\mathcal{B}_R} K
=2+O(R^{-3}),
\end{split}
\end{equation*}
hence $H_{\partial\mathcal{B}_R}-|\tr_{\partial\mathcal{B}_R}K|>0$ for a sufficiently large $R>0$.
\end{proof}

Combining Theorem \ref{existenceBVP} and Lemma \ref{lemLargeSphere} we have

\begin{proposition}\label{propExBVP}
Let $(M,g,K)$ be an asymptotically hyperbolic initial data set with Wang's asymptotics as in Definition \ref{defAHdata}. Let $f_-,f_+:\{r\geq r_0\}\rightarrow \bR$ be the barrier functions as in Proposition \ref{propBarExist}. Given a sufficiently large $R>r_0$ and a sufficiently small $\tau \in (0,1)$, for any $\phi_R\in C^{2,\alpha}(\partial\mathcal{B}_R)$ such that $f_-\leq\phi_R\leq f_+$ on $\partial\mathcal{B}_R$, there exists a solution $f\in C^{2,\alpha}(\overline{\mathcal{B}}_R)\cap C^3(\mathcal{B}_R)$ to the boundary value problem 
\begin{subequations}
\begin{eqnarray}
H_g(f)-\tr_g(K)(f)=\tau f &\text{ in }& \mathcal{B}_R \label{eqBall1} \\
f= \phi_{R} &\text{ on }& \partial\mathcal{B}_R \label{eqBall2}
\end{eqnarray} 
\end{subequations}
such that $f_-\leq f \leq f_+$ on $\{r_0\leq r\leq R\}$.
\end{proposition}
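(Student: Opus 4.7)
The proof has two parts: an existence step via Theorem \ref{existenceBVP}, and a barrier comparison step via the maximum principle. For the first, Lemma \ref{lemLargeSphere} yields $H_{\partial \mathcal{B}_R} - |\tr_{\partial \mathcal{B}_R} K| > 0$ whenever $R$ is sufficiently large, which is exactly the trapping condition \eqref{eqTrap}. Applying Theorem \ref{existenceBVP} with $\Omega = \mathcal{B}_R$ and boundary data $\phi_R$ thus produces, for all sufficiently small $\tau \in (0,1)$, a solution $f \in C^{2,\alpha}(\overline{\mathcal{B}}_R) \cap C^3(\mathcal{B}_R)$ of \eqref{eqBall1}--\eqref{eqBall2}. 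It remains to verify the pointwise bound $f_- \leq f \leq f_+$ on the region $\{r_0 \leq r \leq R\}$.

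For the upper inequality I would set $h \definedas f - f_+$ and argue by contradiction, assuming $\sup_{\{r_0 \leq r \leq R\}} h > 0$. On the outer sphere $\{r = R\}$, $h = \phi_R - f_+ \leq 0$ by hypothesis on $\phi_R$, so the supremum is not attained there. On the inner sphere $\{r = r_0\}$ (which lies in the interior of $\mathcal{B}_R$, so $\partial_r f$ is bounded there), the condition $(\partial_r f_+)|_{r = r_0} = -\infty$ from \eqref{eqBar1} forces $\partial_r h = +\infty$ at $r = r_0$; hence $h(r_0 + \delta, \cdot) > h(r_0, \cdot)$ for all sufficiently small $\delta > 0$, ruling out a maximum on $\{r = r_0\}$. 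Therefore the supremum is attained at some interior point $x^* \in \{r_0 < r < R\}$, where $Df(x^*) = Df_+(x^*)$ and $\partial_{ij} h(x^*)$ is negative semidefinite.

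Since the Jang operator $\mathcal{J}(f) = H_g(f) - \tr_g(K)(f)$ depends on $f$ only through $df$ (in its coefficients and denominator) and linearly through $\Hess^g f$, while the $K$-term has no direct dependence on $f$, the coincidence $Df = Df_+$ at $x^*$ gives
\[
\mathcal{J}(f)(x^*) - \mathcal{J}(f_+)(x^*) = \frac{\bar{g}^{ij}(x^*)\,\partial_{ij} h(x^*)}{\sqrt{1+|df_+|_g^2(x^*)}} \leq 0,
\]
where $\bar{g}^{ij} \definedas g^{ij} - f_+^i f_+^j / (1+|df_+|_g^2)$ is positive definite and I used $\Hess^g_{ij} h(x^*) = \partial_{ij} h(x^*)$ since $Dh(x^*) = 0$. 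Substituting $\mathcal{J}(f)(x^*) = \tau f(x^*)$ and using the strict barrier inequality $\mathcal{J}(f_+) < 0$ from Definition \ref{defBarriers}, one obtains $f(x^*) \leq \mathcal{J}(f_+)(x^*)/\tau$, which for $\tau$ sufficiently small drops below $\inf_{\{r_0 \leq r \leq R\}} f_+$, contradicting $f(x^*) > f_+(x^*)$. The inequality $f \geq f_-$ follows by an entirely symmetric argument using $\mathcal{J}(f_-) > 0$ and $(\partial_r f_-)|_{r = r_0} = +\infty$.

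The main technical subtlety is that the strict barrier inequalities $\mathcal{J}(f_\pm) \lessgtr 0$ are constructed in Proposition \ref{propBarExist} for the unregularized Jang equation, and must be reconciled with the $\tau f$ term in the regularized equation; this is what the ``sufficiently small $\tau$'' hypothesis in the statement buys, in addition to its role in Theorem \ref{existenceBVP}. The infinite-gradient conditions \eqref{eqBar1} in Definition \ref{defBarriers} are tailored precisely to exclude extrema of $h$ from the singular sphere $\{r = r_0\}$, which is the other key ingredient making the maximum principle argument go through.
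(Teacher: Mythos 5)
Your argument is correct and follows essentially the same route as the paper: existence from Theorem \ref{existenceBVP} combined with Lemma \ref{lemLargeSphere}, and then the Bernstein/maximum-principle comparison of $f$ with each barrier (excluding the sphere $\{r=r_0\}$ via the gradient blow-up in \eqref{eqBar1}, then exploiting the coincidence of first derivatives at an interior extremum to reduce to the sign of $\bar{g}^{ij}\,\partial_{ij}h$), which is precisely what the paper invokes by reference to the proof of Proposition \ref{propBarExist}. One small imprecision worth noting: since the normalized barriers are strictly positive on $\{r_0\leq r\leq R\}$, the contradiction $\tau f(x^*)\leq \mathcal{J}(f_+)(x^*)<0$ versus $f(x^*)>f_+(x^*)>0$ holds for every $\tau>0$, so the genuine smallness constraint on $\tau$ arises only on the lower-barrier side, where one needs $\tau f_-\leq \mathcal{J}(f_-)$ on the compact annulus.
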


\begin{proof}
The existence of $f\in C^{2,\alpha}(\overline{\mathcal{B}}_R)\cap C^3(\mathcal{B}_R)$ follows from Theorem \ref{existenceBVP} and Lemma \ref{lemLargeSphere}, so we only need to confirm that $f_-\leq f \leq f_+$ holds on $\{r_0\leq r\leq R\}$.
Note that $f_+$ and $f_-$ satisfy \eqref{eqBar2} and are bounded on $\{r_0\leq r\leq R\}$. Hence by assuming that $\tau>0$ is sufficiently  small we can ensure that
\begin{subequations}
\begin{eqnarray}
H_g(f_+)-\tr_g(K)(f_+)-\tau f_+&<& 0 \label{eqBarUp}, \\
H_g(f_-)-\tr_g(K)(f_-)-\tau f_-&>& 0 \label{eqBarLow}.
\end{eqnarray} 
\end{subequations}
Combining \eqref{eqBall1} with \eqref{eqBarUp}-\eqref{eqBarLow} we may argue as in the proof of Proposition \ref{propBarExist} (see also \cite[Proposition 3]{PMT2}) to show that $f_- \leq f \leq f_+$ on $\{r_0\leq r\leq R\}$. 
\end{proof}

\section{The existence of a geometric solution}\label{secExistence}
In this section we construct a \emph{geometric} solution of the Jang equation with respect to asymptotically hyperbolic initial data $(M,g,K)$ which is assumed to have Wang's asymptotics as in Definition \ref{defAHdata} with $l\geq 5$. By a geometric solution we mean a properly embedded complete $C^3$ hypersurface $\Sigma \subset M\times \bR$ satisfying the prescribed mean curvature equation $H^\Sigma=\tr^\Sigma K$ where $K$ is extended parallelly along the $\bR$-factor as described in Section \ref{secJf}.  The existence and properties of the constructed geometric solution are summarized in Theorem \ref{thJangGraph}. The theorem is proven by suitably modifying the respective construction in the asymptotically Euclidean case that was carried out in \cite{PMT2}.  Alternatively, one could rely on the geometric measure theory based methods as in \cite{EichmairJang}. However, we choose not to discuss these methods here  as  the less technical argument of \cite{PMT2} suffices for our purposes.

The main ingredient of the proof are  the so-called local parametric estimates for graphical hypersurfaces in $M\times \bR$ whose graphing functions $f:\Omega\rightarrow\bR$ are defined by the boundary value problem
\begin{subequations}
\begin{eqnarray}
H_g(f)-\tr_g(K)(f)=\tau f &\text{ in }& \Omega \label{JangCap1}\\
f=\phi &\text{ on }& \partial\Omega \label{JangCap2}
\end{eqnarray} 
\end{subequations}
where $\Omega \subset M$ and $\phi$ are as in Theorem \ref{existenceBVP}.  These estimates are obtained in Proposition \ref{propLocParam}. In Section \ref{subsecLimit} we apply these estimates to prove the existence of a geometric solution. Let us briefly outline the main idea of the construction. From the proof of Lemma \ref{lemApriori} we know that if $f$ is a solution of \eqref{JangCap1}-\eqref{JangCap2} then $\tau|f|\leq \max\{C_1, \tau C_2\}$, where $C_1$ depends only on $(M,g,K)$ while $C_2$ might also depend on $\Omega$ and $\phi$. Consequently, if we choose $\tau$ so that  $\tau \in (0, C_2^{-1})$ then $\tau |f| \leq \mu_1$ in $\Omega$ for some $\mu_1$ depending only on $(M,g,K)$. For similar reasons we may assume that $\tau|df|_g\leq \mu_2$ holds in $\Omega$ for $\mu_2$ depending only on $(M,g,K)$. Now consider a sequence  $\{R_n\}_{n\in\mathbb{N}}$  such that $R_n>r_0$ and $R_n \to \infty$ as $n\to \infty$. For every $n\in\mathbb{N}$ we choose $\phi_n \in C^{2,\alpha}(\partial \mathcal{B}_{R_n})$ so that $f_-\leq \phi_n \leq f_+$. In the view of the above discussion we can choose $\tau_n$  so that $\tau_n \searrow 0$ as $n\to \infty$, and  $\tau_n |f|\leq \mu_1$, $\tau_n |df|_g \leq \mu_2$ holds in $\mathcal{B}_{R_n}$ for $\mu_1$ and $\mu_2$ depending only on $(M,g,K)$.  Such a choice of $\tau_n$ ensures that the solutions of the boundary value problems
\begin{eqnarray*}
H_g(f)-\tr_g(K)(f)=\tau_n f &\text{ in }& \mathcal{B}_{R_n} \\
f= \phi_n &\text{ on }& \partial\mathcal{B}_{R_n}
\end{eqnarray*} 
satisfy  the local parametric estimates of Proposition \ref{propLocParam} with uniform constants depending only on $(M,g,K)$. With these estimates at hand one can study the limit of the respective solutions $f_n$ as $n\to \infty$. This limit might blow up/down inside the compact set  where the barriers are not defined, but wherever the barriers are defined the limit is graphical  and is trapped between the barriers. 

\subsection{Setup}

When $l\geq 5$ in the Definition \ref{defAHdata} the manifold $(M\times\bR, \ghat = g+dt^2)$ admits \emph{uniformly controlled normal coordinates}, see e.g. \cite[Lemma V.3.4]{SY-LN}. More specifically, there exists $\rho_0>0$ such that at every point $p\in M\times \bR$ there is a normal coordinate chart
\begin{equation}\label{eqNorCoor}
\varphi: M\times \bR \supset B_{\rho_0}^4(p)\rightarrow B^4_{\rho_0}(0) \subset \bR^4: q \mapsto (y^1(q),y^2(q),y^3(q),y^4(q)),
\end{equation}
such that $\varpi = \varphi_*\ghat - \delta$ satisfies 
\begin{equation}\label{eqUnifEucl}
\sup_{x\in B^4_{\rho_0}(0)} \left(|x|^{-2}|\varpi|+|x|^{-1}|\partial \varpi|+|\partial^2 \varpi| + |\partial^3 \varpi| \right)\leq C
\end{equation}
for a constant $C>0$ independent of $p$, where $\delta$ denotes the Euclidean metric on $\bR^4$ and $\partial$ stands for the respective coordinate derivatives. Let $\ghat_{ab}$ denote the components of $\ghat$ in the described normal coordinates, that is $\varphi_*\ghat = \sum_{a,b=1}^4 \ghat_{ab} dy^a dy^b$, and we will write $\partial_a =\partial_{y^a}$. In this section we let the indices $a,b,\ldots$ run from 1 to 4, and the indices $i,j,\ldots$ run from 1 to 3. 

Given a $C^{3,\alpha}_{loc}$ graphical hypersurface $\Sigma \subset M\times \bR$ and  $p\in \Sigma$ we may without loss of generality assume that the tangent space to $\Sigma$ at $p$ corresponds to the coordinate slice $\{y^4=0\}$. In this case, $\Sigma$ can be locally written as the graph of a function $w=w(y)$ where $y=(y^1,y^2,y^3)$. We will call $w$ a \emph{local defining function} and denote its domain by $D_w$. \emph{Local parametric estimates} to be obtained in Section \ref{subLocParam} are certain uniform estimates for defining functions.


\subsection{Local parametric estimates}\label{subLocParam}

A key ingredient for deriving local parametric estimates is the $C^0$-bound on the second fundamental form of $\Sigma$.

\begin{proposition}\label{propBound2FF}
Let $\Sigma$ be a hypersurface given as the graph of $f:\Omega\rightarrow\bR$, where $f$ is a solution to the boundary value problem \eqref{JangCap1}-\eqref{JangCap2}, and suppose that $\tau|f|\leq \mu_1$,  $\tau|df|_g\leq \mu_2$, where $\mu_1$ and $\mu_2$ depend only on $(M,g,K)$. Let $A$ denote the second fundamental form of $\Sigma$. Then for any sufficiently small $\rho>0$ there exists a constant $C>0$ depending only on $\rho$ and $(M,g,K)$
such that for any $p=(x,f(x))\in \Sigma$ with $\dist (x,\partial \Omega)\geq \rho$ we have $|A|^2(p)\leq C$.
\end{proposition}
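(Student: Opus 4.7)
The plan is to combine a Simons-type identity for $|A|^2$ on $\Sigma$ with the prescribed mean curvature equation $H^\Sigma = \tr^\Sigma K + \tau f$, and then extract a pointwise estimate by a cutoff/maximum-principle argument in the base. The hypothesis $\tau|f|\leq\mu_1$ and $\tau|df|_g\leq\mu_2$ is used precisely so that the ``regularization'' contribution $\tau f$ and its tangential gradient $\tau df|_{\Sigma}$ are controlled by constants depending only on $(M,g,K)$; likewise, since $K$ is a fixed $C^{l-1,\beta}$ tensor on $M$, the quantities $\tr^\Sigma K$, $\nabla^\Sigma(\tr^\Sigma K)$ and $(\hess^\Sigma)(\tr^\Sigma K)$ are all controlled in terms of $|A|$ and the initial data (with factors at most linear in $|A|$ arising when an ambient derivative is decomposed into a tangential part and a normal part picking up a factor of $A$).

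First I would apply Simons' identity for a hypersurface in the product $(M\times \bR,\hat g)$:
\begin{equation*}
\tfrac{1}{2}\Delta^\Sigma |A|^2
=|\nabla^\Sigma A|^2+\langle A,\hess^\Sigma H\rangle+H\,\tr(A^3)-|A|^4+\mathcal R(A,\hat{\riem}),
\end{equation*}
where $\mathcal R$ collects terms linear in the ambient Riemann tensor $\hat{\riem}$ and quadratic in $A$. Substituting $H=\tr^\Sigma K+\tau f$ and using the $C^0$ bounds on $\tr^\Sigma K$ and $\tau f$ together with the gradient bound $\tau|df|_g\leq \mu_2$, one sees that $\hess^\Sigma H$ schematically is $O(1+|A|+|A|^2)$ with constants depending only on the $C^2$ data of $K,g$ and on $\mu_1,\mu_2$. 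Absorbing the indefinite terms by Cauchy--Schwarz and using Kato's inequality $|\nabla^\Sigma A|^2\geq |\nabla^\Sigma |A||^2$, this produces a differential inequality of the form
\begin{equation*}
\Delta^\Sigma |A|^2\geq -C_1-C_2|A|^2-C_3|A|^4+|\nabla^\Sigma A|^2,
\end{equation*}
with $C_1,C_2,C_3$ depending only on $(M,g,K),\mu_1,\mu_2$.

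Next I would choose a cutoff $\eta$ on the base manifold $M$ supported in a metric ball of radius $\rho/2$ around $x$, equal to $1$ on the ball of radius $\rho/4$, lifted to $\Sigma$ via the projection $\pi_M:\Sigma\to M$; since $\pi_M$ is Lipschitz with constant $1$, the tangential gradient $|\nabla^\Sigma(\eta\circ\pi_M)|\leq |\nabla^M \eta|$ is bounded in terms of $\rho$ only. The test function to consider is
\begin{equation*}
Q\definedas \frac{\eta^2|A|^2}{1-\kappa|A|^2},
\end{equation*}
for a small $\kappa>0$ to be chosen, in the spirit of Ecker--Huisken; alternatively one can run a Moser iteration on $\eta^{2p}|A|^{2p}$. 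At an interior maximum point $p^*\in\Sigma$ of $Q$, applying the maximum principle and using the differential inequality above forces $\kappa|A|^2(p^*)$ to be bounded away from $1$, and hence $\eta^2|A|^2(p^*)$ to be bounded by a constant depending only on $\rho$ and $(M,g,K)$. Evaluating $Q$ at the point over $x$ then yields the desired estimate $|A|^2(p)\leq C(\rho,M,g,K)$.

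The main obstacle is the quartic term $-|A|^4$ in Simons' identity combined with the $H\,\tr(A^3)$ cubic term, which prevents a naive maximum-principle argument: this is precisely the reason for introducing the Ecker--Huisken-type singular test function or the Moser iteration, rather than simply $\eta^2|A|^2$. A secondary technical point is that $\eta$ is a function on $M$ rather than on $\Sigma$, so commutators $[\Delta^\Sigma,\eta\circ\pi_M]$ involve the tangential part of $\nabla^M\eta$ and pick up at most one factor of $A$ via the shape operator relations; keeping careful track of these contributions so that they can be absorbed by the $|\nabla^\Sigma A|^2$ term is the computationally delicate part, but it follows the now-standard template of \cite{PMT2} with only cosmetic changes to accommodate the capillarity term $\tau f$.
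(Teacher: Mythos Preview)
Your outline captures the architecture of the argument in \cite{PMT2} (which is all the paper cites here): a Simons-type identity for $|A|^2$, control of $\Hess^\Sigma H$ through the equation $H=\tr^\Sigma K+\tau f$ together with the bounds $\tau|f|\le\mu_1$, $\tau|df|_g\le\mu_2$, and a cutoff maximum-principle on the base. You also correctly flag the quartic term $-|A|^4$ as the real obstacle. The gap is in the mechanism you propose to absorb it. With your inequality $\Delta^\Sigma|A|^2\ge |\nabla^\Sigma A|^2-C_3|A|^4-\ldots$, the test function $Q=\eta^2|A|^2/(1-\kappa|A|^2)$ (where $\eta$ is merely a base cutoff) gives at an interior maximum only $0\ge\Delta^\Sigma Q\ge -C\eta^2|A|^4/(1-\kappa|A|^2)^2+\ldots$, which places no upper bound on $|A|^2$: the quartic term has the wrong sign to force a contradiction, and the extra positive term $2\kappa|\nabla|A|^2|^2/(1-\kappa|A|^2)^3$ coming from the singular denominator, once you substitute the first-order condition $\nabla Q=0$, is only of order $|A|^2|\nabla\eta|^2$ and cannot compete with $|A|^4$. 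The Ecker--Huisken device works in the parabolic setting because the \emph{tilt} (not a cutoff) appears in the test function and its time evolution contributes a favorable $-|A|^2$ factor; there is no analogue of that coming from a spatial cutoff $\eta$.

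What \cite{PMT2} actually exploits---and what your sketch omits---is precisely the vertical tilt $\langle\nu,\partial_t\rangle=(1+|df|_g^2)^{-1/2}$. As recalled in the present paper in the proof of Proposition~\ref{propLocParam} (see \cite[(2.28)]{PMT2}), this function satisfies a Jacobi-type equation
\[
\Delta^{\gbar}\langle\nu,\partial_t\rangle+\bigl(|A|^2+\tr_\Sigma\widehat R(\nu,\cdot,\nu,\cdot)+\nu H\bigr)\langle\nu,\partial_t\rangle=0,
\]
and it is exactly the coefficient $+|A|^2$ here that cancels the $-|A|^4$ from Simons when one weights the test function by a suitable power of the tilt. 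After that cancellation the remaining terms are at most cubic in $|A|$ and the cutoff maximum-principle goes through. So the correction is not to your strategy but to your weight: replace the singular denominator $1/(1-\kappa|A|^2)$ by a power of $\langle\nu,\partial_t\rangle$ and the argument closes along the lines of \cite[Proposition~1]{PMT2}. Your alternative of Moser iteration would need an a priori $L^p$ bound on $|A|$ (normally supplied by a stability inequality), which you have not provided.
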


\begin{proof}
See \cite[Appendix E]{SakovichThesis} where the proof of \cite[Proposition 1]{PMT2} is adapted to the current setting. Since the required modifications are minor we choose not to include this rather lengthy proof here.
\end{proof}

The following result is stated in \cite{PMT2} in the case when $\Omega=M$. Even though this result appears to be standard, we include its proof as it seems difficult to find it in the literature, and since we will refer to it later in the text.

\begin{lemma}\label{lemDefFuncIneq}
For every sufficiently small $\rho>0$ and $\rho_0>0$ there exists a constant $C>0$ depending only on $(M,g,K)$, $\rho$ and $\rho_0$ such that the inequality 
\begin{equation}\label{eqDefFuncIneq}
C(1+|\partial w|^2)^3 \geq \sum_{i,j=1}^3 (\partial_i \partial _j w)^2
\end{equation}
holds  on $D_w \cap \{|y|<\rho_0\}$ for every $p=(x,f(x))\in \Sigma$ such that $\dist (x,\partial \Omega)\geq \rho$. 
\end{lemma}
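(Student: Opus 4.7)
My plan is to translate the $C^0$ bound on the second fundamental form provided by Proposition \ref{propBound2FF} into the claimed pointwise estimate on $\partial^2 w$. The crucial structural fact is that in the uniformly controlled normal coordinates \eqref{eqNorCoor} centered at $p=(x,f(x))$, the metric $\ghat$ differs from the Euclidean metric $\delta$ by terms satisfying \eqref{eqUnifEucl}, so that on the ball $\{|y|<\rho_0\}$ both $\varpi=\varphi_*\ghat-\delta$ and its first derivatives can be made as small as we wish by choosing $\rho_0$ small. Shrinking $\rho_0\leq\rho/2$ also ensures that every point $q=(y,w(y))$ with $y\in D_w\cap\{|y|<\rho_0\}$ projects to a point of $M$ at distance at least $\rho/2$ from $\partial\Omega$, so Proposition \ref{propBound2FF} yields a uniform bound $|A|_{\gbar}^2(q)\leq C$ with $C$ independent of $q$.

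First I would express the second fundamental form of the graph in the frame $e_i=\partial_i+(\partial_i w)\partial_4$. With $F(y,t)=t-w(y)$ and the unit normal $N=\nabla^{\ghat}F/|\nabla^{\ghat}F|_{\ghat}$, a direct computation gives
\[
A_{ij}=\ghat(\nabla_{e_i}e_j,N)=\frac{\partial_i\partial_j w}{|\nabla^{\ghat}F|_{\ghat}}+R_{ij},
\]
where $R_{ij}$ collects the contributions from the Christoffel symbols of $\ghat$ and is a finite sum of terms of the form (Christoffel symbol)$\cdot$(monomial of degree $\leq 2$ in $\partial w$)$/|\nabla^{\ghat}F|_{\ghat}$, with \emph{no} second derivatives of $w$. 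The bounds \eqref{eqUnifEucl} then give $|\nabla^{\ghat}F|_{\ghat}^2=1+|\partial w|^2+O(\rho_0^2(1+|\partial w|^2))$, so $|\nabla^{\ghat}F|_{\ghat}^2$ is pinched between $c_0(1+|\partial w|^2)$ and $C_0(1+|\partial w|^2)$, and $|R_{ij}|\leq C_1(1+|\partial w|^2)$ for constants depending only on $(M,g,K)$ and $\rho_0$.

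Next I would compare the naive squared norm $\sum_{i,j} A_{ij}^2$ with the intrinsic one $|A|_{\gbar}^2=\gbar^{ik}\gbar^{jl}A_{ij}A_{kl}$. The induced metric $\gbar_{ij}=\ghat(e_i,e_j)$ is a small perturbation (again by \eqref{eqUnifEucl}) of its Euclidean counterpart $\delta_{ij}+(\partial_i w)(\partial_j w)$, whose inverse has eigenvalues $1$ (with multiplicity two) and $(1+|\partial w|^2)^{-1}$ in the direction of $\partial w$. Propagating this to $\gbar^{-1}\otimes\gbar^{-1}$ acting on symmetric $2$-tensors yields $|A|_{\gbar}^2\geq c_2(1+|\partial w|^2)^{-2}\sum_{i,j}A_{ij}^2$, that is $\sum_{i,j}A_{ij}^2\leq C_2(1+|\partial w|^2)^2|A|_{\gbar}^2$.

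Combining everything at an arbitrary point of $D_w\cap\{|y|<\rho_0\}$,
\[
\sum_{i,j}(\partial_i\partial_j w)^2 \leq 2|\nabla^{\ghat}F|_{\ghat}^2 \sum_{i,j}(A_{ij}^2+R_{ij}^2) \leq C(1+|\partial w|^2)^3|A|_{\gbar}^2 + C(1+|\partial w|^2)^3,
\]
and the uniform bound $|A|_{\gbar}^2\leq C$ from Proposition \ref{propBound2FF} closes the argument. The main obstacle is the bookkeeping of the lower-order perturbation terms $R_{ij}$ produced by the non-vanishing Christoffel symbols of $\ghat$ away from $y=0$; the exponent $3$ on the right-hand side of \eqref{eqDefFuncIneq} is forced by combining the factor $(1+|\partial w|^2)$ coming from $|\nabla^{\ghat}F|_{\ghat}^2$ with the factor $(1+|\partial w|^2)^2$ arising from the smallest eigenvalue of $\gbar^{-1}\otimes\gbar^{-1}$ acting on symmetric $2$-tensors.
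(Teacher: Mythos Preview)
Your proposal is correct and is essentially the paper's own argument: both use the eigenvalue estimate $\gbar_{ij}\le C(1+|\partial w|^2)\delta_{ij}$ to pass between $|A|_{\gbar}^2$ and $\sum_{i,j}A_{ij}^2$, express $A_{ij}$ as $(\partial_i\partial_j w)/|\nabla^{\ghat}F|_{\ghat}$ plus Christoffel-symbol remainders controlled via \eqref{eqUnifEucl}, and close with the $|A|_{\gbar}^2$ bound from Proposition~\ref{propBound2FF}. The only cosmetic difference is the direction of the inequality manipulation: you bound $\sum(\partial_i\partial_j w)^2$ from above using $(a-b)^2\le 2a^2+2b^2$, whereas the paper bounds $|A|_{\gbar}^2$ from below, tracking an explicit smallness parameter $\varepsilon$ attached to the Christoffel terms.
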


\begin{proof} 
Assume that $p\in \Sigma$ and let
\begin{equation*}
\{(y,y^4): y^4 = w(y), \, y\in D_w \cap \{|y|<\rho_0\}\}
\end{equation*}
be the local graphical parametrization of $\Sigma$ near $p$. In this case, the vectors $e_i=\partial_i + (\partial_i w) \partial_4$ are tangent to $\Sigma$. Let $\gbar_{ij}=\ghat(e_i,e_j)$ be the respective components of the induced metric on $\Sigma$. In what follows we tacitly assume that all computations are carried out at a fixed point $q\in\Sigma$ covered by the above local parametrization, and we let $C$ denote a generic constant that may vary from line to line but depends only on $(M,g)$.

Let $\Theta$ be the largest eigenvalue of $\gbar=\{\gbar_{ij}\}$ and let $X=X^i e_i$ with $(X^1)^2 + (X^2)^2 + (X^3)^2 = 1$ be the respective eigenvector. We set $Y=X^i \partial_i$ and let $\Lambda$ denote the largest eigenvalue of $\ghat = \{\ghat_{ab}\}$ with respect to the Euclidean metric $\delta$. Relying on  \eqref{eqUnifEucl} we estimate
\begin{equation*}
\begin{split}
\Theta &=  \gbar(X,X)\\ 
&=\ghat_{ij} X^i X^j +2(\partial_i w) \ghat _{4j}X^i X^j+(\partial_i w)(\partial_j w) \ghat_{44} X^i X^j\\
&=\ghat(Y,Y)+2 \ghat (\partial_4, Y) dw(Y)+ \ghat_{44} (dw(Y))^2\\
&\leq \ghat(Y,Y) +2|\partial_4|_{\ghat} |dw|_{\ghat} |Y|^2_{\ghat}+|\ghat_{44}||dw|^2_{\ghat}|Y|^2_{\ghat}\\
&\leq C \ghat(Y,Y)(1+|dw|_{\ghat})^2\\
&\leq C \Lambda (1+|dw|^2_{\ghat})\\
&\leq C (1+|\partial w|^2)
\end{split}
\end{equation*}
which yields a lower bound for the smallest eigenvalue $\Theta^{-1}$ of $\gbar^{-1} = \{\gbar^{ij}\}$. We note for the record that the lowest eigenvalue of $\gbar$ is uniformly bounded in terms of the lowest eigenvalue of $\ghat$, which gives the uniform upper bound for the largest eigenvalue of $\gbar^{-1}$.  

In the rest of the proof we identify  all bilinear forms with their matrices in the basis $\{e_1,e_2,e_3\}$. Let $O$ be the orthogonal matrix such that $O \gbar^{-1}O^{T}=D$, where $D$ is diagonal, and let $\widetilde{A}=O A O^{T}$. Then
\begin{equation*}
\begin{split}
|A|^2=\tr (\gbar^{-1} A \gbar^{-1} A)=\tr (D\widetilde{A} D \widetilde{A})\geq \Theta^{-2} \tr{\widetilde{A}^2}= \Theta^{-2} \tr{A^2}\geq \frac{C\tr A^2}{(1+|\partial w|^2)^2}. 
\end{split}
\end{equation*}
Set  $W(y, y^4) = y^4 - w(y)$. Using \eqref{eqUnifEucl} it is straightforward to check that
\begin{equation*}
\begin{split}
\tr{A^2} &= \sum_{i,j=1}^3 A(e_i,e_j)^2 \\ & =\sum_{i,j=1}^3 (\Hess W (e_i,e_j))^2|dW|^{-2}_{\ghat} \\ & \geq \frac{(1-C \varepsilon)\sum_{i,j} (\partial _i \partial _j w)^2-C\varepsilon (1+|\partial w|^2)^3}{C(1+|\partial w|^2)}
\end{split}
\end{equation*}
where $\varepsilon>0$ can be assumed to be as small as we want up to decreasing $\rho_0$ if necessary.

Hence
\begin{equation*}
|A|^2\geq \frac{(1-C \varepsilon)\sum_{i,j} (\partial _i \partial _j w)^2-C\varepsilon (1+|\partial w|^2)^3}{C(1+|\partial w|^2)^3},
\end{equation*}
and \eqref{eqDefFuncIneq} follows at once by Proposition \ref{propBound2FF}.
\end{proof}

\begin{lemma}\label{lemDefFunc2Der}
If $\rho>0$ is sufficiently small then there exists  $\rho'>0$ depending only on $(M,g,K)$ and $\rho$ such that for every $p=(x,f(x))\in \Sigma$ with $\dist(x, \partial\Omega)\geq\rho$ the local defining function $w$ is defined on $\{|y|\leq \rho'\}$. Moreover, there exists a constant $C>0$ depending only on $(M,g,K)$ and $\rho$ such that 
\begin{equation}\label{EqDefFunc2Der}
\sup_{|y|\leq \rho'} (|w(y)|+|\partial w(y)|+|\partial \partial w(y)|)\leq C.
\end{equation}
\end{lemma}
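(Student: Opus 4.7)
The plan is to run a bootstrap continuity argument, using the second-derivative inequality of Lemma \ref{lemDefFuncIneq} together with the normalization that comes from our coordinate choice. Since the normal coordinates \eqref{eqNorCoor} are chosen so that $T_p\Sigma$ coincides with $\{y^4=0\}$, the local defining function $w$ satisfies $w(0)=0$ and $\partial w(0)=0$. Because $\Sigma$ is a $C^{3,\alpha}_{\mathrm{loc}}$ hypersurface of $M\times\bR$, the implicit function theorem produces $w$ as a $C^{3,\alpha}$ function on some (a priori small) Euclidean ball about the origin.

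First I would introduce $\rho_*\in(0,\rho_0]$ as the supremum of radii $r$ with the property that $w$ extends $C^{3,\alpha}$-smoothly to $\{|y|<r\}$ with $|\partial w|\le 1$ there. On this set Lemma \ref{lemDefFuncIneq} yields $|\partial\partial w|\le \sqrt{C}(1+|\partial w|^2)^{3/2}\le 2^{3/2}\sqrt{C}=:M$, a constant depending only on $(M,g,K)$ and $\rho$ (through the hypothesis of Proposition \ref{propBound2FF}). Integrating along radial segments from the origin and using $\partial w(0)=0$ gives $|\partial w(y)|\le M|y|$ and, in turn, $|w(y)|\le \tfrac{M}{2}|y|^2$. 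Now fix $\rho'>0$ satisfying $M\rho'<\tfrac12$ and $\rho'<\min(\rho_0,\rho)$; this $\rho'$ depends only on $(M,g,K)$ and $\rho$.

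Next I would show $\rho_*\ge\rho'$, from which the required bound \eqref{EqDefFunc2Der} follows at once. Assume for contradiction that $\rho_*<\rho'$. Then throughout $\{|y|<\rho_*\}$ we have $|\partial w|\le M\rho_*<\tfrac12$, strictly better than the bound appearing in the definition of $\rho_*$. The condition $\dist(x,\partial\Omega)\ge\rho$ together with $\rho'<\rho$ guarantees that the chart image stays within $\Omega\times\bR$, so $\Sigma$ remains a smooth hypersurface of $M\times\bR$ across the sphere $\{|y|=\rho_*\}$. At each point of $\Sigma\cap\{|y|=\rho_*\}$ the Euclidean unit normal to $\Sigma$ is proportional to $(-\partial_1 w,-\partial_2 w,-\partial_3 w,1)$ and has $\partial_4$-component at least $1/\sqrt{1+1/4}>0$, so by the implicit function theorem $\Sigma$ is graphical over $\{y^4=0\}$ in a uniform neighborhood of each such point. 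Gluing these local graphs extends $w$ to a slightly larger ball, on which the bound $|\partial w|\le 1$ persists by continuity, contradicting maximality of $\rho_*$.

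The main obstacle is the extension step — ensuring that $\Sigma$ does not "fold back" over $T_p\Sigma$ as we cross $\{|y|=\rho_*\}$. Here the crucial quantitative input is that the bootstrap already produces the \emph{strict} inequality $|\partial w|<\tfrac12$ on $\{|y|<\rho_*\}$ rather than merely $|\partial w|\le 1$; this reserve controls the tilt of $T\Sigma$ relative to $T_p\Sigma$ and rules out folding via the implicit function theorem applied uniformly along the boundary. Once $\rho_*\ge\rho'$ is established, the pointwise bounds derived above immediately give \eqref{EqDefFunc2Der} with $C=\max(\tfrac{M}{2}\rho'^2,M\rho',M)$, completing the proof.
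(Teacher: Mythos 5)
Your proof is correct and takes essentially the same approach as the paper: both use the second-derivative inequality of Lemma \ref{lemDefFuncIneq} to obtain a self-improving bound on $|\partial w|$ via radial integration from the origin (where $\partial w(0)=0$), and then run a continuation argument to conclude that $w$ is defined on a ball of uniform radius with uniform bounds. The paper organizes the a priori estimate as the nonlinear inequality $S_\xi(\rhotil)(1+S_\xi(\rhotil))^{-5/2}\le C\rhotil$ and exploits its trapping structure, whereas you fix the threshold $|\partial w|\le 1$ up front (making $|\partial\partial w|$ bounded by a constant and the integration elementary) and spell out the implicit-function-theorem continuation step that the paper leaves implicit with the phrase ``This, in particular, implies that $w$ is defined on $\{|y|\le\rho'\}$''; the two are equivalent in substance.
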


\begin{proof}
The proof is outlined in \cite{PMT2} and we include it here only for the sake of completeness. We assume that $\rho$ and $\rho_0$ are such that the conclusion of Lemma \ref{lemDefFuncIneq} holds true. Let $\xi$ be a Euclidean unit vector in the $y^1y^2y^3$-space. For any $0\leq \rhotil \leq \rho_0$ we define the function
\begin{equation*}
S_{\xi}(\rhotil)=\max_{0 \leq \lambda \leq \rhotil} s_{\xi}(\lambda), \text{ where } s_{\xi}(\lambda) =\sum_{i=1}^3(\partial_i w)^2(\lambda \xi) = |\partial w|^2 (\lambda \xi).
\end{equation*} 
Since $(\partial w) (0) = 0$, by the mean value theorem we can write $s_\xi (\lambda)=s'_{\xi} (\theta \lambda)\lambda$ for some $0\leq \theta \leq 1$. Using  the Cauchy-Schwartz inequality, \eqref{eqDefFuncIneq} and the fact that $|\xi|=1$, we may estimate $s'_{\xi} = 2\sum_{i,j=1}^3 (\partial_i w) (\partial_i \partial_j w)\xi^j$ as
\begin{equation*}
 |s'_{\xi}| \leq C|\partial w|(1+|\partial w|^2)^{\frac{3}{2}} \leq C (1+|\partial w|^2)^{\frac{5}{2}}.
\end{equation*}
Here and in the rest of the proof $C>0$ is a generic constant that depends only on the quantities mentioned in the statement of the lemma. Combining the above estimates one can check that
\begin{equation*}
S_\xi (\rhotil)\leq C (1+S_\xi (\rhotil ))^{\frac{5}{2}}\rhotil,
\end{equation*}
or, equivalently,
\begin{equation*}
S_\xi (\rhotil)(1+S_\xi (\rhotil ))^{-\frac{5}{2}}\leq C \rhotil.
\end{equation*}
In this case, it is clear that there exists $\rho'>0$ depending only on $C$ such that $S_\xi (\rhotil)$ remains uniformly bounded as long as $0\leq \rhotil < \rho'$. This, in particular, implies that $w$ is defined on $\{|y|\leq \rho'\}$ and that $\sup_{|y|<\rho'} |\partial w|<C$ for a uniform constant $C$. We conclude the proof by noting that the bound on $|\partial \partial w|$ follows from \eqref{eqDefFuncIneq}, and the bound on $|w|$ is a simple consequence of the mean value theorem.
\end{proof}

With \eqref{EqDefFunc2Der} at hand, one can finally obtain the local parametric estimates. The following result is essentially Proposition 2 in \cite{PMT2}, but we nevertheless include the proof so that we can refer to some intermediate steps in the later sections.

\begin{proposition}\label{propLocParam}
If $\rho>0$ is sufficiently small then there exists  $\rhotil>0$ depending only on $(M,g,K)$ and $\rho$ such that for every $p=(x,f(x))\in \Sigma$ with $\dist(x, \partial\Omega)\geq\rho$ the local defining function $w$ is defined on $\{|y|\leq \rhotil\}$ and the following holds. 
\begin{itemize}
\item For any $\alpha\in (0,1)$ there exists a constant $C$ depending only on $(M,g,K)$, $\rho$, and $\alpha$ such that 
\begin{equation}\label{eqDefFuncC3Alpha}
\|w\|_{C^{3,\alpha}(\{|y|\leq \rhotil\})}<C.
\end{equation}

\item Let $\nu$ be the downward pointing unit normal to $\Sigma$ and let $v=-\partial_t$. Then the following Harnack type inequality
\begin{equation}\label{eqHarnack}
\sup_{\Sigma\cap B^4_{\rhotil}(p)}\langle \nu,v \rangle\leq  C \inf_{\Sigma\cap B^4_{\rhotil}(p)}\langle \nu,v \rangle
\end{equation} 
holds for a constant $C$ depending only on $(M,g,K)$ and $\rho$. 

\item We have $\Sigma\cap B^4_{\rhotil}\subseteq \{y^4=w(y)\}$. 
\end{itemize}
\end{proposition}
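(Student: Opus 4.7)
The plan is to build on the $C^2$-estimate of Lemma \ref{lemDefFunc2Der} by first promoting it to $C^{3,\alpha}$, then using the resulting smoothness to extract a linear elliptic PDE for $\langle \nu, v\rangle$ amenable to the classical Harnack inequality, and finally to settle graphicality by a tangent-plane continuity argument. Writing the Jang equation $H_g(f) - \tr_g(K)(f) = \tau f$ in the local normal chart $\varphi$ of \eqref{eqNorCoor} with $\Sigma$ represented as $\{y^4 = w(y)\}$ turns it into a quasilinear equation $a^{ij}(y,w,\partial w)\,\partial_i\partial_j w + b(y,w,\partial w) = 0$ whose coefficients, by \eqref{eqUnifEucl}, are smooth functions on any bounded region of the $(y,w,\partial w)$-space, and which is uniformly elliptic with uniformly bounded coefficients on $\{|y|\leq \rho'\}$ by Lemma \ref{lemDefFunc2Der}. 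Mirroring Step 4 of the proof of Lemma \ref{lemApriori}, the Ladyzhenskaya--Ural'tseva H\"older gradient estimate (\cite[Theorem 13.7]{GT}) yields a uniform $C^{2,\beta}$-bound on a slightly smaller ball; differentiating the PDE once and invoking interior Schauder estimates (\cite[Theorem 6.17]{GT}) bootstraps this to \eqref{eqDefFuncC3Alpha} on $\{|y|\leq \rhotil\}$ for some $\rhotil \in (0,\rho')$.

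For the Harnack inequality \eqref{eqHarnack}, I would first note that since the tangent plane to $\Sigma$ at $p$ is $\{y^4 = 0\}$, one has $\langle \nu, v\rangle(p)$ close to $1$, and the $C^1$-bound on $w$ together with \eqref{eqUnifEucl} keeps $u \definedas \langle \nu, v\rangle$ bounded away from zero on $\Sigma \cap B^4_{\rhotil}(p)$ for $\rhotil$ sufficiently small. Since $v = -\partial_t$ is parallel in $(M \times \bR, \ghat)$, the standard variation computation gives a linear elliptic equation
\[
\Delta^\Sigma u + \bigl(|A|^2 + \hric(\nu,\nu)\bigr)\, u + \text{lower-order terms in } u = -v\bigl(\tr^\Sigma K - \tau f\bigr)
\]
on $\Sigma$; after expanding the right-hand side in the normal chart, it becomes a linear combination of $u$, $\tgrad{} u$ and $w$ whose coefficients are uniformly bounded by the $C^{2,\beta}$-estimate from the previous step, Proposition \ref{propBound2FF} and \eqref{eqUnifEucl}. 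The classical Krylov--Safonov (or Moser) Harnack inequality then yields \eqref{eqHarnack}. I expect this to be the main obstacle: carefully checking that the variation of $\tr^\Sigma K$ along $v$ (producing terms involving $\nabla K$) and the differentiation of $\tau f$ along $v$ yield lower-order contributions that are uniformly bounded, rather than merely integrable, requires all the regularity produced in the first step.

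Finally, to prove the inclusion $\Sigma \cap B^4_{\rhotil}(p) \subseteq \{y^4 = w(y)\}$, I would use a tangent-plane continuity argument. By Proposition \ref{propBound2FF} the unit normal of $\Sigma$ rotates by at most $O(\rhotil)$ across any arc of length $\rhotil$, so if $\rhotil$ is chosen small enough in terms of the universal second fundamental form bound, every connected component of $\Sigma \cap B^4_{\rhotil}(p)$ is itself a graph over $\{y^4 = 0\}$. Since $\Sigma$ is connected (being the graph of $f$ over the connected set $\Omega$) and contains $p$, the only such component meeting $B^4_{\rhotil}(p)$ is the graph of $w$, possibly after a further shrinking of $\rhotil$.
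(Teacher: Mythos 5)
Your first part ($C^{3,\alpha}$ estimates) is essentially the paper's argument: once Lemma \ref{lemDefFunc2Der} gives a uniform $C^2$-bound on $w$, the quasilinear equation $B^{ij}(y,w,\partial w)\partial_i\partial_j w = D(y,w,\partial w)$ is strictly elliptic with H\"older coefficients, and Schauder plus bootstrap does the rest. Your extra Ladyzhenskaya--Ural'tseva step is harmless but unnecessary, since the $C^2$ bound already makes $\partial w$ Lipschitz and hence the coefficients H\"older.

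On the Harnack inequality, your spirit is right (linear elliptic PDE for $\eta=\langle\nu,v\rangle$ plus the classical Harnack theorem), but your opening claim is wrong: the $C^1$-bound on $w$ does \emph{not} keep $\eta$ bounded away from zero. In the product coordinates $\eta = (1+|df|^2_g)^{-1/2}$, which can be arbitrarily small; the normal coordinates at $p$ are rotated so that $\partial_4 = \nu(p)$, and bounding $|\partial w|$ controls the angle between $\nu(q)$ and $\nu(p)$, not the inner product $\langle\nu(q),-\partial_t\rangle$. Were the lower bound true, \eqref{eqHarnack} would be trivial. Also, the equation cited from \cite{PMT2} is homogeneous, $\Delta^{\gbar}\eta + (\operatorname{tr}_\Sigma\widehat{R}(\nu,\cdot,\nu,\cdot)+\nu H+|A|^2)\eta = 0$, whereas your version carries a source; you would need to verify that the ``source'' is in fact a linear combination of $\eta$ and $d\eta$ with bounded coefficients so that \cite[Cor.~8.21]{GT} applies as stated.

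The genuine gap is in the graphicality claim. Connectedness of $\Sigma$ does \emph{not} imply $\Sigma\cap B^4_{\rhotil}(p)$ is connected: a graphical hypersurface can re-enter a small ball as a distant sheet. Nor does the bound on $|A|$ (Proposition \ref{propBound2FF}) by itself control the rotation of the normal between two points lying on different components of $\Sigma\cap B^4_{\rhotil}(p)$, since $|A|$ only bounds the variation of $\nu$ along paths \emph{in} $\Sigma$, and you have no a priori bound on the intrinsic diameter of such components. The paper resolves this by deducing, from the Harnack inequality and interior elliptic estimates, the bound $|d(\ln\eta)|_{\gbar}\le C$ and hence $|\nabla^{\ghat}\nu|_{\ghat}\le C$ on the \emph{ambient} set $\Omega_\rho\times\bR$, where $\nu = \nabla F/|\nabla F|$ with $F(x,t)=t-f(x)$. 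This Lipschitz control of the normal along \emph{extrinsic} paths, together with the implicit function theorem, forces $\partial_4 F\ne 0$ uniformly throughout $B^4_{\rhotil}(p)$, hence a single-valued graph. Your argument skips exactly this ambient control, which is why the Harnack inequality must be proved \emph{before} graphicality, not in parallel with it.
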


\begin{proof}
Let $\rho'$ be as in Lemma \ref{lemDefFunc2Der}. Set $W(y, y^4) = y^4 - w(y)$. Since the bilinear form $\left\{\ghat^{ab}-\frac{W^a W^b}{|dW|^2_{\ghat}}\right\}$ is degenerate in the direction of $dW$ and is equal to $\{\ghat^{ab}\}$ when restricted to the cotangent space of $\Sigma$, as a consequence of \eqref{JangCap1} $W$ satisfies 
\begin{equation*}
\left(\ghat^{ab}-\frac{W^a W^b}{|dW|^2_{\ghat}}\right)\left(\frac{\Hess _{ab}W}{|dW|_{\ghat}}-K_{ab}\right)=\tau t_{|_\Sigma},
\end{equation*}
where $t_{|_\Sigma}$ is the coordinate along the $\bR$-factor in $M\times \bR$ restricted to $\Sigma$.
As a consequence, the local defining function $w$ satisfies an equation of the form
\begin{equation*}
B^{ij}(y,w,\partial w) \partial_i \partial_j w= D(y, w, \partial w)
\end{equation*}
 on $\{|y|\leq\rho'\}$. By  the eigenvalue estimates from the proof of Lemma \ref{lemDefFuncIneq} and Lemma \ref{lemDefFunc2Der}, it follows that the differential operator in the left hand side is strictly elliptic, and that the coefficients of the equation are H\"older continuous functions of $y$. The estimate \eqref{eqDefFuncC3Alpha} follows at once for any $\rhotil\in(0,\rho')$ by standard arguments combining Lemma \ref{lemDefFunc2Der},  Schauder estimates, and a simple bootstrap. 

We shall now focus on proving the Harnack type inequality \eqref{eqHarnack}. Recall from \cite[equation (2.28)]{PMT2} that the function $\eta = \langle v, \nu \rangle \geq 0$ satisfies
\begin{equation*}
\Delta^{\gbar} \eta + \left( \tr_{\Sigma} \widehat{R}(\nu,\cdot,\nu,\cdot)+\nu H+|A|^2 \right) \eta = 0,
\end{equation*}
where $\widehat{R}$ is the curvature tensor of the metric $\ghat$ and $H = H_\Sigma = \tr_\Sigma A$ is the mean curvature of $\Sigma$.
Using the notations as in the proof of Lemma \ref{lemDefFuncIneq} we may rewrite this as the following equation for $\eta=\eta(y)$:
\begin{equation}\label{eqEta}
\alpha^{ij} \,\partial_i\partial_j \eta+\beta^k \,\partial_k \eta+\gamma \,\eta=0, 
\end{equation}
where 
\begin{eqnarray*}
\alpha^{ij} &=& \gbar^{ij},\\
\beta^k &=& -\gbar^{ij}\left(\hat{\Gamma}_{ij}^k+2(\partial_i w)\hat{\Gamma}_{4j}^k+(\partial_i w)(\partial_j w) \hat{\Gamma}^k_{44}\right),\\
\gamma &=&  \tr_{\Sigma} \widehat{R}(\nu,\cdot,\nu,\cdot)+\nu H+|A|^2.
\end{eqnarray*}
Using the formulae $\nu=-|d W|_{\ghat}^{-1}\nabla^{\ghat} W$,
\begin{equation*}
|A|^2 = \left(\ghat^{ac}-\frac{W^a W^c}{|dW|^2_{\ghat}}\right) \left(\ghat^{bd}-\frac{W^b W^d}{|dW|^2_{\ghat}}\right)\left(\frac{\Hess_{ab}W}{|dW|_{\ghat}}\right)\left(\frac{\Hess_{cd}W}{|dW|_{\ghat}}\right),
\end{equation*}
and 
\begin{equation}\label{eqDerMC}
 H=\left(\ghat^{ab}-\frac{W^a W^b}{|dW|^2_{\ghat}}\right)\frac{\Hess_{ab}W}{|dW|^2_{\ghat}},
\end{equation}
it is straightforward to rewrite $\gamma$ in terms of the defining function $w$. By  the eigenvalue estimates from the proof of Lemma \ref{lemDefFuncIneq} the differential operator in the left hand side is strictly elliptic, and combining \eqref{eqUnifEucl} with \eqref{eqDefFuncC3Alpha} we conclude that it has uniformly bounded coefficients on $\{|y|\leq\rhotil \}$. 
Applying \cite[Corollary 8.21]{GT} we conclude that $\eta$ satisfies the Harnack inequality
\begin{equation}\label{eqHarnack2}
\sup_{|y|\leq 3\rhotil/4}\eta\leq C \inf_{|y|\leq 3\rhotil/4}\eta
\end{equation}
with $C$ depending only on $\rho$ and $(M,g,K)$. Redefining $\rhotil$ as $3\rhotil/4$, \eqref{eqHarnack} follows. 

Finally, we prove the last claim of the proposition. In fact, by slightly refining the arguments above, one can see that the coefficients of equation \eqref{eqEta} are uniformly bounded in $C^{0,\alpha}$ norm on $\{|y|\leq\rhotil\}$.
Standard interior elliptic estimates 
then imply that
\begin{equation*}
\sup_{|y|\leq\rhotil /2}|\partial \eta|\leq C \sup_{|y|\leq 3 \rhotil /4}|\eta|
\end{equation*}
Combining this estimate with \eqref{eqHarnack2} and bounds on the eigenvalues of $\gbar$, we get
\begin{equation*}
\sup_{|y|\leq\rhotil /2}|d\eta|_{\gbar}\leq C \inf_{|y|\leq \rhotil/2}\eta, 
\end{equation*}
hence 
\begin{equation}\label{eqAtPoint}
\sup_{|y|\leq\rhotil /2}|d(\ln \eta)|_{\gbar}\leq C.
\end{equation}

Let $\Omega_\rho$ be the set of $x\in \Omega$ such that $\dist(x,\partial \Omega)\geq \rho$ and set $\Sigma_\rho\definedas \{(x,f(x)): x\in \Omega_\rho\}$.
Since the constant $C$ in \eqref{eqAtPoint} does not depend on $p\in \Sigma$, we have 
\begin{equation}\label{eqBoundLn}
\sup_{\Sigma_\rho}|d(\ln \eta)|_{\gbar}\leq C,
\end{equation}
where $C$ depends only on $(M,g,K)$ and $\rho$. In fact, a simple computation in an orthonormal frame (see the derivation of \cite[equation (2.24)]{PMT2}) shows that $|\nabla^{\ghat}_\nu\, \nu|_{\ghat}^2 =|d(\ln \eta)|^2_{\gbar}$. Thus \eqref{eqBoundLn} amounts to $|\nabla^{\ghat}_\nu \,\nu|_{\ghat}^2\leq C$ which in combination with Proposition \ref{propBound2FF} gives 
\begin{equation}\label{eqAmbient}
|\nabla^{\ghat} \nu|_{\ghat} \leq C, 
\end{equation}
which holds on $\Sigma_\rho$, and, more generally, in $\Omega_\rho \times \bR$. With this estimate at hand one can prove the last claim of the proposition using implicit function theorem. For more details,  see the proof of Corollary \ref{corGraph} below where a version of this argument is used.
\end{proof}

\subsection{Passing to the limit}\label{subsecLimit}

We finally prove the existence of a geometric solution of the Jang equation.
\begin{theorem}\label{thJangGraph}
Let $(M,g,K)$ be an asymptotically hyperbolic initial data set with Wang's asymptotics as in Definition  \ref{defAHdata}. Then there exists a properly embedded complete $C^3$ hypersurface $\Sigma\subset M\times \bR$ such that
\begin{itemize}
\item[1)] $\Sigma$ is the boundary of some open set $O\subset M\times \bR$. Moreover, $H_{\Sigma}-\tr_{\Sigma} K=0$ where $H_{\Sigma}$ is the mean curvature of $\Sigma$ computed as the tangential divergence of the normal pointing out of $O$. 

\smallskip

\item[2)] $\Sigma$ consists of finitely many connected components $\widetilde{\Sigma}$. Each component is either a cylinder of the form $\mathcal{E}\times \bR$, where $\mathcal{E}$ is a closed properly embedded $C^3$ hypersurface in $M$, or it is a graph of a $C^3$ function $f_{\widetilde{\Sigma}}$ whose domain $U_{\widetilde{\Sigma}}$ is an open subset of $M$. The function $f_{\widetilde{\Sigma}}$ is a solution of the Jang equation $H_g(f_{\widetilde{\Sigma}})-\tr_g(K)(f_{\widetilde{\Sigma}})=0$ on $U_{\widetilde{\Sigma}}$.

\smallskip

\item[3)] The boundary of the domain $U_{\widetilde{\Sigma}}$ for every graphical component $\graph(f_{\widetilde{\Sigma}}, U_{\widetilde{\Sigma}})$ of $\Sigma$ is a closed properly embedded $C^3$ hypersurface in $M$. In fact, $\partial U_{\widetilde{\Sigma}}$ consists of two disjoint unions of components $\mathcal{E}^+$ and $\mathcal{E}^-$ such that $f_{\widetilde{\Sigma}}(x)\rightarrow \pm \infty$ as $x\rightarrow \mathcal{E}^\pm$. We have $H_{\mathcal{E}^{\pm}}\mp \tr_{\mathcal{E}^{\pm}} K=0$, where the mean curvature is computed as the tangential divergence of the unit normal pointing out of $U_{\widetilde{\Sigma}}$. Furthermore, the hypersurfaces $\graph(f_\Sigma - C, U_\Sigma) \subset M\times \bR$
converge locally uniformly in $C^{3,\alpha}$ to the cylinder $\mathcal{E}^\pm \times \bR$ when $C\to \pm \infty$. 

\smallskip

\item[4)] $\Sigma$  has a graphical component $\widetilde{\Sigma}_0 =\graph(f_{\widetilde{\Sigma}_0},U_{\widetilde{\Sigma}_0})$ such that the domain $U_{\widetilde{\Sigma}_0}$ contains the region $\{r\geq r_0\}$. We have 
\begin{equation}\label{eqFallOff3D}
f_{\widetilde{\Sigma}_0}=\sqrt{1+r^2}+\alpha \ln r+\psi(\theta, \varphi)+O_3(r^{-1+\varepsilon})
\end{equation}
in $U_{\widetilde{\Sigma}_0}$
for a sufficiently small $\varepsilon \in (0,1)$.
\end{itemize} 
\end{theorem}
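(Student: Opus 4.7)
The plan is to construct $\Sigma$ as a geometric limit of graphs of solutions to a sequence of regularized boundary value problems, then analyze its structure using the local parametric estimates from Proposition \ref{propLocParam} together with the barrier trapping from Proposition \ref{propBarExist}. Take $R_n \to \infty$ with $R_n > r_0$, pick $\phi_n \in C^{2,\alpha}(\partial \mathcal{B}_{R_n})$ with $f_- \leq \phi_n \leq f_+$ on $\partial \mathcal{B}_{R_n}$, and choose a decreasing sequence $\tau_n \searrow 0$ small enough that Proposition \ref{propExBVP} yields $f_n \in C^{2,\alpha}(\overline{\mathcal{B}}_{R_n}) \cap C^3(\mathcal{B}_{R_n})$ solving the capillarity regularized problem, with $f_- \leq f_n \leq f_+$ on $\{r_0 \leq r \leq R_n\}$. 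As in the heuristic preceding the theorem, the bound $\tau_n |f_n| \leq \mu_1$ from step 1 of Lemma \ref{lemApriori} holds with $\mu_1$ depending only on $(M,g,K)$; a parallel argument using the gradient identity (step 2 of the same proof) together with the fact that the hypotheses are invariant under vertical translation of $f_n$ (after normalizing, say, $f_n(x_0) = 0$ at a fixed $x_0 \in M$) yields $\tau_n |df_n|_g \leq \mu_2$ on compact subsets with $\mu_2$ depending only on $(M,g,K)$ and the compact subset.

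Let $\Sigma_n = \{(x,f_n(x)) : x \in \mathcal{B}_{R_n}\} \subset M \times \bR$. Proposition \ref{propLocParam} applies uniformly at any point $p \in \Sigma_n$ whose footpoint is at distance at least $\rho$ from $\partial \mathcal{B}_{R_n}$, giving $C^{3,\alpha}$ bounds on local defining functions in a ball of radius $\widetilde\rho$ around $p$ with constants independent of $n$. After passing to a diagonal subsequence and using Arzel\`a--Ascoli together with the uniform properness provided by the ambient normal coordinate balls of Section \ref{secExistence}, the $\Sigma_n$ converge locally in $C^{3,\alpha}$ to a properly embedded complete $C^3$ hypersurface $\Sigma \subset M \times \bR$. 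Since each $\Sigma_n$ bounds the epigraph of $f_n$, the limit $\Sigma$ bounds an open set $O \subset M \times \bR$, and taking the limit of the equation $H_{\Sigma_n} - \tr_{\Sigma_n} K = \tau_n t|_{\Sigma_n}$, using that $\tau_n \to 0$ while $\tau_n t$ stays controlled on the compact sets where convergence occurs, establishes (1).

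To obtain (2)-(3), decompose $\Sigma$ according to the behavior of $\eta = \langle \nu, v\rangle \geq 0$. Since $\Sigma$ is the limit of graphs with downward-pointing normals, $\eta \geq 0$ on $\Sigma$ and it satisfies the Jacobi-type equation (\ref{eqEta}) on $\Sigma$. The Harnack inequality (\ref{eqHarnack}) survives the limit, so on each connected component of $\Sigma$ either $\eta > 0$ everywhere, in which case the component is locally graphical over $M$ and the graphing function satisfies the unregularized Jang equation, or $\eta \equiv 0$, in which case the component is a vertical cylinder $\mathcal{E} \times \bR$ with $\mathcal{E} \subset M$ a closed properly embedded $C^3$ hypersurface. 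For a graphical component $\graph(f_{\widetilde\Sigma}, U_{\widetilde\Sigma})$, approach to a boundary point of $U_{\widetilde\Sigma}$ forces $f_{\widetilde\Sigma} \to \pm \infty$ (because $\Sigma$ is properly embedded and Proposition \ref{propLocParam} excludes finite vertical limits with $\eta \to 0$). Translating vertically, $\graph(f_{\widetilde\Sigma} - C, U_{\widetilde\Sigma})$ is a family of solutions with uniformly bounded $|A|$ by Proposition \ref{propBound2FF} and uniform parametric bounds by Proposition \ref{propLocParam}; a standard subsequential limit argument as $C \to \pm \infty$ produces a cylindrical limit over a component $\mathcal{E}^{\pm} \subset \partial U_{\widetilde\Sigma}$, and the mean curvature equation on the cylinder reduces to $H_{\mathcal{E}^\pm} \mp \tr_{\mathcal{E}^\pm} K = 0$ (the $\mp$ reflecting whether the normal points into or out of $U_{\widetilde\Sigma}$). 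Finiteness of components follows from the positive lower bound on the ``neck length'' between cylindrical components implied by the uniform parametric estimates, combined with the fact that the distinguished graphical component described below captures the entire asymptotic end $\{r \geq r_0\}$.

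For (4), observe that the trapping $f_- \leq f_n \leq f_+$ on $\{r_0 \leq r \leq R_n\}$ ensures that every point of $\Sigma_n$ over $\{r \geq r_0\}$ has $\eta$ bounded below uniformly (since the normal stays away from vertical on the locally uniform $C^{3,\alpha}$-class of graphs between barriers with bounded gradients from Lemma \ref{lemODE} and Lemma \ref{lemBarrierAsympt}). Hence the connected component $\widetilde\Sigma_0$ of $\Sigma$ containing the limit of these points is graphical and contains $\{r \geq r_0\}$ in its domain, and the limit function $f_{\widetilde\Sigma_0}$ satisfies $f_- \leq f_{\widetilde\Sigma_0} \leq f_+$ on $\{r \geq r_0\}$, so by Proposition \ref{propBarExist} we obtain $f_{\widetilde\Sigma_0} = \sqrt{1+r^2} + \alpha \ln r + \psi(\theta,\varphi) + O(r^{-1+\varepsilon})$ in the $C^0$ sense. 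To upgrade this to the $O_3$ statement (\ref{eqFallOff3D}), apply interior Schauder estimates to the Jang equation viewed as a quasilinear elliptic equation on scaled annuli: rewrite the equation in the standard hyperbolic chart with coordinates in which the hyperbolic metric and $K$ have known expansions, subtract off the explicit leading term $\sqrt{1+r^2} + \alpha \ln r + \psi$, and bootstrap using uniform ellipticity of the linearization and Holder bounds on the coefficients derived from Definition \ref{defAHdata}. The main obstacle in the whole argument is the structure theorem for the limit $\Sigma$ in (2)-(3), particularly the dichotomy between graphical and cylindrical components and the passage from the blow-up behavior of $f_{\widetilde\Sigma}$ at $\partial U_{\widetilde\Sigma}$ to the marginally (outer or inner) trapped surface equation for $\mathcal{E}^{\pm}$; this requires careful exploitation of the uniform second fundamental form bound from Proposition \ref{propBound2FF} together with the Harnack estimate (\ref{eqHarnack}).
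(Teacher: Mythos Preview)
Your treatment of parts (1)--(3) is essentially the paper's argument: regularized problems on exhausting domains, uniform local parametric estimates, diagonal extraction, and the Harnack dichotomy between graphical and cylindrical components. That part is fine.

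The genuine gap is in part (4). You propose to upgrade the $C^0$ barrier trapping to the $O_3$ estimate \eqref{eqFallOff3D} by ``interior Schauder estimates to the Jang equation viewed as a quasilinear elliptic equation on scaled annuli'' in the hyperbolic chart. This is precisely the step that fails in the asymptotically hyperbolic setting: scalar multiplication is not a homothety of the hyperbolic metric, so the standard rescaling that works in \cite{PMT2} does not apply to $f_{\widetilde{\Sigma}_0}$ directly. Subtracting the leading term and linearizing does not rescue this, because the resulting equation for the remainder still lives on an asymptotically hyperbolic background and the annuli $\{R \leq r \leq 2R\}$ do not scale into a fixed model.

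The paper devotes all of Section~\ref{secJangAE} to circumventing this. The key new idea is to regard $\widetilde{\Sigma}_0$ not as a graph over $M$ but as a graph over the \emph{asymptotically Euclidean} hypersurface $\Sigma_- = \graph f_-$, via Fermi coordinates. One first proves a tilt-excess estimate (Lemma~\ref{lemNormals}) comparing the normals of $\Sigma$ and $\Sigma_-$, then shows existence and a priori decay of a height function $h$ (Corollary~\ref{corGraph}, Lemma~\ref{lemPropertiesHeight}), rewrites the Jang equation as an elliptic equation for $h$ in the asymptotically Euclidean Fermi chart, and only then applies the rescaling technique to $h$ (Proposition~\ref{propFallOffh}). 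The $O_3$ estimate for $f_{\widetilde{\Sigma}_0}$ is recovered at the very end by translating the decay of $h$ back through the coordinate change. Your sketch omits this entire mechanism, and the single bootstrap you describe would not close. This is also why your identification of ``the main obstacle'' is off: parts (2)--(3) are standard from \cite{PMT2}, whereas (4) is where the new work lies.
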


\begin{remark}
Although
\begin{equation}\label{eqWithoutDeriv}
f_{\widetilde{\Sigma}_0}=\sqrt{1+r^2}+\alpha \ln r+\psi(\theta, \varphi)+O(r^{-1+\varepsilon})
\end{equation}
follows directly from the construction, proving  \eqref{eqFallOff3D} requires quite a bit of work. Therefore in the current section  we only prove the first three claims of the theorem. The lengthy and technical proof of \eqref{eqFallOff3D} is carried out in Section \ref{secJangAE}. 
\end{remark}

\begin{proof}
Let $R_n$ and $\tau_n$ be positive real numbers such that $R_n\rightarrow \infty$ and $\tau_n \rightarrow 0$ as $n\rightarrow \infty$. 
By Proposition \ref{propExBVP} for each sufficiently large $n$ we can solve  the boundary value problem
\begin{eqnarray*}
H_g(f)-\tr_g(K)(f)=\tau _n f &\text{ in }& \mathcal{B}_{R_n} \\
f= \phi_n &\text{ on }& \partial\mathcal{B}_{R_n}
\end{eqnarray*} 
where $\phi_n$ is a function on $\partial\mathcal{B}_{R_n}$ such that $f_-\leq \phi_n \leq f_+$. Let the respective solution be denoted by $f_{n}$, and let $\Sigma_{n}$ be its graph. As discussed in the beginning of Section \ref{secExistence}, we may without loss of generality  assume that $\tau_n$ is chosen so that $\tau_n|f_n|\leq \mu_1$, and $\tau_n |df_n|_g\leq \mu_2$, where $\mu_1$ and $\mu_2$ depend only on $(M,g,K)$, so that the results of Section \ref{subLocParam} apply to $\Sigma_n$.

Let us study the convergence of $\Sigma_{n}$ when $n \rightarrow \infty$. The argument is standard, see e.g. \cite[Section 4]{PerezRos}. We fix some small $\rho>0$ and choose $\rhotil>0$ as in Proposition \ref{propLocParam} so that the estimate \eqref{eqDefFuncC3Alpha} holds for any $p=(x,f_{n}(x))\in \Sigma_n$ where $x\in \mathcal{B}_{R_n-\rho}$. Since $f_-\leq f_{n} \leq f_+$ holds on $\{r\geq r_0\}$, it is obvious that the sequence $\{\Sigma_{n}\}_n$ has accumulation points in $M\times \bR$. We choose a countable dense set $\{p_1,p_2,\ldots\}$ in $M\times \bR$ and proceed as follows. 

Consider the geodesic ball $B^4_{\rhotil/2}(p_1)$. Suppose that this ball contains an accumulation point $q_1$ of the sequence $\{\Sigma_{n}\}_n$. In this case we consider the ball $B^4_{\rhotil}(q_1)\supset B^4_{\rhotil/2}(p_1)$.  Without loss of generality, we assume that there is a sequence of points $q_{1,n}\in \Sigma_{n}$ such that $q_{1,n}\rightarrow q_1$ as $n\rightarrow \infty$. Let $\nu_n(q_{1,n})$ be the (downward pointing) unit normal to $\Sigma_{n}$ at $q_{1,n}$. Since $S^3$ is compact, we can choose a subsequence of $q_{1,n}$ denoted by the same notation such that the respective normals $\nu_n(q_{1,n})$ converge to some unit vector $\nu(q_1)$ when $n\rightarrow \infty$. In fact, we can assume that  $\nu_n(q_{1,n})=\nu(q_1)$ without violating the uniform estimate $\|w_n\|_{C^{3,\alpha}(\{|y|\leq \rhotil\})}<C(\alpha)$ which holds for the defining functions $w_{n}$ of $\Sigma _{n}$ such that $w_{n}(0)=q_{1,n}$. This allows us to apply the Arzela-Ascoli theorem and extract a subsequence of these defining functions converging in $C^3$ on $\{|y|\leq \rhotil\}$ to a function $y^4=w(y)$. We thus obtain a subsequence $\{\Sigma_{n,1}\}_n$ converging with multiplicity one to a $C^3$ hypersurface $\Sigma$ in $B^4_{\rhotil/2}(p_1)$. 

If $B^4_{\rhotil/2}(p_1)$ contains no accumulation points of $\Sigma_{n}$ then we can instead take $\{\Sigma_{n,1}\}_n$ to be a subsequence of $\Sigma_{n}$ such that $\Sigma_{n,1}\cap B^4_{\rhotil/2}(p_1)=\emptyset$.

We repeat this procedure with the sequence $\{\Sigma_{n,1}\}_n$ in $B^4_{\rhotil/2}(p_2)$ and extract a subsequence $\{\Sigma_{n,2}\}_n$, which either converges with multiplicity one to a $C^3$ hypersurface $\Sigma$ in $B^4_{\rhotil/2}(p_2)$, or satisfies $\Sigma_{n,2}\cap B^4_{\rhotil/2}(p_2)=\emptyset$. Iterating this process, we see that the diagonal subsequence $\{\Sigma_{n,n}\}_n$ converges to a properly embedded complete $C^3$ hypersurface $\Sigma\subset M \times \bR$. If each $\Sigma_{n}$ is viewed as the boundary of the set $\{(x,t): t>f_{n}(x), x\in \mathcal{B}_{R_n}\}$, then it is clear that $\Sigma$ is the boundary of some open subset $O\subset M\times \bR$, and that $\Sigma$ satisfies $H_{\Sigma}-\tr_{\Sigma}K=0$ with respect to the normal pointing out of $O$. By the Harnack inequality \eqref{eqHarnack}, each connected component of $\Sigma$ is either graphical or cylindrical. We can view the union of the graphical components of $\Sigma$ as the graph of $f_{\widetilde{\Sigma}}:U_{\widetilde{\Sigma}}\rightarrow \bR$, where $\{r\geq r_0\}\subset U_{\widetilde{\Sigma}}$ where $U_{\widetilde{\Sigma}}$ might be disconnected. It is clear that $f_{\widetilde{\Sigma}}$ solves the Jang equation $H_g(f_{\widetilde{\Sigma}})-\tr_g (K)(f_{\widetilde{\Sigma}})=0$ on $U_{\widetilde{\Sigma}}$, and that $f_-\leq f_{\widetilde{\Sigma}} \leq f_+$ holds on $\{r\geq r_0\}$, which implies that it has the asymptotic behavior as in \eqref{eqWithoutDeriv}. It is also obvious that when we approach a connected component $\mathcal{E}$ of $\partial U_{\widetilde{\Sigma}}$ the graph of  $f_{\widetilde{\Sigma}}$ asymptotes the cylinder $\mathcal{E}\times \bR$. Taking the limit $C\to \infty$ of $f_{\widetilde{\Sigma}} \pm C$ we see that $\mathcal{E}\times \bR$ is a geometric solution of the Jang equation. From this it is easy to conclude that $H_{\mathcal{E}}\mp \tr_{\mathcal{E}}K=0$ with respect to the normal pointing out of $U_{\widetilde{\Sigma}}$, the sign depending on whether $f_{\widetilde{\Sigma}}\rightarrow +\infty$ or  $f_{\widetilde{\Sigma}}\rightarrow -\infty$ as we approach $\mathcal{E}$.  Finally, we note that $\Sigma$ has finitely many connected components, since the region $\mathcal{B}_{r_0}$ (where multiple graphical or cylindrical components might occur) is precompact, and since by Proposition \ref{propLocParam} there is a uniform $\rhotil$ such that $\Sigma\cap B^4_{\rhotil}\subset \{y^4=w(y)\}$ holds over this region.
\end{proof}

\section{The Jang graph is an  asymptotically Euclidean manifold}\label{secJangAE}
The goal of this section is to show that the graphical component 
\begin{equation*}
\widetilde{\Sigma}_0=\graph(f_{\widetilde{\Sigma}_0},U_{\widetilde{\Sigma}_0}),  \quad \text{where} \quad \{r \geq r_0\} \subseteq U_{\widetilde{\Sigma}_0},
\end{equation*} 
of the geometric solution of the Jang equation constructed in Theorem \ref{thJangGraph} is an asymptotically Euclidean manifold in the sense of Definition \ref{defAEManifolds}. 
For this, we need to obtain information about the derivatives of $f_{\widetilde{\Sigma}_0}$; more specifically, we need to confirm that \eqref{eqFallOff3D} holds. Note that the function  $f_{\widetilde{\Sigma}_0}$ is defined on an asymptotically hyperbolic manifold. As scalar multiplication is not a homothety for the hyperbolic metric we cannot directly rely on the rescaling technique which was used for similar purposes in \cite{PMT2}.  Instead, we will first show that near infinity we may view $\Sigmatil_0$ as the graph of a function defined on an asymptotically Euclidean manifold (roughly speaking, the graph of the lower barrier $\Sigma_-$  that was constructed in  Section \ref{secBarriers}). Applying the rescaling technique to the equation that the graphing function satisfies, we will show that its derivatives fall off sufficiently fast for concluding that $\Sigmatil_0$ is an asymptotically Euclidean manifold. We will then rewrite these estimates in terms of $f_{\widetilde{\Sigma}_0}$ thereby establishing \eqref{eqFallOff3D}.

\subsection{Setup}\label{secFermi}

We will use the notation $M_{R} = \{r \geq R\}$ for any $R\geq r_0$. Recall from Section \ref{secExistence} that $f_-\leq f \leq f_+$ holds  in $M_{r_0}$ where $f_-$ and $f_+$ are barriers for the Jang equation constructed in Section \ref{secBarriers}. These barriers are defined implicitly by using solutions of certain initial value problems. For this reason it is not very convenient to use them for the purposes of this section. At the same time, the properties of the barriers established in Section \ref{secBarriers} allow us to pick a  sufficiently large $r_1>r_0$ and two functions\footnote{Denoted by the same notation as the actual barriers, since the later will not be used in the rest of this paper.} $f_-: M_{r_1} \to \bR$ and $f_+: M_{r_1} \to \bR$ such that 
\begin{equation}\label{eqNewBarriers}
f_\pm = \phi_{\pm}(r) + \psi(\theta, \varphi) = \sqrt{1+r^2}+\alpha \ln r+\psi(\theta, \varphi)+O_4(r^{-1+\varepsilon})
\end{equation}
and $f_- \leq f \leq f_+$ on $M_{r_1}$. These two functions are defined  on a potentially smaller neighborhood of infinity than the actual barriers but the asymptotic behavior of their derivatives is more explicit. The graphs of these two functions are denoted by $\Sigma_-$ and $\Sigma_+$ respectively. Note that the submanifolds $(\Sigma_-, g_{\Sigma_-})$ and $(\Sigma_+, g_{\Sigma_+})$ of $M\times\bR$ are asymptotically Euclidean by Lemma \ref{lemma1}.

Rather than using the standard product coordinates on $M\times \bR$, in this section we will work in the so called \emph{Fermi (or normal geodesic) coordinates} adapted to the submanifold $\Sigma_-$. To ensure that these coordinates have good properties (more specifically, that Proposition \ref{propLevel} below holds), in this section we work under the assumption that $(M,g,K)$ is as in Definition \ref{defAHdata} with $l\geq 5$. To avoid excessive notation, in this section we use usual unhatted symbols like $R$, $\nabla$, $\Gamma$ etc to denote various quantities associated with the metric $\ghat=dt^2+g=\langle \cdot, \cdot \rangle$. We will also drop $\ghat$ in the norms. Furthermore, we will write $\Sigma=\graph(f,U)$ instead of $\Sigmatil_0=\graph(f_{\widetilde{\Sigma}_0},U_{\widetilde{\Sigma}_0})$.  

Let $u=(u^1,u^2,u^3)$ be an asymptotically Euclidean Cartesian coordinate system on $\Sigma_-$ obtained from the natural polar coordinate system (see e.g. Lemma \ref{lemma2}) by the usual (spherical coordinates) transformation. Since $\Sigma_-$ has bounded second fundamental form, we may argue as in Section \ref{secExistence} to conclude that $\Sigma_-$ is a uniformly embedded submanifold in the manifold $(M\times \bR, \ghat)$ that has bounded geometry, hence there exists a normal neighborhood $N_{\gamma}(\Sigma_-)$ of radius $\gamma>0$, see \cite[Chapter 2]{Eldering}. We then define the coordinates $y$ on $N_{\gamma}(\Sigma_-) \cong \Sigma_-\times (-\gamma,\gamma) $ such that $y(\cdot,0)=u$ and $\frac{\partial y}{\partial \rho} = \nu^{\Sigma_\rho}$  with $\nu^{\Sigma_\rho}$ being the upward pointing unit normal to $\Sigma_\rho \definedas y(\cdot,\rho)$. Note that in these coordinates we may write $\ghat = d\rho^2 + g_\rho$, where $g_\rho$ is the induced metric on $\Sigma_\rho$. In what follows we let $A_\rho$ denote the second fundamental form of $\Sigma_\rho$ given by $(A_\rho)_{ij}=\langle \nabla_{\partial_i} (-\partial_\rho),\partial_j \rangle$ where 
$\partial_i$  for $i \in \{1,2,3\}$ denote the respective tangent vectors to $\Sigma_\rho$. 

The following result is proven in Appendix \ref{secLevelFermi}.

\begin{proposition}\label{propLevel}
There exist constants $\rho_0>0$ and $C>0$ such that $|A_\rho|<C$ and $\frac{1}{C} \delta_{ij} \leq (g_\rho)_{ij} \leq C \delta_{ij}$ for any $0 \leq \rho \leq \rho_0$. Furthermore, all partial  derivatives of $(g_\rho)_{ij}$ and $(A_\rho)^i_{\phantom{i}j}$ up to order 3   in the Fermi coordinates are bounded. 
\end{proposition}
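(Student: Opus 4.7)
The plan is to establish the desired estimates on $\Sigma_-$ itself (at $\rho = 0$) using the explicit asymptotic expansion of $f_-$, and then propagate these bounds in the $\rho$-direction via the standard evolution equations for the induced metric and shape operator of the foliation $\{\Sigma_\rho\}$, which reduce to a system of ODEs in $\rho$ with bounded coefficients.

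\textbf{Initial data on $\Sigma_-$.} From \eqref{eqNewBarriers} we have $f_- = \sqrt{1+r^2} + \alpha \ln r + \psi(\theta,\varphi) + O_4(r^{-1+\varepsilon})$ with $\psi \in C^4(\mathbb{S}^2)$. A direct computation in the product coordinates $(r,\theta,\varphi,t)$ yields explicit asymptotic expressions for the induced metric $g_{\Sigma_-} = g + df_-\otimes df_-$ and for the components $(A_{\Sigma_-})_{ij} = \Hess^g_{ij}f_-/\sqrt{1+|df_-|^2_g}$ of the second fundamental form. Transforming into the asymptotically Euclidean Cartesian coordinates $u$ obtained from Lemma \ref{lemma2}, one finds that $(g_0)_{ij}-\delta_{ij} = O(|u|^{-1})$ and that $(A_0)^i_{\phantom{i}j}$ is bounded, with successive partial derivatives decaying at progressively faster rates. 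In particular, $(g_0)_{ij}$ and $(A_0)^i_{\phantom{i}j}$ are uniformly bounded on the chart, together with their partial derivatives up to order at least $4$ (more than enough for what follows).

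\textbf{Propagation in $\rho$.} Since $(M,g)$ is $C^{l,\beta}$ with $l \geq 5$, the ambient product metric $\ghat = g + dt^2$ has uniformly bounded Riemann tensor and bounded covariant derivatives of orders up to $l-2 \geq 3$. In the Fermi coordinates $(u,\rho)$ the foliation satisfies the classical evolution equations
\begin{align*}
\partial_\rho (g_\rho)_{ij} &= -2\, (A_\rho)_{ij}, \\
\partial_\rho (A_\rho)^i_{\phantom{i}j} &= (A_\rho)^i_{\phantom{i}k}(A_\rho)^k_{\phantom{k}j} - R^i_{\phantom{i}\rho j\rho},
\end{align*}
where $R$ is the curvature tensor of $\ghat$ in the frame $\{\partial_{y^a}\}$. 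Viewing these as a system of ODEs in $\rho$ with bounded curvature source and initial data from the previous step, a short-time Gronwall argument produces $\rho_0 > 0$ and $C>0$ such that $|A_\rho| < C$ and $\tfrac{1}{C} \delta_{ij} \leq (g_\rho)_{ij} \leq C \delta_{ij}$ for all $0 \leq \rho \leq \rho_0$, uniformly in the spatial point. Differentiating the evolution equations successively with respect to $u^i$ gives, for each multi-index $\alpha$ with $|\alpha| \leq 3$, a linear ODE for $\partial^\alpha (g_\rho)_{ij}$ and $\partial^\alpha (A_\rho)^i_{\phantom{i}j}$ whose coefficients depend on lower-order derivatives of $g_\rho$, $A_\rho$, and the ambient curvature; a bootstrap on $|\alpha|$ combined with Gronwall then yields the required uniform bounds on derivatives up to order $3$, after possibly shrinking $\rho_0$.

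\textbf{Main obstacle.} The principal technical difficulty is the first step: translating the explicit asymptotics of $f_-$ into uniform bounds, with the right number of derivatives, on the \emph{Cartesian} asymptotically Euclidean coordinates $u$ rather than on the polar-type coordinates $(r,\theta,\varphi)$. This requires careful bookkeeping of the spherical-to-Cartesian change of variables composed with the asymptotic expansion of $f_-$, and relies crucially on the expansion being $O_4$ (not merely $O$) in $r^{-1+\varepsilon}$. Once the initial data is under control, the ODE-based propagation is entirely standard, since $(M\times \bR, \ghat)$ has bounded geometry of sufficient order near infinity.
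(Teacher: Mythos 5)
Your approach is essentially the same as the paper's: bound $g_0$ and $A_0$ from the explicit form of $\Sigma_-$, then propagate in $\rho$ via the metric evolution and Mainardi (Riccati) equations, and bootstrap in the number of tangential derivatives. One technical caution: the phrase ``short-time Gronwall argument'' is not quite the right tool for the first step, since $\partial_\rho A = A^2 + R(\cdot,\partial_\rho,\partial_\rho,\cdot)$ is genuinely quadratic in $A$, and Gronwall's inequality handles at-most-linear growth. The paper resolves this with a nonlinear scalar comparison: it shows the Lipschitz functions $\Lambda(\rho)$ and $\lambda(\rho)$ (largest and smallest eigenvalues of $A_\rho$) satisfy scalar Riccati differential inequalities almost everywhere and compares them against the explicit tangent-function solution $\mu(\rho)=\sqrt{C_1}\tan(\sqrt{C_1}\rho+\arctan(C_0/\sqrt{C_1}))$, which is what yields the uniform $\rho_0$. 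Once $A_\rho$ and $g_\rho$ are bounded, the differentiated system \emph{is} linear with bounded coefficients, and there the paper invokes a comparison theorem for ODE systems (\cite[Theorem~10]{BahuaudPacific}) rather than Gronwall per se, after first rewriting $\partial_k R^i_{\phantom{i}\rho\rho j}$ in terms of covariant derivatives and the bounded Christoffel symbols $\Gamma^l_{k\rho}=-(A_\rho)^l_{\phantom{l}k}$ and $\Gamma^i_{kl}$ (the latter contributing first-order derivatives of $g_\rho$ to the coefficient matrix, which is why the system closes). Your worry about the spherical-to-Cartesian change of variables is legitimate but is absorbed by the setup in Lemma~\ref{lemma1} and the earlier discussion; the paper takes the boundedness of $A(0)$ and $g_0$ as a given input.
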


We note that the proof of this result contains a few important equations that we will use below, most notably  \eqref{eqRiccati} and \eqref{eqMetricEvolution}.

\subsection{The height function: existence and a priori estimates}\label{secAprioriHeight}

The aim of this section  is to show that near infinity $\Sigma$ is given as the graph of a function  $h: \Sigma_- \to [0,\gamma)$, that is 
\begin{equation}\label{eqGraphFermi}
\Sigma=\graph h = \{ y(q, h(q)): q\in \Sigma_-\},
\end{equation}
and to obtain some a priori estimates for this function. In what follows, we will refer to $h$ as the \emph{height function} of $\Sigma$ with respect to $\Sigma_-$. 

Using the fact that $\Sigma$ is ``squeezed'' between the graphs of the barrier functions and that its second fundamental form is bounded, we obtain the following estimate for its normal. 

\begin{lemma}[``Tilt-excess'' estimate for $\Sigma$]\label{lemNormals} 
Let $\nu_-$ and $\nu$ be the respective upward pointing normal vector fields to $\Sigma_-$ and $\Sigma$ extended parallelly along the $\bR$-factor in $M\times \bR$. Then there exists a constant $C>0$ such that at every $p \in M_{ 2r_1}\times \bR$ we have 
\begin{equation*}
|\nu(p)-\nu_-(p)|\leq C r(p)^{\frac{-1+\varepsilon}{2}}.
\end{equation*}
\end{lemma}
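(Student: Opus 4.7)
The plan is to realise $\Sigma$ near infinity as a graph over $\Sigma_-$ in the Fermi coordinates of Section \ref{secFermi} via a height function $h$, to control $|h|$ in $C^0$ using the barrier sandwich and in $C^2$ using the bounded second fundamental form of $\Sigma$, and then to interpolate to bound $|\nabla h|$, which in turn controls the tilt $|\nu-\nu_-|$.

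Because both $\nu$ and $\nu_-$ are extended parallelly along the $\bR$-factor, it suffices to estimate $\nu(p)-\nu_-(p)$ at points $p=(x,f(x))\in\Sigma$ with $x\in M_{2r_1}$. The vertical segment $\{(x,t):f_-(x)\leq t\leq f_+(x)\}$ has $\ghat$-length $f_+(x)-f_-(x)=O(r^{-1+\varepsilon})$ by \eqref{eqNewBarriers} and contains $p$, so the ambient distance from $p$ to $\Sigma_-$ is at most $Cr(x)^{-1+\varepsilon}$, which is smaller than $\gamma$ for $r(x)$ large. Combining this with the third bullet of Proposition \ref{propLocParam} and a standard open--closed continuation argument along $\Sigma$ shows that for some $r_2>2r_1$ the portion of $\Sigma$ lying over $M_{r_2}$ is a graph $\{y(q,h(q)):q\in\Sigma_{-,\infty}\}$ of a $C^3$ function $h$ on the end $\Sigma_{-,\infty}\subset\Sigma_-$ consisting of points $q$ with $r(q)\geq r_2$, with
\[
|h(q)|\leq Cr(q)^{-1+\varepsilon}.
\]

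For the $C^2$ control of $h$, the prescribed mean curvature equation $H^\Sigma=\tr^\Sigma K$ rewritten in the Fermi coordinates $(y^1,y^2,y^3,\rho)$ with $\ghat=d\rho^2+g_\rho$ becomes a uniformly elliptic quasilinear equation for $h$ whose coefficients are polynomial expressions in $\nabla h$, the components $(g_\rho)_{ij}$, $(g_\rho)^{ij}$, $(A_\rho)^{i}_{\phantom{i}j}$ and their derivatives, evaluated at $\rho=h$; by Proposition \ref{propLevel} these coefficients remain uniformly bounded provided $|h|\leq\rho_0/2$. The bound $|A|\leq C$ from Proposition \ref{propBound2FF} combined with the algebraic manipulation used in Lemma \ref{lemDefFuncIneq} gives $|\nabla\nabla h|\leq C(1+|\nabla h|^2)^{3/2}$ in the asymptotically Euclidean coordinates $u=(u^1,u^2,u^3)$ on $\Sigma_-$, and then the ODE argument of Lemma \ref{lemDefFunc2Der} yields a uniform estimate $\|h\|_{C^2(B_1(q))}\leq C$ in Euclidean unit balls $B_1(q)\subset\Sigma_{-,\infty}$. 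Interpolating this with the $C^0$ bound via the standard inequality
\[
\|\nabla h\|_{C^0(B_{1/2}(q))}\leq C\,\|h\|_{C^0(B_1(q))}^{1/2}\,\|h\|_{C^2(B_1(q))}^{1/2}
\]
produces $|\nabla h|(q)\leq Cr(q)^{(-1+\varepsilon)/2}$.

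Finally, at $p=y(q,h(q))$ the unit normal to $\Sigma$ in the Fermi frame reads
\[
\nu=\frac{\partial_\rho-(\nabla^{g_h}h)^{i}\partial_i}{\sqrt{1+|\nabla h|^2_{g_h}}},
\]
whereas after parallel extension along the $\bR$-factor to $p$ one has $\nu_-=\partial_\rho+O(r^{-1+\varepsilon})$ there (the error tracking the discrepancy between $\bR$-parallel transport and Fermi transport, which is controlled by the curvature of $\Sigma_-$ and by $|h|$). Combining these two expressions yields $|\nu(p)-\nu_-(p)|\leq C|\nabla h|(q)+Cr^{-1+\varepsilon}\leq Cr(x)^{(-1+\varepsilon)/2}$, as required. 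I expect the principal obstacle to be rigorously identifying the quasilinear equation satisfied by $h$ in Fermi coordinates and verifying that its coefficients are uniformly bounded on unit-sized Euclidean balls; doing this cleanly requires simultaneous use of Proposition \ref{propLevel}, the asymptotically Euclidean geometry of $\Sigma_-$, and the parametric estimates of Proposition \ref{propLocParam}, together with a careful choice of scale.
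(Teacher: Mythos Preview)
Your approach has a genuine circularity with respect to the paper's logical structure. In the paper, the existence of the Fermi height function $h$ is Corollary~\ref{corGraph}, and its proof \emph{uses} Lemma~\ref{lemNormals} (the tilt estimate) to show that $\partial_\rho F$ is bounded away from zero along $\Sigma$. You are proposing to go the other way: first establish $h$, then interpolate. But your justification for graphicality over $\Sigma_-$ is not sufficient. The third bullet of Proposition~\ref{propLocParam} only says that locally $\Sigma$ is a graph over its \emph{own} tangent plane $T_p\Sigma$, not over $\Sigma_-$; turning that into a graph over $\Sigma_-$ requires precisely knowing that $T_p\Sigma$ and $T_{p_-}\Sigma_-$ are close, which is the tilt estimate you are trying to prove. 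The ``standard open--closed continuation argument'' does not patch this gap without an a~priori control on how $\nu$ differs from $\partial_\rho$ along the continuation.

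The paper avoids this loop entirely by performing the interpolation directly on $\Sigma$, with no reference to Fermi coordinates or $h$. One shifts $\Sigma_-$ vertically so that it passes through $p\in\Sigma$, sets $F_-(x,t)=t-\bar f(x)$, and Taylor-expands $F_-$ along a unit-speed geodesic $\gamma$ in $\Sigma$ of length $s=\sqrt{\delta}$ with $\delta\sim r(p)^{-1+\varepsilon}$. The barrier sandwich gives $|F_-(\gamma(s))|\leq\delta$, the bounded second fundamental forms of $\Sigma$ and $\Sigma_-$ give $|\Hess^\Sigma F_-|\leq C|\nabla F_-|$, and the expansion yields $dF_-(\dot\gamma(0))\leq C\sqrt{\delta}\,|\nabla F_-|(p)$, i.e.\ $\langle\nu_-(p),\dot\gamma(0)\rangle\leq C\sqrt{\delta}$. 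Choosing $\dot\gamma(0)=\nabla^\Sigma F_-/|\nabla^\Sigma F_-|$ then gives $\sqrt{1-\langle\nu,\nu_-\rangle^2}\leq C\sqrt{\delta}$ directly. This is the same $C^0$/$C^2$ interpolation idea you have, but carried out intrinsically so that no height-function machinery is needed; once Lemma~\ref{lemNormals} is in hand, it is \emph{then} used to produce $h$ in Corollary~\ref{corGraph}. Your identification of the ``principal obstacle'' (the PDE for $h$) is therefore off: the real obstacle in your route is the unestablished graphicality.
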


\begin{proof}
We will use the following notation: for $z\in M\times \bR$ we define $z_M \definedas \proj _M z$ where $\proj_M: M\times \bR \to M$ is the standard projection operator.

Let $p\in \Sigma$ be such that $r(p) > 2 r_1$. We shift $\Sigma_-$ vertically so that it intersects $\Sigma$ at $p$. The resulting hypersurface, which we denote by $\overline{\Sigma}$, is the graph of the function $\bar{f}: M_{r_1} \to \bR$ given by
\[
\bar{f}=f_-+(f(p_M)-f_-(p_M)).
\] 
Define $F_-: M_{r_1} \times \bR \to \bR$ by $F_-(x,t) = t - \bar{f}(x)$. Then we have 
\[
\begin{aligned}
\frac{\nabla F_-}{|\nabla F_-|} =  \nu_-  &\hspace{0.5cm} \text{ in }\hspace{0.5cm} M_{r_1}\times \bR,\\ 
 F_-=0 & \hspace{0.5cm}\text{ on }\hspace{0.5cm} \overline{\Sigma}.
\end{aligned}
\]  
For a point $q\in \Sigma$, let $\gamma$ be a unit speed 
geodesic in $\Sigma$ such that $\gamma(0)=p$ and $\gamma(s)=q$. Since $F_-(p)=0$, for some $\theta\in [0,1]$ we may write 
\begin{equation}\label{eqTaylor}
\begin{split}
F_-(q) = dF_-(\dot{\gamma}(0)) s + \frac{s^2}{2} (\Hess^{\Sigma} F_-) (\dot{\gamma}(\theta s), \dot{\gamma}(\theta s)).
\end{split}
\end{equation}
The claim will be proven by making a suitable choice of $\dot{\gamma}(0)$ and $s=\dist_\Sigma (p,q)$ in this formula.

From \eqref{eqNewBarriers} we know that there exists a constant $C_0>0$ such that $0 \leq (f_+ - f_-)(r) \leq C_0 r^{-1+\varepsilon}$ on $M_{r_1}$. Set $\delta \definedas 3 C_0 r(p)^{-1 + \varepsilon}$ and let $q$ be such that $\dist_\Sigma (p,q)=\sqrt{\delta}$.  We claim that in this case we may without loss of generality  assume that $\frac{r(p)}{2} \leq r(q) \leq 2 r(p)$. Indeed, if we for instance assume that $r(q) < \frac{r(p)}{2}$ then a computation using the fact that $(M,g)$ is asymptotically hyperbolic with Wang's asymptotics shows that
\begin{equation*}
\begin{split}
     \sqrt{3 C_0 r(p)^{-1 + \varepsilon}}
										 &= \dist_{\Sigma} (p,q)\\ 
										 &\geq \dist_M (p_M, q_M) \\
										 & \geq \int_{r(q)} ^{r(p)} \frac{dr}{\sqrt{1+r^2}} \\
										 & \geq \int_{\tfrac{r(p)}{2}} ^{r(p)} \frac{dr}{\sqrt{1+r^2}}\\
										 & \geq \frac{r(p)}{2\sqrt{1+(r(p))^2}} \\
										 & = \frac{1}{2\sqrt{(r(p))^{-2} + 1}},										
\end{split}
\end{equation*}
which cannot be true for a sufficiently large $r_1>0$. Similarly, one reaches a contradiction in the case when $r(q) > 2r(p)$. 

Since $\frac{r(p)}{2} \leq r(q) \leq 2 r(p)$, we have in particular $r(q) \geq r_1$ so that $f_- (q_M)$ and $f_+(q_M)$ are well-defined. Let $q_1\in \overline{\Sigma}$ be such that $(q_1)_M=q_M$. In this case we have
\begin{equation}\label{eqDistBound}
\begin{split}
\dist_{M\times \bR}(q_1,q)&=|\bar{f}(q_M)-f(q_M)|\\
                          &=|(f_-(q_M)-f(q_M))+(f(p_M)-f_-(p_M))|\\
													&\leq |f_+ (q_M) - f_-(q_M)| + |f_+(p_M)-f_-(p_M)| \\
												  &\leq C_0 (r(q_M))^{-1+\varepsilon} + C_0(r(p_M))^{-1+\varepsilon}\\
													&\leq 3 C_0 (r(p))^{-1+\varepsilon}\\
													&=\delta.
\end{split}
\end{equation} 
Since $F_-(q_1) = 0$ and since $\nabla F_-$ is constant along $\bR$-factor in $M\times \bR$, we may now estimate the left hand side of \eqref{eqTaylor} as follows 
\begin{equation*}
F_-(q) =|F_-(q) - F_-(q_1)| \leq  \delta |\nabla F_-| (q).
\end{equation*}
As for the right hand side, note that$|\Hess^\Sigma F_-| \leq |\Hess F_-| + |\nabla F_-||A^\Sigma| \leq C |\nabla F_-|$ for some $C>0$, since the second fundamental forms $A^\Sigma$  and $A^{\Sigma_-}=\frac{\Hess F_-}{|\nabla F_-|}$   are bounded. 
Consequently, choosing $s=\sqrt{\delta}$ in \eqref{eqTaylor} and estimating the left hand side and the right hand side as described above we obtain
\begin{equation}\label{eqTaylor2}
\delta \, |\nabla F_-| (q) \geq  \sqrt{\delta} \, dF_-(\dot{\gamma}(0)) - C \delta \sup_{0\leq \theta \leq 1} |\nabla F_-|(\gamma(\theta s)). 
\end{equation}
Since $|\nabla F_-| = r + O(1)$ and since $\gamma$ is the geodesic such that $\gamma(0)=p$ and $\gamma(s)=q$ where $\frac{r(p)}{2} \leq r(q) \leq 2 r(p)$ we can also estimate 
 \begin{equation*}
\sup_{0\leq \theta \leq 1} |\nabla F_-|(\gamma(\theta s)) \leq 2 \max\{r(p),r(q)\} < 4 r(p) <8 |\nabla F_-| (p). 
\end{equation*}
Applying this estimate to \eqref{eqTaylor2}, after division by $\sqrt{\delta} |\nabla F_-| (p)$ we obtain
\begin{equation*}
\left\langle \nu_- (p), \dot{\gamma}(0) \right\rangle \leq C \sqrt{\delta},
\end{equation*}
possibly for a larger constant $C>0$. Finally, let $\dot{\gamma}(0) = \frac{\nabla^\Sigma F_- (p)}{|\nabla^\Sigma F_-(p)|}$. Then at the point $p$ we have
\begin{equation*}
\begin{split}
C\sqrt{\delta} & \geq \left\langle \nu_-, \frac{\nabla^\Sigma F_-}{|\nabla^\Sigma F_-|}\right\rangle\\ 
               &= \frac{\langle \nu_-, \nabla F_- - \langle \nabla F_-, \nu\rangle \nu\rangle}{|\nabla^\Sigma F_-|} \\
							 &= \frac{\langle \nu_-, \nabla F_- \rangle - \langle \nabla F_-, \nu\rangle \langle\nu, \nu_-\rangle}{|\nabla^\Sigma F_-|} \\
							 &= \frac{\langle \nabla F_-, \nu_-  - \langle\nu, \nu_-\rangle \nu\rangle}{|\nabla^\Sigma F_-|} \\
							 &= \frac{|\nabla F_-|\langle \nu_-, \nu_-  - \langle\nu, \nu_-\rangle \nu\rangle}{|\nabla^\Sigma F_-|} \\
							 &= \frac{(1 - \langle \nu, \nu_- \rangle^2)|\nabla F_-|}{|\nabla F_- - \langle \nabla F_-, \nu\rangle \nu|} \\
							 &= \frac{1 - \langle \nu ,\nu_- \rangle^2}{|\nu_- - \langle \nu ,\nu_- \rangle \nu|}\\
							 &= \sqrt{1 - \langle \nu ,\nu_- \rangle^2}.
\end{split}
\end{equation*}
Thus $\langle \nu, \nu_-\rangle^2 (p) = 1+O(\delta)$, and $|\nu(p)-\nu_-(p)|=O(\sqrt{\delta})$. Recalling the definition of $\delta$ the claim follows.
\end{proof}

With Lemma \ref{lemNormals} at hand we can prove the existence of the height function $h$.

\begin{cor}[Existence of height function]\label{corGraph}
There exists a $C^3_{loc}$-function $h: \Sigma_- \to \bR_{\geq 0}$ and $r_2>0$ such that $\Sigma \cap (M_{r_2} \times \bR) = \graph h$ in the Fermi coordinates as described in Section \ref{secFermi}.
\end{cor}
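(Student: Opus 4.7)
The strategy is to combine the sandwich $f_- \leq f \leq f_+$ (which confines $\Sigma$ to a thin tubular region around $\Sigma_-$) with the tilt-excess estimate of Lemma~\ref{lemNormals} (which guarantees transversality to the normal direction), and then apply the implicit function theorem together with the local parametric estimates of Proposition~\ref{propLocParam}.

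First, I would verify that $\Sigma$ lies in the normal neighborhood $N_\gamma(\Sigma_-)$ near infinity. Since $0 \leq f - f_- \leq C r^{-1+\varepsilon}$ on $M_{r_1}$ by \eqref{eqNewBarriers}, one can pick $r_2 > r_1$ large enough that every $p \in \Sigma$ with $r(p) \geq r_2$ has ambient distance less than $\gamma$ from $\Sigma_-$, and therefore lies on a unique normal geodesic issuing from $\Sigma_-$ with nonnegative Fermi coordinate $\rho$ (since $\Sigma$ lies above $\Sigma_-$ and $\nu_-$ points upward to leading order).

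Next, I would represent $\Sigma$ locally as a Fermi graph $\rho = h(u)$ via the implicit function theorem. The crucial verification is that at each $p \in \Sigma$ with $r(p) \geq r_2$, the tangent plane $T_p \Sigma$ is transverse to $\partial_\rho$. Let $\pi\colon N_\gamma(\Sigma_-) \to \Sigma_-$ denote the normal projection. Lemma~\ref{lemNormals} gives $|\nu(p) - \nu_-(\pi(p))| = O(r^{(-1+\varepsilon)/2})$, and since $\partial_\rho$ at $p$ is the parallel transport of $\nu_-(\pi(p))$ along a normal geodesic of length $O(r^{-1+\varepsilon})$, we obtain $\langle \nu(p), \partial_\rho\rangle \geq \tfrac{1}{2}$ once $r_2$ is large enough. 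The implicit function theorem then yields a local $C^3$ function $h_p$ with $\Sigma$ equal to $\{\rho = h_p(u)\}$ near $p$, and the $C^3_{loc}$ regularity of the resulting $h_p$ is inherited from Proposition~\ref{propLocParam}.

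The main obstacle is patching these local pieces into a single globally defined $h$. The natural argument is to show that for each $q$ in the end of $\Sigma_-$, the normal geodesic $\rho \mapsto y(q,\rho)$ meets $\Sigma$ in exactly one point. Consider the function $F(x,t) \definedas t - f(x)$, whose zero set is $\Sigma$. Then
\[
\tfrac{d}{d\rho}\, F(y(q,\rho)) \;=\; \langle \partial_\rho,\nabla F\rangle \;=\; |\nabla F|\,\langle \partial_\rho,\nu\rangle,
\]
and the tilt-excess estimate from the previous step shows that this quantity is strictly positive along the entire normal geodesic segment lying in $N_\gamma(\Sigma_-)$. Since $F(y(q,0)) \leq 0$ while $F(y(q,\rho))$ becomes positive before the geodesic exits $N_\gamma(\Sigma_-)$ (by the sandwich estimate), there is exactly one zero $\rho = h(q) \in [0, C r^{-1+\varepsilon}]$, which defines $h$ globally and yields the identity $\Sigma \cap (M_{r_2}\times \bR) = \graph h$ stated in the corollary.
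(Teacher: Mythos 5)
You correctly identify the two ingredients the paper uses — the tilt-excess bound of Lemma~\ref{lemNormals} and the implicit function theorem applied to $F(x,t)=t-f(x)$ — and your verification that $\partial_\rho F$ is bounded away from zero is in the same spirit as the paper's proof. Your strategy is sound.

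There is, however, a subtle gap in the transversality step. Lemma~\ref{lemNormals} bounds $|\nu(p)-\nu_-(p)|$ where $\nu_-$ is the \emph{vertical} extension of the normal to $\Sigma_-$ along the $\bR$-factor, evaluated at $p$ itself (i.e.\ at the vertical projection $p_M$), not at the normal foot-point $\pi(p)$. You assert that $\partial_\rho$ at $p$ is the parallel transport of $\nu_-(\pi(p))$ and then silently identify $\nu_-(\pi(p))$ with $\nu_-(p)$; this conflation of the vertical and normal projections is precisely the step that requires a separate argument. The paper handles this by a two-step estimate: first $\langle\nu(q_-),\nu_-(q_-)\rangle=1+O(r^{(-1+\varepsilon)/2})$ directly from Lemma~\ref{lemNormals} at the foot-point $q_-$ (where $\partial_\rho=\nu_-$ exactly), and then $|\nu(q)-\nu(q_-)|\leq\sup|\nabla\nu|\cdot\dist(q,q_-)=O(r^{-1+\varepsilon})$ using the curvature bound \eqref{eqAmbient} from Proposition~\ref{propLocParam}. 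The second step is where the ambient derivative bound on $\nu$ is genuinely needed, and your proof does not invoke it. You also need (and the paper proves) that $r(q)$ and $r(q_-)$ are comparable, so that the decay rate in $r$ transfers correctly between the two points. Your ``patching'' discussion via monotonicity of $\rho\mapsto F(y(q,\rho))$ is a reasonable way to get global uniqueness along normal geodesics, although the paper is content to conclude directly from the implicit function theorem once $\partial_\rho F\geq\tfrac{1}{2}$ is established uniformly.
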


\begin{proof}
We use the same notations as in Lemma \ref{lemNormals}, in particular we  let $\nu$ and $\nu_-$ denote the upward pointing unit normal vector fields to $\Sigma$ and $\Sigma_-$ extended parallelly along the $\bR$-factor in $M\times \bR$. Let $F: M_{2r_1} \times \bR \to \bR$ be given by $F(x,t) = t - f(x)$. Then we have 
\[
\begin{aligned}
\frac{\nabla F}{|\nabla F|} = \nu  &\hspace{0.5cm} \text{ in }\hspace{0.5cm} M_{2 r_1}\times \bR,\\ 
 F=0 & \hspace{0.5cm}\text{ on }\hspace{0.5cm} \Sigma.
\end{aligned}
\]  
 We will show that $\partial_\rho F$ is bounded away from zero on $\Sigma \cap (M_{r_2} \times \bR)$ provided that $r_2>0$ is sufficiently large. The claim will then follow by  the implicit function theorem.

Fix $q \in \Sigma \cap (M_{3r_1} \times \bR )$. We let $q_-$ denote the orthogonal projection of $q$ on $\Sigma_-$ and we let $q_M$ denote the vertical projection of $q \in M \times \bR$ on $M$ as in the proof of Lemma  \ref{lemNormals}. The same type of argument as in the proof of Lemma  \ref{lemNormals} shows that we may without loss of generality assume that $\tfrac{1}{2}r(q)\leq r(q_-)\leq 2r(q)$. Since $\partial_\rho  = \nu_-$  on $\Sigma_-$, by Lemma \ref{lemNormals} we have 
\[
\frac{\partial_\rho F (q_-)}{|\nabla F (q_-)|} = \langle \nu(q_-) , \nu_-(q_-) \rangle   = 1 + O(r(q_-)^{\frac{-1+\varepsilon}{2}}).
\]
Recalling \eqref{eqAmbient} we obtain 
\begin{equation*}
\begin{split}
 \left|\frac{\partial_\rho F (q)}{|\nabla F (q)|} - \frac{\partial_\rho F (q_-)}{|\nabla F (q_-)|}\right| 
   & \leq \left|\frac{\nabla F (q)}{|\nabla F (q)|} - \frac{\nabla F (q_-)}{|\nabla F (q_-)|}\right|\\
   & \leq |\nu(q) - \nu(q_-)|\\
	 & \leq \sup |\nabla \nu| \dist_{M \times \bR} (q, q_-) \\
	 & \leq C (f(q_M)-f_-(q_M)) \\
          & = O(r(q)^{-1+\varepsilon}),
\end{split}
\end{equation*}
hence
\[
\frac{\partial_\rho F (q)}{|\nabla F (q)|} = 1 + O(r(q)^{\frac{-1+\varepsilon}{2}}).
\]
Finally, since $|\nabla F| = \sqrt{1 + |df|^2_g} \geq 1$, we conclude that
\[
\partial_\rho F (q) \geq \tfrac{1}{2} |\nabla F (q)| \geq \tfrac{1}{2},
\]
provided that $r(q)\geq r_2$ for a sufficiently large $r_2>0$. 
\end{proof}

Estimating the ``vertical gap''  between the barriers it is straightforward to see that the height function satisfies $h=O(r^{-1+\varepsilon})$. In  the following lemma we refine this estimate to $h=O(r^{-2+\varepsilon})$ by estimating the ``horizontal gap'' instead.  We also obtain some preliminary estimates for the coordinate derivatives of $h$.

\begin{lemma}[A priori estimates for the height function]\label{lemPropertiesHeight}
Let  $h: \Sigma_- \to \bR$ be the height function of $\Sigma$ as described in Corollary \ref{corGraph}.  
Then $h=O(r^{-2+\varepsilon})$, $|\partial h|=O(r^{-1+\varepsilon})$, and $|\partial \partial h|=O(1)$. 
\end{lemma}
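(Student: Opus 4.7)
The plan is to carry out all computations in the Fermi coordinates $(u,\rho)$ on the tubular neighborhood $N_\gamma(\Sigma_-)$ described in Section~\ref{secFermi}, in which $\Sigma\cap(M_{r_2}\times\bR)=\{\rho=h(u)\}$ by Corollary~\ref{corGraph} and the ambient metric takes the form $\hat g=d\rho^2+g_\rho$. Since $(\Sigma_-,g_{\Sigma_-})$ is asymptotically Euclidean with asymptotically Cartesian coordinates $u$ satisfying $|u|\sim r$, and since the partial derivatives of $(g_\rho)_{ij}$ and $(A_\rho)^i_{\phantom{i}j}$ up to order three are uniformly bounded by Proposition~\ref{propLevel}, all intrinsic quantities in the Fermi chart are well-behaved. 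As a coarse starting point, the inclusion $f_-\le f\le f_+$ together with $(f_+-f_-)(x)=O(r(x)^{-1+\varepsilon})$ gives $h=O(r^{-1+\varepsilon})$, so $h<\rho_0$ for $r$ large enough.

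The heart of the argument is the sharper bound $h=O(r^{-2+\varepsilon})$. Fix a base point $q\in\Sigma_-$ with Fermi coordinates $(u,0)$ and set $p=y(q,h(q))\in\Sigma$, reached by the $\hat g$-geodesic $\gamma(s)=\exp_q(s\nu_-(q))$ at $s=h(q)$. The key observation is that in the Riemannian product $\hat g=g+dt^2$ the vector field $\partial_t$ is parallel, so $dt$ is parallel and $t$ is \emph{exactly} linear along $\gamma$:
\begin{equation*}
t(p) \;=\; f_-(q_M) + \frac{h(q)}{\sqrt{1+|df_-|^2_g(q_M)}}.
\end{equation*}
Moreover, the $M$-projection of $\gamma$ is itself a $g$-geodesic $\sigma$ in $M$ starting at $q_M$ in the direction $-(df_-)^\sharp/\sqrt{1+|df_-|^2_g}$. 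Taylor expanding $f_-$ along $\sigma$ to second order and using $|df_-|_g,\,|\nabla^2 f_-|_g=O(r)$ (which follows from $\Hess^b\sqrt{1+r^2}=\sqrt{1+r^2}\,b$ applied to the leading part of $f_-$), I obtain
\begin{equation*}
t(p)-f_-(p_M) \;=\; h(q)\sqrt{1+|df_-|^2_g(q_M)} \;+\; O(h(q)^2 r).
\end{equation*}
Since $p\in\Sigma$, the left-hand side is nonnegative and bounded above by $(f_+-f_-)(p_M)=O(r^{-1+\varepsilon})$. Dividing by $\sqrt{1+|df_-|^2_g}$, which is of order $r$, yields $h\le Cr^{-2+\varepsilon}+Ch^2$, and the quadratic term is absorbed using the coarse bound $h\to 0$, giving $h=O(r^{-2+\varepsilon})$.

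For $|\partial\partial h|=O(1)$ I use that $|A^\Sigma|$ is bounded by Proposition~\ref{propBound2FF}. In Fermi coordinates the second fundamental form of $\Sigma=\{\rho=h(u)\}$ with respect to the upward normal takes the form
\begin{equation*}
A^\Sigma_{ij}\sqrt{1+|dh|^2_{g_h}} \;=\; -\nabla^{g_h}_{ij}h - (A_h)_{ij} + (\text{terms linear in }\partial h),
\end{equation*}
and since $(g_h)_{ij}$, $(A_h)_{ij}$, their first derivatives, and $|\partial h|$ are all uniformly bounded (the last either directly from Proposition~\ref{propLocParam} or from Lemma~\ref{lemNormals}), solving for the Hessian gives $|\partial\partial h|=O(1)$. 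The bound $|\partial h|=O(r^{-1+\varepsilon})$ then follows from the standard interpolation inequality
\begin{equation*}
\sup_B|\partial h| \;\le\; C\bigl(R^{-1}\sup_B|h|+R\sup_B|\partial\partial h|\bigr)
\end{equation*}
applied on a Euclidean $u$-ball of radius $R$ centered at a point where $r$ is large (on which $r$ varies by a bounded factor since $|u|\sim r$); optimizing in $R$ gives $|\partial h|=O(r^{-1+\varepsilon/2})$, which, after shrinking $\varepsilon$ at the outset, is the desired bound.

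The main obstacle is the Taylor-expansion step in the second paragraph. Both $|df_-|_g$ and $|\nabla^2 f_-|_g$ grow like $r$ at infinity, so one must verify that the remainder is really $O(h^2 r)$, and more importantly that the apparent near-cancellation between the small vertical shift $t(p)-f_-(q_M)\sim h/r$ along $\gamma$ and the much larger shift $f_-(p_M)-f_-(q_M)\sim -hr$ along $\sigma$ produces the improved factor $r^{-2}$ rather than $r^{-1}$ in the final inequality. The bootstrap closing the argument, which uses the coarse estimate $h=O(r^{-1+\varepsilon})$ to absorb the quadratic error, is also essential.
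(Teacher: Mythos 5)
Your proposal is correct in all three parts, and it takes a genuinely different route from the paper on the first two, so a comparison is in order.

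For $h=O(r^{-2+\varepsilon})$: both arguments ultimately exploit the barriers and the factor of $r$ lurking in $|df_-|_g$, but the mechanisms differ. The paper compares a point $p\in\Sigma_+$ with a point $z\in\Sigma_-$ chosen at the same angular coordinates and the same $t$-height; the near-unit slope $\phi'_-\approx 1$ converts the vertical gap $\phi_+-\phi_-=O(r^{-1+\varepsilon})$ into a radial gap $r_z-r_p=O(r^{-1+\varepsilon})$, and the factor $(1+r^2)^{-1/2}$ in the radial line element then compresses this to $\dist_{M\times\bR}(p,z)=O(r^{-2+\varepsilon})$, which dominates the Fermi coordinate $\rho(p)$ and hence $h$. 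You instead fix $q\in\Sigma_-$, follow the single Fermi geodesic $\gamma$ to $p\in\Sigma$, use that $\partial_t$ is parallel so $t$ is exactly linear along $\gamma$, project to a $g$-geodesic $\sigma$ in $M$, and Taylor expand $f_-$ along $\sigma$. The large terms $\pm\,h\,|df_-|^2_g/\sqrt{1+|df_-|^2_g}$ cancel against the small linear piece $h/\sqrt{1+|df_-|^2_g}$ to produce the prefactor $\sqrt{1+|df_-|^2_g}\sim r$, whence $h\,r+O(h^2r)\lesssim(f_+-f_-)(p_M)=O(r^{-1+\varepsilon})$. The Taylor remainder $O(h^2 r)$ and the comparability $r(p_M)\sim r(q_M)$ that you flag as potential obstacles are both sound: $|\Hess^g f_-|_g=O(r)$ and $|\sigma'|_g<1$ give the remainder, and $|dr|_g\sim r$ integrated over $\tau\le h=O(r^{-1+\varepsilon})$ gives $r(\sigma(\tau))=r_q(1+O(r^{-1+\varepsilon}))$. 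Your version has the advantage of working with one Fermi geodesic and making the role of the product structure explicit, at the cost of tracking the growth of $\Hess^g f_-$; the paper's version avoids any second-order Taylor expansion entirely by moving along a constant-$t$ radial path.

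For $|\partial h|$ and $|\partial\partial h|$: the paper reruns the tilt-excess argument of Lemma~\ref{lemNormals} with the improved datum $\delta\sim h\sim r^{-2+\varepsilon}$, obtaining $|dh|_{g_\rho}=O(\sqrt\delta)=O(r^{-1+\varepsilon/2})$ directly, and only afterwards derives $|\partial\partial h|=O(1)$ from $|A^\Sigma|\le C$. You invert the order: first get $|\partial\partial h|=O(1)$ from the second fundamental form bound (using boundedness of $|\partial h|$, which Lemma~\ref{lemNormals} already supplies in the form $|dh|=O(r^{(-1+\varepsilon)/2})$), then apply the elementary $C^0$--$C^2$ interpolation inequality on small Euclidean $u$-balls of radius $R=r^{-1+\varepsilon/2}$ to obtain $|\partial h|=O(r^{-1+\varepsilon/2})$, which is even slightly stronger than the stated $O(r^{-1+\varepsilon})$ (no shrinking of $\varepsilon$ needed). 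This interpolation shortcut is clean and avoids a second pass through the geodesic Taylor argument; it is a nice simplification. One small caution: the parenthetical reference to Proposition~\ref{propLocParam} as an alternative source for boundedness of $|\partial h|$ is not immediate — that proposition bounds the local defining functions $w$ in the normal chart adapted to the tangent plane at each point, not $h$ in Fermi coordinates adapted to $\Sigma_-$ — so Lemma~\ref{lemNormals} is the right reference there.
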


\begin{proof} We address each estimate separately. Recall the following notation: for  any $z\in M\times \bR$ we denote $z_M = \proj _M z$ where $\proj_M: M\times \bR \to M$ is the standard projection operator.\\

\emph{Proving that $h=O(r^{-2+\varepsilon})$.} By considering sufficiently large $r$ we may assume that the functions $\phi_\pm(r)=f_\pm(r,\theta,\varphi) - \psi(\theta,\varphi)= \sqrt{1+r^2}+\alpha \ln r + O(r^{-1+\varepsilon})$  are both increasing. Let  $p\in\Sigma_+$ and $q\in \Sigma_-$  be such that $q$ is the orthogonal projection of $p$ on $\Sigma_-$.  We define $z\in \Sigma_-$ so that $(\theta(z),\varphi(z))=(\theta(p),\varphi(p))$ and $f_+(p_M)=f_-(z_M)$. Clearly, we have $h(q) \leq \dist_{M\times\bR} (p,q)\leq \dist_{M\times\bR} (p,z)$, so we want to estimate $\dist_{M\times\bR} (p,z)$. 

We denote $r_p= r(p_M)$, $r_z = r(z_M)$, $r_q=r(q_M)$. Since $\phi_-(r_p)<\phi_+(r_p) =\phi_-(r_z)$, we have $r_p< r_z$. We also have $\phi_+(r_p)=\phi_-(r_p)+O(r_p^{-1+\varepsilon})$, hence
\begin{equation*}
\phi_-(r_z)-\phi_-(r_p)=\phi_+(r_p)-\phi_-(r_p)= O(r_p^{-1+\varepsilon}).
\end{equation*} 
As a consequence, there exists $\beta\in [0;1]$ such that 
\begin{equation*}
\phi'_-\left(\beta r_z+(1-\beta)r_p\right)(r_z-r_p)=\phi_-(r_z)-\phi_-(r_p)=O(r_p^{-1+\varepsilon}). 
\end{equation*} 
Since $\phi'_-(r)=1+O(r^{-1})$ we conclude that $r_z-r_p=O(r_p^{-1+\varepsilon})$ so
\begin{equation*}
\dist_{M\times \bR}(p,z) = \int_{r_p}^{r_z} \frac{dr}{\sqrt{1+r^2}} \leq \frac{r_z-r_p}{\sqrt{1+r_p^2}}=O(r_p^{-2+\varepsilon}). 
\end{equation*} 

In order to prove the claim it only remains to replace $O(r_p^{-2+\varepsilon})$ by $O(r_q^{-2+\varepsilon})$ in the right hand side of this inequality. This can be achieved by estimating
\begin{equation*}
\begin{split}
C r_p^{-2+\varepsilon}&\geq \dist_{M\times \bR}(p,z)\\ 
                      & \geq \dist_{M\times \bR}(p,q) \\ 
									    & \geq \dist_{M}(p_M,q_M)\\
											& \geq \left|  \int_{r_p}^{r_q} \frac{dr}{\sqrt{1+r^2}} \right|\\ 
											& \geq \frac{|r_q-r_p|}{\sqrt{1+(r_p + r_q)^2}}\\
											& = \frac{\left| r_q r_p^{-1} -1 \right|}{\sqrt{r_p^{-2} + (r_q r_p^{-1} + 1)^2}} \\
											& \geq \frac{\left| r_q r_p^{-1} -1 \right|}{\sqrt{2} (r_q r_p^{-1} + 1)},
\end{split}
\end{equation*}
which clearly implies that $r_q r_p^{-1}$ is bounded when $r_p\to \infty$. We conclude that $h(q)=O(r_q^{-2+\varepsilon})$. \\

\emph{Proving that $\partial h=O(r^{-1+\varepsilon})$.} We slightly modify the argument in the proof of Lemma \ref{lemNormals}. We fix a point $p\in \Sigma$, and let $p_0$ denote the orthogonal projection of $p$ on $\Sigma_-$. For $\rho_0 = h(p_0)$ consider the function $\Phi= \Phi(\rho) \definedas  \rho_0  - \rho$. Arguing as in the proof of Lemma  \ref{lemNormals}, we conclude that $|\Hess^\Sigma \Phi| \leq C$. Then if $\gamma$ is a unit speed geodesic in $\Sigma$ such that $\gamma(0)=p$ and $\gamma(s)=q$, we have
\begin{equation}\label{eqBoundHeight2}
\Phi(q) \geq d \Phi(\dot{\gamma}(0)) \dist_\Sigma (p,q) - C (\dist_\Sigma (p,q))^2,
\end{equation}		
cf. \eqref{eqTaylor}. We have already proven that there exists $C_0$ such that $h(z) \leq C_0 r(z)^{-2 + \varepsilon}$ for any $z\in \Sigma_-$ so we set $s = \sqrt{\delta}$, where $\delta \definedas 5 C_0 r(p)^{-2 + \varepsilon}$.  

Let $q_0$ denote the orthogonal projection of $q$ on $\Sigma_-$.  Again, it is straightforward to show that  $\frac{r(p_0)}{2} \leq r(q_0) \leq 2 r(p_0)$ provided that $r(p)$ is sufficiently large. The left hand side of \eqref{eqBoundHeight2} can then be estimated as follows
\begin{equation}\label{eqDistBound2}
\begin{split}
|\Phi(q)| &  =| h(p_0)-h(q_0)| \\
						& \leq C_0 r(p_0)^{-2+\varepsilon} + C_0 r(q_0)^{-2+\varepsilon}	\\
						& \leq 5C_0 r(p_0)^{-2+\varepsilon}\\ 
						& = \delta.
\end{split}
\end{equation}
Consequently, it follows from \eqref{eqBoundHeight2} that $d\Phi (\dot{\gamma}(0)) \leq C \sqrt{\delta}$ for some $C>0$. If we now choose $\dot{\gamma}(0) = \frac{\nabla^\Sigma \Phi}{|\nabla^\Sigma \Phi|}$ then at the point $(p_0,\rho_0) = (p_0, h(p_0))$ we have
\begin{equation}
\begin{split}
C \sqrt{\delta} & \geq \frac{\langle \nabla \Phi, \nabla^\Sigma \Phi\rangle}{|\nabla^\Sigma \Phi|} \\
                & =\frac{\langle\partial_\rho,\partial_\rho - \langle  \nu,\partial_\rho\rangle\nu \rangle }
								    {\sqrt{1-\langle \nu, \partial_\rho\rangle^2}}\\
								& = \sqrt{1-\langle \nu, \partial_\rho\rangle^2},
\end{split}
\end{equation} 
where, as before, 
\[
\nu= \frac{\partial_\rho-\nabla^{g_\rho} h}{\sqrt{1 + |dh|^2_{g_\rho}}}
\]
is the upward pointing unit normal of $\Sigma$. It follows that
\[
1 - \frac{1}{1 + |dh|^2_{g_{\rho}}} = O(\delta),
\]
and hence 
$|dh|^2_{g_{\rho}}=O(\delta)$ at the point $(p_0,\rho_0) = (p_0, h(p_0))$. Recalling the definition of $\delta$ and Propostion \ref{propLevel} the second claim follows. \\

\emph{Proving that $|\partial \partial h|=O(1)$.} Combining the argument used in the proof of Lemma \ref{lemDefFuncIneq} with Propostion \ref{propLevel} one can obtain the following estimate for the second fundamental form: 
\[
\sum_{i,j=1}^3 A^\Sigma(e_i,e_j)^2 \leq C (1 + |\partial h|^2)^2,
\]
where $e_i\definedas \partial_i +(\partial_i h) \partial_\rho$ for $i=1,2,3$. It is also straightforward to check that
\[
A^\Sigma(e_i, e_j)^2 = \left(\partial_i \partial_j h - (\Gamma_\rho)^k _{ij} h_k + (A_\rho)_{ij}
				         +2 (A_\rho)_{\phantom{i} i}^k h_j h_k \right)^2 (1 + |dh|^2_{g_\rho})^{-1},					
\]
see Section \ref{secHeight} for details. Using the inequality $(a+b)^2 \geq \frac{a^2}{2} -  b^2$, Proposition \ref{propLevel} and the fact that $|\partial h| = O(r^{-1+\varepsilon})$, the last claim of the proposition follows. 
\end{proof}

\subsection{The height function: a posteriori estimates}\label{secHeight}

We begin this section by rewriting the Jang equation in terms of the height function using Fermi coordinates. 
For this purpose it is convenient to  think of  $\Sigma$ as the level set $\{F = 0\}$ of the function  $F(u,\rho)=h(u)-\rho$. A computation shows that
\begin{align*}
\Hess_{\rho \rho} F & =0, \\ 
\Hess_{\rho i} F & = (A_\rho)^{k}_{\phantom{i} i} h_k, \\
\Hess_{ij} F & = \Hess^{g_\rho}_{ij} h + (A_\rho)_{ij},
\end{align*}
where we, as before, use the notation $h_i= \partial_i h$, and tacitly assume that $i,j,k\in\{1,2,3\}$, and that the indices are raised with respect to the metric $g_\rho$.  We remind the reader that our sign convention for the second fundamental form of the surfaces $\{\rho=\const\}$ is $(A_\rho )_{ij}= \langle \nabla_{\partial_i} (-\partial_\rho), \partial_j \rangle$.

In this setting, the vector $-\partial_\rho+\nabla^{g_\rho} h$ is normal, and the vectors $e_i = \partial_i+(\partial_i h)\partial_\rho$ are tangent to $\Sigma$ at the point with Fermi coordinates $(u,\rho)=(u, h(u))$. The induced metric on $\Sigma$ has components 
\begin{equation*}
\overline{g}_{ij} \definedas \ghat (e_i,e_j)=(g_\rho)_{ij}+h_i h_j,
\end{equation*}  
and its inverse is
\begin{equation}\label{eqFermiInverse}
\gbar^{ij}= (g_\rho)^{ij}-\frac{h^i h^j}{1+|dh|^2_{g_\rho}}.
\end{equation}
The mean curvature of $\Sigma$ is then given by
\begin{equation}\label{eqMeanCurvFermi}
\begin{split}
H^\Sigma & = \bar{g}^{ij} A^\Sigma (e_i, e_j) \\
         & = \frac{\bar{g}^{ij}\Hess F \left(\partial_i+(\partial_i h)\partial_\rho, \partial_j+(\partial_j h)\partial _\rho\right)}{|\nabla F|}\\
         & = \frac{\bar{g}^{ij}\left(\Hess^{g_\rho}_{ij} h + (A_\rho)_{ij}+2 (A_\rho)_{\phantom{i} i}^k h_j h_k \right)}{\sqrt{1+|dh|^2_{g_\rho}}},
\end{split}
\end{equation}
and the trace of $K$ with respect to the induced metric on $\Sigma$ is given by
\begin{equation}\label{eqTrKFermi}
                   \tr^\Sigma K  = \bar{g}^{ij} K(e_i,e_j)
                                 =  \bar{g}^{ij} (K_{ij}+2 h_i K_{\rho j}+h_i h_j K_{\rho \rho}).
\end{equation} 
Note that all quantities in the equations \eqref{eqMeanCurvFermi} and \eqref{eqTrKFermi}  are computed at the point with Fermi coordinates $(u,\rho)=(u, h(u))$. 

We may now rewrite the Jang equation $H^\Sigma  - \tr^\Sigma K = 0$ in terms of the height function as follows.

\begin{proposition}
The height function $h$ satisfies the equation
\begin{equation}\label{eqJangHeight}
a^{ij}\partial_i\partial_j h + b^k \partial_k h =c,
\end{equation}
with the coefficients given by
\[
a^{ij}  =  \frac{\bar{g}^{ij}}{\sqrt{1+|dh|_{g_\rho}^2}}, \hspace{1cm}
  b^k   = - \frac{\bar{g}^{ij}(\Gamma_\rho)^k_{ij}}{\sqrt{1+|dh|_{g_\rho}^2}} - 2 \bar{g}^{ik} K_{i\rho},
\]
\[
c= \bar{g}^{ij} \left(-\frac{(A_\rho)_{ij}+2 (A_\rho)_{\phantom{i} i}^k h_j h_k}
		         {\sqrt{1+|dh|^2_{g_\rho}}}+K_{ij} 
						+ h_i h_j K_{\rho \rho}  \right),
\]
where $(\Gamma_\rho)^k_{ij}$ are the Christoffel symbols of the metric $g_\rho$, and $\gbar^{ij}$ is given by \eqref{eqFermiInverse} .
\end{proposition}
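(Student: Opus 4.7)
The proposition is essentially a bookkeeping exercise: the Jang equation $H^\Sigma - \tr^\Sigma K = 0$ has already been expressed in Fermi coordinates via the formulas \eqref{eqMeanCurvFermi} and \eqref{eqTrKFermi}, so all that remains is to isolate the second and first order derivatives of $h$ and identify the coefficients. I therefore plan to simply substitute and collect terms, with no nontrivial geometric input required.

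More explicitly, I would start from the identity
\[
\frac{\bar{g}^{ij}\left(\Hess^{g_\rho}_{ij} h + (A_\rho)_{ij}+2 (A_\rho)_{\phantom{i} i}^k h_j h_k \right)}{\sqrt{1+|dh|^2_{g_\rho}}}
= \bar{g}^{ij}\left(K_{ij}+2 h_i K_{\rho j}+h_i h_j K_{\rho \rho}\right),
\]
which is exactly \eqref{eqMeanCurvFermi}$=$\eqref{eqTrKFermi}. I then expand the Hessian of $h$ with respect to $g_\rho$ as
\[
\Hess^{g_\rho}_{ij} h = \partial_i\partial_j h - (\Gamma_\rho)^k_{ij}\,\partial_k h,
\]
so that the second derivative terms on the left produce precisely $\tfrac{\bar{g}^{ij}}{\sqrt{1+|dh|^2_{g_\rho}}}\partial_i\partial_j h$, giving the coefficient $a^{ij}$ as claimed.

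Next I would gather all terms linear in $\partial_k h$. On the left these come from the Christoffel contribution $-\bar g^{ij}(\Gamma_\rho)^k_{ij}\partial_k h/\sqrt{1+|dh|^2_{g_\rho}}$; on the right they come from $2\bar g^{ij}h_i K_{\rho j}$, which after relabeling and using the symmetry $K_{\rho j}=K_{j\rho}$ together with $\bar g^{ij}=\bar g^{ji}$ becomes $2\bar g^{ik}K_{i\rho}\partial_k h$. Moving this to the left contributes $-2\bar g^{ik}K_{i\rho}$ to $b^k$, yielding the stated expression. Finally, everything that does not involve $\partial h$ or $\partial\partial h$ is moved to the right-hand side: the $(A_\rho)$ terms coming from \eqref{eqMeanCurvFermi} together with the $\bar g^{ij}K_{ij}$ and $\bar g^{ij}h_i h_j K_{\rho\rho}$ terms from \eqref{eqTrKFermi}, which assemble into the expression for $c$.

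There is no real obstacle in this argument; the only care required is with index conventions, specifically that $(A_\rho)^k_{\phantom{k}i}$ denotes the second-slot-raised version of the second fundamental form and that the symmetry of $\bar g^{ij}$ and $K_{ab}$ must be invoked to match the asymmetric-looking formula for $b^k$. The formula \eqref{eqFermiInverse} for $\bar g^{ij}$ is only needed to make sense of the coefficients but is not used in the derivation itself. Since no analysis is involved, the proof is essentially one display of algebraic manipulation followed by reading off $a^{ij}$, $b^k$, and $c$.
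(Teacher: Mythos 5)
Your proposal is correct and matches the paper's (implicit) derivation: the proposition is obtained by setting \eqref{eqMeanCurvFermi} equal to \eqref{eqTrKFermi}, expanding $\Hess^{g_\rho}_{ij}h = \partial_i\partial_j h - (\Gamma_\rho)^k_{ij}\partial_k h$, and sorting terms into the quasilinear form $a^{ij}\partial_i\partial_j h + b^k\partial_k h = c$, with the relabeling $2\bar g^{ij}h_i K_{\rho j} = 2\bar g^{ik}K_{i\rho}\partial_k h$ handled exactly as you describe. The paper provides no separate proof beyond the preceding computation of $H^\Sigma$ and $\tr^\Sigma K$ in Fermi coordinates, so your bookkeeping is precisely what is intended.
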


Applying standard elliptic theory and rescaling technique to \eqref{eqJangHeight} we will obtain our a posteriori estimates for the height function. It will be convenient to use the following definition (see e.g. \cite{Bartnik}, \cite{ChruscielDelayOnMapping}, \cite{Meyers}).

\begin{definition}
 Let $B$ be a closed ball in $\bR^n$ with center at the origin. For every $k\in\{0,1,2,\ldots\}$, $\alpha \in (0,1)$ and $\tau \in \bR$ we define the weighted H\"older space $C^{k,\alpha}_\tau(\bR^n \setminus B)$ as the collection of $f\in C^{k,\alpha}_{loc}(\bR^n \setminus B)$ with 
\[
\sum_{|I|\leq k} \sup_{x\in \bR^n \setminus B} |x|^{|I|+\tau} |(\partial^{(I)} f)(x)| + \sup_{x\in \bR^n \setminus B} |x|^{k+\tau+\alpha} \sup_{4|x-y|<|x|} \frac{|\partial^k f(x)-\partial^k f(y)|}{|x-y|^\alpha} < \infty.
\]
\end{definition}

\begin{remark}
This definition extends in a standard way (see e.g. \cite[Definition 1]{EHLS}) to define the weighted H\"older space $C^{k,\alpha}_\tau(M)$ on a $C^k$ manifold $M$ which outside of a compact set is diffeomorphic to $\bR^n \setminus B$ as well as to the case of tensor bundles on $M$. In what follows, we will write $C^{k,\alpha}_\tau$ instead of $C^{k,\alpha}_\tau(M)$ whenever the context is clear and denote by $O^{k,\alpha} (r^{-\tau})$ a tensor in the weighted H\"older space $C^{k,\alpha}_\tau$.
\end{remark}

\begin{proposition}\label{propFallOffh}
The height function $h$ satisfies $h=O^{2,\alpha} (r^{-2 + \varepsilon})$ and $|\partial\partial\partial h |= O^\alpha(r^{-4+\varepsilon})$ for some $\alpha\in (0,1)$.
\end{proposition}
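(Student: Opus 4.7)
The natural strategy is the classical rescaling technique applied to equation~\eqref{eqJangHeight}. This is legitimate here because the domain $\Sigma_-$ is asymptotically Euclidean and dilations act as homotheties for the Euclidean background, in contrast to the hyperbolic situation which obstructed the analogous direct argument on $M$. Fix a point $p_0$ in the asymptotically Euclidean Cartesian coordinates $u$ on $\Sigma_-$ with $R\definedas|u(p_0)|$ large and introduce the rescaled function
\[
\tilde h(y)\definedas R^{2-\varepsilon}\,h(p_0+Ry),\qquad y\in B_1(0)\subset\bR^3.
\]
The a priori bound $h=O(r^{-2+\varepsilon})$ from Lemma~\ref{lemPropertiesHeight} yields $\|\tilde h\|_{C^0(B_1)}\leq C$ uniformly in $p_0$. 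The pulled-back version of~\eqref{eqJangHeight} then reads
\[
\tilde a^{ij}\,\partial_i\partial_j\tilde h + R\,\tilde b^k\,\partial_k\tilde h = R^{4-\varepsilon}\,\tilde c,
\]
with tildes denoting coefficient evaluation at $p_0+Ry$. Once I verify, uniformly in $R$, that this operator is strictly elliptic with $C^{0,\alpha}$-bounded coefficients and that $R^{4-\varepsilon}\tilde c$ is $C^{0,\alpha}$-bounded on $B_1$, interior $C^{2,\alpha}$ Schauder estimates applied on $B_{1/2}$ give $\|\tilde h\|_{C^{2,\alpha}(B_{1/2})}\leq C$ independent of $p_0$, and scaling back produces $|\partial h|\leq Cr^{-3+\varepsilon}$ and $|\partial^2 h|\leq Cr^{-4+\varepsilon}$ together with the corresponding scaled H\"older seminorms, which is precisely the statement $h\in C^{2,\alpha}_{2-\varepsilon}$.

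The verification of these three conditions breaks into an easy and a hard part. Ellipticity of $\tilde a^{ij}$ and its $C^{0,\alpha}$ bound follow from $\tilde a^{ij}\to\delta^{ij}$ as $R\to\infty$, combining the asymptotic Euclideanness of $\Sigma_-$, the a priori control $|\partial h|=O(r^{-1+\varepsilon})$, and the bounded derivatives of $g_\rho$ from Proposition~\ref{propLevel}. The bound $R\,\tilde b^k=O(1)$ amounts to $b^k=O(r^{-1})$, which holds because $\Gamma_\rho=O(r^{-2})$ on the asymptotically Euclidean foliation, while $\bar g^{ik}K_{i\rho}=\bar g^{ik}\eta_{i\rho}$ (using $g_{i\rho}=0$ in Fermi coordinates) decays like the perturbation $\eta\definedas K-g$, which is $O(r^{-\tau})$ with $\tau>3/2$ under Wang's asymptotics. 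The delicate point is $c=O(r^{-4+\varepsilon})$. Evaluated at $h\equiv 0$, $c$ equals $-\mathcal{J}(f_-)$, i.e.\ the defect of $\Sigma_-$ as a Jang solution; by the barrier construction and the residual estimate embedded in Lemma~\ref{lemODE} (the terms that are dropped after solving~\eqref{eqLowerBarrier}), this defect decays at least like $r^{-4+\varepsilon}$. The contribution arising from $h\not\equiv 0$ is linear in $h$---with coefficient controlled by the Riccati evolution~\eqref{eqRiccati} of $A_\rho$---plus quadratic in $|dh|_{g_\rho}$; the preliminary bounds of Lemma~\ref{lemPropertiesHeight} together with the Wang decay of $K-g$ supply the cancellation between $\bar g^{ij}K_{ij}$ and $\bar g^{ij}(A_\rho)_{ij}$ (both approaching $\tr^{g_\rho}g_\rho$ at infinity) needed to absorb this contribution into $O(r^{-4+\varepsilon})$.

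The H\"older control on $\partial^3 h$ would then be obtained by a second round of the same bootstrap. I differentiate~\eqref{eqJangHeight} in $\partial_k$ to obtain an elliptic equation of the same shape for $\partial_k h$ whose coefficients remain $C^{0,\alpha}$ (using the regularity hypothesis $l\geq 5$ in Definition~\ref{defAHdata} to ensure enough derivatives of $g_\rho$, $A_\rho$ and $K$ are available), employ the newly established $C^{2,\alpha}_{2-\varepsilon}$ bound on $h$ to estimate the resulting right-hand side, and apply rescaled interior Schauder once more. I expect the principal obstacle to be the careful verification of $c=O(r^{-4+\varepsilon})$: extracting the requisite cancellation between the asymptotic umbilicity of $\Sigma_-$ and the hyperboloidal condition $K\to g$, and matching it against the barrier defect down to the fourth order, will demand a calculation in Fermi coordinates of complexity comparable to Lemma~\ref{lemODE}, although it is essentially determined by quantities already controlled by the preceding sections.
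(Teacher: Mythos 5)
Your proposed one-step rescaling $\tilde h(y)=R^{2-\varepsilon}h(p_0+Ry)$ cannot close at the level of a~priori information available from Lemma~\ref{lemPropertiesHeight}, and the ``principal obstacle'' you identify --- the estimate $c=O(r^{-4+\varepsilon})$ --- is not merely a tedious computation but is false at this stage. The decomposition $c=(-H_\rho+\tr^{g_\rho}K)+O(|\partial h|^2)$ of~\eqref{eqF} has a zeroth-order part that is indeed $O(r^{-4+\varepsilon})$, due to the Taylor expansion in $\rho=h=O(r^{-2+\varepsilon})$, the cancellation $\ric(\nu_-,\nu_-)+|A^{\Sigma_-}|^2=O(r^{-2})$ from the Riccati evolution, and the barrier identity $H^{\Sigma_-}-\tr^{\Sigma_-}K=O(r^{-4+\varepsilon})$. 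But the part quadratic in $\partial h$ is governed by the bilinear form $B_{ij}=\tfrac{H_\rho}{2}(g_\rho)_{ij}-(A_\rho)_{ij}-K_{ij}+(g_\rho)_{ij}K_{\rho\rho}$, whose radial-radial component satisfies $B_{rr}\approx 2$ near infinity: from Lemma~\ref{lemma2} one reads off $(g_0)_{rr}\to 1$, $(A_0)_{rr}\approx r^{-2}$, $K_{rr}\approx r^{-2}$, $H_0\to 2$, $K_{\rho\rho}\to 1$. There is no cancellation here; indeed $\Sigma_-$ is far from asymptotically umbilical, since its shape operator has eigenvalues approximately $(r^{-2},1,1)$, so the heuristic you invoke is misplaced. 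With only $|\partial h|=O(r^{-1+\varepsilon})$ one therefore has $c=O(r^{-2+2\varepsilon})$, so $R^{4-\varepsilon}\tilde c=O(R^{2+\varepsilon})$ diverges and Schauder yields nothing; worse, iterating the rescaled Schauder argument from the a~priori gradient bound strictly \emph{degrades} the estimate for $\partial h$, because each derivative costs a factor of $R$ that the quadratic source does not recover.

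The paper closes this gap with an essential intermediate step that your proposal omits. Before any rescaling it applies $L^p$ interior elliptic regularity on balls of \emph{fixed} radius $B_2(p)$ followed by Sobolev embedding; because the domain is not dilated, the resulting $C^{1,\alpha}(B_1(p))$ bound inherits the decay rate $O(r(p)^{-2+\varepsilon})$ of $\sup|h|$ and of $c=O(r^{-2+\varepsilon})$ directly, upgrading $|\partial h|$ from $O(r^{-1+\varepsilon})$ to $O(r^{-2+\varepsilon})$ in one stroke. Only then is $c=O(r^{-4+2\varepsilon})$ available, and the rescaled Schauder argument (the paper's gauge $\tilde u=(u-u_0)/\sigma$ is equivalent to yours up to normalizing the unknown) produces the weighted $C^{2,\alpha}$ and $C^{3,\alpha}$ bounds exactly as you sketch from that point on, including the H\"older interpolation of the coefficients. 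The rest of your outline --- ellipticity and coefficient control from Proposition~\ref{propLevel}, $b^k=O(r^{-1})$ from the decay of $(\Gamma_\rho)^k_{ij}$ and of $K_{i\rho}$, and a second Schauder round for $\partial^3 h$ --- is sound, but it is all conditional on the missing first bootstrap; inserting it would bring your proof in line with the paper's.
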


\begin{remark}
The positive constant $\varepsilon$ may be assumed to be arbitrarily small by choosing an appropriate $r_0>0$ in Lemma \ref{lemBarrierAsympt} and a sufficiently small $\beta>0$ in the proof below. Since we are not interested in the explicit  form of $\varepsilon$, in what follows we will mostly let $\varepsilon>0$ denote a generic constant possessing the above properties.
\end{remark}

\begin{proof}
We prove the proposition by completing the following steps. \\

\emph{Proving that $\partial h=O(r^{-2+\varepsilon})$.} From Lemma \ref{lemPropertiesHeight} we know that $h=O(r^{-2+\varepsilon})$, $\partial h = O(r^{-1+\varepsilon})$, $\partial \partial h = O(1)$. Consequently, Proposition \ref{propLevel} implies that $a^{ij}$ is bounded in $C^1$ norm, that $b^k$ is bounded, and that the equation is uniformly elliptic. It is also clear that
\begin{equation}\label{eqF}
c  =  -H_\rho + \tr^{g_\rho} K 
+ O(|\partial h|^2),
\end{equation}
where $H_\rho$ is the mean curvature of $\Sigma_\rho$. 

In order to estimate the coefficient $c$ more accurately, recall that $A_\rho$ satisfies the Mainardi equation
\begin{equation*}
-\partial_\rho (A_\rho)^i_{\phantom{i}j} + (A_\rho)^i_{\phantom{i}k} (A_\rho)^k_{\phantom{k}j}=R^i_{\phantom{i} \rho \rho j},
\end{equation*}
see Appendix \ref{secLevelFermi}  for details. Taking the trace, we obtain 
\begin{equation*}
\partial_\rho H_{\rho} = \ric (\partial_\rho, \partial_\rho) + |A_\rho|^2.
\end{equation*}
Differentiating with respect to $\rho$ one more time, we get 
\begin{equation*}
\begin{split}
\partial^2_{\rho \rho} H_\rho & = 2 \partial_\rho (A_\rho)^i_{\phantom{i}k} (A_\rho)^k_{\phantom{k}i} + \partial _\rho \ric (\partial_\rho, \partial_\rho)\\
                        & = 2 (A_\rho)^i_{\phantom{i}l}(A_\rho)^l_{\phantom{l}k} (A_\rho)^k_{\phantom{k}i} - 2 R^i_{\phantom{i}\rho \rho k} (A_\rho)^k_{\phantom{k}i}  +                             \partial _\rho \ric (\partial_\rho, \partial_\rho).
\end{split}
\end{equation*}
Since  $\nabla_{\partial_\rho} \partial_\rho=0$, we have $\partial _\rho \ric (\partial_\rho, \partial_\rho) =\left(\nabla_{\partial_\rho} \ric\right) (\partial_\rho, \partial_\rho)$. Consequently, 
\begin{equation*}
|\partial^2_{\rho \rho}H_\rho| \leq 2 |A_\rho|^3 + 2|A_\rho||R| + |\nabla \ric|,  
\end{equation*}
which is bounded for all $\rho \in [0,\rho_0]$ by Proposition \ref{propLevel} and by our assumptions on the initial data. As a consequence, by Lemma  \ref{lemma2}, for $\rho=h(u)=O(r^{-2+\varepsilon})$ we obtain 
\begin{equation*}
\begin{split}
H_\rho & =  H_0+ (\partial_\rho H_\rho)_{|_{\rho = 0}} \rho +O(\rho^2)\\
			  & = H^{\Sigma_-} +\left(\ric (\nu_-, \nu_-) + \left|A^{\Sigma_-}\right|^2\right)\rho+O(r^{-4+\varepsilon})\\
				& = H^{\Sigma_-} + O(r^{-4+\varepsilon}).
\end{split}
\end{equation*}

We also need to estimate $\tr^{g_\rho} K$. For this we note that 
\[
\tr^{g_\rho} K = \tr^{\ghat} K - K_{\rho \rho}.
\]
Again, in the view of $\nabla_{\partial_\rho} \partial_\rho=0$ we have
\begin{equation*}
\partial_\rho (\tr^{g_\rho} K) = \partial_\rho (\tr^{\ghat} K) - (\nabla_{\partial_\rho} K)(\partial_\rho, \partial_\rho)
\end{equation*} 
and
\begin{equation*}
\partial^2_{\rho\rho} (\tr^{g_\rho} K) = \nabla_{\partial_\rho} \nabla_{\partial_\rho} (\tr^{\ghat} K) - (\nabla_{\partial_\rho} \nabla_{\partial_\rho} K)(\partial_\rho, \partial_\rho).
\end{equation*}
In particular, we see that $\partial^2_{\rho\rho} (\tr^{g_\rho} K)$ is bounded for any $\rho \in [0,\rho_0]$. As a consequence,  using the asymptotic properties of $K$ (see Section \ref{secPrelim}), we obtain 
\begin{equation*}
\begin{split}
\tr^{g_\rho} K & = \tr^{g_0} K + \partial_\rho (\tr^{g_\rho} K) _{|_{\rho=0}} \rho + O(\rho^2)\\
               & = \tr^{\Sigma_-} K + \left(\nabla_{\nu_-} (\tr^{\ghat} K) - \left( \nabla_{\nu_-} K \right) (\nu_-, \nu_-)\right) \rho +O(\rho^2)\\
						   & = \tr^{\Sigma_-} K + O(r^{-4+\varepsilon}).
\end{split}												
\end{equation*}
Recall now that $\Sigma_-$ is a graphical hypersurface such that \eqref{eqAlphaMass} and \eqref{eqPsi} hold. It follows from \eqref{eqJangEqExpansion} that $H^{\Sigma_-} - \tr^{\Sigma_-} K = O(r^{-4+\varepsilon})$, which implies
\begin{equation}\label{eqF1}
c=O(r^{-4+\varepsilon}) + O(|\partial h|^2) = O(r^{-2 + \varepsilon}).
\end{equation}
Applying elliptic regularity in the balls of fixed radius followed by Sobolev embedding we conclude from \eqref{eqJangHeight} that $|h |_{C^{1,\alpha} (B_2 (p))}=O(r(p)^{-2+\varepsilon})$ for any $p\in \Sigma_-$ with sufficiently large $r(p)$. The estimate  $\partial h=O(r^{-2+\varepsilon})$ follows.\\

\emph{Proving that $h=O^{2,\alpha} (r^{-2 + \varepsilon})$.} Note that interior Schauder estimates and a standard bootstrap argument, in the view of our assumptions on the initial data and Lemma  \ref{propLevel}, yield $|h|_{C^{3,\alpha} (B_{3/2} (p))} = O(r(p)^{-2+\varepsilon})$. In order to improve this estimate we fix a point $p_0\in \Sigma_-$ with asymptotically Euclidean coordinates $u_0=(u_0^1,u_0^2,u_0^3)$ and define the coordinates $\tilde{u}=\frac{u-u_0}{\sigma}$, where $\sigma=r_0/2$ for $r_0 =r(u_0)= |u_0|$. In terms of $\tilde{u}$, our equation becomes
\begin{equation}\label{eqRescaled}
a^{ij} \tilde{\partial}_i \tilde{\partial}_j h + \sigma b^k \tilde{\partial}_k h = \sigma^2 c,
\end{equation} 
where $\tilde{\partial}_i =  \partial_{\tilde{u}^i}$ for $i=1,2,3$. We will consider this equation in $U_{3/2} \definedas \{|\tilde{u}|< 3/2$\}. We will use the notation  $U_R= \{|\tilde{u}|< R \}$ throughout the proof. 

Recall that the coefficients $a^{ij}$, $b^k$ and $c$ of the equation \eqref{eqJangHeight} are computed at the point $(u,\rho) = (u,h(u))$, so the chain rule must be applied whenever  these coefficients are differentiated with respect to $u^i$, $i=1,2,3$. For instance, in the case of $a^{ij}$ we have
\begin{equation}\label{eqChain}
\partial_k (a^{ij}(u, h(u))) = (\partial_k a^{ij} + \partial_\rho a^{ij} \partial_k h) (u, h(u)),
\end{equation}
where 
\begin{equation*}
\begin{split}
\partial_k a^{ij} + \partial_\rho a^{ij} \partial_k h & = \partial_k (g_\rho) ^{ij} + \partial_\rho (g_\rho)^{ij} \partial_k h + O(r^{-4+\varepsilon})\\ 
                                                      & = \partial_k (g_0) ^{ij} + O(r^{-2 + \varepsilon}) \\
																											& = O(r^{-2 + \varepsilon})
\end{split}
\end{equation*}
in the view of the above estimate for $h$, Proposition \ref{propLevel} and Lemma \ref{lemma2}. It follows that $\max_{U_{3/2}} |\tilde{\partial}_k a^{ij}|=O(r_0^{-1+\varepsilon})$. 

Let $b^k_1 = - 2 \bar{g}^{kj} K_{\rho j}$ and $b^k_2 = - a^{ij} (\Gamma_\rho)^k_{ij}$ so that $b^k = b^k_1 + b^k_2$. In order to estimate the $C^{0,\beta}(U_{3/2})$-norm of $b_1^k$ we first note that 
\begin{equation*}
\partial_\rho (dt(\partial_\rho)) = (\nabla_{\partial_\rho} dt) (\partial_\rho) + dt (\nabla_{\partial_\rho} \partial_\rho)=0,
\end{equation*}
hence $(dt(\partial_\rho))(u,\rho) = (dt(\partial_\rho))(u,0) = (dt(\nu_-))(u)$. In the view of Lemma \ref{lemma2} we then have
\begin{equation}\label{eqExtraTerm}
\begin{split}
K(\partial_\rho, \partial_j) & = \ghat (\partial_\rho, \partial_j) - dt(\partial_\rho) dt(\partial_j)+ (K-g)(\partial_\rho, \partial_j) \\
                             & = - dt(\nu_-) dt(\partial_j)+ (K-g)(\partial_\rho, \partial_j) \\
														 & = O(r^{-1})
\end{split}
\end{equation}
where we have also used the fact that $r(p)$ and $r(p_-)$ are comparable, cf. the proof of Corollary \ref{corGraph}. It follows that $b_1^k = O(r^{-1})$. Furthermore, using \eqref{eqExtraTerm}, we also obtain
\begin{equation*}
\begin{split}
\partial_\rho K(\partial_\rho, \partial_j) & = (\nabla_{\partial_\rho} K)(\partial_\rho, \partial_j) + K(\partial_\rho, \nabla_{\partial_\rho}\partial_j)\\
                                           & = - K_{\rho k} (A_\rho)^k_{\phantom{k} j} + O(r^{-4}) \\
																					 & = O(r^{-1}).
\end{split}
\end{equation*}
Similarly, differentiating  \eqref{eqExtraTerm} and using Proposition \ref{propLevel}, Lemma \ref{lemma2}, and the fact that 
$\nabla_{\partial_l} \partial_j =( \Gamma_\rho)^k_{lj} \partial_k$ where  $(\Gamma_\rho)_{lj}^k = (\Gamma_0)_{lj}^k+O(r^{-2+\varepsilon})=O(r^{-2+\varepsilon})$ we conclude that
\begin{equation*}
\begin{split}
\partial_l K(\partial_\rho, \partial_j) 
                               = & - \partial_l (dt(\nu_-) ) dt(\partial_j) - dt(\nu_-) dt(\nabla_{\partial_l} \partial_j)\\ & \quad+ \nabla_{\partial_l}(K-g)(\partial_\rho, \partial_j)  - (K-g)(\nabla_{\partial_l} \partial_\rho, \partial_j)  - (K-g)( \partial_\rho, \nabla_{\partial_l}\partial_j) \\
														  =& O(r^{-2}).
\end{split}
\end{equation*}
Applying the chain rule as in  \eqref{eqChain}, we conclude that $\max_{U_{3/2}} |\tilde{\partial}_l b_1^k| = O(r_0^{-1})$.  

A similar argument shows that $b_2^k= O(r^{-2+\varepsilon})$ and $\partial_l b_2^k = O(r^{-2+\varepsilon})$ hence $\tilde{\partial}_l b_2^k = O(r_0^{-1 + \varepsilon})$ on $U_{3/2}$. This gives us the  estimate 
\begin{equation}\label{eqHolder}
\begin{split}
\frac{\sigma |b_2^k(\tilde{x})-b_2^k(\tilde{y})|}{|\tilde{x}-\tilde{y}|^{\beta}} & = \sigma \frac{|b_2^k(\tilde{x})-b_2^k(\tilde{y})|^\beta}{|\tilde{x}-\tilde{y}|^{\beta}}|b_2^k(\tilde{x})-b_2^k(\tilde{y})|^{1-\beta} \\ & \leq C r_0^{1+(-1+\varepsilon) \beta + (-2 + \varepsilon)(1 - \beta)} \\ & = C r_0^{-1 + \varepsilon +\beta}
\end{split}
\end{equation}
for $\tilde{x}, \tilde{y} \in U_1$. Hence 
\[
\|\sigma b_2^k\|_{C^{0,\beta}(U_{3/2})}=O(r_0^{-1 + \varepsilon +\beta}).
\]

Further, in the view of $\partial h=O(r^{-2+\varepsilon})$ the estimate \eqref{eqF1} improves  and we obtain $c = O(r^{-4+2\varepsilon})$. Combining  the formulas that we obtained when proving the estimate $\partial h=O(r^{-2+\varepsilon})$ with Lemma \ref{lemma2} and \eqref{eqJangEqExpansion}, we also find that
\begin{equation*}
\begin{split}
\partial_l c= &- \partial_l (H_\rho - \tr^{g_\rho} K) + O(r^{-4+\varepsilon}) \\
= &-\partial_l (H^{\Sigma_-} - \tr^{\Sigma_-} K )-\partial_l \left(\ric (\nu_-, \nu_-) + \left|A^{\Sigma_-}\right|^2\right)\rho \\ & \quad + \partial_l \left(\nabla_{\nu_-} (\tr^{\ghat} K) - \left( \nabla_{\nu_-} K \right) (\nu_-, \nu_-)\right) \rho+O(r^{-4+2\varepsilon})\\
= & O(r^{-4+2\varepsilon})
\end{split}
\end{equation*}
and 
\begin{equation*}
\begin{split}
\partial_\rho c= - \partial_\rho(H_\rho - \tr^{g_\rho} K) _{|_{\rho = 0}} + O(r^{-2+\varepsilon}) 
=  O(r^{-2+\varepsilon}).
\end{split}
\end{equation*}
Applying the chain rule as in  \eqref{eqChain} and estimating as in \eqref{eqHolder} we find that
\[
\|\sigma^2 c\|_{C^{0,\beta}(U_{3/2})}=O(r_0^{-2 + 2 \varepsilon +\beta}).
\]
We are now in a position to apply interior  Schauder estimates which gives
\[
\|h\|_{C^{2,\beta}(U_{5/4})}=O(r_0^{-2+2 \varepsilon + \beta}).
\]

Changing back to the unrescaled coordinates $u=(u^1,u^2,u^3)$, 
the estimate follows up to redefining $\varepsilon$.\\

\emph{Proving that $|\partial\partial\partial h| = O^\alpha(r^{-4+\varepsilon})$.}
Recall that $\max_{U_{3/2}} |\tilde{\partial}_k a^{ij}|=O(r_0^{-1+\varepsilon})$ and that the second derivatives of $a^{ij}$ with respect to unrescaled coordinates $u_k$ are bounded.  Estimating as in  \eqref{eqHolder} we conclude that $\|a^{ij}\|_{C^{1,\beta}(U_{5/4})} = O(r_0^{-1+\varepsilon})$. Further,  in the view of $h=O_2 (r^{-2+\varepsilon})$ we have $\partial_l b_2^k = O(r^{-3+\varepsilon})$. Again, the second derivatives of $b_2^k$ with respect to unrescaled coordinates  are bounded so it follows that  $\|\sigma b_2^k\|_{C^{1,\beta}(U_{5/4})} $ is bounded along the lines of  \eqref{eqHolder}. Furthermore, one can check that 
the second order derivatives of $b_1^k$ in the unrescaled coordinates are of order $O(r^{-3})$ which implies boundedness of $\|\sigma b_1^k\|_{C^{1,\beta}(U_{5/4})} $. Finally, using the earlier estimate $\partial_l c = O(r^{-4+\varepsilon})$ and the boundedness of $|\partial\partial c|$ we obtain
\[
\|\sigma^2 c\|_{C^{1,\beta}(U_{5/4})}=O(r_0^{-1 +\varepsilon})
\]
up to redefining $\varepsilon$. The desired estimate follows by applying  interior  Schauder estimates and changing back to the unrescaled coordinates.  
\end{proof}

\begin{remark}
 Note that the above method does not allow us to prove the expected estimate $|\partial\partial\partial h| = O^\alpha(r^{-5+\varepsilon})$ due to the fact that the estimate $\partial_l c = O(r^{-4+\varepsilon})$ cannot be improved to $\partial_l c = O(r^{-5+\varepsilon})$ unless we include more terms in the Taylor expansion. At the same time, a much weaker estimate   $|\partial\partial\partial h|=O^\alpha(r^{-2+\varepsilon})$ would suffice for our purposes, as one can see by inspecting the proofs below.
\end{remark}


\begin{cor}\label{corAEh}
The induced metric $\bar{g}$ on the Jang graph is asymptotically Euclidean such that
\begin{equation}\label{eqFallOffMetricAE2}
\bar{g} = g_{\Sigma_-} + O^{2,\beta}(r^{-2+\varepsilon}).
\end{equation}
In particular, the ADM masses of the metrics $\bar{g}$ and $g_{\Sigma_-}$ are equal:
\begin{equation}\label{eqADMMasses}
\mathcal{M}(\bar{g})=\mathcal{M}(g_{\Sigma_-} )=\alpha = 2E.
\end{equation}
\end{cor}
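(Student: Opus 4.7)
\textbf{Proof plan for Corollary \ref{corAEh}.}  The plan is to first express the induced metric $\bar{g}$ on $\Sigma$ purely in terms of the height function $h: \Sigma_- \to [0,\gamma)$ and the geometry of the Fermi foliation, then invoke the a priori decay of $h$ established in Proposition \ref{propFallOffh}, and finally extract the ADM mass using the already understood asymptotics of $\Sigma_-$.

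First, recall from the setup of Section \ref{secHeight} that in the Fermi coordinates adapted to $\Sigma_-$ the graph $\Sigma$ carries the induced metric
\[
\bar{g}_{ij} = (g_{h(u)})_{ij} + \partial_i h \, \partial_j h.
\]
I would compare $(g_{h(u)})_{ij}$ to $(g_0)_{ij} = (g_{\Sigma_-})_{ij}$ by integrating the evolution equation $\partial_\rho (g_\rho)_{ij} = -2 (A_\rho)_{ij}$ (which is the content of \eqref{eqMetricEvolution}, and which follows from the definition of $A_\rho$ together with $\nabla_{\partial_\rho}\partial_\rho = 0$) along the Fermi fibers:
\[
(g_{h(u)})_{ij} - (g_0)_{ij} = -2 \int_0^{h(u)} (A_\rho)_{ij}\, d\rho.
\]
By Proposition \ref{propLevel} the second fundamental forms $A_\rho$ and their derivatives up to order three in the Fermi coordinates are uniformly bounded, so Proposition \ref{propFallOffh} combined with the chain rule gives $(g_{h(u)})_{ij} - (g_0)_{ij} = O^{2,\beta}(r^{-2+\varepsilon})$. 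The quadratic correction $\partial_i h \, \partial_j h$ is of order $O^{1,\beta}(r^{-4+2\varepsilon})$, which is strictly better; adding the two contributions establishes \eqref{eqFallOffMetricAE2}.

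Second, for the ADM masses. From the construction recalled at the beginning of Section \ref{secFermi}, $(\Sigma_-, g_{\Sigma_-})$ is asymptotically Euclidean (this is the cited Lemma \ref{lemma1}), and the coordinate system $u$ obtained from the natural polar coordinates on $\Sigma_-$ is an asymptotically Euclidean chart simultaneously for $\Sigma_-$ and, via the Fermi graphing map, for $\Sigma$ near infinity. Because $\varepsilon > 0$ may be chosen as small as desired (by shrinking $r_0$ in Lemma \ref{lemBarrierAsympt} and $\beta$ in Proposition \ref{propFallOffh}), we can ensure $2 - \varepsilon > 3/2$, so the perturbation $\bar{g} - g_{\Sigma_-} \in C^{2,\beta}_{2-\varepsilon}$ decays strictly faster than the critical rate $r^{-3/2}$ and therefore does not contribute to the boundary integral defining the ADM mass. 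Hence $\mathcal{M}(\bar{g}) = \mathcal{M}(g_{\Sigma_-})$.

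Finally, it remains to compute $\mathcal{M}(g_{\Sigma_-}) = \alpha$. This is a direct asymptotic computation on the graph of $f_-(r,\theta,\varphi) = \sqrt{1+r^2} + \alpha \ln r + \psi(\theta,\varphi) + O_4(r^{-1+\varepsilon})$ inside $(M\times\bR, g + dt^2)$: the pullback metric is $g + df_-\otimes df_-$, which one expands in the asymptotically hyperbolic chart on $M$ together with Wang's asymptotics for $g$, and then converts to the asymptotically Euclidean coordinates $u$ of Lemma \ref{lemma2} (for which $|u|\sim \sqrt{1+r^2}+ \alpha\ln r$ asymptotically). The leading radial part produced by $\sqrt{1+r^2}$ combines with $g$ to cancel the $-1$ in $(1+r^2)^{-1}$, the angular part gives a flat spherical factor, and the only surviving contribution of order $|u|^{-1}$ to $g_{ij} - \delta_{ij}$ comes from the $\alpha \ln r$ term, yielding $\mathcal{M}(g_{\Sigma_-}) = \alpha$. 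Combined with \eqref{eqAlphaMass} this gives $\alpha = 2E$ and completes the proof.

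The main obstacle I expect is the last step: carrying out the explicit asymptotic expansion of the graph metric of $f_-$ and identifying the coefficient of the leading divergence-type surface integral as $\alpha$ is bookkeeping-heavy, and one must be careful that the contributions from the $\psi(\theta,\varphi)$ term and from the higher order correction $O_4(r^{-1+\varepsilon})$ drop out (the former because it contributes only to a pure gauge change of the Euclidean coordinates, the latter because it is subcritical). Everything else is conceptually routine given Propositions \ref{propLevel} and \ref{propFallOffh}.
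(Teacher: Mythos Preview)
Your overall strategy matches the paper's: write $\bar g_{ij}=(g_{h(u)})_{ij}+h_ih_j$, compare $(g_{h(u)})_{ij}$ to $(g_0)_{ij}=(g_{\Sigma_-})_{ij}$ via the evolution equation \eqref{eqMetricEvolution}, and then read off the mass equality from the subcritical decay of the perturbation. The mass identification $\mathcal M(g_{\Sigma_-})=\alpha$ is exactly Lemma~\ref{lemma1} (not Lemma~\ref{lemma2}), and in that computation the asymptotically Euclidean radial coordinate is simply $r$ itself, not $\sqrt{1+r^2}+\alpha\ln r$; this is a harmless mislabeling.

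There is, however, a genuine gap in your derivative estimates. From Proposition~\ref{propLevel} you only get \emph{boundedness} of $(A_\rho)_{ij}$ and its Fermi-coordinate derivatives, and this is not enough to conclude $(g_{h(u)})_{ij}-(g_0)_{ij}=O^{2,\beta}(r^{-2+\varepsilon})$. Indeed, differentiating your integral representation gives
\[
\partial_l\bigl[(g_{h(u)})_{ij}-(g_0)_{ij}\bigr]
= -2(A_{h(u)})_{ij}\,\partial_l h \;-\; 2\int_0^{h(u)}\partial_l(A_\rho)_{ij}\,d\rho .
\]
The first term is $O(r^{-3+\varepsilon})$ as you want, but for the second term mere boundedness of $\partial_l(A_\rho)_{ij}$ only yields $O(h)=O(r^{-2+\varepsilon})$, which is \emph{not} $o(r^{-2})$ and would in fact contaminate the ADM boundary integral. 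The missing ingredient is the \emph{decay} of the tangential derivatives of $A_0=A_{\Sigma_-}$ in the asymptotically Euclidean chart: one has $\partial_l(A_0)_{ij}=O(r^{-1})$ (and $\partial_k\partial_l(A_0)_{ij}=O(r^{-2})$), which follows from the explicit expansions in Lemma~\ref{lemma2}. With this in hand one Taylor-expands $\partial_l(A_\rho)_{ij}=\partial_l(A_0)_{ij}+O(\rho)$ and obtains $\partial_l e_{ij}=-2\,\partial_l(A_0)_{ij}\,h+O(r^{-4+\varepsilon})=O(r^{-3+\varepsilon})$; the second-order and H\"older estimates go similarly. This is precisely how the paper proceeds (invoking Lemma~\ref{lemma2} at each order), and without it your argument does not close.

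A smaller point: you record $h_ih_j=O^{1,\beta}(r^{-4+2\varepsilon})$, but the statement \eqref{eqFallOffMetricAE2} requires $O^{2,\beta}$-control. This is easily obtained from Proposition~\ref{propFallOffh}, since the third derivatives of $h$ are $O^\alpha(r^{-4+\varepsilon})$; the paper handles it by splitting $e_{ij}=(e_{ij}-h_ih_j)+h_ih_j$ and treating the two pieces separately.
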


\begin{proof}
We perform the computation in the asymptotically Euclidean coordinate chart $\Psi$ as described in  Lemma \ref{lemma1}.  Let $e=\bar{g} - g_{\Sigma_-}$. 
Using $h=O_2(r^{-2+\varepsilon})$ we compute as in the proof of Proposition \ref{propFallOffh} that
\begin{equation*}
\begin{split}
e_{ij}  = (g_\rho)_{ij} - (g_0)_{ij} +h_i h_j =\partial_\rho (g_\rho)_{ij}\vert_{\rho=0}\,\rho  +  O(r^{-4+\varepsilon}) = O(r^{-2+ \varepsilon}),
\end{split}
\end{equation*}
where $\partial_\rho (g_\rho)_{ij} = - 2 (A_\rho)^k_{\phantom{k}i} (g_\rho)_{kj}$. Similarly, in the view of Lemma \ref{lemma2} we obtain
\begin{equation*}
\begin{split}
\partial_l e_{ij}  & = \partial_l\left((g_\rho)_{ij} -(g_0)_{ij}\right)  + \partial_\rho \left((g_\rho)_{ij} -(g_0)_{ij}\right) h_l + O(r^{-7+\varepsilon}) \\ & = \partial_l \left(\partial_\rho (g_\rho)_{ij}\vert_{\rho=0}\right)\rho + O(r^{-3+\varepsilon}) \\ & = -2\partial_l (A_0)_{ij} \rho + O(r^{-3+\varepsilon}) \\ &= O(r^{-3+\varepsilon}).
\end{split}
\end{equation*}
Recalling $\partial \partial \partial h=O(r^{-4+\varepsilon})$, by Lemma \ref{lemma2} we also have
\begin{equation*}
\begin{split}
\partial_k \partial_l e_{ij}  & =   \partial_k \partial_l \left((g_\rho)_{ij} -(g_0)_{ij}\right)  + \partial_\rho \partial_l \left((g_\rho)_{ij} -(g_0)_{ij}\right) h_k \\ &\qquad + \partial_k \partial_\rho \left((g_\rho)_{ij} -(g_0)_{ij}\right) h_l +  \partial^2_\rho \left((g_\rho)_{ij} -(g_0)_{ij}\right) h_l  h_k \\ &\qquad + \partial_\rho\left((g_\rho)_{ij} -(g_0)_{ij}\right) \partial_k \partial_l h + O(r^{-7+\varepsilon}) \\
& =  -2 \partial_k \partial_l (A_0)_{ij} \rho -2 \partial_l (A_0)_{ij}h_k -2 \partial_k (A_0)_{ij}h_l +O(r^{-4+\varepsilon}) \\
& =  O(r^{-4+\varepsilon}). 
\end{split}
\end{equation*}

It follows that $\bar{g} = g_{\Sigma_-} + O_2(r^{-2+\varepsilon})$. In particular, this implies that $\mathcal{M}(\bar{g})=\mathcal{M}(g_{\Sigma_-} )$ so \eqref{eqADMMasses} follows by Lemma \ref{lemma1}.

To complete the proof it remains to show that $\partial_k \partial_l  e_{ij}=O^{\beta}(r^{-4+\varepsilon})$. For this,
 we write  $e_{ij}= (e_{ij} -h_i h_j) + h_i h_j$. 
 The third order coordinate derivatives of the first term are bounded and the second order coordinate derivatives fall off as  $O(r^{-4+\varepsilon})$, see the above computation. Arguing as in the  the proof of Proposition \ref{propFallOffh}), we conclude that $\partial_k \partial_l  (e_{ij} -h_i h_j) =O^{\beta}(r^{-4+\varepsilon})$. That $\partial_k \partial_l (h_i h_j) =O^{\beta}(r^{-4+\varepsilon})$ is a direct consequence of Proposition \ref{propFallOffh}.
\end{proof}

We have now all ingredients ready for proving  \eqref{eqFallOff3D}.

\begin{proof}[Proof that $f=\sqrt{1+r^2}+\alpha \ln r+\psi(\theta, \varphi)+O_3(r^{-1+\varepsilon})$.]
We write 
\[
f =\sqrt{1+r^2}+\alpha \ln r+\psi(\theta, \varphi)+\eta(r,\theta, \varphi), 
\]
where $\eta = O(r^{-1+\varepsilon})$. On the one hand, \eqref{eqFallOffMetricAE2} implies that
\begin{equation*}
\begin{split}
\bar{g}_{rr} & = g_{rr} + (\partial_r f_-)^2 + O(r^{-2+\varepsilon})\\ & = \frac{1}{1+r^2} +  \left( \frac{r}{\sqrt{1+r^2}} + \frac{\alpha}{r} + O(r^{-2+\varepsilon})\right)^2 + O(r^{-2+\varepsilon}) \\ & = \frac{1}{1+r^2} +  \left( \frac{r}{\sqrt{1+r^2}} + \frac{\alpha}{r} + O(r^{-2+\varepsilon})\right)^2.
\end{split}
\end{equation*}
On the other hand, we have
\begin{equation*}
\bar{g}_{rr} = g_{rr} + (\partial_r f)^2 = \frac{1}{1+r^2} + \left( \frac{1}{\sqrt{1+r^2}} + \frac{\alpha}{r} + \eta'_r \right) ^2.
\end{equation*}
It follows that $\eta'_r = O(r^{-2+\varepsilon})$. Note that when comparing the two expressions we have tacitly relied on the fact that $r(p)=r(p_-) + O(r(p_-)^{-2+\varepsilon})$.

With this estimate at hand, one finds that $\eta'_\mu = O(r^{-1+\varepsilon})$ by computing the components of $\bar{g}_{r\mu} $ in two different ways as discussed above. 

Estimates for the second and third order derivatives follow in a similar way.
\end{proof}

\section{The conformal structure of the Jang graph}\label{secConformal}
As in Section 6, we denote by  $(\Sigma, \bar{g})$  the graphical component of the geometric solution of the Jang equation. The graphing function is denoted by $f$, and it is assumed that its domain  $U$ contains the region $\{r \geq r_0\}$. The goal of this section is to show that $\Sigma$ admits a metric satisfying the conditions of positive mass theorem for asymptotically Euclidean manifolds, that is, a complete metric with nonnegative scalar curvature. This metric is constructed  mostly following \cite{PMT2} (see also \cite{EichmairJang}), although we need to take care of some additional complications arising from the fact that $\gbar - \delta$ has a somewhat slower fall-off rate as $r\to\infty$ in our setting. 

In this section $0<\varepsilon<1$ and $C>0$ are generic constants that may vary from line to line. The particular value is not important. 

\begin{proposition}\label{propConfStructure1}
The metric $\bar{g}=g+df\otimes df$ on $U\subset M$ is complete and $C^{2,\beta} _{loc}$. Its scalar curvature satisfies 
\begin{equation}\label{eqScalarJang}
\scal^{\bar{g}} = \frac{2 \Delta^{\bS^2} \psi}{r^3} + O(r^{-4+\varepsilon})
\end{equation}
and the integral inequality
\begin{equation}\label{eqSYIntegrated0}
\int_{\Sigma} \left( \scal^{\gbar} \varphi^2 + 2|d\varphi|^2_{\gbar} \right) \, d\mu^{\gbar} \geq \int_\Sigma \left( 2(\mu-|J|_g)\varphi^2 + |A-K|^2_{\gbar} \varphi^2 \right) \; d\mu^{\gbar}
\end{equation}
holds for $\varphi\in C^1_c(\Sigma)$.
 
As a consequence, if the strict dominant energy condition holds near $\partial U$ then the spectrum of the operator $-\Delta^{\gamma_i} + \frac{1}{8} \scal^{\gamma_i}$ is positive on each connected component $\partial U_i$ of $\partial U$, $i=1,\ldots, l$. In particular, each of $(\partial U_i, \gamma_i \definedas g_{|_{\partial U_i}})$, $i=1,\ldots, l$, is topologically a sphere. 
\end{proposition}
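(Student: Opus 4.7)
The proof splits into three essentially independent stages.

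(i) \emph{Regularity and completeness.} Since $f\in C^3_{loc}(U)$ by Theorem \ref{thJangGraph} and $g\in C^{l,\beta}$ with $l\geq 6$, the metric $\gbar=g+df\otimes df$ is of class $C^{2,\beta}_{loc}(U)$. The bound $\gbar\geq df\otimes df$ gives $\dist_{\gbar}(p,q)\geq|f(p)-f(q)|$, and $f\to\pm\infty$ at $\partial U$ (Theorem \ref{thJangGraph}(3)) then forces any Cauchy sequence in $\gbar$ to stay in a compact subset of $U$; equivalently, $(U,\gbar)$ is isometric to the complete properly embedded hypersurface $\Sigma\subset M\times\bR$.

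(ii) \emph{Schoen-Yau identity.} I would invoke the standard divergence identity for a Jang graph, derived via Gauss-Codazzi and the Jang equation (see \cite{PMT2}):
\[
\scal^{\gbar}=2(\mu-J(w))+|A-K|^2_{\gbar}+2|q|^2_{\gbar}-2\,\divg^{\gbar}(q),
\]
where $w^i=f^i/\sqrt{1+|df|^2_g}$ is the horizontal part of the unit normal and $q$ is the standard Schoen-Yau 1-form built from $A-K$ and $w$. Plugging the asymptotics of $f$ from Theorem \ref{thJangGraph}(4), those of $\gbar$ from Corollary \ref{corAEh}, and the decay $\mu,|J|=O(r^{-3-\tau_0})$ yields \eqref{eqScalarJang}, with the dominant $r^{-3}$ contribution arising from $-2\,\divg(q)$ and reproducing the Laplacian expression in \eqref{eqJangEqExpansion}. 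Multiplying the identity by $\varphi^2$, integrating by parts to transfer the divergence onto $d(\varphi^2)$, and absorbing the resulting cross term via $|4\varphi||q||d\varphi|\leq 2|q|^2\varphi^2+2|d\varphi|^2$ while using $\mu\geq|J|_g\geq J(w)$ (since $|w|_g\leq 1$) gives \eqref{eqSYIntegrated0}.

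(iii) \emph{Spectral positivity and topology.} Rerunning the same integration by parts with the sharper Cauchy-Schwarz $|4\varphi||q||d\varphi|\leq\tfrac12|q|^2\varphi^2+8|d\varphi|^2$ yields
\[
\int_\Sigma\bigl(\scal^{\gbar}\varphi^2+8|d\varphi|^2_{\gbar}\bigr)\,d\mu^{\gbar}\geq\int_\Sigma\bigl(2(\mu-|J|_g)+\tfrac32|q|^2_{\gbar}\bigr)\varphi^2\,d\mu^{\gbar},
\]
so strict DEC near $\partial U$ makes $-\Delta^{\gbar}+\tfrac18\scal^{\gbar}$ strictly positive on test functions supported near $\partial U_i$. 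To pass this to a statement on $\partial U_i$ itself, I would use that by Theorem \ref{thJangGraph}(3) the shifted graphs $\graph(f-C)$ converge locally in $C^{3,\alpha}$ to the cylinder $\partial U_i\times\bR$ with metric $\gamma_i+dt^2$ as $C\to\pm\infty$. Inserting product test functions $\varphi(x,t)=\phi(x)\chi_n(t)$ with a diagonal choice placing the support of $\chi_n$ deep in the cylindrical region, $\int\chi_n^2\to\infty$ and $\int(\chi_n')^2/\int\chi_n^2\to 0$, and using $\scal^{\gamma_i+dt^2}=\scal^{\gamma_i}$, the inequality descends in the limit to
\[
\int_{\partial U_i}\bigl(\scal^{\gamma_i}\phi^2+8|d\phi|^2_{\gamma_i}\bigr)\,d\mu^{\gamma_i}\geq c\int_{\partial U_i}\phi^2\,d\mu^{\gamma_i}
\]
for some $c>0$, i.e.\ $\lambda_1(-\Delta^{\gamma_i}+\tfrac18\scal^{\gamma_i})>0$. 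The topological conclusion then follows by testing with $\phi\equiv 1$: the Rayleigh quotient yields $\int_{\partial U_i}\scal^{\gamma_i}>0$, and since $\scal^{\gamma_i}=2K^{\gamma_i}$ on a surface, Gauss-Bonnet forces $\chi(\partial U_i)>0$, so $\partial U_i\cong\bS^2$.

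The main obstacle will be the cylindrical-limit step in (iii): one must choose the diagonal sequence $(C_n,\chi_n)$ carefully so that the local $C^{3,\alpha}$ convergence in Theorem \ref{thJangGraph}(3) controls, uniformly as $\chi_n$ spreads out, the errors between the integrals on $\Sigma$ and on the model cylinder (involving the induced volume form, $\scal^{\gbar}$, and $|d\varphi|^2_{\gbar}$).
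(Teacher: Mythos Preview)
Your proposal is correct and follows essentially the same route as the paper: the Schoen--Yau identity is invoked to get \eqref{eqScalarJang} (the paper points to Lemma \ref{lemma2} for the explicit asymptotics of $q$ and $\divg^{\gbar}q$), the integration-by-parts with Cauchy--Schwarz gives \eqref{eqSYIntegrated0}, and the cylindrical-limit separation-of-variables argument yields the spectral positivity and, via Gauss--Bonnet with $\xi\equiv 1$, the topological conclusion.

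One small simplification relative to your stage (iii): there is no need to rerun the Cauchy--Schwarz with the split $4|\varphi||q||d\varphi|\leq\tfrac12|q|^2\varphi^2+8|d\varphi|^2$. The paper works directly from \eqref{eqSYIntegrated0}, which, after the cylindrical limit, gives the stronger inequality
\[
\int_{\partial U_i}\!\bigl(\scal^{\gamma_i}\xi^2+2|d\xi|^2_{\gamma_i}\bigr)\,d\mu^{\gamma_i}\geq \lambda\!\int_{\partial U_i}\!\xi^2\,d\mu^{\gamma_i}.
\]
Since $-\Delta^{\gamma_i}\geq 0$, adding $6\int|d\xi|^2$ to the left is harmless, so positivity of $-\Delta^{\gamma_i}+\tfrac18\scal^{\gamma_i}$ follows immediately. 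Your direct derivation with the coefficient $8$ is of course equally valid, just slightly redundant. Your concern about controlling the errors in the cylindrical-limit step is well placed; the paper handles it tersely by appealing to the $C^{3,\alpha}$ convergence of $\graph(f-C)$ to $\partial U_i\times\bR$ from Theorem \ref{thJangGraph}(3), which is exactly the mechanism you describe.
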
 

\begin{proof}
We recall that the scalar curvature of $(\Sigma, \bar{g})$ can be computed using the Schoen and Yau identity \cite[(2.25)]{PMT2}: 
\begin{equation}\label{eqScalarJang1}
\scal^{\bar{g}} = 2(\mu - J (w)) + |A-K|^2_{\bar{g}} + 2 |q|^2_{\bar{g}} - 2 \divg^{\bar{g}} q, 
\end{equation}
where the 1-form $q$ is as defined in Lemma \ref{lemma2} and the vector field $w$ is such that $|w|_g<1$. Since $f$ satisfies \eqref{eqFallOff3D} the asymptotics of all terms in the right hand side of \eqref{eqScalarJang1} can be made precise using Lemma \ref{lemma2} and Definition \ref{defAHdata}. In particular, we see  that $\divg^{\bar{g}} q = - r^{-3} \Delta^{\bS^2}\psi  + O(r^{-4})$, while the remaining terms are of order $O(r^{-4+ \varepsilon})$ or lower. This proves \eqref{eqScalarJang}. 

It is also straightforward to check that \eqref{eqSYIntegrated0} holds by integrating \eqref{eqScalarJang1} against $\varphi^2$, where $\varphi\in C^1_c(\Sigma)$, and using a simple estimate
\[
 -\int_\Sigma 2 \varphi^2 \divg^{\bar{g}} q \, d\mu^{\gbar} 
= \int_\Sigma 4 \varphi q (\nabla^{\gbar} \varphi) \, d\mu^{\gbar} 
\geq - \int_\Sigma \left( 2|q|^2_{\gbar}\varphi^2 + 2|d\varphi|^2_{\gbar} \right) \, d\mu^{\gbar},
\]
together with the fact that $|w|_{g}< 1$.

The second part of the claim follows from the same separation of variables argument as in \cite[p. 254-255]{PMT2}. Suppose that $2(\mu-|J|_g) > \lambda > 0$ near $\partial U$, then using the fact that $\Sigma$ has ends that are $C^{3,\alpha}$ asymptotic to $(\partial U_i \times \bR, \gamma_i + dt^2)$, $i=1,\ldots, l$,  we obtain from \eqref{eqSYIntegrated0} the inequality
\begin{equation}\label{eqSYIntegrated12}
\int_{ \partial U\times \bR} \left( \scal^{\gamma} \varphi^2 + 2|d\varphi|^2_{\gamma+dt^2} \right) \; d\mu^{\gamma+dt^2} \geq \lambda \int_{\partial U \times \bR} \varphi^2 \; d\mu^{\gamma+dt^2},
\end{equation}
where $\gamma=g_{|_{\partial U}}$.
Now let $\varphi = \xi \chi$ where $\xi: \partial U \to \bR$ and $\chi: \bR \to \bR$ is a cutoff function such that $\chi(t)=1$ for $|t|\leq T$, $\chi(t)=0$ for $|t|\geq T+1$, and $|\partial_t \chi| \leq 2$. For this choice of $\varphi$ in \eqref{eqSYIntegrated12} we obtain 
\begin{equation*}
\begin{split}
\int_{\partial U}  \xi^2 \scal^{\gamma} \, d\mu^{\gamma}\int_{\bR} \chi^2 dt + 2\int_{\partial U}|d\xi|^2_{\gamma} \, d\mu^{\gamma} \int_{\bR} \chi^2 dt
 + 8\int_{ \partial U} \xi^2  \, d\mu^{\gamma} &  \\  \geq  \lambda \int_{\partial U}  \xi^2 \, d\mu^{\gamma} \int_{\bR} \chi^2 \,dt. &
\end{split}
\end{equation*}
Dividing by $\int_{\bR} \chi^2 dt$ and letting $T\to \infty$ we get 
\begin{equation*}
\begin{split}
\int_{ \partial U}  \xi^2 \scal^{\gamma} \, d\mu^{\gamma} +  2 \int_{\partial U}|d\xi|^2_{\gamma} \, d\mu^{\gamma} \geq \lambda \int_{\partial U} \xi^2 \, d\mu^{\gamma}.
\end{split}
\end{equation*}
Applying this with $\xi$ that vanishes on all components of $\partial U$ except for $\partial U_i$ shows that for every $i=1, \ldots, l$ the operator $-\Delta^{\gamma_i} + \frac{1}{8} \scal^{\gamma_i}$ on $\partial U_i$ has positive spectrum. In particular, if $\xi = 1$ on $\partial U_i$ and zero elsewhere, we conclude by Gauss-Bonnet theorem that $\partial U_i$ is topologically a sphere.
\end{proof}

\begin{proposition}\label{propExactCylindrical}
Let $f: U \to \mathbb{R}$ be as described in the beginning of this section. Assume that $U \neq M$, that the dominant energy condition $\mu \geq |J|_g$ holds on $U$ and that this inequality is strict near $\partial U$. For every sufficiently large number $T_0$ that is a regular value for both $f$ and $-f$  there exists a complete Riemannian metric $\gtil$ on $\Sigma \subset M \times \bR$ such that
\begin{itemize}
\item[(1)] There is a compact set $\Omega \subset \Sigma$ such that its complement $\Sigma \setminus \Omega$ has finitely many components $C_1, \ldots, C_l$ and $N$. The induced metric on $N$ is the asymptotically Euclidean metric $\gtil_{|_N} = \gbar_{|_N}$, and each $(C_i, \gtil)$ is isometric to a half-cylinder $(\partial U_i \times (T_0,\infty), \gamma_i \times dt^2)$, where $\partial U_i$, $i=1,\ldots, l$, are the connected components of $\partial U$. The metric $\gtil$ is uniformly equivalent to $\gbar$ on all of $\Sigma$. 
\item[(2)] For every $\varphi \in C_c^1 (\Sigma)$ we have 
\begin{equation}\label{eqSYIntegrated1}
\int_{\Sigma} \left(|d\varphi|^2_{\gtil}  + \tfrac{1}{8}\scal^{\gtil} \varphi^2 \right) \, d\mu^{\gtil} \geq \tfrac{1}{8}\int_N  |A-K|^2_{\gbar} \varphi^2 \, d\mu^{\gbar} + \tfrac{3}{4}\int_{\Sigma} |d\varphi|^2_{\gtil} \, d\mu^{\gtil}.
\end{equation}
\end{itemize}
\end{proposition}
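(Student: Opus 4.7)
I will build $\gtil$ by surgery -- gluing an exact half-cylinder onto each cylindrical end of $\Sigma$ -- and then verify items (1) and (2) in turn, with the integral coercivity being the main technical point. To construct $\gtil$, I choose $T_0$ large and a regular value of both $f$ and $-f$. By item (3) of Theorem \ref{thJangGraph}, each connected component of $\Sigma \cap \{|f| > T_0\}$ is either the asymptotically Euclidean end $N$ (which lies in $\{f > T_0\}$ by \eqref{eqFallOff3D}) or a cylindrical end $\widetilde{C}_i$ asymptoting locally uniformly in $C^{3,\alpha}$ to the product $\partial U_i \times \bR$ with metric $\gamma_i + dt^2$. Consequently, for $T_0$ large there is a diffeomorphism $\Phi_i\colon \partial U_i \times [T_0, \infty) \to \widetilde{C}_i$ with $\Phi_i^{*}\gbar \to \gamma_i + dt^2$ in $C^{3,\alpha}$ as $t \to \infty$. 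Fixing $T_0 < T_1 < T_2$ and a smooth cutoff $\chi\colon [T_0,\infty) \to [0,1]$ equal to $1$ on $[T_0, T_1]$ and $0$ on $[T_2,\infty)$, I define on $\widetilde{C}_i$
\[
\gtil := (\Phi_i)_{*}\bigl(\chi(t)\,\Phi_i^{*}\gbar + (1-\chi(t))(\gamma_i + dt^2)\bigr),
\]
and $\gtil := \gbar$ elsewhere on $\Sigma$. With $C_i := \Phi_i(\partial U_i \times (T_2,\infty))$ and $\Omega := \Sigma \setminus (N \cup \bigcup_i C_i)$, the map $\Phi_i$ is an isometry from $(C_i, \gtil)$ onto the exact half-cylinder; $\gtil = \gbar$ on $N$; and $\gtil$ is uniformly equivalent to $\gbar$ on the compact transition region. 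Completeness of $(\Sigma, \gtil)$ is immediate, proving item (1).

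\emph{Reduction of (2) to a cylinder coercivity.} Using \eqref{eqSYIntegrated0}, the DEC, and the strict DEC near $\partial U$, dividing by $4$ gives for every $\varphi \in C^1_c(\Sigma)$
\begin{equation*}
\int_\Sigma \bigl(|d\varphi|^2_{\gbar} + \tfrac{1}{8}\scal^{\gbar}\varphi^2\bigr)\,d\mu^{\gbar} \geq \tfrac{3}{4}\int_\Sigma |d\varphi|^2_{\gbar}\,d\mu^{\gbar} + \tfrac{1}{8}\int_\Sigma |A-K|^2_{\gbar}\varphi^2\,d\mu^{\gbar} + \tfrac{1}{4}\int_\Sigma(\mu-|J|_g)\varphi^2\,d\mu^{\gbar},
\end{equation*}
which is \eqref{eqSYIntegrated1} for $\gtil = \gbar$ with a positive reserve from the strict DEC. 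Since $\gtil = \gbar$ on $N$ and on $\{|f| \leq T_1\}$, these regions contribute as in \eqref{eqSYIntegrated1}. On the compact transition region $\Phi_i(\partial U_i \times [T_1,T_2])$, the $C^{3,\alpha}$-closeness of $\gtil$ and $\gbar$ produces an error bounded by $\varepsilon(T_0)\|\varphi\|_{H^1}^2$ with $\varepsilon(T_0)\to 0$ as $T_0\to\infty$; choosing $T_0$ large absorbs this error into the strict-DEC reserve above, since the transition lies where $\mu > |J|_g$. The essential remaining task is the coercivity
\[
\int_{C_i}\bigl(|d\varphi|^2_{\gtil} + \tfrac{1}{8}\scal^{\gtil}\varphi^2\bigr)\,d\mu^{\gtil} \;\geq\; \tfrac{3}{4}\int_{C_i}|d\varphi|^2_{\gtil}\,d\mu^{\gtil}
\]
on each exact half-cylinder $(C_i, \gtil)$.

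\emph{Approach to the cylinder coercivity.} By Proposition \ref{propConfStructure1}, the operator $-\Delta^{\gamma_i} + \tfrac{1}{8}\scal^{\gamma_i}$ on $\partial U_i$ has positive first eigenvalue $\lambda_i$ with positive first eigenfunction $\psi_i$, so the separated ansatz $u_i(y,t) := \psi_i(y)\,e^{-\sqrt{\lambda_i}\,t}$ is a positive solution of the conformal Laplacian equation on $(\partial U_i \times \bR,\, \gamma_i + dt^2)$. Writing $\varphi = u_i\eta$ with $\eta$ compactly supported and integrating by parts yields the Hardy-type identity
\[
\int_{C_i}\bigl(|d\varphi|^2_{\gtil} + \tfrac{1}{8}\scal^{\gtil}\varphi^2\bigr)\,d\mu^{\gtil} = \int_{C_i} u_i^2|d\eta|^2_{\gtil}\,d\mu^{\gtil}.
\]
The target coercivity is then equivalent to $\int u_i^2|d\eta|^2 \geq \tfrac{3}{4}\int|d\varphi|^2$, which is to be extracted from a weighted expansion of $|d\varphi|^2 = |u_i\,d\eta + \eta\,du_i|^2$ combined with the cylinder-specific lower bound $|du_i|^2_{\gtil}/u_i^2 \geq \lambda_i > 0$ and the spectral gap $\int u_i^2|d\eta|^2 \geq \lambda_i\int\varphi^2$.

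\emph{Main obstacle.} Extracting the sharp constant $\tfrac{3}{4}$ from the Hardy identity is the delicate step: the naive Cauchy--Schwarz argument yields only nonnegativity of the conformal Laplacian Dirichlet form, not coercivity in $\int|d\varphi|^2_{\gtil}$. The quantitative improvement has to come from the positivity of $\lambda_i$ and the exponential profile of $u_i$, and the constants must be arranged so that the transition-region error is indeed swallowed by the strict-DEC reserve. In \cite{PMT2} the Jang end is asymptotically Euclidean rather than hyperbolic and the cylindrical surgery takes a simpler form, so no analogous cylindrical coercivity enters; in the present setting, the cylindrical asymptotics of $\Sigma$ together with the slower decay of $\gbar - \delta$ recorded in Corollary \ref{corAEh} make this the main challenge.
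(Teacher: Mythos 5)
Your construction of $\gtil$ and the verification of item (1) match the paper. The trouble is in your reduction of item (2). You try to localize \eqref{eqSYIntegrated1} to the exact cylinders and prove a self-contained coercivity
\[
\int_{C_i}\Bigl(|d\varphi|^2_{\gtil}+\tfrac{1}{8}\scal^{\gtil}\varphi^2\Bigr)\,d\mu^{\gtil}\;\geq\;\tfrac{3}{4}\int_{C_i}|d\varphi|^2_{\gtil}\,d\mu^{\gtil},
\]
which is equivalent to nonnegativity of the quadratic form of $-\Delta+\tfrac{1}{2}\scal^{\gamma_i}$ on the product. The Hardy substitution $\varphi=u_i\eta$ with $u_i=\psi_i e^{-\sqrt{\lambda_i}t}$ gives, as you compute, the identity $\int(|d\varphi|^2+\tfrac{1}{8}\scal^{\gtil}\varphi^2)=\int u_i^2|d\eta|^2$, which shows the quadratic form of $-\Delta+\tfrac{1}{8}\scal^{\gamma_i}$ is nonnegative; but the weight $\tfrac18$ there is rigid, and no amount of juggling the spectral gap $\lambda_i>0$ turns it into the weight $\tfrac12$ that the $\tfrac34$-coercivity demands. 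Your own ``Main obstacle'' paragraph correctly flags that this extraction of $\tfrac34$ is unresolved, and indeed it cannot be done from the spectral information alone: positivity of $-\Delta^{\gamma_i}+\tfrac{1}{8}\scal^{\gamma_i}$ does not imply positivity of $-\Delta^{\gamma_i}+\tfrac{1}{2}\scal^{\gamma_i}$, since $\scal^{\gamma_i}$ can be negative somewhere on $\partial U_i$.

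The paper avoids this entirely by never splitting $\Sigma$ into pieces. It shows that the pointwise Schoen--Yau inequality
\[
\tfrac{1}{2}\scal^{\gtil}-\tfrac{1}{2}|A-K|^2_{\gbar}-|q|^2_{\gtil}+\divg^{\gtil}q\;\geq\;\tfrac{1}{2}(\mu-|J|_g)
\]
survives the perturbation from $\gbar$ to $\gtil$ -- this is possible precisely because the perturbation is supported where $\mu-|J|_g>0$ strictly, so there is pointwise slack to absorb the error in $\scal$, $|q|^2$, $\divg q$. Then one multiplies by $\varphi^2$, integrates over all of $\Sigma$ at once, and applies Cauchy--Schwarz to $-\int\varphi^2\divg^{\gtil}q=2\int\varphi\,q(\nabla^{\gtil}\varphi)$ with the split $2|\varphi||q||d\varphi|\leq |q|^2\varphi^2+|d\varphi|^2$. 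After dividing by $4$, only $\tfrac14$ of the $|d\varphi|^2$-budget is spent in the Cauchy--Schwarz, which is exactly where the $\tfrac34$ comes from. So the $\tfrac34$ has nothing to do with the spectral gap on the cross-section; it comes from the $q$-term in the Schoen--Yau identity, and the estimate is intrinsically global because the Cauchy--Schwarz is applied after an integration by parts over the whole of $\Sigma$, which is also why a naive region-by-region decomposition of \eqref{eqSYIntegrated1} cannot be made to close.
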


\begin{proof}
Just as in \cite{PMT2} we may slightly perturb the metric $\gbar$ so that the asymptotically cylindrical ends $\Sigma \cap \{|t|>T_0\}$ for a sufficiently large $T_0>0$ become exactly cylindrical. Since $\mu-|J|_g>0$ near $\partial U$, in the view of \eqref{eqScalarJang1} we may ensure that the perturbed metric $\gtil$ satisfies
\begin{equation}\label{eqScalarJang2}
\tfrac{1}{2}\scal^{\gtil} - \tfrac{1}{2}|A-K|^2_{\bar{g}}  - |q|^2_{\gtil} + \divg^{\gtil} q \geq \tfrac{1}{2}(\mu - J(w)) \geq \tfrac{1}{2}(\mu - |J|_g).
\end{equation}
Integrating this against $\varphi^2$ with respect to the measure $d\mu^{\gtil}$ and arguing as in the proof of Proposition \ref{propConfStructure1} the claim follows in the view of the dominant energy condition.
\end{proof}

From now on we will refer to $N$ as an asymptotically Euclidean end of $(\Sigma, \gtil)$ and to $C_1,\ldots, C_n$ as its cylindrical ends. If $U=M$ then we take $\gtil = \gbar$.

\begin{remark}\label{remModification}
Note that in the asymptotically Euclidean setting of \cite{PMT2} and \cite{EichmairJang} the inequality \eqref{eqSYIntegrated1} is satisfied not only for $\varphi \in C^1_c(\Sigma)$ but for all $\varphi \in C^1(\Sigma)$ such that $(\mathrm{spt} \, \varphi) \cap C_i$, $i=1,\ldots, n$, is compact. In particular, it applies to $\varphi$ vanishing outside of a compact set in the asymptotically cylindrical ends and satisfying $\varphi \to 1$ in the asymptotically Euclidean end of $\Sigma$. This is not the case in the  asymptotically hyperbolic setting, as we merely have $\scal^{\gtil} = O (r^{-3})$ by \eqref {eqScalarJang}. This becomes important when analyzing the asymptotic behavior of certain conformal factors, see Proposition \ref{propAsymptAnalysis} below.
\end{remark}

We start with the metric $\gtil$ with exactly cylindrical ends on $\Sigma$, as described in Proposition \ref{propExactCylindrical}, and deform it into the metric satisfying the conditions of the positive mass theorem for asymptotically Euclidean manifolds that was proven in \cite{PMT1}. For this we essentially follow the same steps as in \cite{PMT2} and \cite{EichmairJang}, apart from some adjustments needed to deal with the fact that the asymptotics of the asymptotically Euclidean metric $\gbar$ are slightly worse than in the setting of \cite{PMT2} and \cite{EichmairJang}. Describing how the mass changes in this deformation process requires careful bookkeeping. The argument proceeds as follows:

\begin{itemize}

\item[(1)] In Proposition \ref{propStep1} we make a conformal change to zero scalar curvature in the cylindrical ends. More specifically, we construct a conformal factor $\Psi > 0$  that ``conformally closes'' the cylindrical ends $C_i$, $i=1,\ldots,l$, and yields an incomplete asymptotically Euclidean metric $\gtil_\Psi = \Psi^4 \gtil$ with $l$ conical singularities. We have $\gtil_\Psi = \gtil = \gbar$ in $N$, in particular, the mass of the metric is preserved.

\smallskip
 
\item[(2)] In Proposition \ref{propYamabe} we construct a conformal factor $u>0$ such that the metric $\gtil_{u\Psi} = u^4 \gtil_\Psi = (u\Psi)^4 \gtil$ has zero scalar curvature everywhere. This conformal transformation may change the mass, in which case the mass of $\gtil_{u\Psi}$ is at least a half of the mass of $\gtil$, see Proposition \ref{propAsymptAnalysis}.

\smallskip

\item[(3)] In Proposition \ref{propDeform} the metric $\gtil_{u\Psi}$ is deformed to a metric $\ghat$  which is asymptotically Schwarzschildean in the sense of Definition \ref{defAEManifolds} and has zero scalar curvature. The mass changes arbitrarily little. This step is not needed in the asymptotically Euclidean setting of  \cite{PMT2} or \cite{EichmairJang}. 

\smallskip

\item[(4)] Finally, in Proposition \ref{propOpeningEnds} we construct a conformal factor that we will later use for ``opening up'' the conformally compactified asymptotically Euclidean ends while changing the mass arbitrarily little. As we will see in Section \ref{secPositivity}, this deformation results in a complete metric with nonnegative scalar curvature to which the positive mass theorem of \cite{PMT1} can be applied. 
\end{itemize}

\begin{proposition}\label{propStep1}
There is a conformal factor $\Psi>0$ such that $\gtil_\Psi \definedas \Psi^4 \gtil$ has vanishing scalar curvature $\scal^{\gtil_\Psi}=0$ on each cylindrical end. Further, for each compactly supported $\varphi \in C^1 (\Sigma)$ we have
\begin{equation}\label{eqSYIntegrated2}
\begin{split} 
\int_\Sigma \left( |d\varphi|^2_{\gtil_\Psi} + \tfrac{1}{8} \scal^{\gtil_\Psi} \varphi^2 \right) \; d\mu^{\gtil_\Psi} \geq 
 \tfrac{3}{4} \int_\Sigma \Psi^{-2} |d(\varphi\Psi)|^2_{\gtil_\Psi}  \,d\mu^{\gtil_\Psi} + \tfrac{1}{8} \int_N |A-K|^2_{\gbar}\varphi^2 \, d\mu^{\gbar}.
\end{split}
\end{equation}
\end{proposition}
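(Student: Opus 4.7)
The plan is to construct $\Psi$ by separation of variables on each cylindrical end and extend it to be identically $1$ on the asymptotically Euclidean end $N$, then to deduce \eqref{eqSYIntegrated2} from \eqref{eqSYIntegrated1} via the conformal covariance of the conformal Laplacian. By Proposition~\ref{propConfStructure1}, which applies because the hypothesis of Proposition~\ref{propExactCylindrical} provides the strict dominant energy condition near $\partial U$, the operator $-\Delta^{\gamma_i}+\tfrac{1}{8}\scal^{\gamma_i}$ on each component $(\partial U_i,\gamma_i)$ has positive first eigenvalue $\lambda_i>0$; let $\phi_i>0$ be a corresponding eigenfunction. On $C_i\cong \partial U_i\times(T_0,\infty)$ with the exact product metric $\gamma_i+dt^2$, I set
\[
\Psi(x,t) = \phi_i(x)\,e^{-\sqrt{\lambda_i}\,(t-T_0)},
\]
and verify using the product structure of $\Delta^{\gtil}$ together with the eigenvalue equation $\Delta^{\gamma_i}\phi_i = \tfrac{1}{8}\scal^{\gamma_i}\phi_i - \lambda_i\phi_i$ that $-8\Delta^{\gtil}\Psi + \scal^{\gtil}\Psi = 0$ on each $C_i$. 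The dimension-three conformal scalar curvature formula $\scal^{\gtil_\Psi} = \Psi^{-5}(-8\Delta^{\gtil}\Psi + \scal^{\gtil}\Psi)$ then yields $\scal^{\gtil_\Psi}=0$ on every cylindrical end. It remains to extend $\Psi$ smoothly and positively across the compact core of $\Sigma$ so that $\Psi\equiv 1$ on $N$; this is immediate since $\Psi$ is smooth, bounded, and bounded away from zero on each $\{t=T_0\}\subset\partial C_i$.

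For \eqref{eqSYIntegrated2} I will exploit the conformal covariance $L_{\gtil_\Psi}(\varphi)=\Psi^{-5}L_{\gtil}(\varphi\Psi)$ in dimension three, which, combined with $d\mu^{\gtil_\Psi}=\Psi^6 d\mu^{\gtil}$ and integration by parts, produces the identity
\[
\int_\Sigma \!\left(|d\varphi|^2_{\gtil_\Psi}+\tfrac{1}{8}\scal^{\gtil_\Psi}\varphi^2\right)d\mu^{\gtil_\Psi} = \int_\Sigma \!\left(|d(\varphi\Psi)|^2_{\gtil}+\tfrac{1}{8}\scal^{\gtil}(\varphi\Psi)^2\right)d\mu^{\gtil}
\]
for every $\varphi\in C^1_c(\Sigma)$. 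Since $\Psi$ is smooth and positive, $\varphi\Psi\in C^1_c(\Sigma)$, so \eqref{eqSYIntegrated1} applied with test function $\varphi\Psi$ bounds the right-hand side below by $\tfrac{1}{8}\int_N |A-K|^2_{\gbar}(\varphi\Psi)^2\,d\mu^{\gbar} + \tfrac{3}{4}\int_\Sigma |d(\varphi\Psi)|^2_{\gtil}\,d\mu^{\gtil}$. Since $\Psi\equiv 1$ on $N$, the $N$-integral retains the form in \eqref{eqSYIntegrated2}, and the pointwise identity $|d(\varphi\Psi)|^2_{\gtil}\,d\mu^{\gtil} = \Psi^{-2}|d(\varphi\Psi)|^2_{\gtil_\Psi}\,d\mu^{\gtil_\Psi}$ converts the gradient term to its conformal form, giving exactly \eqref{eqSYIntegrated2}.

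The construction contains no serious analytic obstacle: separation of variables produces $\Psi$ explicitly on the cylindrical ends and the inequality follows by pure conformal algebra from \eqref{eqSYIntegrated1}. The crucial input is the positivity $\lambda_i>0$, which is precisely why the hypothesis of Proposition~\ref{propExactCylindrical} requires the strict dominant energy condition near $\partial U$; without it the exponential ansatz would not produce a positive decaying solution and the construction would fail. The degenerate case $U=M$ (no cylindrical ends) is handled trivially by taking $\Psi\equiv 1$, in which case \eqref{eqSYIntegrated2} reduces to \eqref{eqSYIntegrated1}.
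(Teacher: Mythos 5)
Your proof is correct and takes essentially the same route as the paper: you construct $\Psi$ on each exact cylindrical end via the first eigenfunction $\phi_i$ of $-\Delta^{\gamma_i}+\tfrac{1}{8}\scal^{\gamma_i}$ multiplied by $e^{-\sqrt{\lambda_i}t}$, extend smoothly and positively to equal $1$ on $N$, and deduce \eqref{eqSYIntegrated2} by applying \eqref{eqSYIntegrated1} to the test function $\varphi\Psi$. The only cosmetic difference is that you invoke the conformal covariance of the conformal Laplacian $L_{\gtil_\Psi}(\varphi)=\Psi^{-5}L_{\gtil}(\varphi\Psi)$ to obtain the integral identity $\int_\Sigma(|d\varphi|^2_{\gtil_\Psi}+\tfrac18\scal^{\gtil_\Psi}\varphi^2)\,d\mu^{\gtil_\Psi}=\int_\Sigma(|d(\varphi\Psi)|^2_{\gtil}+\tfrac18\scal^{\gtil}(\varphi\Psi)^2)\,d\mu^{\gtil}$ in one line, whereas the paper unwinds it by expanding $\scal^{\gtil}(\varphi\Psi)^2 = (8\Psi\Delta^{\gtil}\Psi+\scal^{\gtil_\Psi}\Psi^6)\varphi^2$ and integrating $\Psi\Delta^{\gtil}\Psi$ by parts explicitly; the two computations are literally the same content.
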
                                                              
\begin{proof}
Let $(C_i, \gtil) = (\partial U_i \times (T_0, \infty), \gamma_i + dt^2)$ be one of the exact cylindrical ends of $(\Sigma, \gtil)$. (If $(C_i, \gtil) = (\partial U_i \times (-\infty,- T_0), \gamma_i + dt^2)$, replace $t$ by $-t$ in the argument below.) Let $0<\phi_i\in C^{2,\beta}(\partial U_i)$ be the first eigenfunction of the operator $-\Delta^{\gamma_i} + \frac{1}{8} \scal^{\gamma_i}$, so that
\[
-\Delta^{\gamma_i} \phi_i + \frac{1}{8} \scal^{\gamma_i} \phi_i = \lambda_i \phi_i
\]
for $\lambda_i > 0$. If we set $\Psi_i = e^{-\sqrt{\lambda_i}t}\phi_i$, then the scalar curvature of the metric $\Psi^4_i (\gamma_i + dt^2)$ vanishes on $\partial U_i \times \bR$. Let $s_i = \frac{1}{2\sqrt{\lambda_i}}e^{-2\sqrt{\lambda_i}t}$, then $(C_i,\Psi_i^4(\gamma_i + dt^2))$ is isometric to  $(\partial U_i \times (0,\frac{1}{2\sqrt{\lambda_i}}e^{-2\sqrt{\lambda_i}T_0}), \phi_i^4 (4\lambda_i s_i^2 \gamma_i + ds_i^2))$, in particular, it is  uniformly equivalent to the cone $(\partial U_i\times (0,\frac{1}{2\sqrt{\lambda_i}}e^{-2\sqrt{\lambda_i}T_0}), s_i^2 \gamma_i + ds_i^2)$. Fix a function $\Psi>0$ such that $\Psi = \Psi_i$ on $C_i$ and $\Psi=1$ on $N$, and let $\gtil_\Psi \definedas \Psi^4 \gtil$. The scalar curvature $\scal^{\gtil_\Psi}$ vanishes on each cylindrical end of $\Sigma$. 

In order to obtain \eqref{eqSYIntegrated2}, we first replace $\varphi$ by $\varphi \Psi$ in \eqref{eqSYIntegrated1}, which gives 
\[
\int_{\Sigma} \left( \scal^{\gtil} (\varphi \Psi)^2 + 2|d(\varphi\Psi)|^2_{\gtil} \right) \,d\mu^{\gtil} \geq \int_N |A-K|^2_{\gbar} \varphi^2\, d\mu^{\gbar}.
\]
Further, we note that 
\[
\begin{split}
\int_\Sigma \scal^{\gtil} (\varphi\Psi)^2 \, d\mu^{\gtil} & = \int_\Sigma (8 \Psi \Delta^{\gtil} \Psi + \scal^{\gtil_\Psi} \Psi^6)  \varphi^2 \, d\mu^{\gtil} \\ 
\end{split}
\]
where 
\[
\int_\Sigma \scal^{\gtil_\Psi} \Psi^6 \varphi^2 \, d\mu^{\gtil} = \int_\Sigma \scal^{\gtil_\Psi} \varphi^2 \, d\mu^{\gtil_\Psi}
\] 
and
\[
\begin{split}
8 \int_\Sigma \varphi^2 \Psi \Delta^{\gtil} \Psi \, d\mu^{\gtil} 
& = - 8 \int_\Sigma \left( \varphi^2 |d\Psi|^2_{\gtil}+ 2 \varphi \Psi \gtil(\nabla^{\gtil} \Psi, \nabla^{\gtil} \varphi)  \right) \, d\mu^{\gtil}\\     
& = - 8 \int_\Sigma \left( |d(\varphi\Psi)|^2_{\gtil} - \Psi^2 |d\varphi|^2_{\gtil} \right) \, d\mu^{\gtil} \\
& = - 8 \int_\Sigma \left( \Psi^{-2}|d(\varphi\Psi)|^2_{\gtil_\Psi} - |d\varphi|^2_{\gtil_\Psi}\right) \, d\mu^{\gtil_\Psi} .
\end{split}
\]
Similarly, we have 
\[
2 \int_{\Sigma} |d(\varphi\Psi)|^2_{\gtil}  \,d\mu^{\gtil} = 2 \int_{\Sigma} \Psi^{-2} |d(\varphi\Psi)|^2_{\gtil_\Psi}  \,d\mu^{\gtil_\Psi}.
\]
Summing up, we obtain \eqref{eqSYIntegrated2}.
\end{proof}
                                                                                  
Following \cite{EichmairJang} we may now introduce a new distance function $s=s(x)$ such that $0<s\in C^{3,\beta}(\Sigma)$, $s=r$ on $N$, and $s=s_i$ on $C_i$. When $U=M$ we just set $s=r$ everywhere on $\Sigma$.  One may now add a point at infinity to each of the asymptotically cylindrical ends of $(\Sigma,\gtil)$ and extend the new distance function $s$ to these virtual singular points by zero. In this way each cylindrical end of $(\Sigma,\gtil)$  corresponds to a conical singularity of $(\Sigma,\gtil_\Psi)$.

\begin{remark}\label{remHarmonicCapacity}
 These conical singularities have \emph{vanishing harmonic capacity}, as explained in \cite{EichmairJang}: Take a smooth cut off function $\chi_\varepsilon$ such that $0\leq \chi_\varepsilon \leq 1$, $\chi_\varepsilon = 0$ for $0\leq s \leq \varepsilon$, $\chi_\varepsilon = 1 $ for $ s \geq 2\varepsilon$, and $|\nabla^{\gtil_\Psi} \chi_\varepsilon| \leq C \varepsilon^{-1}$ where $C$ does not depend on $\varepsilon$.  Then
\begin{equation*}
\int_\Sigma |d\chi_\epsilon|^2_{\gtil_\Psi}\, d\mu^{\gtil_\Psi} = O(\varepsilon).
\end{equation*} 
\end{remark}

\begin{proposition}\label{propYamabe}
There exists $u \in C^{2,\beta}_{loc}(\Sigma)$ such that 
\begin{equation}\label{eqConfFactor}
 -\Delta^{\gtil_\Psi} u + \tfrac{1}{8} \scal^{\gtil_\Psi} u = 0 \hspace{12pt}\text{on}\hspace{12pt} \Sigma,
\end{equation}
$u\to 1$ as $r\to\infty$, and $c^{-1} \leq u \leq c$ for some $c\geq 1$.
As a consequence, the metric $\gtil_{u\Psi}\definedas u^4 \gtil_\Psi$ has zero scalar curvature.
\end{proposition}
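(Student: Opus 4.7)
The plan is to solve \eqref{eqConfFactor} via a compact exhaustion argument following the scheme of \cite{PMT2} and \cite{EichmairJang}, with adjustments for the weaker decay of $\scal^{\gtil_\Psi}$ in our setting. First I would fix a smooth exhaustion $\{\Omega_k\}$ of $\Sigma$ (viewed as a possibly singular manifold) by relatively compact open sets with piecewise smooth boundary, chosen so that each $\Omega_k$ contains a fixed compact core and $\partial\Omega_k$ consists of a large ``outer'' sphere in the asymptotically Euclidean end $N$ together with small spheres shrinking toward each conical singularity. On each $\Omega_k$ I would solve the Dirichlet problem $L u_k \definedas -\Delta^{\gtil_\Psi} u_k + \tfrac{1}{8}\scal^{\gtil_\Psi} u_k = 0$ with $u_k|_{\partial\Omega_k} = 1$; existence and uniqueness follow from Lax--Milgram, because the quadratic form $\langle L\varphi,\varphi\rangle$ is positive definite on $H^1_0(\Omega_k)$ by \eqref{eqSYIntegrated2} (on each fixed $\Omega_k$ the weight $\Psi$ is bounded away from zero, so the right-hand side of \eqref{eqSYIntegrated2} dominates a multiple of $\int|d\varphi|^2$).

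The next task is to derive $k$-independent bounds $0 < c^{-1} \leq u_k \leq c$. Writing $u_k = 1 - v_k$ with $v_k \in H^1_0(\Omega_k)$, we obtain $L v_k = \tfrac{1}{8}\scal^{\gtil_\Psi}$. Since $\scal^{\gtil_\Psi}$ vanishes on the conformally compactified cylindrical ends and is $O(r^{-3})$ on $N$ by \eqref{eqScalarJang}, it lies in $L^{6/5}(\Sigma,\gtil_\Psi)$. Using the coercivity inequality \eqref{eqSYIntegrated2} together with the Sobolev embedding $H^1\hookrightarrow L^6$ on $\Sigma$ minus the conical singular points (valid precisely because those points have vanishing harmonic capacity, see Remark \ref{remHarmonicCapacity}), I would obtain a uniform bound on $\|v_k\|_{H^1}$. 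Moser iteration applied to the elliptic equation for $v_k$ then upgrades this to a uniform $L^\infty$ bound, and the strong maximum principle for the positive operator $L$ combined with a Harnack inequality yields the uniform positive lower bound $u_k \geq c^{-1}$.

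With uniform $L^\infty$ bounds in place, interior $C^{2,\beta}$ Schauder estimates and a diagonal subsequence produce a limit $u \in C^{2,\beta}_{loc}(\Sigma)$ solving \eqref{eqConfFactor} with $c^{-1} \leq u \leq c$. What remains is to show $u \to 1$ at infinity in $N$. Setting $w = u - 1$, this reduces to showing that the bounded $H^1$ solution of $Lw = -\tfrac{1}{8}\scal^{\gtil_\Psi}$ in $N$ decays at infinity. A convolution representation using the Euclidean Green's function of the Laplacian on $\bR^3$, together with the decay rate $O(r^{-3})$ of both the potential and the inhomogeneity, gives $w(x)\to 0$ as $|x|\to\infty$. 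The final assertion then follows from the conformal transformation law $\scal^{\gtil_{u\Psi}} = u^{-5}(-8\Delta^{\gtil_\Psi} u + \scal^{\gtil_\Psi} u) = 0$.

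The main obstacle is the asymptotic analysis in the third step. As emphasized in Remark \ref{remModification}, the decay $O(r^{-3})$ of $\scal^{\gtil_\Psi}$ is borderline for the integrability theory typically used in the asymptotically Euclidean positive mass theorem, so one cannot simply invoke a standard weighted isomorphism theorem for the conformal Laplacian to produce a solution with prescribed limit $1$. The argument must instead exploit the specific structure of the equation, together with careful handling of the test function classes near the conical singularities, where the vanishing harmonic capacity is essential both for the coercivity estimate and for the maximum principle used above.
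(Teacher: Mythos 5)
Your proposal follows essentially the same route as the paper's proof: an exhaustion by domains that are annular between the asymptotically Euclidean end and the conical singular points, solvability of the Dirichlet problem from the coercivity inherent in \eqref{eqSYIntegrated2}, a $\sigma$-independent (or $k$-independent) $L^6$ bound from that coercivity plus the Sobolev inequality of \cite[Lemma 18]{EichmairJang}, an $L^\infty$ upgrade, a lower bound from the variational/positivity structure of the operator together with Harnack, and a subsequential limit. The only substantive differences are (i) you prove decay $u\to 1$ by a Green's function convolution on $N$, whereas the paper gets it from the $L^6$ bound on $v$ plus elliptic regularity in balls of fixed radius — with $\scal^{\gtil_\Psi}=O(r^{-3})$ your convolution gives $O(r^{-1}\log r)$, which still tends to zero, so both work, though the paper's argument avoids the borderline Newtonian potential estimate; and (ii) near the shrinking inner boundaries you invoke Moser iteration and ``the strong maximum principle for the positive operator $L$,'' while the paper uses the fact that $\scal^{\gtil_\Psi}\equiv 0$ for $s\le 2\sigma_0$, so the $v_\sigma$ are harmonic there and their sup/inf are controlled by the boundary values and the values on $\{s=2\sigma_0\}$ — a cleaner way to get the $\sigma$-uniform $L^\infty$ and lower bounds in the region where the domain degenerates. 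Since $\scal^{\gtil_\Psi}$ changes sign, the classical Hopf strong maximum principle does not apply directly; the positivity of $u_\sigma$ should instead be derived from the coercivity via the test function $\min\{u_\sigma+\varepsilon,0\}$ as in the paper (your phrase ``positive operator'' suggests you have this in mind, but it is worth making explicit that it is the variational positivity, not a sign condition on the potential, that is used).
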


\begin{proof}
Here we essentially repeat a part of the proof of \cite[Proposition 12]{EichmairJang} which in turn is based on \cite[Lemma 4]{PMT2}, for the reader's convenience. Let $\sigma_0$ be as small as to ensure that $\scal^{\gtil_\Psi} = 0$ for $0<s<2\sigma_0$. For $\sigma < \sigma_0$ consider a sequence of Dirichlet problems
\begin{eqnarray*}
-\Delta^{\gtil_\Psi} v_\sigma + \tfrac{1}{8} \scal^{\gtil_\Psi} v_\sigma = - \tfrac{1}{8} \scal^{\gtil_\Psi} &\text{ in }& \{\sigma < s < \sigma^{-1}\},\\
v_\sigma = 0 &\text{ on }& \{s=\sigma\} \cup \{s=\sigma^{-1}\}.
\end{eqnarray*} 
The solution $v_\sigma$ exists and is unique as \eqref{eqSYIntegrated2} implies that the respective homogeneous problem only has a zero solution. Extending each $v_\sigma$ by zero to be a compactly supported Lipschitz function on $\Sigma$ we obtain 
\[
\begin{split}
\left(\int_{\{s\geq \sigma_0\}} |v_\sigma|^6 \, d\mu^{\gtil_\Psi} \right)^{1/3} 
& \leq C \left(\int_{\{s\geq \sigma_0\}} |v_\sigma \Psi|^6 \, d\mu^{\gtil_\Psi} \right)^{1/3} \\
& \leq C \int_{\{s\geq \sigma_0\}} |d(v_\sigma \Psi)|^2_{\gtil_\Psi} \, d\mu^{\gtil_\Psi} \\
& \leq C \int_{\{s\geq \sigma_0\}} \Psi^{-2} |d(v_\sigma \Psi)|^2_{\gtil_\Psi} \, d\mu^{\gtil_\Psi} \\
& \leq C \int_{\{\sigma \leq s \leq \sigma^{-1}\}} \left( |dv_\sigma|^2_{\gtil_\Psi} + \tfrac{1}{8} \scal^{\gtil_\Psi} v_\sigma^2 \right) \; d\mu^{\gtil_\Psi}\\
& = C \int_{\{\sigma \leq s \leq \sigma^{-1}\}} v_\sigma \left( - \Delta^{\gtil_\Psi} v_\sigma + \tfrac{1}{8} \scal^{\gtil_\Psi} v_\sigma \right) \; d\mu^{\gtil_\Psi}\\
& \leq C \int_{\{s \geq \sigma_0\}} |\scal^{\gtil_\Psi}||v_\sigma| \; d\mu^{\gtil_\Psi}\\
& \leq C \left(\int_{\{s \geq \sigma_0\}} |\scal^{\gtil_\Psi}|^{6/5} \; d\mu^{\gtil_\Psi}\right)^{5/6} \left(\int_{\{s \geq \sigma_0\}} |v_\sigma|^{6} \; d\mu^{\gtil_\Psi}\right)^{1/6},
\end{split} 
\]
where the constant $C>0$ may vary from line to line, but is independent of $\sigma$. In the first line we  relied on the fact that $\Psi$ is bounded away from zero on $\{s \geq \sigma_0\}$. In the second line we used the Sobolev inequality in the form of \cite[Lemma 18]{EichmairJang}. The third line is a consequence of the fact that $\Psi$ is bounded from above on $\{s \geq \sigma_0\}$. In the fourth line we  used the fact that $v_\sigma$ vanishes outside of $\{\sigma \leq s \leq \sigma^{-1}\}$ and applied  \eqref{eqSYIntegrated2} with $\varphi = v_\sigma$.  In the fifth line we performed integration by parts. In the sixth line we made use of the equation that $v_\sigma$ satisfies together with the fact that $\scal^{\gtil_{\Psi}} = 0$ for $0 \leq s \leq 2\sigma_0$. We conclude by applying the H\"older inequality in the last line.

Since $\scal^{\gtil_{\Psi}} = 0$ for $0 \leq s \leq 2\sigma_0$ and  $|\scal^{\gtil_{\Psi}}|^{6/5} = O(r^{-18/5})$ in $N$, it follows that $v_\sigma$ are uniformly bounded in $L^6$ on $\{s\geq \sigma_0\}$.  Applying elliptic regularity in the balls of fixed radius followed by the Sobolev embedding it follows that $|v_\sigma|<C$ on $\{s\geq 2\sigma_0\}$ for a constant $C>0$ independent of $\sigma$. Further, note that $v_\sigma$ are harmonic on $\{\sigma \leq s \leq 2 \sigma_0\}$ and vanish on $\{s=\sigma\}$. Since harmonic functions attain their maximum and minimum on the boundary, it follows that $|v_\sigma|<C$ on $\{\sigma \leq s \leq 2 \sigma_0\}$ as well. All in all, we obtain the uniform bound $|v_\sigma| < C$ on $\{\sigma \leq s \leq \sigma^{-1}\}$. A standard diagonal subsequence extraction argument gives a subsequence of $u_\sigma \definedas v_\sigma + 1$ that converges to a solution $u \in C^{2,\beta}_{loc}$ of \eqref{eqConfFactor} as $\sigma\searrow 0$. Note that the above discussion shows that $|u_\sigma| <c$ for some $c>1$.

In order to show that $u$ is bounded away from zero, we will first show that $u_\sigma > 0$ on $\{\sigma < s < \sigma^{-1}\}$. From the definition of $v_\sigma$ it is clear that this is true in a neighborhood of the boundary of this set. Let $\varepsilon>0$ be a sufficiently small regular value of $-u_\sigma$, then $\min\{u_\sigma+\varepsilon,0\}$ is a Lipschitz continuous function with support in $\{\sigma < s < \sigma^{-1}\}$. Using it as a test function in  \eqref{eqSYIntegrated2} we obtain
\begin{equation*}
\begin{split}
&\tfrac{3}{4} \int_{\{u_\sigma < -\varepsilon\}} \Psi^{-2} |d((u_\sigma + \varepsilon)\Psi)|^2_{\gtil_\Psi}  \,d\mu^{\gtil_\Psi} \\
& \qquad \leq \int_{\{u_\sigma < -\varepsilon\}} \left( |d(u_\sigma + \varepsilon)|^2_{\gtil_\Psi} + \tfrac{1}{8} \scal^{\gtil_\Psi} (u_\sigma + \varepsilon)^2 \right) \; d\mu^{\gtil_\Psi} \\
& \qquad  \leq \int_{\{u_\sigma < -\varepsilon\}} (u_\sigma + \varepsilon)\left(- \Delta^{\gtil_\Psi} (u_\sigma + \varepsilon) + \tfrac{1}{8} \scal^{\gtil_\Psi} (u_\sigma + \varepsilon) \right) \; d\mu^{\gtil_\Psi} \\
& \qquad  \leq \varepsilon \int_{\{u_\sigma < -\varepsilon\}} \tfrac{1}{8} \scal^{\gtil_\Psi} (u_\sigma + \varepsilon) \; d\mu^{\gtil_\Psi},
\end{split}
\end{equation*}
where we used the equation that $u_\sigma$ satisfies in the last line. Letting $\varepsilon \searrow 0$, we see that $\Psi u_\sigma = \const$ on $\{u_\sigma < 0\}$, hence $\{u_\sigma < 0\} = \emptyset$.  As $u_\sigma=1$ on $\{s=\sigma\} \cup \{s=\sigma^{-1}\}$ we have $u_\sigma > 0$ by Harnack theory, thus $u \geq 0$ everywhere on $\Sigma$. Combining the fact that the subsequential limit $v$ of $v_\sigma$ satisfies $\int_{\{s\geq \sigma_0\}} |v|^6 \, d\mu^{\gtil_\Psi} < C$ with standard elliptic theory for the equation that $v$ satisfies 
we conclude that $u\to 1$ as $r\to \infty$. Again, by Harnack theory it follows that $u>0$ on $\Sigma$. Since $u_\sigma$ are harmonic on $\{\sigma < s < 2 \sigma_0\}$ and uniformly approach $u>0$ on a neighborhood of $\{s=2\sigma_0\}$, it follows that they are uniformly bounded away from zero on $\{\sigma < s < 2\sigma_0\}$ by some constant independent of $\sigma$. Combining this with the fact that $u>0$ is bounded away from zero for large $r$, we conclude that $u > c^{-1}$ for some $c>1$ everywhere in $\Sigma$, which completes the proof.
\end{proof}       

We recall that $(N, \gtil_\Psi) $ is (a part of) the graphical component of the geometric solution of the Jang equation and that the graphing function $f: U \to \bR$  satisfies 
\begin{equation*}
f(r,\theta,\varphi)=\sqrt{r^2+1} + \alpha \ln r + \psi (\theta,\varphi)+O_3(r^{-1+\varepsilon})
\end{equation*}
as $r\to \infty$, where $\alpha = 2E$  is twice the energy of the initial data set $(M,g,K)$ and the function $\psi:\mathbb{S}^2\to\bR$ defined by the equation \eqref{eqPsi} is such that $\int_{\mathbb{S}^2} \psi \, d\mu^\sigma = 0$. We shall now see how these quantities enter the asymptotics of the conformal factor $u$ constructed in  Proposition \ref{propYamabe}. 

\begin{proposition}\label{propAsymptAnalysis}
Let $u$ be as in Proposition \ref{propYamabe}. Then 
\begin{equation}\label{eqExpansionU}
u = 1 + (A + \tfrac{1}{4} \psi) r^{-1} + O^{2,\beta}(r^{-2+\varepsilon}) \quad \text{ in } \quad N
\end{equation}
where  the constant $A$ satisfies $A \leq -\tfrac{\alpha}{4}$. Consequently, $\gtil_{u\Psi} \definedas u^4 \gtil_\Psi = (u\Psi)^4 \gtil$ is a (possibly incomplete) asymptotically Euclidean metric, and its mass satisfies $\mathcal{M} (\gtil_{u\Psi}) \leq \alpha/2 = E$.
\end{proposition}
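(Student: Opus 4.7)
The proof naturally splits into three stages: establishing the asymptotic expansion \eqref{eqExpansionU}; verifying that $\gtil_{u\Psi}$ is asymptotically Euclidean with mass $\alpha + 2A$; and finally proving the upper bound $A \leq -\alpha/4$. The last step is the main obstacle.

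For the expansion, observe that on the end $N$ we have $\Psi \equiv 1$, so $u$ satisfies $-\Delta^{\gbar} u + \tfrac{1}{8}\scal^{\gbar} u = 0$ there. By Corollary \ref{corAEh}, $\gbar$ equals $\delta + O^{2,\beta}(r^{-2+\varepsilon})$ in the asymptotically Euclidean coordinates on $\Sigma_-$, and by \eqref{eqScalarJang} the source satisfies $\tfrac{1}{8}\scal^{\gbar} = \tfrac{1}{4r^{3}}\Delta^{\mathbb{S}^2}\psi + O(r^{-4+\varepsilon})$. Since $u$ is bounded and $u \to 1$ at infinity, standard elliptic asymptotic analysis in weighted H\"older spaces on asymptotically Euclidean $3$-manifolds, applied to this perturbed Laplace equation, yields an expansion $u = 1 + a(\theta,\varphi)/r + O^{2,\beta}(r^{-2+\varepsilon})$. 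Substituting into the equation and using the Euclidean identity $\Delta^{\delta}(a/r) = \Delta^{\mathbb{S}^2}a/r^3$, the leading $r^{-3}$ order forces $\Delta^{\mathbb{S}^2}(a - \psi/4) = 0$ on $\mathbb{S}^2$, so that $a - \psi/4 = A$ for a constant $A$.

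For the mass formula, $\Psi \equiv 1$ on $N$ gives $\gtil_{u\Psi}|_N = u^4 \gbar$, which is asymptotically Euclidean by the expansion from Step~1 combined with \eqref{eqFallOffMetricAE2}. The standard conformal mass change formula then produces $\mathcal{M}(\gtil_{u\Psi}) = \mathcal{M}(\gbar) + 2\bar a$, where $\bar a = \tfrac{1}{4\pi}\int_{\mathbb{S}^2} a\,d\mu^\sigma$. Because $\int_{\mathbb{S}^2}\psi\,d\mu^\sigma = 0$ (cf.\ Section \ref{secHeuristic}), we have $\bar a = A$, and hence $\mathcal{M}(\gtil_{u\Psi}) = \alpha + 2A$. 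The desired mass inequality is therefore equivalent to $A \leq -\alpha/4$.

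The upper bound on $A$ is the technical heart of the proposition. My plan is to extract $A$ as a boundary integral by multiplying the equation $-\Delta^{\gtil_\Psi} u + \tfrac{1}{8}\scal^{\gtil_\Psi} u = 0$ by $u$ and integrating by parts on the exhaustion $\{s \geq \epsilon\} \cap \{r \leq R\} \subset \Sigma$. In the limit $R \to \infty$, $\epsilon \to 0$, the contribution at $\{r = R\}$ evaluates to $-4\pi A$ by the expansion from Step~1, while the contributions near the conical singularities vanish because $u$ is of class $C^{2,\beta}_{\mathrm{loc}}$ and bounded and the area of $\{s = \epsilon\}$ with respect to $\gtil_\Psi$ is $O(\epsilon^{2})$. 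This produces the identity
\[
\int_\Sigma \bigl(|du|^2_{\gtil_\Psi} + \tfrac{1}{8}\scal^{\gtil_\Psi} u^2\bigr)\,d\mu^{\gtil_\Psi} = -4\pi A.
\]
It then remains to bound the left-hand side from below by $\pi\alpha$. To achieve this I would substitute for $\scal^{\gbar}$ on $N$ via the Schoen--Yau identity \eqref{eqScalarJang1}, use the dominant energy condition $\mu \geq |J|_g$ to discard the nonnegative term $2(\mu - J(w))$, and apply \eqref{eqSYIntegrated2} with a carefully chosen cutoff test function, combining the resulting inequality with the asymptotic relation expressing $\int_N \scal^{\gbar}\,d\mu^{\gbar}$ in terms of $\mathcal{M}(\gbar) = \alpha$. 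The delicate point is that in three dimensions the Dirichlet energy $\int |d\chi_R|^2\,d\mu^{\gtil_\Psi}$ of a cutoff $\chi_R$ approaching $1$ at infinity does not tend to zero; one must therefore test not against a cutoff of $u$ itself but rather against $u - 1$, which has genuine $O(r^{-1})$ decay, and then convert the resulting lower bound into a bound for $-4\pi A$ using the algebraic relation between the energies of $u$ and $u-1$ and the integral of $\scal^{\gtil_\Psi}$. I expect this bookkeeping, which must capture precisely how the factor $\alpha/4$ arises from the $\alpha \ln r$ term in the Jang graphing function $f$ and is therefore responsible for $\mathcal{M}(\gbar) = 2E$ rather than $E$, to be the most involved part of the argument.
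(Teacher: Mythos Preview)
Your first two steps (the expansion of $u$ and the mass formula $\mathcal{M}(\gtil_{u\Psi})=\alpha+2A$) are correct and match the paper. The gap is in Step~3.

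Your identity $\int_\Sigma\bigl(|du|^2_{\gtil_\Psi}+\tfrac18\scal^{\gtil_\Psi}u^2\bigr)\,d\mu^{\gtil_\Psi}=-4\pi A$ is only conditionally convergent on $N$ (since $\scal^{\gbar}=O(r^{-3})$), and your proposed workaround---applying \eqref{eqSYIntegrated2} with $\varphi=u-1$ and then ``converting''---does not deliver the $\alpha$. Applying \eqref{eqSYIntegrated2} to $u-1$ gives only $\int\bigl(|du|^2+\tfrac18\scal^{\gtil_\Psi}(u-1)^2\bigr)\geq 0$; subtracting from your identity leaves $-4\pi A\geq\tfrac18\int\scal^{\gtil_\Psi}(2u-1)$, and there is no clean identity expressing $\int_N\scal^{\gbar}$ as a multiple of $\mathcal{M}(\gbar)=\alpha$ (the quadratic remainder in the scalar curvature expansion obstructs this). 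The paper explicitly flags (Remark~\ref{remModification}) that the integrated inequalities \eqref{eqSYIntegrated1}--\eqref{eqSYIntegrated2} cannot be used for test functions tending to $1$ at infinity in this setting, precisely because $\scal^{\gtil}=O(r^{-3})$.

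The missing idea is to work on $(\Sigma,\gtil)$ rather than $(\Sigma,\gtil_\Psi)$, combine the \emph{pointwise} Schoen--Yau inequality \eqref{eqScalarJang2} with $\scal^{\gtil}=8(u\Psi)^{-1}\Delta^{\gtil}(u\Psi)$, and package the result as
\[
\divg^{\gtil}\bigl(4u\Psi\,d(u\Psi)+(u\Psi)^2 q\bigr)-3|d(u\Psi)|^2_{\gtil}\ \geq\ \tfrac12(u\Psi)^2|A-K|^2_{\gbar}+\tfrac12(u\Psi)^2(\mu-|J|_g).
\]
The point is that once the $q$-flux is \emph{included} in the divergence, the slowly decaying $O(r^{-3})$ parts coming from $\scal^{\gtil}$ and from $\divg^{\gtil}q$ cancel, so the divergence term is $O(r^{-4+\varepsilon})$ on $N$ (cf.\ \eqref{eqGoodTerm}) and integration over $\Sigma$ is legitimate. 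The boundary contributions at the conical tips vanish, and the flux at $\{r=R\}$ converges to $-(4A+\alpha)\,\mathrm{vol}(\mathbb{S}^2)$, the $\alpha$ arising directly from $q_r=-\alpha r^{-2}+O(r^{-3+\varepsilon})$ (Lemma~\ref{lemma2}). Nonnegativity of the right-hand side then gives $4A+\alpha\leq 0$.
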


\begin{proof}
By Corollary \ref{corAEh} we have $\gbar=g_{\Sigma_-} + O^{2,\beta}(r^{-2+\varepsilon})$.  Recalling \eqref{eqNewBarriers} it is straightforward to verify that $\scal^{g_{\Sigma_-} }= \tfrac{2 \Delta^{S^2} \psi}{r^3} + O^{\beta}(r^{-4+\varepsilon})$, either by a direct computation or using the full version of the Schoen and Yau identity \cite[(2.25)]{PMT2}. It follows that $\scal^{\gbar} = \tfrac{2 \Delta^{S^2} \psi}{r^3} + O^{\beta}(r^{-4+\varepsilon})$.  If we now set $u = 1 + \tfrac{1}{4}\psi\, r^{-1} + u_0$, then as a consequence of \eqref{eqConfFactor}, we see that $u_0$ in $N$ satisfies the equation 
\[
\Delta^{\gbar} u_0 = \tfrac{1}{4} r^{-3} \Delta^{\bS^2} \psi \,   u_0 + O^\beta(r^{-4+\varepsilon}).
\] 
A standard argument using the fact that is $\gbar=\delta + O^{2,\beta}(r^{-1})$ and \cite[Theorem 2]{Meyers} yields $u_0 = A \, r^{-1} + O^{2,\beta}(r^{-2+\varepsilon})$, where $A\in \bR$, which implies \eqref{eqExpansionU}.

Our goal now is to estimate the constant $A$ from above. In contrast to \cite{PMT2} and \cite{EichmairJang} we cannot use \eqref{eqSYIntegrated2} for this purpose, see Remark \ref{remModification}. Instead we will work directly with \eqref{eqScalarJang2}, and rely on the fact that $u$, as a consequence of \eqref{eqConfFactor}, satisfies
\begin{equation}\label{eqYamabeRed}
\scal^{\gtil} = 8 (u\Psi)^{-1} \Delta^{\gtil} (u\Psi).
\end{equation}
In this case we have
\[
4 u\Psi \Delta^{\gtil} (u\Psi) + (u\Psi)^2 \divg^{\gtil} q - \tfrac{1}{2} (u\Psi)^2 |A-K|^2_{\gbar} - (u\Psi)^2 |q|^2_{\gtil}  \geq \tfrac{1}{2}  (u\Psi)^2 (\mu - |J|_g),
\]
which yields  
\begin{equation}
\begin{split}
\divg^{\gtil} (4 u\Psi d (u\Psi) + (u\Psi)^2 q) - 4 |d(u\Psi)|^2_{\gtil} - 2 u\Psi q(\nabla^{\gtil}(u\Psi)) - (u\Psi)^2 |q|^2_{\gtil}  & \\ \geq \tfrac{1}{2}(u\Psi)^2 |A-K|^2_{\gbar} +\tfrac{1}{2}  (u\Psi)^2 (\mu - |J|_g) &.
\end{split}
\end{equation}
Applying Cauchy-Schwartz inequality in the left hand side we obtain
\begin{equation}\label{eqBeforeIntegration}
\begin{split}
\divg^{\gtil} (4 u\Psi d (u\Psi) + (u\Psi)^2 q) - 3 |d(u\Psi)|^2_{\gtil}  \geq \tfrac{1}{2}(u\Psi)^2 |A-K|^2_{\gbar} + \tfrac{1}{2}(u\Psi)^2(\mu - |J|_g)  .
\end{split}
\end{equation}
We intend to integrate this inequality with respect to the measure $d\mu^{\gtil}$ on $\Sigma$, so we first need to clarify why such an integration makes sense. 

Note that as a consequence of \eqref{eqScalarJang1} and \eqref{eqYamabeRed} in the asymptotically Euclidean end, where $\Psi \equiv 1$, we have 
\begin{equation}\label{eqGoodTerm}
\begin{split}
\divg^{\gtil} (4 u\Psi d (u\Psi) + (u\Psi)^2 q) & =  \divg^{\gbar} (4 u \, d u + u^2 q) \\&  =  4 |du|^2_{\gbar} + 2u q(\nabla^{\gbar}u) + u^2 |q|^2_{\gbar} \\  & \qquad + \tfrac{1}{2} u^2 |A-K|^2_{\gbar}  + u^2 (\mu - J (w)) \\ & =  O(r^{-4+\varepsilon}), 
\end{split}
\end{equation}
and all other terms in \eqref{eqBeforeIntegration} are $O(r^{-4+\varepsilon})$ in the asymptotically Euclidean end as well.\footnote{It is actually the main advantage of  \eqref{eqBeforeIntegration} that the terms with slow fall off arising from $\scal^{\gtil}$ and $\divg^{\gtil} q$ in  \eqref{eqScalarJang2} are combined together in one quickly decaying term \eqref{eqGoodTerm}.} In the case when $(\Sigma, \gtil)$ has cylindrical ends all terms in the right hand side of \eqref{eqBeforeIntegration} fall off exponentially as $t\to\infty$ as they all include $\Psi$ and since all other quantities appearing in these terms are bounded. As for the left hand side of \eqref{eqBeforeIntegration}, we note that  
\begin{equation*}
\begin{split}
&|\divg^{\gtil} (4 u\Psi d (u\Psi) + (u\Psi)^2 q) |\\ & =4  |d (u\Psi) |^2_{\gtil} + \tfrac{1}{2} (u\Psi)^2 |\scal^{\gtil}| + (u\Psi)^2|\divg^{\gtil} q| + 2 |u\Psi q(\nabla^{\gtil}(u\Psi))| \\&  \leq  5 |d (u\Psi) |^2_{\gtil}+ (u\Psi)^2 |q|^2_{\gtil}+ \tfrac{1}{2} (u\Psi)^2 |\scal^{\gtil}| + (u\Psi)^2|\divg^{\gtil} q|  \\&  \leq  10\Psi^2 |d u|^2_{\gtil}+ 10 u^2 |d \Psi |^2_{\gtil}+ (u\Psi)^2 |q|^2_{\gtil}+ \tfrac{1}{2} (u\Psi)^2 |\scal^{\gtil}| + (u\Psi)^2|\divg^{\gtil} q|,
\end{split}
\end{equation*}
where we used the Cauchy-Schwartz inequality in the third and fourth line. Clearly, all terms  in the last line, except for possibly the first one, are integrable on $\{s\leq \sigma_0\}$.  Thus, in order to be able to integrate \eqref{eqBeforeIntegration}  we only need to show that
\begin{equation}\label{eqGradBoundCylEnd}
\int_{\{s\leq \sigma_0\}}  \Psi^2 |d u|^2_{\gtil} \, d\mu^{\gtil} = \int_{\{s\leq \sigma_0\}}  |d u |^2_{\gtil_\Psi} \, d\mu^{\gtil_\Psi} < \infty.
\end{equation}
This has actually been explained in \cite[Proof of Proposition 12]{EichmairJang}. For the reader's convenience, we briefly recall this argument. 

We have $-\Delta^{\gtil_\Psi} u + \frac{1}{8} \scal^{\gtil_\Psi} u = 0$, where $\scal^{\gtil_\Psi} \equiv 0$ on $\{s\leq \sigma_0\}$. Consequently, for any function $\xi\in C^1$ that has compact support in $\{s\leq \sigma\}$ for $0<\sigma \leq \sigma_0$ we have
\begin{equation}\label{eqTest}
\int_{\{s\leq \sigma\}} du (\nabla^{\gtil_\Psi }\xi) \, d\mu^{\gtil_\Psi} = \int_{ \{s=\sigma\}} \xi \nu_{\gtil_\Psi} (u) \, d\mu^{\gtil_\Psi},
\end{equation}
where $\nu_{\gtil_\Psi}$ is the outward pointing unit normal with respect to the metric $\gtil_\Psi $. Applying this identity with $\xi=u\chi_\varepsilon^2$ for $\chi_\varepsilon$ as in the Remark \ref{remHarmonicCapacity}  and letting $\varepsilon \searrow 0$ the desired bound \eqref{eqGradBoundCylEnd} follows in the view of the $L^\infty$-bound on $u$. With this bound at hand, using test functions $\xi=u\chi_\varepsilon$ in \eqref{eqTest}, we also obtain
\begin{equation}\label{eqDivTh}
 \int_{\{s<\sigma \}} |du|^2_{\gtil_\Psi} \, d\mu^{\gtil_\Psi} = \int_{\{s=\sigma \}} u \,du (\nu_{\gtil_\Psi})  \, d\mu^{\gtil_\Psi} .
\end{equation}

As a consequence, integrating \eqref{eqBeforeIntegration} over $(\Sigma, \gtil)$ and performing integration by parts, we obtain
\begin{equation}\label{eqIntByParts4}
\begin{split}
0 & \leq  \tfrac{1}{2}\int_N (u\Psi)^2 (\mu - |J|_g) \, d\mu^{\gbar} + \tfrac{1}{2}  \int_N (u\Psi)^2 |A-K|^2_{\gbar}  \, d\mu^{\gbar} + 3 \int_{\Sigma} |d(u\Psi)|^2_{\gtil}\, d\mu^{\gtil} \\
& \leq \int_\Sigma \divg^{\gtil} (4 u\Psi d (u\Psi) + (u\Psi)^2 q) \, d\mu^{\gtil}\\
 & = \lim_{\sigma \to 0} \int_{\{s=\sigma^{-1}\}} (4u \,du + u^2 q)(\nu_{\gtil})  \, d\mu^{\gtil} + \lim_{\sigma \to 0} \int_{\{s=\sigma \}} (4u\Psi \,d(u\Psi) + (u\Psi)^2 q)(\nu_{\gtil})  \, d\mu^{\gtil}
\end{split}
\end{equation}
where $\nu_{\gtil}$ is the outward pointing unit with respect to $\gtil$ normal to the domain $\{\sigma < s < \sigma^{-1}\}$. Using the exponential fall off of $\Psi$, \eqref{eqDivTh}, and the finiteness of $\int_\Sigma |du|^2_{\gtil_\Psi}\, d\mu^{\gtil_\Psi}$ it is straightforward to check that
\begin{equation*}
\begin{split}
\lim_{\sigma \to 0} \int_{\{s=\sigma \}} (4u\Psi \,d(u\Psi) + (u\Psi)^2 q)(\nu_{\gtil})  \, d\mu^{\gtil} & = \lim_{\sigma \to 0} \int_{\{s=\sigma \}} 4\Psi^2 u\, du (\nu_{\gtil})  \, d\mu^{\gtil}  \\ & = \lim_{\sigma \to 0} \int_{\{s=\sigma \}} 4u \,du (\nu_{\gtil_\Psi})  \, d\mu^{\gtil_\Psi}  \\ & = \lim_{\sigma \to 0} \int_{\{s<\sigma \}} 4|du|^2_{\gtil_\Psi} \, d\mu^{\gtil_\Psi} =0.
\end{split}
\end{equation*}
 Further, using asymptotic expansions in the asymptotically Euclidean end and recalling that $\int_{\mathbb{S}^2} \psi \, d\mu^\sigma=0$ we obtain
 \begin{equation*}
\begin{split}
\lim_{\sigma \to 0} \int_{\{s=\sigma^{-1}\}} (4u \,du + u^2 q)(\nu_{\gtil})  \, d\mu^{\gtil} & =-  \int_{\mathbb{S}^2}(4A + \psi + \alpha) \, d\mu^\sigma \\ & = - (4A + \alpha ) \, \mathrm{vol}(\mathbb{S}^2).
\end{split}
\end{equation*}
The desired estimate $A \leq -\alpha /4$ follows by \eqref{eqIntByParts4}. 

Finally, we compute the mass of the asymptotically Euclidean metric $\gtil_{u\Psi}$: 
\[
\begin{split}
\mathcal{M} (\gtil_{u\Psi}) & = \mathcal{M} (u^4\gbar) \\
                            & =\frac{1}{16\pi}\lim_{R \to \infty} \int_{\{r=R\}}\left(\operatorname{div}^\delta (u^4\gbar) 
														   - d \tr^{\delta} (u^4 \gbar)\right) (\partial_r) \,d\mu^\delta \\
														& = \mathcal{M}(\gbar) + \frac{1}{16\pi}\lim_{R \to \infty} \int_{\{r=R\}} 4u^3\left(\gbar (\nabla^\delta u,\partial_r) 
														   - (\tr^\delta \gbar)\, \partial_r u \right) \, d\mu^\delta \\
														& = \mathcal{M}(\gbar) + \frac{1}{16\pi} \lim_{R \to \infty} \int_{\{r=R\}} (-8 \partial_r u + o(r^{-2})) \, d\mu^\delta \\
														& = \alpha + 2 A \\
														& \leq \alpha/2 = E,
\end{split}
\]
where we used Corollary \ref{corAEh} in the last two lines.
\end{proof}

While the metric $\gtil_{u\Psi}$ is asymptotically Euclidean with zero scalar curvature, it may fail to satisfy the assumptions of the Riemannian positive mass theorem in  \cite{PMT1}, since it might have conical singularities and since it does not approach the Euclidean metric sufficiently fast. In the view of these potential issues, we first adapt a well-known construction from \cite{PMT1.5} to ``improve'' the asymptotics of the metric (Proposition \ref{propDeform}) and then we ``open up''   the previously conformally closed cylindrical ends (Proposition \ref{propOpeningEnds}). This results in a complete metric with nonnegative scalar curvature to which the Riemannian positive mass theorem of \cite{PMT1} can be applied.

\begin{proposition}\label{propDeform}
For any sufficiently large $R>0$ there exists a metric $\ghat=\ghat(R)$ on $\Sigma$ such that 
\begin{itemize}
\item[1)] For $s\geq 2R$ we have $\ghat = v^4 g_{\mathrm{Schw}}$  where $g_{\mathrm{Schw}}=\left(1+\tfrac{m}{2r}\right)^4 \delta$ is the Schwarzschild metric  of the mass $m=\mathcal{M}(\gtil_{u\Psi})$. 
For $s\leq R$ we have  $\ghat = v^4 \gtil_{u\Psi}$.
\item[2)] The scalar curvature of the metric $\ghat = \ghat(R)$ is zero.
\item[3)] The conformal factor $v=v(R)\in C^{2,\alpha}_{loc}$ satisfies $c^{-1}\leq v \leq c$ in $\Sigma$ for a constant $c>1$ that is independent of $R$ and $v= 1 +ar^{-1}+ O^{2,\alpha}(r^{-2})$ in $N$ for $a=a(R) \in \bR$. 
As a consequence, the metric $\ghat=\ghat(R)$ is asymptotically Schwarzschildean in the sense of Definition \ref{defAEManifolds} with the mass $\mathcal{M}(\ghat) = \mathcal{M}(\gtil_{u\Psi})  + 2 a$. We also have
\[
\lim_{R\to\infty} a(R) = 0.
\]
\end{itemize}
\end{proposition}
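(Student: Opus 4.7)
My plan follows the density-type argument of Schoen and Yau \cite{PMT1.5}. First, fix a smooth cutoff $\eta_R : \Sigma \to [0,1]$ with $\eta_R \equiv 1$ on $\{s\leq R\}$, $\eta_R \equiv 0$ on $\{s\geq 2R\}$, and $|\partial^j \eta_R|=O(R^{-j})$, $j=0,1,2,3$, in the Cartesian coordinates on $N$, and form the interpolating metric
\[
g_R \definedas \eta_R\, \gtil_{u\Psi} + (1-\eta_R)\, g_{\mathrm{Schw}}
\]
on $\Sigma$ (extended as $\gtil_{u\Psi}$ outside $N$). Both terms are scalar flat, so $\scal^{g_R}$ is supported in the annulus $A_R \definedas \{R \leq s \leq 2R\}$. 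By Proposition \ref{propAsymptAnalysis} and Corollary \ref{corAEh}, the masses of $\gtil_{u\Psi}$ and $g_{\mathrm{Schw}}$ both equal $m$, and the difference $e\definedas \gtil_{u\Psi}-g_{\mathrm{Schw}}$ satisfies $|e|=O(r^{-1})$ on $N$ (the $r^{-1}$ part being traceless and driven by $\psi$) with a remainder of order $O^{2,\beta}(r^{-2+\varepsilon})$.

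Second, I will solve the Yamabe-type problem
\[
-\Delta^{g_R} v + \tfrac{1}{8}\scal^{g_R}\, v = 0, \qquad v \to 1 \text{ at infinity.}
\]
Writing $v = 1 + w$ gives a Fredholm equation on weighted H\"older spaces with compactly supported source $-\tfrac{1}{8}\scal^{g_R}$; since $-\Delta^{g_{\mathrm{Schw}}}$ is an isomorphism on $C^{2,\alpha}_\tau$ for $\tau\in(0,1)$ \cite{Bartnik}, the same holds for the perturbed operator once $R$ is large enough that $\tfrac{1}{8}\scal^{g_R}$ is a small perturbation. Pointwise estimates give $\|w\|_\infty \to 0$ as $R\to\infty$, so for all large $R$ there exist two-sided bounds $c^{-1}\leq v\leq c$ with $c$ independent of $R$. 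The metric $\ghat \definedas v^4 g_R$ then has $\scal^{\ghat}=0$ and the claimed structure in (1). Because $v$ is $g_{\mathrm{Schw}}$-harmonic on $\{s\geq 2R\}$ and tends to $1$, a standard harmonic expansion yields $v = 1 + a(R)\,r^{-1} + O^{2,\alpha}(r^{-2})$, so $\ghat$ is asymptotically Schwarzschildean of mass $m + 2a(R)$.

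The main technical point, which I expect to be the real work, is to prove $a(R)\to 0$. Integrating the PDE for $v$ against $d\mu^{g_R}$ and applying Stokes with the expansion of $v$ at infinity yields the identity
\[
a(R) = -\frac{1}{32\pi}\int_\Sigma \scal^{g_R}\, v\, d\mu^{g_R}.
\]
I estimate the right-hand side by expanding the scalar-curvature functional about $g_{\mathrm{Schw}}$:
\[
\scal^{g_R} = D\scal_{g_{\mathrm{Schw}}}(\eta_R e) + Q(\eta_R e),
\]
where $D\scal_{g_{\mathrm{Schw}}}(h) = -\Delta^{g_{\mathrm{Schw}}}(\tr^{g_{\mathrm{Schw}}} h) + \divg^{g_{\mathrm{Schw}}}\divg^{g_{\mathrm{Schw}}} h - \langle h, \ric^{g_{\mathrm{Schw}}}\rangle$ and $Q$ is at least quadratic in $h$. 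Because $\eta_R e$ is compactly supported, the two divergence-form terms in the linearization integrate to zero; the Ricci pairing satisfies $|\ric^{g_{\mathrm{Schw}}}|=O(r^{-3})$, hence
\[
\Big|\int_\Sigma \langle \eta_R e,\ric^{g_{\mathrm{Schw}}}\rangle\, d\mu^{g_R}\Big| \leq C\int_R^{2R} r^{-4}\cdot r^2\, dr = O(R^{-1}).
\]
A similar derivative count using $|\eta_R e|, |\partial(\eta_R e)|, |\partial^2(\eta_R e)| = O(r^{-1}), O(r^{-2}), O(r^{-3})$ on $A_R$ gives $\int_\Sigma |Q(\eta_R e)|\,d\mu^{g_R} = O(R^{-1})$. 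Combined with the uniform bound on $v$, these yield $a(R) = O(R^{-1})$, which is (3); this scalar-curvature cancellation at the level of volume integrals, rather than pointwise smallness, is the heart of the argument, and reflects the matching of masses $\mathcal{M}(\gtil_{u\Psi}) = \mathcal{M}(g_{\mathrm{Schw}}) = m$ built into the setup.
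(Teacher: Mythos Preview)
Your overall strategy (cut off to Schwarzschild, conformally kill the scalar curvature, then estimate the resulting $a(R)$ via the linearized scalar curvature identity) matches the paper's. However there is a genuine gap in your existence step, and a sloppy point in your $a(R)\to 0$ argument.

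\textbf{The conical singularities.} You invoke the isomorphism of $-\Delta^{g_{\mathrm{Schw}}}$ on $C^{2,\alpha}_\tau$ and treat $-\Delta^{g_R}+\tfrac18\scal^{g_R}$ as a small perturbation. But $(\Sigma,g_R)$ is \emph{not} a complete asymptotically Euclidean manifold: on $\{s\leq R\}$ the metric is $\gtil_{u\Psi}$, which in general carries the conical singularities produced in Proposition~\ref{propStep1}. Bartnik's Fredholm theory does not apply across these cone points, and the interior Laplacian is in no sense a small perturbation of the Schwarzschild one. The paper circumvents this by solving mixed Dirichlet--Neumann problems on the exhaustion $\{\sigma<s<\sigma^{-1}\}$ (Neumann at the inner boundary, Dirichlet at the outer), proving uniqueness by the Sobolev inequality when $R$ is large, obtaining the quantitative bound $\|\varphi_\sigma\|_{L^\infty}\leq CR^{-1/2}$, and then letting $\sigma\searrow 0$. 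The Neumann condition is precisely what makes the later integral identity for $a(R)$ work: it forces $\int_{\{s=\sigma_0\}}\nu_{\check g}(\varphi)\,d\mu^{\check g}=0$, so no boundary contribution survives at the cone points. Your derivation of $a(R)=-\tfrac{1}{32\pi}\int\scal^{g_R}v\,d\mu^{g_R}$ silently assumes this vanishing.

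\textbf{The $a(R)\to 0$ step.} Your claim that ``the two divergence-form terms in the linearization integrate to zero'' is only true when integrated against a constant, not against $v$. The paper splits $v=1+\varphi$ with $|\varphi|\leq CR^{-1/2}$ (proved, not assumed), so that $\int\scal^{\check g}\varphi\,d\mu^{\check g}=O(R^{-1/2})$ because $\int|\scal^{\check g}|\,d\mu^{\check g}=O(1)$, and then applies Stokes to the remaining $\int\scal^{\check g}\,d\mu^{\check g}$ to get boundary flux integrals at $r=R$ and $r=2R$ whose difference is $16\pi m - I(R)\to 0$ by the very definition of ADM mass. Your Ricci and quadratic estimates are fine, but you need the quantitative smallness of $v-1$ (which you assert without proof) to dispose of the $v$ factor before integrating by parts.
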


\begin{proof}
Following \cite{PMT1.5},  we begin by splitting $\gtil_{u\Psi}$ into the ``Schwarzschildean'' part and the ``massless'' part. That is, we write $\gtil_{u\Psi} = h + \left(1+\tfrac{m}{2r}\right)^4 \delta$, where $m=\mathcal{M}(\gtil_{u\Psi})$, so that the contribution of the symmetric 2-tensor $h=O(r^{-1})$ to the mass of $\gtil_{u\Psi}$ is zero. Let $\chi$ be a smooth cutoff function such that $0 \leq \chi \leq 1$, $\chi=0$ for $s\leq R$, $\chi = 1$ for $s \geq 2R$, $|\nabla \chi|\leq c_1 R^{-1}$, and $|\nabla \nabla \chi|\leq c_2 R^{-2}$ for some constants $c_1$ and $c_2$ independent of $R$.  We define a new metric 
\[
\check{g}  = \gtil_{u\Psi} - \chi h = (1-\chi) h  + \left(1+\tfrac{m}{2r}\right)^4 \delta.
\] 
Note that $\scal^{\check{g}} = 0 $ for $s \in (0,R] \cup [2R, \infty)$, since both $\gtil_{u\Psi}$ and $g_{\mathrm{Schw}}=\left(1+\tfrac{m}{2r}\right)^4 \delta$  are scalar flat. Note also that $\scal^{\check{g}} = O(R^{-3})$. 

Next step is to construct a conformal factor $v$ such that the metric $\ghat = v^4\gcheck$ has zero scalar curvature everywhere. We fix $\sigma_0>0$ and note that $\scal^{\check{g}}$ vanishes on $\{s \leq 2\sigma_0\}$ when $R$ is sufficiently large.  For each $\sigma \in (0,\sigma_0)$ we consider the mixed Dirichlet-Neumann problem 
\begin{eqnarray}
-\Delta^{\gcheck} \varphi_\sigma + \tfrac{1}{8} \scal^{\gcheck} \varphi_\sigma = 0 &\text{ in }& \{\sigma < s < \sigma^{-1}\} \label{eqY1hom} \\
\varphi_\sigma = 0 &\text{ on }& \{s=\sigma^{-1}\} \label{eqY2hom}
\\
\nu_{\gcheck}( \varphi_\sigma) = 0 &\text{ on }& \{s=\sigma\} \label{eqY3hom}
\end{eqnarray} 
where $\nu_{\gcheck}$ denotes the outward $\gcheck$-unit normal to the domain  $\{\sigma < s < \sigma^{-1}\}$. Recalling that $\scal^{\check{g}} = O(R^{-3})$, and using the Sobolev inequality in the form of \cite[Lemma 18]{EichmairJang} on $\{ s \geq \sigma_0\}$, we conclude that the solutions satisfy
\begin{equation*}
\begin{split}
8\int_{\{\sigma \leq s \leq \sigma^{-1}\}} |d\varphi_\sigma|_{\gcheck}^2 \, d\mu^{\gcheck} 
& =  - \int_{\{\sigma_0 \leq s \leq \sigma^{-1}\}} \scal^{\gcheck} \varphi_\sigma^2  \, d\mu^{\gcheck}\\
& \leq \left( \int_{\{\sigma_0 \leq s \leq \sigma^{-1}\}} |\scal^{\gcheck}|^{3/2} \, d\mu^{\gcheck}\right)^{2/3} \left( \int_{\{\sigma_0 \leq s \leq \sigma^{-1}\}} \varphi_\sigma^6 \, d\mu^{\gcheck}\right)^{1/3} \\
& \leq C R^{-1} \left( \int_{\{\sigma_0 \leq s \leq \sigma^{-1}\}} \varphi_\sigma^6 \, d\mu^{\gcheck}\right)^{1/3} \\
& \leq C R^{-1} \int_{\{\sigma_0 \leq s \leq \sigma^{-1}\}} |d\varphi_\sigma|^2 \, d\mu^{\gcheck}\\
& \leq C R^{-1} \int_{\{\sigma \leq s \leq \sigma^{-1}\}} |d\varphi_\sigma|^2 \, d\mu^{\gcheck},
\end{split}
\end{equation*}
where the constant $C>0$ might vary from to line but remains independent of $\sigma$ and $R$. Choosing $R>C$ in this estimate we see that $\varphi_\sigma \equiv 0$ is the only solution of \eqref{eqY1hom}-\eqref{eqY3hom}. 

Consequently, for each $\sigma \in (0,\sigma_0)$ there exists a unique solution $\varphi_\sigma$ to the mixed Dirichlet-Neumann problem
\begin{eqnarray*}
-\Delta^{\gcheck} \varphi_\sigma + \tfrac{1}{8} \scal^{\gcheck} \varphi_\sigma = - \tfrac{1}{8} \scal^{\gcheck} &\text{ in }& \{\sigma < s < \sigma^{-1}\}
\\
\varphi_\sigma = 0 &\text{ on }& \{s=\sigma^{-1}\} 
\\
\nu_{\gcheck}( \varphi_\sigma) = 0 &\text{ on }& \{s=\sigma\}. 
\end{eqnarray*} 
We extend each  $\varphi_\sigma$ by zero to a Lipschitz continuous function on $\{s\geq \sigma\}$. Using the Sobolev inequality  \cite[Lemma 18]{EichmairJang}, and the fact that $\scal^{\gcheck}$ vanishes on $\{s \leq 2 \sigma_0\}$ we obtain that
\begin{equation*}
\begin{split}
C^{-1}\left( \int_{\{s \geq \sigma_0\}} \varphi_\sigma^6 \, d\mu^{\gcheck}\right)^{1/3} 
& \leq  8 \int_{\{s \geq \sigma_0\}} |d\varphi_\sigma|_{\gcheck}^2 \, d\mu^{\gcheck} \\
& =  \int_{\{\sigma \leq s \leq \sigma^{-1}\}} (-\scal^{\gcheck} \phi_\sigma^2-\scal^{\gcheck} \phi_\sigma)\, d\mu^{\gcheck} \\
& \leq \left( \int_{\{s\geq \sigma_0\}} |\scal^{\gcheck}|^{3/2} \, d\mu^{\gcheck}\right)^{2/3} \left( \int_{\{s \geq \sigma_0\}} \varphi_\sigma^6 \, d\mu^{\gcheck}\right)^{1/3} \\ & \qquad + \left( \int_{\{s\geq \sigma_0\}} |\scal^{\gcheck}|^{6/5} \, d\mu^{\gcheck}\right)^{5/6} \left( \int_{\{s \geq \sigma_0\}} \varphi_\sigma^6 \, d\mu^{\gcheck}\right)^{1/6}\\
& \leq C_1 R^{-1} \left( \int_{\{s \geq \sigma_0\}} \varphi_\sigma^6 \, d\mu^{\gcheck}\right)^{1/3} \\ & \qquad + C_2 R^{-1/2} \left( \int_{\{s \geq \sigma_0\}} \varphi_\sigma^6 \, d\mu^{\gcheck}\right)^{1/6}
\end{split}
\end{equation*}
for constants $C$, $C_1$ and $C_2$ independent of $\sigma$ and $R$. This shows that $\| \varphi_\sigma \|_{L^6( \{s \geq \sigma_0 \})}\leq CR^{-1/2}$ for a constant $C$ independent of $R$ and $\sigma$. Arguing as in the proof of  Proposition \ref{propYamabe} we conclude that  $\| \varphi_\sigma \|_{L^\infty ( \{s \geq 2 \sigma_0 \})}\leq CR^{-1/2}$. This in combination with the fact that $\varphi_\sigma$ are harmonic in $\{\sigma < s < 2\sigma_0\}$ 
implies that $|\varphi_ \sigma|<CR^{-1/2}$ for the same constant $C>0$ in $\{\sigma < s < 2\sigma_0\}$ by a simple argument using the Harnack inequality.
All in all, we obtain that $\| \varphi_\sigma  \|_{L^\infty(\{\sigma < s < \sigma^{-1}\})}\leq C R^{-1/2}$  for a constant $C$ independent of $R$ and $\sigma$. 

Let $\varphi$ be a subsequential limit of $\varphi_\sigma$ as $\sigma\searrow  0$. Then $\Delta^{\gcheck} \varphi - \tfrac{1}{8} \scal^{\gcheck} (\varphi +1) = 0$ in $\Sigma$ and  $c^{-1} \leq \varphi + 1 \leq c$ for some constant $c>1$ independent of $R$ in the view of the above uniform estimate for $\varphi_\sigma$.  Note that $\varphi$ is harmonic on $\{s \geq 2R\}$, in which case a simple asymptotic analysis as in the proof of Proposition \ref{propAsymptAnalysis} yields that $\varphi = a \,r^{-1} + O^{2,\beta}(r^{-2})$ as $r\to\infty$. To estimate the constant $a = a(R)$, we first note that $\int_{\{s=\sigma_0\}} \nu_{\gcheck}(\varphi) \, d\mu^{\gcheck}=0$ since each $\varphi_\sigma$ is harmonic on $\{\sigma <s<\sigma_0\}$ and satisfies the Neumann boundary condition on $\{s=\sigma\}$. It follows that                                            
\begin{equation}\label{eqIntbyParts}                                                                                                                         
\tfrac{1}{8} \int_{ \{s\geq \sigma_0\}} \scal^{\gcheck} (\varphi + 1) \, d\mu^{\gcheck} 
  = \int_{\{s\geq \sigma_0\}} \Delta^{\gcheck} \varphi \, d\mu^{\gcheck} = \lim_{R\to \infty} \int_{\{r=R\}} \nu_{\gcheck}(\varphi) \, d\mu^{\gcheck} 
 = - 4 \pi a.
\end{equation}
Combined with  the Taylor formula for the scalar curvature $\scal^{\gcheck}$ at $\gcheck=\delta$ (see e.g. \cite{Michel}), \eqref{eqIntbyParts} gives:
\[
\begin{split}
-32 \pi a(R) & = \int_{\Sigma} \scal^{\gcheck} (\varphi + 1) \, d\mu^{\gcheck} 
\\ & = \int_{\{R \leq r \leq 2R\}} \divg^\delta (\divg^\delta \gcheck - d \tr^\delta \gcheck) \, d\mu^\delta + O(R^{-1})\\
& =  \int_{\{r=2R\}} (\divg^\delta \gcheck - d \tr^\delta \gcheck) \left(\partial_r \right)\, d\mu^\delta \\ & \qquad \qquad -\int_{\{r=R\}} (\divg^\delta \gcheck - d \tr^\delta \gcheck) \left(\partial_r \right) \, d\mu^\delta + O(R^{-1}) \\ & = 16 \pi m - I(R) + O(R^{-1}),
\end{split}
\]
where 
\[
I(R) \definedas \int_{\{r=R\}} (\divg^\delta \gcheck - d \tr^\delta \gcheck) \left(\partial_r \right) \, d\mu^\delta \rightarrow 16 \pi m \hspace{0.5cm} \text{as} \hspace{0.5cm} R \rightarrow \infty.
\]
It follows that $\lim_{R\to\infty} a(R) = 0$, so the metric $\ghat \definedas v^4 \gcheck$ for $v=1+\varphi$ has all the required properties.
\end{proof}

\begin{proposition}\label{propOpeningEnds}
Let $\ghat$ be as in Proposition \ref{propDeform}. Then there is a positive function $w \in C^{2,\alpha}_{loc} (\Sigma)$ such that $\Delta^{\ghat} w \leq 0$ with strict inequality for large $r$, and $w = b \, r^{-1} + O^{2,\alpha} (r^{-2})$ as $r\to \infty$ for some constant $b \in \bR$. Moreover, there is a constant $c>1$  such that $c^{-1} (u v s)^{-1} \leq w \leq c (u v s)^{-1}$ as $s \to 0$.  
\end{proposition}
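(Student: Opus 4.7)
The plan is to construct $w$ as the limit of solutions to a sequence of Dirichlet problems on the exhaustion $\{s > \sigma\}_{\sigma \searrow 0}$, in the spirit of the construction of $u$ in Proposition \ref{propYamabe}. For each sufficiently small $\sigma > 0$, I will solve the problem
\[
\Delta^{\ghat} w_\sigma = 0 \quad \text{in } \{s > \sigma\}, \qquad w_\sigma = (uv\sigma)^{-1} \quad \text{on } \{s = \sigma\}, \qquad w_\sigma \to 0 \quad \text{as } r \to \infty.
\]
Since $\ghat$ is asymptotically Schwarzschildean with $\scal^{\ghat}=0$ by Proposition \ref{propDeform}, standard theory for the Laplace equation on asymptotically Euclidean manifolds with compact interior boundary gives existence, uniqueness, and $C^{2,\alpha}_{\mathrm{loc}}$ regularity of $w_\sigma$. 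The maximum principle yields $w_\sigma > 0$, and standard asymptotic analysis gives $w_\sigma = b_\sigma r^{-1} + O^{2,\alpha}(r^{-2})$ as $r \to \infty$ for some $b_\sigma \in \bR$.

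The key step is to establish uniform bounds $c^{-1}(uvs)^{-1} \leq w_\sigma \leq c(uvs)^{-1}$ on $\{s \leq \sigma_0\}$ with $c$ independent of $\sigma$, together with uniform local $C^{2,\alpha}$ bounds away from the conical singularities and uniform boundedness of $b_\sigma$. For the bounds near the conical singularities I exploit the conformal covariance of the conformal Laplacian: because $\ghat = (uv\Psi)^4 \gtil$ and $\scal^{\ghat}=0$, the equation $\Delta^{\ghat} w = 0$ is equivalent to $(-\Delta^{\gtil} + \tfrac{1}{8}\scal^{\gtil})(uv\Psi w) = 0$. On each exactly cylindrical end $(C_i, \gtil) = (\partial U_i \times (T_0, \infty), \gamma_i + dt^2)$, the cross-sectional operator $L^{\gamma_i} = -\Delta^{\gamma_i} + \tfrac{1}{8}\scal^{\gamma_i}$ has strictly positive spectrum by Proposition \ref{propConfStructure1}, so solutions of $L^{\gtil} F = 0$ decompose into exponential modes with rates $\pm\sqrt{\mu_k}$, $\mu_k > 0$. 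The model function $w_i^{\mathrm{mod}} = \Phi_i/(uv\Psi_i) = (2\sqrt{\lambda_i}\, s \, uv)^{-1}$, built from the growing principal mode $\Phi_i = e^{\sqrt{\lambda_i}t}\phi_i$, is exactly $\ghat$-harmonic up to errors controlled by the spectral gap and by the deviation of $u,v$ from being constant near each conical tip. Suitable multiples of $w_i^{\mathrm{mod}}$ then serve as uniform sub- and supersolutions, and comparison with $w_\sigma$ on $\{s \leq \sigma_0\}$ yields the desired bounds. Away from the conical singularities, the Harnack inequality and interior elliptic estimates propagate these bounds, and uniform boundedness of $b_\sigma$ follows by comparison with a harmonic function of the form $Cr^{-1}$ in the asymptotically Schwarzschildean region.

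With the uniform bounds in hand, a standard diagonal subsequence extraction yields a positive $w \in C^{2,\alpha}_{\mathrm{loc}}(\Sigma)$ that is $\ghat$-harmonic and satisfies $c^{-1}(uvs)^{-1} \leq w \leq c(uvs)^{-1}$ as $s \to 0$ together with $w = b r^{-1} + O^{2,\alpha}(r^{-2})$ at infinity. To obtain strict inequality $\Delta^{\ghat} w < 0$ for large $r$, I modify $w$ by adding a small correction $\varepsilon \chi \phi_0$, where $\chi$ is a smooth cutoff supported in the asymptotically Schwarzschildean region and $\phi_0$ is a positive function there with $\Delta^{\ghat} \phi_0 < 0$ for large $r$ (easily constructed from the explicit Schwarzschildean form of the metric, for instance from a suitable modification of $r^{-1}$). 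For $\varepsilon > 0$ small, the modified function is still positive, retains the asymptotic expansion $b r^{-1} + O^{2,\alpha}(r^{-2})$ with a slightly modified constant $b$, and satisfies $\Delta^{\ghat} w \leq 0$ on $\Sigma$ with strict inequality for large $r$.

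The main obstacle is the sharp barrier construction near the conical singularities, where the model function $w_i^{\mathrm{mod}}$ is exactly $\ghat$-harmonic only in the model geometry and must be corrected to an actual sub/supersolution for the full metric $\ghat$. The correction must be small enough not to destroy the leading-order behavior $\sim (uvs)^{-1}$, and its construction relies essentially on the strict positivity of the spectrum of $L^{\gamma_i}$, which provides the exponential spectral gap needed to absorb the errors coming from the non-constancy of $u$ and $v$ near the tips.
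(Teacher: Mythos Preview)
Your approach is in the right spirit but misses the key simplification that makes the paper's argument clean, and as a result the sketch contains both unnecessary complications and a small gap.

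The crucial observation you overlook is that your ``model function'' $(uvs)^{-1}$ is \emph{exactly} $\ghat$-harmonic on $\{s < 2\sigma_0\}$, not merely approximately so. Indeed, since $\ghat = (uv\Psi)^4\gtil$ there, one has $(uvs)^{-4}\ghat = (\Psi/s)^4\gtil$, and a direct computation using $\Psi_i = e^{-\sqrt{\lambda_i}t}\phi_i$, $s_i = \tfrac{1}{2\sqrt{\lambda_i}}e^{-2\sqrt{\lambda_i}t}$ shows that $\Psi_i/s_i = 2\sqrt{\lambda_i}\,e^{\sqrt{\lambda_i}t}\phi_i$ satisfies $(-\Delta^{\gtil} + \tfrac{1}{8}\scal^{\gtil})(\Psi_i/s_i) = 0$ on $C_i$. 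Hence the metric $(uvs)^{-4}\ghat$ is scalar-flat there, which by conformal covariance gives $\Delta^{\ghat}(uvs)^{-1}=0$. There are no ``errors coming from the non-constancy of $u$ and $v$ near the tips'' to absorb, and no delicate barrier construction is needed: the model function is already an exact local solution.

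The paper exploits this by fixing a function $w_0$ equal to $(uvs)^{-1}$ on $\{s<2\sigma_0\}$ and compactly supported elsewhere, then solving $-\Delta^{\ghat}(w_0+w_\sigma)=q$ on annuli $\{\sigma<s<\sigma^{-1}\}$ with zero Dirichlet data, where $q\geq 0$ is a fixed source with $q=r^{-6}$ for large $r$ and $q=0$ on $\{s<2\sigma_0\}$. Uniform $L^6$ and then $L^\infty$ bounds on $w_\sigma$ follow from the Sobolev inequality exactly as in Proposition~\ref{propYamabe}; positivity of $w_0+w_\sigma$ is immediate from the maximum principle. Passing to the limit gives $w$ with $-\Delta^{\ghat}w=q$, so $\Delta^{\ghat}w\leq 0$ with strict inequality for large $r$ is built in from the start. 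Your after-the-fact perturbation $w\mapsto w+\varepsilon\chi\phi_0$ is more awkward: in the transition region of the cutoff $\chi$ you have no control on the sign of $\Delta^{\ghat}(\chi\phi_0)$, so $\Delta^{\ghat}w\leq 0$ globally is not guaranteed by your sketch as written.

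In short, your route could likely be repaired, but once you recognize that $(uvs)^{-1}$ is exactly harmonic near the tips, the paper's approach of prescribing the source $q$ and solving a Poisson problem is both simpler and avoids the sign issue in the transition region.
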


\begin{proof}
The proof is very similar to \cite[Proposition 13]{EichmairJang}. By Proposition \ref{propDeform}  we have $\ghat = v^4 (u \Psi)^4 \gtil$ in  $\{s<2\sigma_0\}$. Recall also that $\scal^{\ghat}=0$.  A simple computation using the definition of $\Psi$ (see the proof of Proposition \ref{propStep1}) shows that the metric $(vus)^{-4}\ghat = s^{-4} \Psi^4 \gtil $ has zero scalar curvature in $\{s<2\sigma_0\}$ hence $\Delta^{\ghat} (v u s)^{-1} = 0$ in $\{s<2\sigma_0\}$. Fix a non-negative function $w_0 \in C^{2,\alpha}_{loc} (\Sigma)$ such that it agrees with $(v u s)^{-1}$ in $\{s < 2 \sigma_0\}$, and such that $(\supp w_0) \cap \{s > \sigma_0\}$ is compact. Now fix a nonnegative function $q \in C^{2,\alpha}_{loc} (\Sigma)$ with  $(\supp q) \cap \{s < 2\sigma_0 \} = \emptyset $ and such that $q(x) = r^{-6}$ when $r=r(x)$ is large. Given $\sigma \in (0,\sigma_0)$, let $w_\sigma$ be the unique solution of  
\begin{eqnarray*}
-\Delta^{\ghat} (w_0 + w_\sigma) = q & \text{ on }& \{\sigma < s < \sigma^{-1}\},\\
w_\sigma = 0 &\text{ on }& \{s = \sigma\} \cup \{s = \sigma^{-1}\}.
\end{eqnarray*} 
Note that $w_0 + w_\sigma$ is positive by the maximum principle. We extend $w_\sigma$ by zero to a Lipschitz continuous function on all of $\Sigma$. Using the Sobolev inequality  \cite[Lemma 18]{EichmairJang}, the equation that $w_\sigma$ satisfies, and the properties of $q$ and $w_0$ we obtain
\[
\begin{split}
& C^{-1} \left(\int_{\{s \geq \sigma_0\}} |w_\sigma|^6 \, d\mu^{\ghat} \right)^{1/3} \\
& \leq \int_{\{s \geq \sigma_0\}} |d w_\sigma|^2 \, d\mu^{\ghat} \leq \int_{\{\sigma \leq s \leq \sigma^{-1}\}} |d w_\sigma|^2 \, d\mu^{\ghat}\\ & = \int_{\{\sigma \leq s \leq \sigma^{-1}\}} w_\sigma (q + \Delta^{\ghat} w_0) \, d\mu^{\ghat} \leq \int_{\{s \geq \sigma_0 \}} |w_\sigma| |q + \Delta^{\ghat} w_0| \, d\mu^{\ghat}  \\ & \leq \left(\int_{\{s \geq \sigma_0 \}} |w_\sigma|^6 \, d\mu^{\ghat} \right)^{1/6}
  \left(\int_{\{s \geq \sigma_0 \}}|q + \Delta^{\ghat} w_0|^{6/5} \, d\mu^{\ghat} \right)^{5/6}.
\end{split}
\] 
It follows that $\int_{\{s \geq \sigma_0\}} |w_\sigma|^6 \, d\mu^{\ghat}$ is bounded independently of $\sigma \in (0,\sigma_0)$. A standard argument as in the proof of Proposition \ref{propYamabe} 
yields a uniform  $L^\infty$-bound for $w_\sigma$ and also allows us to pass to a subsequential limit when $\sigma \to 0$, thereby obtaining a nonnegative function $w \definedas w_0 + \lim_{i\to\infty} w_{\sigma_i} \in C^{2,\alpha}_{loc}(\Sigma)$ such that $-\Delta^{\ghat} w = q$. Since $w$ is a non-constant subharmonic function in $\Sigma$ we see that $w>0$ in $\Sigma$ by the Hopf maximum principle. The asymptotics of $w$ follow from the fact that $\ghat$ is asymptotically Schwarzschildean near infinity as a consequence of Proposition \ref{propDeform}.  Finally, recall that $w_0 = (v u s)^{-1}$ where  $c^{-1 }\leq uv \leq c$ for some $c>1$ in  $\{s < 2 \sigma_0\}$. Since $w>0$ is  bounded we conclude that $c^{-1} (u v s)^{-1} \leq w \leq c (u v s)^{-1}$ as $s \to 0$, up to increasing $c$ if necessary. 
\end{proof}

\section{Positive mass theorem in the asymptotically hyperbolic setting}\label{secPositivity}
In this section we prove the positivity part of Theorem \ref{thMain}.
\begin{theorem}\label{thPositivity}
 Let $(M,g,K)$ be a 3-dimensional asymptotically hyperbolic initial data set of type  
$(l,\beta, \tau,\tau_0)$ for $l\geq 6$, $0<\beta<1$, $\tfrac{3}{2} < \tau <3$  and
$\tau_0>0$. Assume that the dominant energy condition $\mu \geq |J|_g$
holds. Then the mass vector $(E,\vec{P})$ is causal future directed, that is $E \geq |\vec{P}|$.
\end{theorem}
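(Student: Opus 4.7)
The plan is to combine the Jang equation machinery of Sections \ref{secBarriers}--\ref{secConformal} with the Riemannian positive mass theorem of \cite{PMT1} to obtain $E \geq 0$, and then to promote this to the causal inequality $E \geq |\vec P|$ by exploiting how the mass functional transforms under hyperbolic isometries (which play the role of Lorentz boosts). First I would reduce to the setting of Wang's asymptotics with strict dominant energy condition: by Theorem \ref{thPerturb}, any data set of type $(l,\beta,\tau,\tau_0)$ with $\mu\geq|J|_g$ is approximated by data sets with Wang's asymptotics, strict DEC, and $|\bar E - E|<\varepsilon$; using the analogous continuity statement for the linear-momentum components proven in the companion paper \cite{DahlSakovich}, it suffices to prove $\bar E \geq |\vec{\bar P}|$ in the strict, Wang-asymptotic regime and then pass to the limit.

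Given such approximating data, I would run the full Jang program. Construct the barriers $f_\pm$ of Proposition \ref{propBarExist}, solve the capillary-regularized Dirichlet problems on $\mathcal{B}_R$ (Proposition \ref{propExBVP}), and pass to the limit $R\to\infty$, $\tau\to 0$ as in Section \ref{secExistence} to produce the geometric solution $\Sigma$ of Theorem \ref{thJangGraph}. Its distinguished graphical component $(\widetilde{\Sigma}_0,\bar g)$ is asymptotically Euclidean with $\mathcal{M}(\bar g)=2E$ (Corollary \ref{corAEh}). Next, deform $\bar g$ through the sequence of Propositions \ref{propExactCylindrical}--\ref{propOpeningEnds}: make the asymptotically cylindrical ends exact; close each end conically with a vanishing-scalar-curvature conformal factor $\Psi$; solve for $u$ so that the new metric is scalar flat, with $\mathcal{M}(\gtil_{u\Psi})\leq E$ by Proposition \ref{propAsymptAnalysis}; apply the Schoen--Yau splitting trick of Proposition \ref{propDeform} to produce an asymptotically Schwarzschildean tail with a mass shift $2a(R)\to 0$; and finally use the harmonic function $w$ of Proposition \ref{propOpeningEnds} to open each conical singularity into a new asymptotically flat end, arriving at a complete asymptotically Schwarzschildean manifold with nonnegative scalar curvature to which the Riemannian PMT of \cite{PMT1} applies. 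Invoking that theorem and tracking the mass shifts at each conformal step, then sending $R\to\infty$ and the opening parameter to zero, yields $\bar E \geq 0$, and the density limit gives $E \geq 0$ for the original data.

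To promote $E\geq 0$ to $E\geq |\vec P|$, I would apply the argument in a family of charts obtained by boosts. Recall from Section \ref{secPrelim} that for any hyperbolic isometry $\mathcal{I}$ one has $H_{\mathcal{I}\circ\Phi}(V)=H_\Phi(V\circ\mathcal{I}^{-1})$, and that the restriction of the mass functional to $\mathcal{N}$ transforms exactly like a Minkowski four-vector under the identification of $\mathrm{Isom}(\mathbb{H}^3,b)$ with the restricted Lorentz group. Consequently, for a Lorentz boost of velocity $v\in\mathbb{R}^3$, $|v|<1$, the energy in the boosted chart is
\[
E_v = \frac{E - v\cdot \vec P}{\sqrt{1-|v|^2}}.
\]
The boosted data $(M,g,K)$ viewed in the chart $\mathcal{I}\circ\Phi$ is again asymptotically hyperbolic of the same type with Wang's asymptotics preserved up to the density reduction, and the dominant energy condition is preserved. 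Applying the previous paragraph in each such chart gives $E_v\geq 0$ for every admissible $v$. If $\vec P\neq 0$, taking a sequence $v_n\to \vec P/|\vec P|$ with $|v_n|<1$ forces $E-|\vec P|\geq 0$, which is the asserted inequality.

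The main technical obstacle lies in the conformal bookkeeping of the second step. Because $\bar g-\delta=O^{2,\beta}(r^{-2+\varepsilon})$ on the Jang graph is slower than the $O_2(r^{-1})$ decay available in the asymptotically Euclidean setting of \cite{PMT2},\cite{EichmairJang}, the standard integration-by-parts arguments that convert the scalar-curvature identity \eqref{eqScalarJang1} into a mass bound do not directly close, and Propositions \ref{propDeform} and \ref{propOpeningEnds} must absorb this slack at the price of introducing arbitrarily small controllable mass shifts; checking that each of these shifts actually vanishes in the appropriate limit is the delicate point that has to be handled before the Riemannian PMT can be invoked to conclude.
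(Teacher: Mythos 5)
Your proposal is correct and follows the same route as the paper: reduce via the density Theorem~\ref{thPerturb} to Wang asymptotics with strict dominant energy condition, run the Jang/conformal chain (Propositions~\ref{propBarExist}--\ref{propOpeningEnds}) to feed the Riemannian positive mass theorem of~\cite{PMT1} and conclude $E\geq 0$, then use hyperbolic isometries playing the role of Lorentz boosts to upgrade $E\geq 0$ to $E\geq|\vec P|$. One remark: in your first paragraph you invoke a continuity statement for the linear momentum $\vec P$ under the density approximation in order to ``prove $\bar E\geq|\vec{\bar P}|$ and pass to the limit,'' but Theorem~\ref{thPerturb} as stated only controls $|E-\bar E|$, and no momentum continuity is needed. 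The cleaner logical order is the one you actually execute in your last paragraph (and which the paper uses): first establish $E\geq 0$ for \emph{all} initial data of the given type satisfying DEC, and then apply that established fact to the boosted data $\mathcal{I}\circ\Phi$ (which is again of the required type with DEC preserved) to obtain $E_v\geq 0$, from which $E\geq|\vec P|$ follows by taking $v\to\vec P/|\vec P|$. This avoids any appeal to continuity of $\vec P$.
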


\begin{proof}
We will first prove that $E \geq 0$ holds in the case when $(M,g,K)$ satisfies the assumptions of the theorem. Assume first that the initial data has Wang's asymptotics and satisfies the strict dominant energy condition $\mu>|J|_g$. By Proposition \ref{propYamabe} and Proposition \ref{propAsymptAnalysis} we know that in this case there is a Riemannian metric $\gtil_{u\Psi}$ which is asymptotically Euclidean (possibly with finitely many conical singularities) and such that 
\[
\mathcal{M} (\gtil_{u\Psi}) =  \alpha + 2 A \leq \alpha/2 =E.
\]
Then, by Proposition \ref{propDeform}, for any $N > 0$ there is  a radius $R_N>0$ and an asymptotically Schwarzschildean metric $\ghat_N \definedas \ghat (R_N)$ that retains the eventual conical singularities of the metric $\gtil_{u\Psi}$ and such that 
\[|
 \mathcal{M} (\gtil_{u\Psi}) -  \mathcal{M}(\ghat_N)| < 1/N. 
 \]
 Further, by Proposition \ref{propOpeningEnds} there is another asymptotically Schwarzschildean complete metric $\ghat^\varepsilon_N \definedas (1 + \varepsilon w_N)^4 \ghat_N$ that has nonnegative scalar curvature everywhere and strictly positive scalar curvature for large $r$.  Applying the Riemannian positive mass theorem of \cite{PMT1} and \cite{PMT2} \footnote{See also the two final remarks made in the proof of \cite[Proposition 14]{EichmairJang} which explain why the original proof of Schoen and Yau can be extended to account for non-asymptotically Euclidean ends such as $(C_i,\ghat^\varepsilon_N)$, $i=1,\ldots, l$.} we see that 
\[
 \mathcal{M}(\ghat^\varepsilon_N) =  \mathcal{M}(\ghat_N) + 2\varepsilon b_N \geq 0
\]
where $b_N $ is the leading order term in the expansion of $w_N$ for $r\to\infty$, see Proposition \ref{propOpeningEnds}. Since this holds for every $\varepsilon>0$ we conclude that $\mathcal{M}(\ghat_N) \geq 0$ for any $N$. Passing to the limit when $N\to\infty$ we conclude that
\begin{equation}\label{eqImportantForRigidity}
E \geq \mathcal{M}(\gtil_{u\Psi}) \geq 0.
\end{equation} 
Thus $E\geq 0$ holds when the initial data has Wang's asymptotics and the strict dominant energy condition holds. That $E\geq 0$ holds under the assumptions of the theorem follows at once by the density result of Theorem \ref{thPerturb}. 

To complete the proof, it remains to show that we have $E-|\vec{P}|\geq 0$. In fact, in the asymptotically hyperbolic case this is a straightforward consequence of $E \geq 0$. Indeed, suppose on the contrary that we have $0 \leq E < |\vec{P}|$. Since boosts of Minkowski spacetime restrict to (nonlinear) isometries of the unit upper hyperboloid, we may compose the given asymptotically hyperbolic coordinate chart with the boost of the slope $\theta\in (0,1)$  and thereby obtain another asymptotically hyperbolic coordinate chart with the same asymptotic properties defined on the complement of a compact set in $M$. Recall that the mass vector transforms equivariantly under the changes of coordinates near infinity (see e.g. \cite{Michel}), in particular the first component of the mass vector in the boosted chart is $E'=\frac{E-\theta |\vec{P}|}{1-\theta^2}$. Clearly, for any  $\theta \in \left(\frac{E}{|\vec{P}|}, 1\right)$ we have $E'<0$, which is a contradiction. Note that such an argument does not directly apply in the asymptotically Euclidean setting because boosts of Minkowski spacetime do not restrict to isometries of constant time slices, cf. the final remark in \cite{EHLS}. 
\end{proof}

\section{Rigidity}\label{secRigidity}
In this section we prove the rigidity part of Theorem \ref{thMain}. 

\begin{theorem}\label{thRigidity}
Let $(M,g,k)$ be initial data satisfying the assumptions of Theorem \ref{thPositivity}. If $(M,g,K)$ has Wang's asymptotics and $E=0$ then $(M,g)$ can be embedded isometrically into Minkowski spacetime as a spacelike graphical hypersurface with second fundamental form $K$.
\end{theorem}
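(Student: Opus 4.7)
The plan is to exploit the fact that $E = 0$ forces equality throughout the chain of inequalities that delivered positivity in Theorem \ref{thPositivity}, and then to invoke the rigidity case of the Riemannian positive mass theorem of \cite{PMT1}.

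First I would show that the conformally rescaled metric $\tilde g_{u\Psi}$ produced in Proposition \ref{propYamabe} has vanishing ADM mass. The upper bound $\mathcal{M}(\tilde g_{u\Psi}) \leq E = 0$ is supplied directly by Proposition \ref{propAsymptAnalysis}, while the matching lower bound $\mathcal{M}(\tilde g_{u\Psi}) \geq 0$ is obtained by applying the Riemannian positive mass theorem to the complete asymptotically Schwarzschildean approximations $\hat g^\varepsilon_N$ constructed in Propositions \ref{propDeform} and \ref{propOpeningEnds} and letting $\varepsilon \to 0$ and $N \to \infty$. Since $\alpha = 2E = 0$, the identity $\mathcal{M}(\tilde g_{u\Psi}) = \alpha + 2A$ from the proof of Proposition \ref{propAsymptAnalysis} forces $A = 0$, and consequently every nonnegative term on the left of the key inequality \eqref{eqIntByParts4} must vanish. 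This yields simultaneously that $u\Psi$ is constant on $\Sigma$, that $A^{\Sigma} = K$ on the asymptotically Euclidean end $N$, and that $\mu = J(w)$.

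Next I would use the constancy of $u\Psi$ to rule out cylindrical ends. Since $u \to 1$ at Euclidean infinity and $\Psi \equiv 1$ on $N$, the constant value of $u\Psi$ must be $1$. However, on any putative cylindrical end $C_i$ the explicit construction of $\Psi$ in Proposition \ref{propStep1} gives $\Psi_{|_{C_i}} = e^{-\sqrt{\lambda_i}t}\phi_i \to 0$ as $t \to \infty$, which is incompatible with $u\Psi \equiv 1$ and the uniform bound $u \leq c$ from Proposition \ref{propYamabe}. Hence $\Sigma$ has no cylindrical ends: $U = M$, the Jang graph is globally graphical over $M$, and $\Psi \equiv u \equiv 1$, so that $\tilde g_{u\Psi} = \bar g$. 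This exhibits $(M, \bar g)$ as a complete asymptotically Euclidean $3$-manifold with vanishing scalar curvature and vanishing ADM mass, and the rigidity case of the Riemannian positive mass theorem of \cite{PMT1} produces an isometry $\Phi : (M, \bar g) \to (\mathbb{R}^3, \delta)$.

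Finally I would exhibit the desired embedding explicitly by setting $F : M \to \mathbb{R}^{3,1}$ to be $F(x) = (\Phi(x), f(x))$, where $\mathbb{R}^{3,1}$ is endowed with the Minkowski metric $\delta - dt^2$. Since
\[
F^*(\delta - dt^2) = \Phi^*\delta - df \otimes df = \bar g - df \otimes df = g,
\]
the map $F$ is an isometric embedding with spacelike image, and that image is the graph over $\mathbb{R}^3$ of $h \definedas f \circ \Phi^{-1}$. To match the second fundamental form of $F(M)$ with $K$, I would appeal to the Gauss--Codazzi equations for $(\Sigma, \bar g) \hookrightarrow (M \times \mathbb{R}, g + dt^2)$: the flatness of $\bar g$ together with $A^{\Sigma} = K$ converts these ambient Riemannian equations into exactly the vacuum constraint equations for $(M, g, K)$ viewed as a spacelike hypersurface in a flat Lorentzian ambient, and a short direct computation in the flat coordinates furnished by $\Phi$ then identifies the second fundamental form of the Minkowski graph of $h$ with $K$. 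The main obstacle in the whole argument is the elimination of cylindrical ends: once this is achieved, the remaining steps are a fairly direct adaptation of Schoen and Yau's Euclidean rigidity discussion in \cite{PMT2}.
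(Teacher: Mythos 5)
Your proposal has a genuine gap, and it is precisely the one the paper explicitly flags at the start of its own proof. You apply Propositions \ref{propStep1}--\ref{propOpeningEnds} directly to the geometric Jang solution $\Sigma$ built from $(M,g,K)$ with $E=0$. But all of the Section \ref{secConformal} machinery is conditioned on the strict dominant energy condition $\mu > |J|_g$ holding near $\partial U$: that strictness is what Proposition \ref{propConfStructure1} needs to show the operator $-\Delta^{\gamma_i}+\tfrac{1}{8}\scal^{\gamma_i}$ on each $\partial U_i$ has positive first eigenvalue $\lambda_i>0$, and that positivity in turn is what makes the conformal factor $\Psi_i = e^{-\sqrt{\lambda_i}\,t}\phi_i$ of Proposition \ref{propStep1} decay and close the cylindrical end. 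Under the mere weak inequality $\mu\geq|J|_g$ (which is all you have when $E=0$), you cannot assert $\lambda_i>0$. Consequently $\Psi$, $u$, and $\gtil_{u\Psi}$ may simply not be constructible when $U\neq M$, so your appeal to Propositions \ref{propYamabe} and \ref{propAsymptAnalysis}, and the ensuing use of equality in \eqref{eqIntByParts4}, is not available. Your subsequent elimination of cylindrical ends is therefore circular: the conclusion ``$u\Psi\equiv 1$ forces $\Psi\not\to 0$ on $C_i$'' presupposes that $\Psi$ has been successfully built on the putative $C_i$, which is the very thing in doubt.

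The paper handles this by a density-and-limit argument. It approximates $(g,K)$ by data $(g_i,K_i)$ with Wang's asymptotics satisfying the \emph{strict} DEC (Theorem \ref{thPerturb} and the references to \cite{DahlSakovich}), solves the Jang equation for each $(g_i,K_i)$ with uniform barriers, passes to a subsequential geometric limit, and simultaneously takes the limit of the conformal factors $u_i$. The strict DEC for each $(g_i,K_i)$ legitimizes the Section \ref{secConformal} constructions for each $i$; then $\alpha_i\to 0$ together with $-\alpha_i/2\leq A_i\leq -\alpha_i/4$ gives $A_i\to 0$, which via the integral identity of Proposition \ref{propAsymptAnalysis} forces $u_i\to 1$ in $C^{2,\alpha}_{loc}$ and yields $\scal^{\gbar}=0$, $A=K$ on the limit $\Sigma$ — without ever invoking the cylindrical-end conformal closing for $(M,g,K)$ itself. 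The remaining part of your argument (vanishing ADM mass plus $\scal^{\gbar}=0$ gives $(\Sigma,\gbar)\cong(\bR^3,\delta)$ by the rigidity case of \cite{PMT1}, and then $A=K$ upgrades the isometry to the Minkowski graphical embedding via $F^*(\delta-dt^2)=\gbar-df\otimes df=g$) is sound and matches the paper's endgame; the missing ingredient is how to reach that point without the strict DEC.
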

\begin{remark}
This result does not seem to be optimal for the following reasons:

\begin{itemize}

\item We have to assume Wang's asymptotics, which is rather restrictive. This assumption needs to be imposed so that we can solve the Jang equation. Solving the Jang equation for general asymptotics would require the existence of ``uniform'' barriers (cf. the proof of \cite[Proposition 16]{EichmairJang}), something that our construction does not provide. 

\smallskip
 
\item In the view of the results in the asymptotically Euclidean and asymptotically anti-de Sitter setting (see e.g. \cite[Theorem 3]{HuangLeeRigidity}, \cite[Theorem 1.2]{ChruscielMaerten}), and \cite[Theorem 4]{ChruscielMaertenTod}) 
one would expect the conclusion of the theorem to hold under the weaker assumption $E=|P|$, meaning that the mass vector is future directed null. It appears that the Jang equation reduction technique is not capable of providing results of this kind, as \cite[Theorem 2]{PMT2}, \cite[Theorem 3]{EichmairJang}, and our Theorem  \ref{thRigidity} indicate. The same comment can presumably be made about spinor methods in the asymptotically hyperbolic ``hyperboloidal'' setting, see e.g. \cite[Theorem 5.1]{Maerten}. At the same time, the optimal rigidity theorem  for asymptotically hyperbolic manifolds has recently been proven in  \cite{HJM}. It is feasible that the methods of \cite{HJM} and \cite{HuangLeeRigidity} can be used to prove more general rigidity results for asymptotically hyperbolic initial data than Theorem \ref{thRigidity}.  
\end{itemize}
\end{remark}

\begin{proof}[Proof of Theorem \ref{thRigidity}] 
We denote the chart at infinity with respect to which $(M,g,K)$ has Wang's asymptotics by $\Phi$. Under the assumptions of the theorem there is a sequence of initial data $(g_i, K_i)$, $i=1,2,\ldots$, suitably asymptotically hyperbolic with respect to $\Phi$,  satisfying the assumptions of Theorem \ref{thPositivity}, and such that the strict dominant energy condition $\mu_i > |J_i|_{g_i}$ holds, see \cite[Proposition 5.2]{DahlSakovich}) for details. Furthermore, there is a sequence of charts $\Phi_i$, constructed by means of a standard procedure called \emph{adjustment}\footnote{a term coined in \cite{MassInvariants}; this procedure is also referred to as \emph{change of conformal gauge}.} such that  $(g_i, K_i)$ have Wang's asymptotics with respect to $\Phi_i$ (again, the reader is referred to the proof of \cite[Theorem 5.2]{DahlSakovich} for details). We may use the chart $\Phi_i$ to construct a geometric solution $\Sigma_i$ of the Jang equation with respect to every initial data set $(g_i, K_i)$, $i=1,2,\ldots$. In particular, inspecting the arguments of Section \ref{secBarriers}, we see that there exist uniform constants $R>0$ and $C>0$ such that for every  $i$ the barrier functions $f_{+,i}$ and $f_{-,i}$ are defined on $\{r\geq R\}$ and satisfy $|f_{\pm,i} - \sqrt{1+r^2} - \alpha_i \ln r - \psi_i| \leq C r^{-1 + \varepsilon}$ there. Here $\alpha_i= 2 E_i$ is twice  the energy of the initial data set  $(M_i,g_i,K_i)$ and $\psi_i : \mathbb{S}^2\to \bR$ such that  $\int_{\mathbb{S}^2} \psi_i\, d\mu^\sigma = 0$ is defined in terms of the asymptotic expansions of the initial data by
\begin{equation}\label{eqPsii}
\Delta^{\mathbb{S}^2} \psi_i  =\mathbf{M}_i\definedas\tfrac{1}{2} \tr^\sigma \mathbf{m}_i + \tr^\sigma \mathbf{p}_i - \alpha_i,
\end{equation}
see Section \ref{secPrelim} for details.
Note that the described asymptotics of the barrier functions $f_{\pm,i} $ are the same in either of the charts $\Phi_i$ and $\Phi$, as the adjustment will introduce only lower order corrections in this case (see the proof of \cite[Theorem 5.2]{DahlSakovich} for details).  As in  Section \ref{secExistence}, the hypersurfaces $\Sigma_i  \subset M\times \bR$ satisfy the uniform curvature estimates and we may pass to a subsequential limit as $i\to \infty$, thereby obtaining a geometric solution of the Jang equation with respect to initial data $(M,g,K)$.  Clearly, this limit has a connected component $\Sigma\subset M\times \bR$ given as the graph of a function $f$ such that its domain $U$ contains the set $\{r\geq R\}$.  To clarify the asymptotics of the function $f$, we first note that $\lim_{i\to \infty} \alpha_i=2E = 0$ holds by the continuity of the mass functional. Further, define $\psi$ such that $\int_{\mathbb{S}^2} \psi\, d\mu^\sigma = 0$ by
\begin{equation}\label{eqPsiLimit}
\Delta^{\mathbb{S}^2} \psi =\mathbf{M}\definedas \tfrac{1}{2} \tr^\sigma \mathbf{m}+ \tr^\sigma \mathbf{p},
\end{equation}
then as a consequence of \eqref{eqPsii}, \eqref{eqPsiLimit} and the Poincare inequality we obtain
\[
\begin{split}
\int_{\mathbb{S}^2} (\psi- \psi_i)^2 \, d\mu^\sigma 
& \leq C \int_{\mathbb{S}^2} |\nabla \psi - \nabla \psi_i|^2 \, d\mu^\sigma \\
& = C \int_{\mathbb{S}^2}  (\mathbf{M}_i - \mathbf{M})( \psi - \psi_i)  \, d\mu^\sigma \\
& \leq C \left( \int_{\mathbb{S}^2}  (\mathbf{M} - \mathbf{M}_i)^2 \, d\mu^\sigma\right)^{1/2} \left(\int_{\mathbb{S}^2} (\psi - \psi_i)^2 \, d\mu^\sigma \right)^{1/2}.
\end{split}
\]
Since $\mathbf{M} -\mathbf{M}_i\to 0$ uniformly on $\mathbb{S}^2$ (see the proof of \cite[Theorem 5.2]{DahlSakovich} for details)  it follows that $\psi - \psi_i$ converges to zero in $L^2 (\mathbb{S}^2)$. A standard bootstrap argument then yields  $\psi_i \to \psi$ in $C^{3,\alpha} (\mathbb{S}^2)$. We conclude in the view of the above uniform estimate for barriers that $f=\sqrt{1+r^2} + \psi + O(r^{-1 + \varepsilon})$. Arguing as in Section \ref{secJangAE} we may now show that the metric $\gbar=g+df\otimes df$ induced on $\Sigma \subset M\times \bR$  is asymptotically Euclidean, with the properties described in Corollary \ref{corAEh}.

Note however that the conclusion of Proposition \ref{propConfStructure1} might fail to hold  for the boundary components $\partial U_i$ of the domain of the graphing function $f$ as we do not necessarily have a strict inequality in the dominant energy condition $\mu \geq |J|_g$. Therefore the analysis of the conformal structure of $\Sigma$ cannot be approached directly by the methods of Section \ref{secConformal}. 

As in \cite[Proof of Proposition 16]{EichmairJang} we choose $t_0^i\nearrow \infty$ to be a sequence such that $\pm t_0^i$ are regular values for both $f_i$ and $f$. Let $\gtil_i$ be the metrics on $\Sigma_i$ as in Proposition \ref{propExactCylindrical} such that $\gtil_i = \gbar_i$ on $N_i\definedas\Sigma_i \cap (M\times (-t^i_0,t_0^i))$. Further, let $u_i \in C^{2,\alpha}_{loc}(\Sigma_i)$ be the solution of $-\Delta^{\gtil_i} u^i + \tfrac{1}{8} \scal^{\gtil_i} u^i=0$ as in Proposition \ref{propYamabe}.  Arguing as in the proof of Proposition \ref{propAsymptAnalysis} we see that
\[
0\leq \int_{N_i} |du_i|^2_{\gbar_i}  \leq -(\alpha_i + 4A_i) \vol \mathbb{S}^2.
\] 
From the above discussion we know that $\lim_{i\to \infty} \alpha_i = 0$. Furthermore, the proof of Theorem \ref{thPositivity} shows that $\alpha_i + 2A_i \geq 0$ hence $-\frac{\alpha_i}{2} \leq A_i \leq -\frac{\alpha_i}{4}$, hence $\lim_{i\to \infty} A_i = 0$. In conjunction with the Sobolev inequality and the equation that $u_i$ satisfies we see that $u_i  \to 1$ as $r\to \infty$ uniformly in $i$. Using
standard elliptic theory  we conclude that $u_i$ converges in $C^{2,\alpha}_{loc}$ to the constant function one on $\Sigma$. Inspecting the proof of Proposition \ref{propAsymptAnalysis} once more we also conclude that $\scal^{\gbar}=0$, and $A=K$ on $\Sigma$.

Now recall that the asymptotically Euclidean metric $\gbar = g+ df \otimes df$ satisfies $\gbar = \delta + O_2(r^{-1})$.  Consequently, the asymptotically Euclidean initial data set $(\Sigma,\gbar,0)$ has Sobolev type $(2,p,q,q_0,\alpha)$, as defined in \cite[Definition 1]{EHLS}, for $p>3$, $q\in(\frac{1}{2},1)$, for some $\alpha \in (0,1)$ and for every $q_0>0$. Since $\scal^{\gbar}=0$ and $\mathcal{M}(\gbar)=0$, a version of the variational argument used by Schoen and Yau in \cite{PMT1} to prove the Riemannian positive mass theorem, yields that $(\Sigma,\gbar)$ is isometric to the Euclidean space. The reader is referred to \cite[Proof of Proposition 16]{EichmairJang} where the details of this argument are provided. Combining this with the fact that  $A=K$, it follows as in \cite[p. 260]{PMT2}  that  $g$ respectively $K$ arise as the induced metric respectively the second fundamental form of the graph of the function $f:\bR^3 \to \bR$ in the Minkowski spacetime $(\bR \times \bR^3,-dt^2 + \delta)$.
\end{proof}

\appendix \label{secAppendix} 
\section{Christoffel symbols}\label{appChristoffel}

For $g$ as in Definition \ref{defAHdata} the Christoffel symbols are as follows:
\begin{equation*}
\Gamma_{rr}^r=-\frac{r}{1+r^2}, \hspace{1cm} \Gamma_{rr}^{\mu}=0, \hspace{1cm} \Gamma_{r\mu}^{r}=0,
\end{equation*}
\begin{equation*}
\Gamma_{r\mu}^{\nu}=\frac{1}{2}g^{\lambda\nu}\partial_r g_{\lambda\mu}, \hspace{1cm} \Gamma_{\mu\nu}^r=-\frac{1}{2}(1+r^2)\partial_r g_{\mu \nu},
\end{equation*}
\begin{equation*}
\begin{split}
\Gamma_{\mu\nu}^{\kappa}&=\frac{1}{2}g^{\kappa \lambda}\left(\frac{\partial g_{\lambda\nu}}{\partial x^{\mu}}+\frac{\partial g_{\mu \lambda}}{\partial x^{\nu}}-\frac{\partial g_{\mu \nu}}{\partial x^{\lambda}}\right).
\end{split}
\end{equation*}

\section{The barrier method for boundary gradient estimates}\label{appBarMet} 
Here we recall barrier method for deriving boundary gradient estimates as described in \cite[Chapter 14]{GT}, applied to the boundary value problem \eqref{eqAux1}-\eqref{eqAux2}. 

\begin{proposition}\label{propBoundBar}
Suppose that in some neighborhood $U$ of $\partial \Omega$ we have two functions $\fbar,\funder\in C^2(\Omega\cap U)\cap C^1(\overline{\Omega}\cap U)$ such that 
\begin{equation*}
\hspace{1cm} H_g(\fbar)-s\tr_g(K)(\fbar)< \tau \fbar, \hspace{0.5cm} H_g(\funder)-s\tr_g(K)(\funder)> \tau \funder \hspace{0.5cm} \text{in} \hspace{0.5cm} U\cap\Omega,
\end{equation*}
and 
\begin{equation*}
\fbar=\funder=s\phi \hspace{0.5cm}\text{ on }\hspace{0.5cm} \partial \Omega.
\end{equation*}
If $f_s\in C^2(\Omega)\cap C^0(\overline{\Omega})$ is a solution of \eqref{eqAux1}-\eqref{eqAux2} such that $\funder\leq f_s \leq \fbar$ on $\partial (\Omega\cap U)$ then $|df_s|_g$ restricted to $\partial \Omega$ is bounded by a constant depending only on $\fbar$ and $\funder$.
\end{proposition}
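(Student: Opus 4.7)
The plan is to first establish a comparison principle giving $\underline{f}\leq f_s\leq \overline{f}$ throughout $\Omega\cap U$, and then to read off the gradient bound on $\partial\Omega$ from the fact that all three functions agree there. Write the Jang operator (with the capillarity term) as
\[
Q(f) = a^{ij}(x,df)\,\Hess^g_{ij}f + B(x,f,df) - \tau f,
\]
where $a^{ij}= (1+|df|_g^2)^{-1/2}\bigl(g^{ij}-f^if^j/(1+|df|_g^2)\bigr)$ is positive definite and $B$ collects the Christoffel-symbol correction together with $-s\tr_g(K)(f)$. By assumption $Q(\overline{f})<0 = Q(f_s)$ on $\Omega\cap U$, while $\overline{f}\geq f_s$ on $\partial(\Omega\cap U)$.

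Suppose for contradiction that $w = \overline{f}-f_s$ attained a strictly negative minimum at some interior point $x_0\in \Omega\cap U$. Then $dw(x_0)=0$ and $\Hess w(x_0)\geq 0$. Since $d\overline{f}(x_0) = df_s(x_0)$, the coefficients $a^{ij}$ and $B$ evaluated along $\overline{f}$ and $f_s$ coincide at $x_0$, and the quasilinear comparison collapses to
\[
Q(\overline{f})(x_0) - Q(f_s)(x_0) = a^{ij}(x_0)\bigl(\Hess_{ij}\overline{f}-\Hess_{ij}f_s\bigr)(x_0) - \tau\,w(x_0).
\]
The first term is nonnegative by ellipticity and the Hessian inequality, while the second is strictly positive because $w(x_0)<0$. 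This contradicts $Q(\overline{f})<0=Q(f_s)$, so $\overline{f}\geq f_s$ throughout $\Omega\cap U$; the symmetric argument with $\underline{f}$ delivers $f_s\geq \underline{f}$.

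Now extract the gradient bound on $\partial\Omega$. Since $\overline{f}=f_s=\underline{f}=s\phi$ on $\partial\Omega$, the tangential components of $df_s$ along $\partial\Omega$ coincide with those of $d(s\phi)$ and are therefore controlled by $\|\phi\|_{C^1(\partial\Omega)}$. For the inward normal derivative, at every $p\in\partial\Omega$ the inclusion $\underline{f}\leq f_s\leq \overline{f}$ in $\Omega\cap U$ combined with equality on $\partial\Omega$ forces
\[
\partial_\nu \underline{f}(p) \;\leq\; \partial_\nu f_s(p) \;\leq\; \partial_\nu \overline{f}(p),
\]
where $\nu$ denotes the inward unit normal. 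Hence $|\partial_\nu f_s|$ on $\partial\Omega$ is controlled in terms of $\sup_{U}\bigl(|d\overline{f}|_g+|d\underline{f}|_g\bigr)$, and combining the tangential and normal estimates yields the required bound.

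The only delicate step is the comparison argument, and its difficulty is essentially dissolved by the presence of the strict zeroth-order term $-\tau f$ (with $\tau>0$): this is precisely what turns the inequality $Q(\overline{f})<Q(f_s)$ at an interior negative minimum into a genuine contradiction, without needing any further structural hypotheses on $a^{ij}$ or $B$. This is the standard setup for the barrier method in \cite[Chapter~14]{GT}, and the only adaptation needed here is keeping track of the Riemannian rather than Euclidean form of the operator, which is harmless since the rest of Lemma~\ref{lemApriori} already bounds $f_s$ and $df_s$ uniformly on compact subsets.
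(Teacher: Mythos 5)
Your proof is correct and follows essentially the same strategy as the paper: first establish the comparison $\funder\leq f_s\leq \fbar$ in $\Omega\cap U$ by a maximum-principle argument, then read off the gradient bound on $\partial\Omega$ from the fact that the three functions agree there. The only notable variation is in the comparison step: the paper applies the mean value theorem to obtain a linear elliptic differential inequality
\[
a^{ij}\Hess_{ij}(\fbar-f_s)+b^i\nabla_i(\fbar-f_s)-\tau(\fbar-f_s)<0
\]
valid throughout $\Omega\cap U$, and then notes that this rules out a nonpositive interior minimum; you instead go straight to the hypothetical interior minimum point, where $d\fbar=df_s$ forces the quasilinear coefficients to coincide so that the difference $Q(\fbar)-Q(f_s)$ collapses to a pure second- plus zeroth-order quantity, and obtain the same contradiction more directly, without the MVT. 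This is slightly cleaner for the purposes of this single lemma, though the paper's formulation is the one that generalizes if one later needs the full linear operator. Your tangential/normal split for extracting the gradient bound is just a more explicit phrasing of the paper's difference-quotient argument.
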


\begin{proof}
Subtracting $H_g(f_s)-s\tr_g(K)(f_s)-\tau f_s=0$ from $H_g(\fbar)-s\tr_g(K)(\fbar)-\tau \fbar<0$ we get 
\begin{equation*}
\begin{split}
0>&\left(g^{ij}-\frac{\fbar^i\fbar^j}{1+|d\fbar|_g^2}\right)\frac{\Hess_{ij}\left(\fbar-f_s\right)}{\sqrt{1+|d\fbar|_g^2}}\\&+\left(\left(g^{ij}-\frac{\fbar^i\fbar^j}{1+|d\fbar|_g^2}\right)\frac{1}{\sqrt{1+|d\fbar|_g^2}}-\left(g^{ij}-\frac{(f_s)^i (f_s)^j}{1+|d f_s|_g^2}\right)\frac{1}{\sqrt{1+|df_s|_g^2}}\right)\Hess_{ij} (f_s)\\&+s\left(\frac{(f_s)^i (f_s)^j}{1+|d f_s|_g^2}-\frac{\fbar^i\fbar^j}{1+|d\fbar|_g^2}\right)K_{ij}-\tau(\fbar-f_s)\\=& \left(g^{ij}-\frac{\fbar^i\fbar^j}{1+|d\fbar|_g^2}\right)\frac{\Hess_{ij}\left(\fbar-f_s\right)}{\sqrt{1+|d\fbar|_g^2}}+b^i \nabla_i \left(\fbar-f_s\right)-\tau(\fbar-f_s),
\end{split}
\end{equation*}
where the existence of locally bounded functions $b_i$ follows from the mean value theorem. It is clear from the above inequality that $\fbar-f_s$ cannot have a nonpositive interior minimum in $\Omega\cap U$. Since $\fbar\geq f_s$ on  $\partial (\Omega\cap U)$, we conclude that $\fbar\geq f_s$ in $\Omega\cap U$. The same argument shows that $f_s\geq \funder$ in $\Omega\cap U$. From the fact that $\funder\leq f_s\leq \fbar$ in $\Omega\cap U$, and $\funder=f_s=\fbar$ on $\partial \Omega$ we conclude that 
\begin{equation*}
\frac{\funder(p)-\funder(p_0)}{|p-p_0|}\leq \frac{f_s(p)-f_s(p_0)}{|p-p_0|}\leq \frac{\fbar(p)-\fbar(p_0)}{|p-p_0|}
\end{equation*}
for any $p_0\in\partial\Omega$ and $p\in \Omega\cap U$. The result follows by comparing partial derivatives of $f_s$ with the respective partial derivatives of $\funder$ and $\fbar$.
\end{proof}

\section{Some basic properties of Fermi coordinates}\label{secLevelFermi} 

In this appendix we include the proof of the result which is repeatedly used in Section \ref{secJangAE}. We would like to remark that this result is the main reason behind the regularity assumptions that are made throughout the paper: as we will see, for this result to hold certain curvature bounds are required. Notations and conventions are as in Section \ref{secFermi}. 

\begin{proposition}
There exist constants $\rho_0>0$ and $C>0$ such that $|A_\rho|<C$ and $\frac{1}{C} \delta_{ij} \leq (g_\rho)_{ij} \leq C \delta_{ij}$ for any $0 \leq \rho \leq \rho_0$. Furthermore, all partial  derivatives of $(g_\rho)_{ij}$ and $(A_\rho)^i_{\phantom{i}j}$ up to order 3   in the Fermi coordinates are bounded. 
\end{proposition}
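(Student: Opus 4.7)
The plan is to derive two fundamental ODEs along the normal geodesics emanating from $\Sigma_-$ and then propagate, via ODE comparison and Gronwall, the uniform $C^3$ bounds that hold initially on $\Sigma_-$ to a uniform normal neighborhood of width $\rho_0$. The inputs I will rely on are: (i) the ambient curvature $R$ of $(M\times\bR,\ghat)$ together with its derivatives up to order $3$ is uniformly bounded on $M\times\bR$ (this is where the assumption $l\geq 5$ in Definition \ref{defAHdata} enters), and (ii) the initial data $(A^{\Sigma_-}, g_{\Sigma_-})$ is uniformly bounded in $C^3$, with $g_{\Sigma_-}$ uniformly equivalent to $\delta$ in the Cartesian coordinates $u$ on $\Sigma_-$ (this follows from Lemma \ref{lemma1} and the explicit form \eqref{eqNewBarriers} of the barriers).

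First I would use the fact that the integral curves of $\partial_\rho$ are unit speed geodesics, so $\nabla_{\partial_\rho}\partial_\rho=0$, to derive in a standard way the Riccati equation
\begin{equation}\label{eqRiccati}
-\partial_\rho (A_\rho)^i_{\phantom{i}j}+(A_\rho)^i_{\phantom{i}k}(A_\rho)^k_{\phantom{k}j}=R^i_{\phantom{i}\rho\rho j},
\end{equation}
together with the metric evolution
\begin{equation}\label{eqMetricEvolution}
\partial_\rho (g_\rho)_{ij}=-2(A_\rho)^k_{\phantom{k}i}(g_\rho)_{kj}.
\end{equation}
Viewing \eqref{eqRiccati} as a quadratic ODE in $A_\rho$ with uniformly bounded inhomogeneity $R^i_{\phantom{i}\rho\rho j}$, standard ODE comparison against a scalar Riccati ODE of the form $y'=y^2+c$ yields a uniform $\rho_0>0$ and a uniform bound $|A_\rho|\leq C$ on $[0,\rho_0]$, depending only on $\sup |A^{\Sigma_-}|$ and $\sup |R|$. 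Substituting this bound into \eqref{eqMetricEvolution} and applying Gronwall to the linear system for $(g_\rho)_{ij}$ (with uniformly equivalent initial datum $(g_0)_{ij}$) gives $\tfrac{1}{C}\delta_{ij}\leq (g_\rho)_{ij}\leq C\delta_{ij}$ on $[0,\rho_0]$, possibly after shrinking $\rho_0$.

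For the derivative bounds, I would proceed inductively in the order of tangential differentiation. Differentiating \eqref{eqRiccati} with $\partial_i$ produces a \emph{linear} ODE
\[
\partial_\rho\,\partial_i(A_\rho)^k_{\phantom{k}j}=2(A_\rho)^k_{\phantom{k}l}\,\partial_i (A_\rho)^l_{\phantom{l}j}+\partial_i R^k_{\phantom{k}\rho\rho j},
\]
with bounded coefficients (by the previous step) and bounded source term (by the ambient curvature bound of order $1$); Gronwall then propagates the initial bound on $\partial_i A^{\Sigma_-}$ to a bound on $\partial_i A_\rho$ throughout $[0,\rho_0]$. Iterating this argument two more times yields uniform bounds on all tangential derivatives of $A_\rho$ up to order $3$, at the cost of using ambient curvature derivatives up to order $3$; the corresponding bounds on tangential derivatives of $(g_\rho)_{ij}$ follow by differentiating \eqref{eqMetricEvolution} and applying Gronwall once more. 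Finally, mixed partial derivatives involving $\partial_\rho$ are read off algebraically from \eqref{eqRiccati} and \eqref{eqMetricEvolution} after suitable tangential differentiation, since both equations express $\partial_\rho$ of the quantity of interest in terms of quantities already controlled in $C^2$.

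The main technical point—and the step I expect to require the most care—is verifying that the constants in the ODE comparison at each stage can be chosen \emph{independently of the basepoint} $p\in\Sigma_-$. This uniformity comes from two separate sources: the curvature bounds for $\ghat$ are genuinely global on $M\times\bR$ (they do not degenerate near infinity because $\Sigma_-$ sits in a region where the asymptotically hyperbolic Wang-type asymptotics give uniform control on derivatives of $g$ up to order $l$), and the initial-data bounds on $\Sigma_-$ are uniform in the Cartesian coordinates because $\Sigma_-$ is asymptotically Euclidean with controlled fall-off. Putting these together, the constants $\rho_0$ and $C$ produced by the ODE arguments are indeed independent of $p$, which completes the proof.
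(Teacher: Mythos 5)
Your derivation of the zeroth-order bounds (Riccati equation for $A_\rho$ plus metric evolution for $g_\rho$, followed by scalar ODE comparison and Gronwall) is essentially the paper's argument, and the identification of the two structural ODEs is correct. However, there is a genuine gap in the step controlling the first tangential derivatives, and it propagates to the higher orders.

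You claim that differentiating the Riccati equation tangentially produces a linear ODE for $\partial_i A_\rho$ whose source term $\partial_i R^k_{\phantom{k}\rho\rho j}$ is ``bounded by the ambient curvature bound of order $1$.'' This is not true as stated: the ambient hypothesis gives a bound on the \emph{covariant} derivative $\nabla R$, whereas the ODE requires the \emph{coordinate} derivative $\partial_i R$ in Fermi coordinates. The conversion
\[
\partial_k R^i_{\phantom{i}\rho\rho j}
= \nabla_k R^i_{\phantom{i}\rho\rho j}
- \Gamma^i_{kl}\,R^l_{\phantom{l}\rho\rho j}
+ \Gamma^l_{k\rho}\,R^i_{\phantom{i}l\rho j}
+ \Gamma^l_{k\rho}\,R^i_{\phantom{i}\rho l j}
+ \Gamma^l_{kj}\,R^i_{\phantom{i}\rho\rho l}
\]
introduces the tangential Christoffel symbols $\Gamma^i_{kl}$ of $\ghat$, which in Fermi coordinates ($\ghat=d\rho^2+g_\rho$) are linear in $\partial_k(g_\rho)_{ij}$ — precisely the other quantity you are trying to bound, and which you propose to bound only \emph{after} $\partial A_\rho$ has been controlled. (The normal Christoffel symbols $\Gamma^l_{k\rho}=-(A_\rho)^l_{\phantom{l}k}$ are bounded by the previous step, so those terms are harmless.) As a result, the ODE for $\partial A_\rho$ has a source term that genuinely depends on $\partial g_\rho$, and your sequential strategy — first close the estimate for $\partial A_\rho$, then for $\partial g_\rho$ — is circular and does not close.

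The fix is to treat $(\partial A_\rho,\partial g_\rho)$ as a coupled linear system with bounded coefficient matrices, so that a single vector-valued Gronwall-type comparison (the paper invokes \cite[Theorem~10]{BahuaudPacific}) bounds both simultaneously; the same coupling recurs at orders $2$ and $3$, with higher covariant curvature derivatives and lower-order terms entering the coefficient matrices. Until you account for this coupling, the inductive derivative estimates are not justified. (Two minor points for the record: the sign of the $\partial_i R$ term you wrote is opposite to what the Riccati equation gives, and the two product-rule terms do not collapse to $2A\,\partial A$ since $A_\rho$ and $\partial_i A_\rho$ need not commute; neither affects the substance of the argument.)
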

\begin{proof}
The first part of this result is proven by a standard comparison argument, cf. \cite[Chapter 5, Theorem 27]{Petersen} and  \cite[Theorem 15]{BahuaudThesis}. Recall that our convention for the second fundamental form of the hypersurfaces $\Sigma_\rho$ of constant $\rho$ is $A_\rho(X,Y)=\langle\nabla_X Y, \partial_\rho\rangle$. It is well-known that  the respective shape operator (the associated  $(1,1)$-tensor) which we denote by the same notation $A_\rho$ satisfies the Mainardi equation
\begin{equation}\label{eqRiccati}
-\partial_\rho (A_\rho)^i_{\phantom{i}j} + (A_\rho)^i_{\phantom{i}k} (A_\rho)^k_{\phantom{k}j}=R^i_{\phantom{i} \rho \rho j},
\end{equation}
see e.g. \cite{Petersen}. 
We rewrite this equation  in a simplified form as 
\begin{equation*}
-A'(\rho) + A^2 (\rho)=-R_N(\rho),
\end{equation*}
where $R_N(\rho)$ is a normal sectional curvature operator defined by $\langle R_N(\rho) V, V \rangle = \sec(V,\partial_\rho)$ for $V \bot \partial_\rho$, the prime denotes the derivative with respect to $\rho$, and we suppress the dependence on the tangential coordinates. The eigenvalues of the shape operator $A(0)$ are bounded, and we want to prove that the same is true for the eigenvalues of $A(\rho)$ for $0<\rho\leq \rho_0$.  

Let $\Lambda(\rho)$ be the largest eigenvalue of $A(\rho)$. Since $\Lambda(\rho)$ is obtained through a maximum procedure (from the Rayleigh quotient) it is Lipschitz continuous and hence differentiable almost everywhere. At a point $\tilde\rho$ where it is differentiable we pick a unit eigenvector $v$ with respect to the Euclidean metric. Then we extend it to a parallel vector field $v$ such that $v(u,\rho)=v(u,\tilde\rho)$ for  $\rho \in [0,\rho_0]$. Set $\varphi (\rho)=v^T A(\rho) v$. Then $\varphi(\tilde\rho)=\Lambda (\tilde\rho)$ and $\varphi(\rho) \leq \Lambda (\rho)$ for $\rho \in [0,\rho_0]$, hence $\varphi'(\tilde\rho)=\Lambda'(\tilde\rho)$. As a consequence, we have 
\begin{equation*}
\begin{split}
-\Lambda'(\tilde\rho) + \Lambda^2 (\tilde\rho) & = - \varphi'(\tilde\rho) + \varphi^2 (\tilde\rho)\\
                                       & = v^T (-A'(\tilde\rho) + A^2 (\tilde\rho)) v\\
																			 & = v^T (-R_N (\tilde\rho)) v.
\end{split}
\end{equation*}  
Since the curvature term in the right hand side is uniformly bounded we conclude that $\Lambda$ satisfies the differential inequality 
\[
-C_1<-\Lambda'(\rho) + \Lambda^2 (\rho) <C_1
\]
for some constant $C_1>0$ and for almost every $\rho \in [0,\rho_0]$. 

Let now $\mu (\rho)=\sqrt{C_1}\tan \left(\sqrt{C_1}\rho + \arctan \frac{C_0}{\sqrt{C_1}}\right)$ be the solution of  the initial value problem
\begin{eqnarray*}
-\mu'(\rho) + \mu^2 (\rho) &=& -C_1,\\
                    \mu(0) &=& C_0,
\end{eqnarray*}
for some  $C_0 > |\Lambda(0)|$. Up to decreasing $\rho_0$ if necessary, we may assume that  $\mu (\rho)$ is defined and bounded as long as $\rho \in [0,\rho_0]$. Furthermore, we have
\begin{equation}\label{eqDiffMonotone}
\Lambda'(\rho)-\Lambda^2(\rho) <C_1 = \mu'(\rho) - \mu^2 (\rho)
\end{equation} 
and
\begin{equation}\label{eqSumMonotone}
-(\Lambda(\rho)+ \mu(\rho) )'+(\Lambda^2(\rho)+\mu^2 (\rho) )<0
\end{equation} 
for almost every $\rho \in [0,\rho_0]$. Note also that $|\Lambda(0)|< \mu (0)$. 

We will now show that $\Lambda(\rho)<\mu(\rho)$ for all $\rho \in [0,\rho_0]$. Since $\Lambda$ is Lipschitz continuous we have $\Lambda(\rho) = \Lambda(0) + \int_0^\rho \Lambda'(\tau) \, d\tau$ for all $\rho \in [0,\rho_0]$.  
Combining this with \eqref{eqSumMonotone} we find that 
\[
\Lambda(\rho)+\mu(\rho)=\int_0^\rho (\Lambda(\tau)+\mu(\tau))'\,d\tau + \mu(0)+\Lambda(0)>0,
\]
hence $\Lambda(\rho)>-\mu(\rho)$ for $\rho\in[0,\rho_0]$. Furthermore, by \eqref{eqDiffMonotone} we have
\begin{equation}\label{eqComparisonLipschitz}
\Lambda(\rho)-\mu(\rho) = \int_0^\rho (\Lambda'(\tau) - \mu'(\tau))\, d\tau + \Lambda(0) - \mu(0)<\int _0^\rho (\Lambda^2(\tau) - \mu^2(\tau))\, d\tau 
\end{equation}
for $\rho  \in (0,\rho_0]$. Now let $\rho_*=\inf \{\rho: \Lambda(\rho)>\mu(\rho)\}$. Since  $\Lambda(0)< \mu (0)$ we have $\rho_* >0$. On the one hand, we have $\Lambda(\rho_*)=\mu(\rho_*)$. On the other hand, we have $\Lambda(\rho)<\mu(\rho)$ for $0\leq \rho<\rho_*$  and $\Lambda(\rho)>-\mu(\rho)$ for $\rho\in[0,\rho_0]$. Then $\Lambda^2(\rho)- \mu^2(\rho)<0$ for  $\rho\in[0,\rho_*]$ so \eqref {eqComparisonLipschitz} yields $\Lambda(\rho_*)<\mu(\rho_*)$, a contradiction. It follows that $\Lambda(\rho)<\mu(\rho)$ for all $\rho \in [0,\rho_0]$. 

Arguing as above one shows that the smallest eigenvalue $\lambda(\rho)$ of $A(\rho)$ satisfies for almost every $\rho  \in [0,\rho_0]$ and some constant $C_1>0$ the differential inequality 
\[
-C_1<-\lambda'(\rho) + \lambda^2 (\rho) <C_1.
\]
Then $\lambda' >\lambda^2-C_1>-C_1$ hence 
\[
\lambda(\rho)=\lambda(0)+\int_0^\rho \lambda'(\tau)\,d\tau > \lambda(0)-C_1\rho >\lambda(0)-C_1 \rho_0
\] 
holds for all $\rho  \in [0,\rho_0]$. This shows that $\lambda(\rho)$ is uniformly bounded from below. Combining this with the above estimate for $\Lambda(\rho)$ the uniform bound $|A_\rho|<C$ follows.

To obtain the metric estimate, we note that $g_\rho$ satisfies the linear equation
\begin{equation}\label{eqMetricEvolution}
\partial_\rho (g_\rho)_{ij}= - 2 (A_\rho)^k_{\phantom{k}i} (g_\rho)_{kj}.
\end{equation}
Let $\Theta(\rho)$ be the largest eigenvalue of $g_\rho$ with respect to the Euclidean metric. Again, $\Theta=\Theta(\rho)$ is Lipschitz continuous and, in the view of the shape operator estimate, from \eqref{eqMetricEvolution}  we see that whenever $\Theta$ is differentiable it satisfies $\Theta'(\rho)\leq C_1 \Theta (\rho)$, or equivalently,  $(\Theta(\rho) e^{-C_1\rho})'\leq 0$ for some $C_1>0$. Let $\Gamma(\rho)=C_0 e^{C_1 \rho}$ be the solution of the equation  $(\Gamma(\rho) e^{-C_1\rho})'= 0$ such that $\Gamma(0)=C_0 > \Theta (0)$. Then 
\[
(\Theta(\rho) - \Gamma(\rho))e^{-C_1\rho}=\int_0^\rho ((\Theta(\tau) - \Gamma(\tau))e^{-C_1\tau})'\, d\tau + \Theta(0)-\Gamma(0) <0
\]  
thus $\Theta(\rho)<\Gamma(\rho)= C_0 e^{C_1\rho}$ for all $\rho \in [0,\rho_0]$. Similar analysis applies to the lowest eigenvalue and  the desired bound $\frac{1}{C} \delta_{ij} \leq (g_\rho)_{ij} \leq C \delta_{ij}$ for some $C>0$ follows. 

Next we observe that the obtained estimates for $(g_\rho)_{ij}$ and $(A_\rho)^i_{\phantom{i}j}$ in combination with \eqref{eqRiccati} and \eqref{eqMetricEvolution} yield the required bounds on $\partial_\rho (A_\rho)^i_{\phantom{i}j}$ and $\partial_\rho (g_\rho)_{ij}$. In order to prove that $\partial_k (A_\rho)^i_{\phantom{i}j}$ and   $\partial_k (g_\rho)_{ij}$ are bounded we may argue as in \cite[Section 3]{BahuaudGicquaud}. As a consequence of \eqref{eqRiccati} and \eqref{eqMetricEvolution} we have 
\begin{subequations}
\begin{eqnarray}
\partial_\rho \partial_k (A_\rho)^i_{\phantom{i}j} & = &  \partial_k (A_\rho)^i_{\phantom{i}l} (A_\rho)^l_{\phantom{k}j}+ (A_\rho)^i_{\phantom{i}l}                                                                        \partial_k (A_\rho)^l_{\phantom{k}j} - \partial_k R^i_{\phantom{i} \rho \rho j}, 
                                                                       \label{eqTangentialDers1} \\
            \partial_\rho \partial_k (g_\rho)_{ij} & = & - 2 \partial_k (A_\rho)^l_{\phantom{k}i} (g_\rho)_{lj} - 2 (A_\rho)^l_{\phantom{k}i} 
                                                                       \partial_k(g_\rho)_{lj}\label{eqTangentialDers2}.
\end{eqnarray}
\end{subequations}
By the well-known formula relating the coordinate and covariant derivatives, using the fact that $\Gamma_{k\rho}^\rho = 0$ and the properties of the curvature tensor, we obtain
\begin{equation*}
\partial_k R^i_{\phantom{i} \rho \rho j} = \nabla_k  R^i_{\phantom{i} \rho \rho j} - \Gamma_{kl}^i R^l_{\phantom{i} \rho \rho j} + \Gamma_{k\rho}^l R^i_{\phantom{i} l \rho j} + \Gamma_{k\rho}^l R^i_{\phantom{i} \rho l j} + \Gamma_{k j}^l R^i_{\phantom{i} \rho \rho l}.
\end{equation*}
Since $\Gamma^l_{k\rho}=-(A_\rho)^l_{\phantom{l}k}$, it follows by the above estimates for $g_\rho$ and $A_\rho$ that all terms in the right hand side of this formula are bounded, possibly except for $\Gamma^i_{kl}$ and $\Gamma^l_{kj}$, which in their turn can be written as a linear combination of the first order coordinate derivatives of $g_\rho$ with bounded coefficients. As a consequence, the above system can be compactly written as
\begin{subequations}
\begin{eqnarray*}
(\partial A_\rho)' & = & K_1 \partial A_\rho + K_2 \partial g_\rho +K_3, \\
(\partial g_\rho)' & = & K_4 \partial A_\rho + K_5 \partial g_\rho,
\end{eqnarray*}
\end{subequations}
where $K_i$, $i=1,\ldots,5$, are $3^3 \times 3^3$ matrices, and $\partial A_\rho$ and $\partial g_\rho$ are treated as vectors in $\bR^{3^3}$ with the respective components $\partial_k (A_\rho)^i_{\phantom{i}j}$ and $\partial_k (g_\rho)_{ij}$. We will not need the explicit form of the matrices $K_i$, only the fact that their entries are bounded. We set $x(\rho)=|\partial A_\rho|$ and $y(\rho)=|\partial g_\rho|$. These functions are continuous and smooth as long as they are nonzero. 
 Moreover, it follows by Cauchy-Schwartz inequality that $x'\leq |(\partial A_\rho)'|$ and $y'\leq |(\partial g_\rho)'|$ whenever $x$ and $y$ are nonzero. As a consequence we have 
\begin{subequations}
\begin{eqnarray*}
x' & \leq &  c_1 x + c_2 y + c_3, \\
y' & \leq & c_4 x + c_5 y + c_6
\end{eqnarray*}
\end{subequations}
for some constants $c_i>0$, $i=1,\ldots, 6$. By \cite[Theorem 10]{BahuaudPacific} we conclude that $x<\tilde{x}$, $y<\tilde{y}$ on $[0,\rho_0]$, where $(\tilde{x},\tilde{y})$ is a smooth positive solution of the system 
\begin{subequations}
\begin{eqnarray*}
\tilde{x}' & = &  c_1 \tilde{x}+ c_2 \tilde{y}+ c_3, \\
\tilde{y}' & = & c_4 \tilde{x} + c_5 \tilde{y} + c_6
\end{eqnarray*}
\end{subequations}
for $\rho \in [0,\rho_0]$ such that $x(0)<\tilde{x}(0)$ and $y(0)<\tilde{y}(0)$. (That such a solution exists is a simple consequence of Picard-Lindel\"of theorem; note that we may need to decrease $\rho_0$ in order to ensure that the solution  remains positive in $[0,\rho_0]$.)
It follows that $\partial_k(g_\rho)_{ij}$ and $\partial_k (A_\rho)^i_{\phantom{i}j}$ are bounded for $\rho\in [0,\rho_0]$. As a consequence of these estimates  and  \eqref{eqTangentialDers1}-\eqref{eqTangentialDers2},  we see that $\partial_\rho \partial_k (g_\rho)_{ij}$ and $\partial_\rho \partial_k (A_\rho)^i_{\phantom{i}j}$ are bounded. Taking one more partial derivative of \eqref{eqRiccati} and \eqref{eqMetricEvolution} with respect to $\rho$ it also follows that $\partial_\rho \partial_\rho (g_\rho)_{ij}$ and $\partial_\rho \partial_\rho(A_\rho)^i_{\phantom{i}j}$ are bounded.

With the above estimates at hand, we take one more tangential derivative of \eqref{eqTangentialDers1}-\eqref{eqTangentialDers2}, and use standard formulae relating covariant and coordinate derivatives to conclude that
\begin{subequations}
\begin{eqnarray*}
(\partial \partial A_\rho)' & = & K_1 \partial \partial A_\rho + K_2 \partial \partial g_\rho +K_3, \\
(\partial \partial g_\rho)' & = & K_4 \partial \partial A_\rho + K_5 \partial \partial g_\rho +K_6,
\end{eqnarray*}
\end{subequations}
where $K_i$, $i=1,\ldots,6$, are $3^4 \times 3^4$ matrices with bounded entries, and $\partial \partial A_\rho$ and $\partial \partial g_\rho$ are treated as vectors in $\bR^{3^4}$ with the respective components $\partial_k \partial_l (A_\rho)^i_{\phantom{i}j}$ and $\partial_k \partial_l (g_\rho)_{ij}$. Repeating the above argument, we are again in a position to apply  \cite[Theorem 10]{BahuaudPacific}  and the boundedness of $  \partial_k \partial_l (A_\rho)^i_{\phantom{i}j}$ and $\partial_k \partial_l (g_\rho)_{ij}$ follows. 

Similar analysis yields the desired estimates for the third order coordinate derivatives of $g_\rho$ and $A_\rho$.
\end{proof}

\begin{remark}
In order to keep the proof of Proposition \ref{propLevel} as elementary as possible, we only used very rough bounds for the geometry of $(M\times\bR,g+dt^2)$. It is possible that the estimates of Proposition \ref{propLevel} can be improved if one uses more accurate bounds, cf. \cite[Section 3]{BahuaudGicquaud}.  However, as Proposition \ref{propLevel} in its current form suffices for our purposes we choose not to proceed in that direction.
\end{remark}

\section{Some asymptotic expansions}\label{secSigmaMinus}

In this article we repeatedly make use of the following two lemmas.

\begin{lemma}\label{lemma2}
Let $(M,g,K)$ be an asymptotically hyperboloidal initial data with Wang's asymptotics in the sense of Definition \ref{defAHdata} for $l\geq 3$. If $f:M\to \bR$ is such that $f=\sqrt{1+r^2} + \alpha \ln r + \psi + O_{3}(r^{-1+\varepsilon})$ then 
\begin{itemize}
\item[1)] The components of the induced metric $\gbar_{ij} = g + f_i f_j$ are given by
\[
\begin{split}
\gbar_{rr} & = 1 + 2 \alpha \, r^{-1} + O_2(r^{-2+\varepsilon}),\\
\gbar_{r\mu} & = \psi_\mu + O_2(r^{-1+\varepsilon}),\\
\gbar_{\mu\nu} & = r^{2}\sigma_{\mu\nu} + \psi_\mu \psi_\nu + O_2(r^\varepsilon).  
\end{split}
\]

\item[2)] If $\gbar^{ij}$ is given by $\gbar^{ik} \gbar_{kj} = \delta^i_j$ then
\[
\begin{split}
\gbar^{rr} & = 1 - 2 \alpha \, r^{-1} + O_2(r^{-2+\varepsilon}),\\
\gbar^{r\mu} & = - r^{-2} \sigma^{\mu\nu} \psi_\nu + O_2(r^{-3+\varepsilon}),\\
\gbar^{\mu\nu} & = r^{-2}\sigma^{\mu\nu} + O_2(r^{-5}).
\end{split}
\]
\item[3)] The components of the downward pointing unit normal $\nu$ of $\graph f \subset (M\times \bR,g+dt^2)$ satisfy 
\[
\begin{split}
\nu^t & = -r^{-1} + O_2(r^{-2}),\\
\nu^r & = r+O_2(r^{-1}),\\
\nu^\mu & = O_2(r^{-3}).  
\end{split}
\]
As a consequence, the Ricci curvature of the product metric $\ghat = g+dt^2$ satisfies $\ric(\nu, \nu) = -2 + O_1 (r^{-2})$.
\item[4)] The components of the second fundamental form $A_{ij} = \hess_{ij} f (1+|df|_g)^{-1/2}$ of $\graph f \subset M\times \bR$ are given by
\begin{equation*}
\begin{split}
A_{rr} & =r^{-2} - \alpha \, r^{-3} + O_1(r^{-4+\varepsilon}),\\
A_{r\mu} & = -r^{-2}\psi_\mu + O_1(r^{-3+\varepsilon}),\\
A_{\mu\nu} & =r^{2}\sigma_{\mu\nu}+O_1(1).
\end{split}
\end{equation*}
In particular, $|A|_{\gbar}^2=2 + O_1(r^{-2})$ and $|A-K|_{\gbar}^2 = O(r^{-4})$.
\item[5)] The components of the 1-form $q$  given by $q_i =f^j (A_{ij} - K_{ij})(1+|df|_g)^{-1/2}$
satisfy
\[
q_r = -\alpha \, r^{-2} + O_1(r^{-3+\varepsilon}), \qquad q_\mu = -r^{-1} \psi_\mu + O_1(r^{-2+\varepsilon}).
\]
We also have
\[
\divg^{\bar{g}} q = - r^{-3} \Delta^{\bS^2}\psi  + O(r^{-4}).
\]
\end{itemize}
\end{lemma}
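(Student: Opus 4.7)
\medskip

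\noindent\textbf{Proof plan for Lemma C.1.} The statement is essentially a bookkeeping exercise: every quantity is defined explicitly in terms of $g$, $K$, and $f$, and the assumed expansions give us the leading-order behavior of each ingredient. The plan is to differentiate the given expansion for $f$, substitute into the standard formulas from Section~\ref{secJf}, and keep track of orders. First I would compute
\[
\partial_r f = \frac{r}{\sqrt{1+r^2}}+\frac{\alpha}{r}+O_2(r^{-2+\varepsilon})
             = 1+\frac{\alpha}{r}+O_2(r^{-2+\varepsilon}),
\qquad \partial_\mu f = \psi_\mu + O_2(r^{-1+\varepsilon}),
\]
and record the contracted quantities $|df|_g^2 = r^2 + 2\alpha r + O(1)$ and $\sqrt{1+|df|_g^2}=r+\alpha+O(r^{-1+\varepsilon})$, using $g^{rr}=1+r^2$, $g^{r\mu}=0$, $g^{\mu\nu}=r^{-2}\sigma^{\mu\nu}+O(r^{-5})$ from Wang's asymptotics. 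This gives $f^r = (1+r^2)f_r$ and $f^\mu = r^{-2}\sigma^{\mu\nu}\psi_\nu+O_2(r^{-3+\varepsilon})$, which are the only inputs needed for parts (1)--(3).

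For (1), plug into $\bar g_{ij} = g_{ij} + f_i f_j$; the $(r,r)$-entry gives $\tfrac{1}{1+r^2}+(1+\alpha/r)^2+O(r^{-2+\varepsilon}) = 1+2\alpha/r + O_2(r^{-2+\varepsilon})$, and similarly for the mixed and tangential entries. Part (2) follows from the Sherman--Morrison style identity $\bar g^{ij} = g^{ij} - f^i f^j/(1+|df|_g^2)$ stated in Section~\ref{secJf}; the leading correction to $\bar g^{rr}$ comes from $(f^r)^2/(1+|df|_g^2) = (r^2+\alpha r+\ldots)^2/(r^2+2\alpha r+\ldots)= r^2+O(r^{-1+\varepsilon})$, and the other entries are similar. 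For (3), the normal $\nu=(f^i\partial_i-\partial_t)/\sqrt{1+|df|_g^2}$ is immediate, and for the Ricci curvature of $\ghat=g+dt^2$ I use that $\partial_t$ is parallel, so $\ric^{\ghat}(\nu,\nu)=\ric^g(f^i\partial_i,f^j\partial_j)/(1+|df|_g^2)$; since $(M,g)$ is asymptotically hyperbolic with $\ric^b=-2b$, Wang's expansion gives $\ric^g=-2g+O(r^{-3})$, and contracting against $f^i\partial_i$ then dividing by $1+|df|_g^2$ produces $-2+O_1(r^{-2})$.

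For (4), the Hessian is $\Hess^g_{ij}f = \partial_i\partial_j f - \Gamma^k_{ij}f_k$ with the Christoffel symbols listed in Appendix~\ref{appChristoffel}; the only slightly subtle entry is $A_{rr}$, where $\partial_r^2 f = (1+r^2)^{-3/2}-\alpha/r^2+\ldots$ combines with $-\Gamma^r_{rr}f_r = r/(1+r^2)\cdot f_r$ to yield $r^{-2}-\alpha r^{-3}+O_1(r^{-4+\varepsilon})$ after dividing by $\sqrt{1+|df|_g^2}$. The statements for $A_{r\mu}$ and $A_{\mu\nu}$ follow the same scheme. The identities $|A|^2_{\bar g}=2+O_1(r^{-2})$ and $|A-K|^2_{\bar g}=O(r^{-4})$ then use $K_{ij}=g_{ij}+O(r^{2-\tau})$ at matching index positions and the inverse metric from (2).

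The main obstacle is (5), and specifically the claim $\divg^{\bar g}q = -r^{-3}\Delta^{\mathbb{S}^2}\psi+O(r^{-4})$, where enough cancellations occur that one must be careful. I would first verify the pointwise expansion of $q$: for $q_r$, the dominant contribution is $f^r(A_{rr}-K_{rr})/\sqrt{1+|df|_g^2}$, and using $K_{rr}=(1+r^2)^{-1}+O(r^{-5})$ together with the expansion of $A_{rr}$ produces $-\alpha r^{-2}+O_1(r^{-3+\varepsilon})$; for $q_\mu$, the dominant contribution comes from $f^r(A_{r\mu}-K_{r\mu})/\sqrt{1+|df|_g^2}$, giving $-r^{-1}\psi_\mu+O_1(r^{-2+\varepsilon})$ (the $f^\nu(A_{\mu\nu}-K_{\mu\nu})$ piece contributes at lower order because $A_{\mu\nu}$ and $K_{\mu\nu}$ agree with $g_{\mu\nu}=r^2\sigma_{\mu\nu}$ to leading order). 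Then to compute the divergence, the cleanest route is
\[
\divg^{\bar g}q = \frac{1}{\sqrt{\det\bar g}}\,\partial_i\!\left(\sqrt{\det\bar g}\,\bar g^{ij}q_j\right).
\]
Raising indices gives $q^r = -\alpha r^{-2}+O(r^{-3+\varepsilon})$ and $q^\mu = -r^{-3}\sigma^{\mu\nu}\psi_\nu+O(r^{-4+\varepsilon})$, and $\sqrt{\det\bar g}=r^2\sin\theta + O(r)$. The $\partial_r$ contribution is $\partial_r(-\alpha\sin\theta+O(r^{-1+\varepsilon}))=O(r^{-2+\varepsilon})$ (after dividing by $\sqrt{\det\bar g}$, this becomes $O(r^{-4+\varepsilon})$), while the tangential contribution becomes $\frac{1}{r^2\sin\theta}\partial_\mu(-r^{-1}\sin\theta\,\sigma^{\mu\nu}\psi_\nu) = -r^{-3}\Delta^{\mathbb{S}^2}\psi + O(r^{-4})$, using the formula for the spherical Laplacian in coordinates. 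Summing the two contributions yields the claimed asymptotics. Throughout, the only real risk is mis-tracking a subleading term where cancellation occurs; the safeguard is to keep two orders in every expansion until the final step.
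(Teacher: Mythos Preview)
Your proposal is correct and is precisely the routine verification the paper has in mind; the paper's own proof consists of the single line ``A computation.'' Your outline supplies exactly that computation, with the right inputs (the derivatives of $f$, the Christoffel symbols from Appendix~\ref{appChristoffel}, the identity $\gbar^{ij}=g^{ij}-f^if^j/(1+|df|_g^2)$ from Section~\ref{secJf}) and the correct handling of the one delicate point, the cancellation in $\divg^{\gbar}q$ via the density formula.
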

\begin{proof}
A computation.
\end{proof}

\begin{lemma}\label{lemma1}
If $(M,g,k)$ is asymptotically hyperboloidal initial data with Wang's asymptotics and $f=\sqrt{1+r^2} + \alpha \ln r + \psi + O_3(r^{-1+\varepsilon})$ then the Jang metric $\gbar=g+df\otimes df$ is asymptotically flat in the sense of Definition \ref{defAEManifolds}, and its ADM mass is $\mathcal{M}(\gbar) = \alpha = 2E$.
\end{lemma}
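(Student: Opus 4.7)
The plan is to convert the polar-like coordinates $(r,\theta,\varphi)$ inherited from the chart at infinity on $M$ into Cartesian-type coordinates on $\Sigma_-$ via the standard spherical map $x^i = r\,\omega^i(\theta,\varphi)$, where $\omega^i$ are the restrictions to $\mathbb{S}^2$ of the coordinate functions on $\mathbb{R}^3$. Since $|x|=r$ under this map, the problem reduces to computing $\bar g$ in these coordinates using the already established expansion of $\bar g$ in $(r,\theta,\varphi)$ from Lemma \ref{lemma2}.

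First I would record from Lemma \ref{lemma2} the asymptotics
\[
\bar g_{rr}=1+\tfrac{2\alpha}{r}+O_2(r^{-2+\varepsilon}),\quad
\bar g_{r\mu}=\psi_\mu+O_2(r^{-1+\varepsilon}),\quad
\bar g_{\mu\nu}=r^{2}\sigma_{\mu\nu}+\psi_\mu\psi_\nu+O_2(r^\varepsilon),
\]
and combine them with the chain-rule identities $\partial r/\partial x^i=\omega^i$ and $\partial\theta^\mu/\partial x^i = T^\mu_i/r$, where $T^\mu_i=O(1)$ is tangent to the spheres (i.e.\ $T^\mu_i\omega^i=0$). Writing $P_i \definedas \psi_\mu T^\mu_i$ for the tangential vector corresponding to the angular gradient of $\psi$, a direct computation converts the three groups of polar components into Cartesian corrections
\[
\bar g_{ij}-\delta_{ij}=\tfrac{2\alpha}{r}\omega^i\omega^j+\tfrac{1}{r}(P_i\omega^j+P_j\omega^i)+\tfrac{1}{r^2}P_iP_j+O_2(|x|^{-2+\varepsilon}).
\]
All terms are manifestly $O_2(|x|^{-1})$, and the $O_3$ hypothesis on $f$ propagates to give $|\partial^2(\bar g_{ij}-\delta_{ij})|=O(|x|^{-3})$, so Definition \ref{defAEManifolds} is satisfied.

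For the ADM integral I would separate the three explicit correction terms above. The quadratic tangential piece $P_iP_j/r^2$ is $O(|x|^{-2})$ and contributes nothing in the limit. The radial piece $(2\alpha/r)\omega^i\omega^j=2\alpha\,x^ix^j/r^3$ is a standard Schwarzschild-type correction whose ADM integrand on $\{|x|=R\}$ evaluates explicitly to $4\alpha/R^2$, giving $\alpha$ after dividing by $16\pi$ and multiplying by the sphere area $4\pi R^2$. The tangential cross term $(P_i\omega^j+P_j\omega^i)/r$ requires the most care: since $P$ is degree-$0$ homogeneous and tangent to spheres, Euler's relation kills $x^j\partial_j P_i$, while $P\cdot\omega=0$ eliminates several other terms, leaving an integrand proportional to the Euclidean divergence $\mathrm{div}\,P = r^{-1}\mathrm{div}^{\mathbb{S}^2}P$. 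On each sphere $\{|x|=R\}$ this integrates to $\int_{\mathbb{S}^2}\mathrm{div}^{\mathbb{S}^2}P\,d\mu^\sigma=0$ by Stokes. Combining the three contributions gives $\mathcal{M}(\bar g)=\alpha$, which is $2E$ by \eqref{eqAlphaMass}.

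The main subtlety is the handling of the non-decaying polar cross terms $\bar g_{r\mu}=\psi_\mu+O(r^{-1+\varepsilon})$: they do not look small in $(r,\theta,\varphi)$, and it is only after passing to Cartesian coordinates that the implicit factor $r^{-1}$ from $d\theta^\mu=O(r^{-1})\,dx$ makes them into legitimate $O(|x|^{-1})$ metric perturbations. A natural alternative, using a corrected radial coordinate $\tilde s=r+\alpha\ln r$, would introduce unwanted $\tilde s\ln\tilde s$ terms in the angular part $r^2\sigma_{\mu\nu}$ and spoil the asymptotic Euclidean decay; this is why the simple choice $x^i=r\omega^i$ is preferable. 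Once the Cartesian form is in place, the vanishing of the $\psi$-contribution to the mass is the genuinely geometric step, reflecting that the non-constant angular part of $\psi$ only reshuffles the direction of the ``mass aspect'' on $\mathbb{S}^2$ without changing its total.
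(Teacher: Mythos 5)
Your proof is correct, and the route differs from the paper's in a meaningful way worth noting. The paper's proof never converts $\bar{g}$ into Cartesian components: it works entirely in the polar-like chart, splits $\gbar = \Psi_*g + df\otimes df$, and evaluates $\operatorname{div}^\delta$ and $d\tr^\delta$ directly using the Christoffel symbols of the flat metric in polar coordinates (explicitly: $(\operatorname{div}^\delta\Psi_*g)(\nu) = -2r^{-1} + O(r^{-3})$, the angular contribution $r^{-2}f_r\Delta^\sigma\psi$ coming from the $\mathring{\Gamma}^r_{\alpha\beta}$ terms, and so on). You instead transform the metric into Cartesian form first, arriving at the decomposition $\bar g_{ij} - \delta_{ij} = 2\alpha r^{-1}\omega_i\omega_j + r^{-1}(\omega_i P_j + \omega_j P_i) + r^{-2}P_iP_j + O_2(r^{-2+\varepsilon})$ and then apply the classical Cartesian ADM integrand term by term. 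The substantive geometric content (the Schwarzschild-type radial piece carries the whole mass $\alpha$; the $\psi$-dependent pieces integrate to zero on each sphere) is the same, but your version makes the asymptotic-Euclidean structure of $\bar g$ manifest at the level of Cartesian components, which is pedagogically cleaner, whereas the paper's avoids the chain-rule bookkeeping entirely by staying invariant in the given chart. The only small inaccuracy is that you attribute the decay $|\partial^2(\bar g_{ij}-\delta_{ij})| = O(r^{-3})$ to the $O_3$ regularity of $f$, but in fact the explicit homogeneous leading terms already give exactly $O(r^{-3})$ and the $O_2(r^{-2+\varepsilon})$ remainder (which is all that Lemma \ref{lemma2} records and requires) contributes only $O(r^{-4+\varepsilon})$ — so $O_2$ regularity of the polar components suffices and $O_3$ of $f$ is not the operative hypothesis at this step.
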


\begin{proof}
It is clear that the graph of $f$ in $M\times\bR$ has an end diffeomorphic to $(R,\infty)\times \bS^2$, the coordinate diffeomorphism $\Psi$ being naturally induced by the asymptotically hyperbolic chart $\Phi : M\setminus \mathcal{C} \to (R,\infty) \times \bS^2$.  We have $\Psi_* \gbar  = \delta + O_2 (r^{-1})$ as a consequence of Lemma \ref{lemma2}.

We compute the mass $\mathcal{M} (\gbar)$ of the asymptotically Euclidean metric $\gbar$ using the formula
\begin{equation*}
\mathcal{M} (\gbar) =\frac{1}{16\pi}\int_{S_{\infty}}(\operatorname{div}^\delta \Psi_*\gbar - d \tr^{\delta} \Psi_*\gbar) (\nu) \, d\mu^\sigma.
\end{equation*}
Note that in this case we have
\begin{equation*}
\mathring{\Gamma}^{r}_{rr} = \mathring{\Gamma}^\alpha_{rr} = \mathring{\Gamma}^r_{\alpha r} = 0, \qquad \mathring{\Gamma}^r_{\alpha\beta} = -r \sigma_{\alpha\beta}, \qquad \mathring{\Gamma}^\alpha_{ \beta r} = r^{-1} \delta^\alpha_\beta, \qquad \mathring{\Gamma}^\alpha_{\beta\gamma} = (\Gamma_{\sigma})^\alpha_{\beta\gamma},
\end{equation*}
where $\mathring{\Gamma}_{ij}^{l}$ and $(\Gamma_{\sigma})^\alpha_{\beta\gamma}$ are Christoffel symbols for the metrics $\delta$ and $\sigma$ respectively, and
hence
\begin{align*}
\begin{split}
(\operatorname{div}^\delta \Psi_*g) (\nu)
& = (\operatorname{div}^\delta \Psi_* g)(\partial_{r}) \\
& = \mathring{\nabla}_r g_{rr} + r^{-2} \sigma^{\alpha\beta} \mathring{\nabla}_\beta g_{\alpha r} \\
& = \partial_r g_{rr} - 2 \mathring{\Gamma}^l_{rr} g_{lr} +  r^{-2} \sigma^{\alpha\beta}(\partial_\beta g_{\alpha r}                                            - \mathring{\Gamma}^l_{\alpha\beta} g_{lr}  - \mathring{\Gamma}^l_{\beta r} g_{\alpha l} ) \\
& = - r^{-2} \sigma^{\alpha\beta} g_{\alpha \gamma} \mathring{\Gamma}^\gamma_{\beta r} + O(r^{-3}) \\
& = -2r^{-1} + O(r^{-3}).
\end{split}
\end{align*}
Furthermore
\begin{align*}
\begin{split}
\operatorname{div}^\delta (df\otimes df) (\nu)
& = \operatorname{div}^\delta (df\otimes df)(\partial_{r}) \\
& = \mathring{\nabla}_r (f_r^2) + r^{-2} \sigma^{\alpha\beta} \mathring{\nabla}_\beta(f_\alpha f_r) \\
& = 2 f_r \mathring{\nabla}_{rr} f + r^{-2} \sigma^{\alpha\beta} (f_r \mathring{\nabla}_{\alpha\beta} f + f_\alpha \mathring{\nabla}_{\beta r} f),
\end{split}
\end{align*}
with
\begin{equation*}
2 f_r \mathring{\nabla}_{rr} f= -2 \alpha r^{-2} + O(r^{-3+\varepsilon}),
\end{equation*}
\begin{equation*}
r^{-2} \sigma^{\alpha\beta} f_\alpha \mathring{\nabla}_{\beta r} f = - r^{-2} \sigma^{\alpha\beta}f_\alpha \mathring{\Gamma}^\gamma_{\beta r} f_\gamma + O(r^{-3}) = O(r^{-3}),
\end{equation*}
and
\begin{align*}
\begin{split}
 r^{-2} \sigma^{\alpha\beta} f_r \mathring{\nabla}_{\alpha\beta} f
& = r^{-2} \sigma^{\alpha\beta} f_r \mathring{\nabla}_{\alpha\beta} \psi
+ r^{-2} \sigma^{\alpha\beta} f_r \mathring{\nabla}_{\alpha\beta} (f - \psi)\\
& = r^{-2}f_r\Delta^\sigma \psi - r^{-2} \sigma^{\alpha\beta} f_r \mathring{\Gamma}^r _{\alpha\beta} (f - \psi_r +  O(r^{-3+\varepsilon}))\\
& = r^{-2} \Delta^\sigma \psi +  2 r^{-1} (f_r)^2  +  O(r^{-3+\varepsilon}) \\
& = r^{-2} \Delta^\sigma \psi + 2 r^{-1} + 4 \alpha r^{-2} + O(r^{-3+\varepsilon}).
 \end{split}
\end{align*}
Finally, we have
\begin{align*}
\begin{split}
 (d \tr^{\delta} \Psi_* \gbar) (\nu) & = \partial_r (\gbar_{rr} + r^{-2} \sigma^{\alpha \beta} \gbar_{\alpha \beta}) \\
                                                 & = \partial_r(g_{rr} + f_r^2 + r^{-2} \sigma^{\alpha\beta} (g_{\alpha\beta} + f_\alpha f_{\beta}))\\
																								 & = \partial_r (f_r^2) + O(r^{-3})\\
																								 & = -2 \alpha r^{-2} + O(r^{-3 + \varepsilon}).
\end{split}
\end{align*}
Summing up, we conclude that
\begin{equation*}
\mathcal{M} (\gbar) = \frac{1}{16\pi}\int_{S_{\infty}}\left[(\Delta^\sigma \psi + 4 \alpha)r^{-2} +  O(r^{-3+\varepsilon})\right] \, d\mu^\sigma = \alpha = 2E.
\end{equation*}
\end{proof}

\bibliographystyle{amsalpha}
\bibliography{biblio}

\end{document}